\numberwithin{equation}{section}
\theoremstyle{plain}
\newtheorem{theo}{Theorem}[section]
\newtheorem{lem}[theo]{Lemma}
\newtheorem{prop}[theo]{Proposition}
\newtheorem{coro}[theo]{Corollary}
\newtheorem{claim}[theo]{Claim}
\newcounter{zaehler} 
\newtheorem*{theointro}{Theorem \Roman{zaehler}\stepcounter{zaehler}}
\newtheorem*{corointro}{Corollary \Roman{zaehler}\stepcounter{zaehler}}
\newtheorem*{quest}{Question \Roman{zaehler}\stepcounter{zaehler}}
\theoremstyle{definition}
\newtheorem{defi}[theo]{Definition}
\theoremstyle{remark}
\newtheorem{rem}[theo]{Remark}
\newcommand{\ab}{{\rm ab}}
\newcommand{\C}{{\rm C}}
\newcommand{\W}{{\rm W}}
\newcommand{\Spec}{\mathrm{Spec}}
\newcommand{\Hom}{\text{\rm Hom}}
\newcommand{\Q}{{\mathbb Q}}
\newcommand{\Z}{{\mathbb Z}}
\newcommand{\F}{{\mathbb F}}
\newcommand{\Frob}{{\mathit{Frob}}}
\newcommand{\Gal}{\mathrm{Gal}}
\newcommand{\et}{\text{\rm \'et}}
\def\rsw{{\mathrm{rar}}}      
\def\rswM{{\mathrm{cc}}}   
\def\rswMXD{{\mathrm{cc}_{X,D}}}      
\def\art{{\mathrm{ar}}}      
\def\ord{{\mathrm{ord}}}      
\def\Coker{{\mathrm{Coker}}}
\def\Ker{{\mathrm{Ker}}}    
\def\cont{{\mathrm{cont}}}  
\def\div{{\mathrm{div}}}    
\def\Div{{\mathrm{Div}}}    
\def\dlog{d{\mathrm{log}}}  
\def\et{{\text{\'{e}t}}}    
\def\Gal{{\mathrm{Gal}}}    
\def\Hom{{\mathrm{Hom}}}    
\def\Image{{\mathrm{Image}}}   
\def\mod{\;{\mathrm{mod}}\;}    
\def\ord{{\mathrm{ord}}}    
\def\Proj{{\mathrm{Proj}}}  
\def\ch{{\mathrm{ch}}}    
\def\cF{{\mathcal F}}
\def\cL{{\mathcal L}}
\def\cO{{\mathcal O}}
\def\cC{{\mathcal C}}
\def\cP{{\mathcal P}}
\def\cB{{\mathcal B}}
\def\cE{{\mathcal E}}
\def\cL{{\mathcal{L}}}
\def\cExt{{\cE}xt}
\def\Lam{\Lambda}
\def\lam{\lambda}
\def\k{\kappa}
\def\ep{\epsilon}
\def\bZ{{\mathbb Z}}
\def\bQ{{\mathbb Q}}
\def\bP{{\mathbb P}}
\def\bF{{\mathbb F}}
\def\bA{{\mathbb A}}
\def\fm{{\frak{m}}}          
\def\fp{{\frak{p}}}
\def\fq{{\frak{q}}}          
\def\fmK{\fm_K}
\def\tX{\tilde{X}}
\def\tX{\tilde{X}}
\def\tC{\tilde{C}}
\def\tD{\tilde{D}}
\def\tbeta{\tilde{\beta}}
\def\talpha{\tilde{\alpha}}
\def\tomega{\tilde{\omega}}
\def\qz{{\bQ}/{\bZ}}
\def\nz{\bZ/n\bZ}
\def\pz{\bZ/p\bZ}
\def\psz{\bZ/p^s\bZ}
\def\indlim#1{\underset{{\underset{#1}{\longrightarrow}}}{\mathrm{lim}}\; }
\def\projlim#1{\underset{{\underset{#1}{\longleftarrow}}}{\mathrm{lim}}\; }
\def\rmapo#1{\overset{#1}{\longrightarrow}}
\def\isom{\overset{\cong}{\longrightarrow}}
\def\KM#1#2{K^M_{#1}(#2)}
\def\mlam{m_\lam}
\def\Clam{C_\lam}
\def\Cmu{C_\mu}
\def\Klam{K_{\lam}}
\def\Blam{B_\lambda}
\def\fmlam{\fm_\lambda}
\def\Alam{A_\lam}
\def\klam{k(\lam)}
\def\fillog#1{\mathrm{fil}^{\log}_{#1}}
\def\fil#1{\mathrm{fil}_{#1}}
\def\gr#1{\mathrm{gr}_{#1}}
\def\FH#1#2{\fil{#2}H^1(#1)}
\def\FHlog#1#2{\fillog{#2}H^1(#1)}
\def\FXCC#1{F^{(#1)}W(X,C)}
\def\FXdCC#1{F^{(#1)}W(X',C')}
\def\FtXCC#1{F^{(#1)}W(\tX,\tC)}
\def\FXCCd#1{F^{(#1)}W(X',C')}
\def\FXCCap#1{F_{\Cap}^{(#1)}W(X,C)}
\def\FXtCCap#1{F_{\Cap}^{(#1)}W(\tX,\tC)}
\def\FXCCapV#1{F_{\Cap{V}}^{(#1)}W(X,C)}
\def\tFXCC#1{F_{\cB}^{(#1)}W(X,C)}
\def\tFtXCC#1{F_{\cB}^{(#1)}W(\tX,\tC)}
\def\tFXdCC#1{F_{\cB}^{(#1)}W(X',C')}
\def\tFB{F_{\cB}}
\def\hFXCC#1{\widehat{F}^{(#1)}W(X,C)}
\def\hFXtCC#1{\widehat{F}^{(#1)}W(\tilde{X},\tilde{C})}
\def\hFXCCd#1{\widehat{F}^{(#1)}W(X',C')}
\def\hFXCCkp#1{\widehat{F}^{(#1)}W(X_{k'},C_{k'})}
\def\WU{W(U)}
\def\WUL{W(U_L)}
\def\WUkp{W(U_{k'})}
\def\hWU{\widehat{F}^{(1)}\WU}
\def\CXD{{\rm C}(X,D)}
\def\CXC{{\rm C}(X,C)}
\def\CXDC{{\rm C}(X,D-C)}
\def\ZX{Z_1(X)^+}
\def\ZXCD{\mathrm{Div}(X)^+}
\def\ZXCC{\mathrm{Div}(X,C)^+}
\def\ZXC{Z_1(X,C)^+}
\def\ZXCCt{\mathrm{Div}(\tX,\tC)^+}
\def\ZXCCtd{\mathrm{Div}(\tX',\tC')^+}
\def\divXx#1{\div_{X,x}(#1)}
\def\Zinf{Z_{\infty}}
\def\Zinf{Z_{\infty}}
\def\piabXD{\pi_1^{\rm ab}(X,D)}
\def\piabXC{\pi_1^{\rm ab}(X,C)}
\def\piab#1{\pi^{\rm ab}_1(#1)}
\def\phiD{\phi_{X,D}}
\def\phiDC{\phi_{X,D-C}}
\def\piD{\pi_D}
\def\TL{T_L}
\def\Fth{F_{\theta}}
\def\qaq{\quad\text{and}\quad}
\def\qfor{\quad\text{for }}
\def\qwith{\quad\text{with }}
\def\dlog#1{\displaystyle{\frac{d{#1}}{#1}}}
\def\sym#1#2{\{#1\}_{#2}}
\def\chBX{\widehat{\cB}_X}
\def\WX{\Omega_X^1}
\def\wC{\omega_{C}}
\def\Lx{\Lambda_x}
\def\Ap{A_\fp}
\def\Kp{K_\fp}
\def\kp{k(\fp)}
\def\K2#1{K_2(#1)}
\def\Liotaqo{L(\bF_{q})^o}
\def\XSig{X_\Sigma}
\def\USig{U_\Sigma}
\def\CSig{C_\Sigma}
\def\thSig{\theta_\Sigma}
\def\tXSig{\widetilde{X}_\Sigma}
\def\tCSig{\widetilde{C}_\Sigma}
\def\tthSig{\widetilde{\theta}_\Sigma}
\begin{document}

\title[Wild class field theory for varieties over finite fields]
{Chow group of $0$-cycles with modulus and higher dimensional class field theory}

\author{Moritz Kerz}
\author{Shuji Saito}
\address{Moritz Kerz\\
NWF I-Mathematik\\
Universit\"at Regensburg\\
93040 Regensburg\\
Germany}
\email{moritz.kerz@mathematik.uni-regensburg.de}

\address{Shuji Saito\\
Interactive Research Center of Science, 
Graduate School of Science and Engineering,
Tokyo Institute of Technology\\
Ookayama, Meguro\\
Tokyo 152-8551\\
Japan
}
\email{sshuji@msb.biglobe.ne.jp}

\begin{abstract}
One of the main results of this paper is a proof of the rank one case of an existence conjecture on
lisse $\overline{\Q}_\ell$-sheaves on a smooth variety $U$ over a finite field due to Deligne and Drinfeld.
The problem is translated into the language of higher dimensional class field theory over finite fields,
which describes the abelian fundamental group of $U$ by Chow groups of zero cycles with moduli. 
A key ingredient is the construction of a cycle theoretic avatar of refined Artin conductor in ramification theory
originally studied by Kazuya Kato.
\end{abstract}

\maketitle

\tableofcontents

\section*{Introduction}\label{intro}

\bigskip

One of the main results of this paper is a proof of the rank one case of an existence conjecture on
lisse $\overline{\Q}_\ell$-sheaves on a smooth variety $U$ over a finite field suggested
by Deligne \cite{EK},  which was
motivated by work of Drinfeld \cite{Dr}, see also \cite{D2}.
The conjecture says that a compatible system of lisse $\overline{\Q}_\ell$-sheaves
on the integral closed curves on $U$, satisfying a certain boundedness condition for ramification at
infinity, should arise from a lisse sheaf on $U$. It thus reduces the understanding of lisse $\overline{\Q}_\ell$-sheaves on $U$ to that of lisse $\overline{\Q}_\ell$-sheaves on curves on $U$. 
A precise formulation is as follows.

\subsection*{Skeleton sheaves}

Let $U$ be a smooth variety over a finite field $k$.
Choose a compactification $U\subset X$ with $X$ normal and proper over $k$ such that $X\setminus U$ is the support of an effective Cartier divisor on $X$.
Consider a family $(V_Z)_Z$, where $Z$ runs through all closed integral curves on $U$ and where
$V_Z$ is a semi-simple lisse $\overline{\Q}_\ell$-sheaf on the normalization $\tilde Z$ of $Z$. We
say that that the family $(V_Z)_Z$ is compatible if for two different curves $Z_1$,
$Z_2$ the sheaves $V_{Z_1}$ and $V_{Z_2}$ become isomorphic up to semi-simplification after
pullback to the finite scheme $(\tilde Z_1 \times_X \tilde Z_2)_{\rm red}$. 
Such compatible families are also called {\em skeleton sheaves} and have been studied by
Deligne and Drinfeld, see \cite{EK}.

Let $Z^N$ be the normalization of the closure of $Z$ in $X$, which is by definition the smooth compactification of $\tilde Z$. Let $\psi_Z:Z^N \to X$ be the natural map and $Z_\infty$ be the set of points $y\in Z^N$ such that $\psi_Z(y)\not\in U$.
We say that a skeleton sheaf $(V_Z)_Z$ has bounded ramification if there exists an
effective Cartier divisor $D$ on $X$ with $|D| \subset X\setminus U$ and such that 
for all integral curves $Z$ on $U$, the following inequality of Cartier divisors on $Z^N$ holds:
\[
\underset{y\in \Zinf}{\sum}\; \art_y(V_Z) [y] \;\leq \;\psi_Z^* D.
\]
Here $\psi_Z^*D$ is the pullback of $D$ by $\psi_Z$ and $\art_y(V_Z)$ 
is the local Artin conductor of $V_Z$ at the point $y$, see \cite{Se}.

\begin{quest}[Deligne]
For any skeleton sheaf  $(V_Z)_Z$ with bounded ramification, does there exist a lisse
$\overline{\Q}_\ell$-sheaf $V$ on $U$ such that  for all curves $Z$ on $U$, the restrictions of $V$ to $\tilde{Z}$
become isomorphic to $V_Z$ after semi-simplification? 
\end{quest}

In this paper we prove the following:

\begin{theointro}
Question~I has a positive answer in rank one and for $\ch (k)\ne 2$.
\end{theointro}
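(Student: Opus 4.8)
The plan is to translate Question~I in rank one into a statement about abelian fundamental groups and then solve the resulting class field theory problem. A rank one lisse $\overline{\Q}_\ell$-sheaf on a smooth curve or variety is the same as a continuous character of the abelian fundamental group with values in $\overline{\Q}_\ell^\times$, so the datum of a skeleton sheaf $(V_Z)_Z$ of rank one is a compatible system of characters $\chi_Z\colon \piab{\tilde Z}\to\overline{\Q}_\ell^\times$, and the bounded ramification condition says that each $\chi_Z$ factors through $\piabXC$ for $C$ a fixed modulus supported on $X\setminus U$ (obtained from $D$, after possibly enlarging it to absorb the difference between the Artin conductor and the logarithmic/Cartier bookkeeping). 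Thus the problem becomes: given a family of characters of $\piabXC$ restricted along all curves $Z\hookrightarrow U$ that is compatible in the sense of agreeing on the intersections $(\tilde Z_1\times_X\tilde Z_2)_{\mathrm{red}}$, produce a single character of $\piabXC$ restricting to them. By Pontryagin duality this is the assertion that the natural map
\[
\piabXC \;\lra\; \varprojlim_{Z} \piab{\tilde Z}
\]
(the limit over all integral curves $Z$ on $U$, with appropriate moduli on their compactifications) is, up to the subtleties of semi-simplification which are invisible in rank one, injective with dense image, or more precisely that its dual is an isomorphism onto the subgroup cut out by the compatibility relations.

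The key input is the class field theory isomorphism developed in the body of the paper: the cycle-theoretic description of $\piabXC$ by the Chow group of zero-cycles with modulus $\CH_0(X,C)$ (or a Wiesend-style idele class group $\C(X,C)$), together with the fact that this group is generated by the images of the curves $Z$ on $U$ — each closed point of $U$ lies on such a curve, and relations come from rational functions on curves, i.e. exactly from the groups attached to the $Z^N$. In these terms the statement to prove is that
\[
\C(X,C)\;\isom\;\varprojlim_{Z}\, \C(Z^N, C|_{Z^N})\Big/(\text{compatibility})
\]
is essentially tautological: surjectivity of each $\piabXC\to\piab{\tilde Z}$ is reciprocity for curves, and the cycle presentation shows that a compatible system of characters evaluated on zero-cycles is well-defined on $\C(X,C)$ because every defining relation of $\C(X,C)$ is supported on some curve. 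The refined-Artin-conductor avatar constructed earlier is what guarantees that the modulus $C$ can be chosen uniformly — i.e. that bounded ramification of the skeleton sheaf, phrased via Artin conductors $\art_y(V_Z)\le\psi_Z^*D$, translates into the single filtration level $F^{C}$ on all the curve class groups simultaneously, so that the inverse limit is taken in a fixed truncated category rather than over an ever-growing family of moduli.

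The main obstacle is the passage from the \emph{tame} or na\"ive statement to the genuinely \emph{wild} one: matching the geometric bound $\sum_y \art_y(V_Z)[y]\le \psi_Z^*D$ coming from Artin conductors on curves with the conductor/modulus filtration on $\piabXC$ coming from the higher-dimensional theory. This is precisely where the hypothesis $\ch(k)\ne 2$ enters — the comparison between the refined Swan conductor (the $\rsw$/$\rswM$ constructions in the paper) and the cycle-theoretic filtration $F^{(n)}W(X,C)$ is only established in the body under a restriction on the residue characteristic, presumably because the relevant computation of graded pieces in terms of differential forms, via the Artin--Schreier--Witt or Brylinski--Kato symbol, behaves uniformly only away from $p=2$. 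So the proof structure is: (i) reduce to characters via rank one and Pontryagin duality; (ii) invoke the cycle-class-group presentation of $\piabXC$ and the reciprocity isomorphism for curves to see that a compatible system extends \emph{as a function on zero-cycles}; (iii) use the uniform modulus, guaranteed by the refined Artin conductor avatar and the characteristic hypothesis, to see that this function factors through $\piabXC$ and hence corresponds to the desired rank one lisse sheaf $V$ on $U$; (iv) check compatibility of $V$ with each $V_Z$, which is again reciprocity for the curve $Z$. The first, second and fourth steps are formal given the results quoted from the body; the weight of the argument — and the source of the characteristic restriction — is entirely in step (iii), i.e. in the ramification-theoretic comparison.
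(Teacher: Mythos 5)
Your proposal is correct and follows essentially the same route as the paper: the paper's entire argument for Theorem~II is the reduction, via classical class field theory for curves and the presentation of $\C(X,D)$ by zero-cycles with relations supported on curves, to the Existence Theorem (Theorem~III), which is exactly your steps (i)--(iv). You also correctly locate both the formal part (the translation of bounded ramification into the modulus condition and of compatibility into well-definedness on $Z_0(U)$) and the real content carrying the hypothesis $\ch(k)\ne 2$ (the surjectivity of the dual reciprocity map, proved via the cycle conductor).
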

\medbreak

\subsection*{Class field theory}

Using classical class field theory for curves over finite fields, 
Theorem~II is translated into the language of higher dimensional class field theory over finite fields,
and follows from Theorem~III explained below.
Instead of the class group involving higher $K$-theory which was used in earlier work, see
\cite{KS} for example, we use a relative Chow group of zero cycles with modulus. 

The principal idea is to describe the abelian fundamental group $\piab U$ of $U$ in terms
of the Chow groups $\CXD$  with modulus $D$, where $D$ is an effective Cartier divisor with support $|D|$ in $X\setminus U$. We define
\[
\CXD = \Coker\Big(\underset{Z\subset U}{\bigoplus} \; k(Z)_{D}^\times \rmapo{\div_Z} Z_0(U)\Big),
\]
where $Z_0(U)$ is the group of zero-cycles on $U$ and
$Z$ ranges over the integral closed curves on $U$. Here
$\div_Z:k(Z)^\times \to Z_0(U) $ is the divisor map on the function field $k(Z)$. The group 
$k(Z)_{D}^\times$ is the congruence subgroup of elements of $k(Z)^\times$ which are
congruent to $1$ modulo the ideal $I_D=\cO_X (-D)$ at all infinite places of $k(Z)$. 

We define
\[
k(Z)_{D}^\times =\underset{y\in \Zinf}{\bigcap}\;\Ker\big(\cO_{Z^N,y}^\times \to 
(\cO_{Z^N,y}/I_D \cO_{Z^N,y})^\times \big)\; \subset k(Z)^\times,
\]
where $\cO_{Z^N,y}$ is the local ring of $Z^N$ at $y$. 
Thus $\CXD$ is an extension of the Chow group of zero-cycles of $U$ 
which has been used repeatedly, see \cite{ESV} \cite{LevWei} \cite{Rus}.
It is also an extension of Suslin's singular homology $H_0^{sing}(U,\bZ)$, see  \cite{SV} and Remark \ref{rem.Suslinhom} below. In case $\dim(X)=1$ it is the class group with modulus $D$ used in classical class field theory.

We then introduce our class group of $U$ as 
\[
\C(U):= \varprojlim_D \CXD\;,
\]
where $D$ runs through all effective Cartier divisors on $X$ with $|D|\subset X\setminus U$ 
and endow it with the inverse limit topology where $\CXD$ is endowed with the discrete topology.
We show that the topological group $\C(U)$ is independent of the compactification $X$ of $U$, and 
construct a continuous map of topological groups, called the reciprocity map,
\begin{equation*}
\rho_U: \C(U) \to \piab U,
\end{equation*}
such that its composite with the natural map $Z_0(U) \to \C(U)$ is induced by the Frobenius maps
$Frob_x:\bZ \to \piab U$ for closed points $x$ of $U$.
The reciprocity map induces a continuous map
\begin{equation}\label{recmap.intro}
\rho_U^0: \C(U)^0 \to \piab U^0.
\end{equation}
Here $\piab U^0=\Ker\big(\piab U \to \piab {\Spec(k)}\big)$ and 
$\C(U)^0=\Ker(\C(U)\rmapo {\deg} \bZ)$, where $\deg$ is induced by the degree map $Z_0(U)\to \bZ$.
Now our main result, see also Theorem \ref{CFT.mainthm}, is the following.

\begin{theointro}[Existence Theorem]
Assuming $\ch(k)\ne 2$,
$\rho_U^0$ is an isomorphism of topological groups.
\end{theointro}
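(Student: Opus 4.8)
The plan is to prove the Existence Theorem by combining an injectivity statement and a surjectivity statement, each reduced to known results in class field theory via the cycle-theoretic conductor.

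First I would establish surjectivity of $\rho_U^0$. The image of $\C(U)^0$ under $\rho_U^0$ contains the Frobenius elements $\Frob_x$ for all closed points $x\in U$ (up to the degree-zero adjustment), hence by Chebotarev density its image is dense in $\piab{U}^0$. Since $\C(U)^0$ is a profinite group (an inverse limit over $D$ of the groups $\CXD^0$, each of which one must show is finite — this finiteness for the Chow group with modulus over a finite field should itself follow from a Bloch–Kato–Saito style argument, or be among the results quoted earlier) and $\rho_U^0$ is continuous, the image is compact, hence closed, hence all of $\piab{U}^0$. So surjectivity is essentially formal once one knows $\C(U)^0$ is profinite.

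The substantial part is injectivity, and this is where I expect the main obstacle to lie. The strategy is: a character $\chi\in\Hom(\piab{U},\Q/\Z)$ corresponds to a finite abelian covering, and its Artin conductor along the boundary $X\setminus U$ is bounded by some effective Cartier divisor $D$; the key input from the earlier part of the paper — the cycle-theoretic avatar of Kato's refined Artin conductor — is precisely a compatibility asserting that the pairing of $\chi$ with a zero-cycle, computed via $\rho_U$, factors through $\CXD$ for that same $D$. Thus $\Hom_{\cont}(\piab{U},\Q/\Z) = \varinjlim_D \Hom(\CXD,\Q/\Z)$, i.e.\ $\rho_U$ induces an isomorphism on (continuous) Pontryagin duals; dualizing back and restricting to degree-zero parts gives that $\rho_U^0$ is an isomorphism of profinite groups. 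To make this run one must: (i) check the conductor bound is respected on both sides, which is the content of the refined-conductor comparison and the Lefschetz-type moving argument reducing statements about $\piab{U}$ to curves $Z\subset U$ (here classical class field theory for the curve $Z$, and compatibility of the reciprocity maps for $Z$ and for $U$, is used); and (ii) handle the $p$-part, where the restriction $\ch(k)\ne 2$ enters — the wild ramification filtration on $\FHlog{U}{n}$ must match the modulus filtration on $\CXD$, and the identification of graded pieces via differential forms / the refined Swan conductor is where characteristic $2$ is presently excluded.

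The logical skeleton is therefore: (1) finiteness of $\CXD^0$, giving $\C(U)^0$ profinite; (2) density of the image of Frobenii plus compactness $\Rightarrow$ surjectivity; (3) the refined-conductor comparison $\Rightarrow$ every $\chi\in\Hom_{\cont}(\piab{U},\Q/\Z)$ pulls back from some $\CXD$, with matching conductor; (4) using class field theory for curves and a Bertini/Lefschetz moving lemma to see the induced map $\Hom_{\cont}(\piab{U},\Q/\Z)\to\varinjlim_D\Hom(\CXD,\Q/\Z)$ is bijective; (5) Pontryagin duality and passage to degree-zero parts to conclude. The hardest step is (4) together with the $p$-adic part of (3): controlling wild ramification cycle-theoretically — matching the modulus filtration with the Artin-conductor filtration — is exactly what the new conductor construction is for, and the characteristic-$2$ hypothesis is the residue of the analytic difficulties there.
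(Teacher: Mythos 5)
Your overall philosophy --- dualize, and compare the conductor filtration on $H^1(U)$ with the modulus filtration on the class group via the cycle conductor --- is indeed the paper's strategy, but two of your steps have genuine gaps. First, your surjectivity argument is circular: you need $\C(U)^0$ to be profinite, hence each $\CXD^0$ to be finite, but that finiteness is not available in advance --- it is equivalent to the rank-one case of Deligne's finiteness theorem and is itself a \emph{consequence} of the Existence Theorem. What is available beforehand (Corollary \ref{tameCFT.cor}) is only that $\C(X,D)^0$ is torsion of finite exponent. The proof instead runs entirely through the finiteness of $\piabXD^0$, which is supplied on the \emph{Galois} side by the Lefschetz theorem of \cite{KeSLef}; surjectivity of $\rho_{X,D}$ then follows from Chebotarev, and the whole theorem reduces (Lemma \ref{cft.proper}) to the dual surjectivity statement ${\bf I}_U$: every continuous character of $\C(U)$ comes from $H^1(U)$.

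Second, that dual surjectivity is exactly your step (4), and "class field theory for curves plus a Bertini/Lefschetz moving lemma" does not produce it; this is where all the work lies. The actual mechanism is: (a) reduce to $\dim X=2$ with $C$ a normal crossing divisor via de Jong's alterations, Wiesend's trick (Lemma \ref{cft.trick}), and the Lefschetz theorem for $\piabXD$; (b) fibre $X$ over a curve $S$ with a section and pass to the geometric generic fibre $U_{\bar\eta}$ --- this makes the residue fields of the $K_\lambda^-$ perfect, so that Kato--Matsuda's refined Artin conductor $\rsw_\lambda$ becomes \emph{surjective} onto $\Omega^1_{X_{\bar\eta}}(mC)\otimes k(C_\lambda)$, and it kills the $p$-torsion obstruction in $H^2(X_{\bar\eta},\qz)$ so that $\gr m H^1(U_{\bar\eta})\to\bigoplus_\lambda\gr m H^1(K_\lambda^-)$ is an isomorphism; (c) induct on the multiplicity $m$ of $D$, with base case the tame theory of Wiesend, the induction step being a diagram chase that crucially uses the exactness $\Ker(\rswMXD)=\C(X,D-C)^\vee$ from Theorem \ref{keythm}(ii) together with the compatibility of Theorem \ref{keythm2}. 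None of (b) or (c) is visible in your sketch, and without the surjectivity of $\rsw_\lambda$ over the perfect residue field and the exactness of the cycle-conductor sequence the induction cannot close. (Minor point: $\ch(k)\ne 2$ enters in the well-definedness of the local components $\mu_{\pi,f}$ of the cycle conductor, where one divides by $2$, rather than in the identification of graded pieces per se.)
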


In case $\dim(X)=1$ the theorem is one of the main results in classical class field theory.
In higher dimension a special case of Theorem~III, describing only the tame quotient of $\piab
U$, is shown in \cite{SS} (see also \cite{Wi} and \cite{KeSc}).

In \cite{KS} an analog of  Theorem~III  is shown with
$\C(U)$ replaced by a  different class group $\C^{KS}(U)$ explained below, which can be described in terms
of higher local fields associated to chains of subvarieties on a compactification $X$ of $U$. Recall $\C(U)$ is
defined only in terms of points and curves on $U$.
There is a factorization of the reciprocity map
\begin{equation}\label{factorization}
\C(U) \to \C^{KS}(U) \to  \piab U
\end{equation}
and the main result of \cite{KS} over a finite field, see also \cite[Thm.\ 6.2]{Ras}, is a direct consequence of our Theorem~III if $\ch(k)\not=2$.

\medbreak

Using ramification theory in local class field theory,  Theorem~III implies.

\begin{corointro} Assume $\ch(k)\ne 2$.
For an effective divisor $D$ on $X$ with $|D|\subset X\setminus U$, $\rho_U$ induces an isomorphism of finite groups
\[
\CXD^0 \xrightarrow{\sim} \piabXD^0,
\]
where $\piabXD$ is the quotient of $\piab X$ which classifies abelian
\'etale coverings of $U$ with ramification over $X\setminus U$ bounded by the divisor $D$.
\end{corointro}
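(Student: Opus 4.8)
The plan is to deduce Corollary~IV from Theorem~III by passing to the limit and identifying the two sides level by level.

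First I would show that the target group $\piabXD^0$ is, by definition, a finite quotient of $\piab{U}^0$, and that $\piab{U}^0 = \varprojlim_D \piabXD^0$ where $D$ ranges over effective Cartier divisors supported on $X\setminus U$; this is a standard fact from ramification theory, saying that every abelian \'etale covering of $U$ has bounded ramification along the boundary, so the filtered family of divisors is cofinal in the family of open normal subgroups of $\piab{U}^0$. Dually, by the very definition $\C(U)^0 = \varprojlim_D \CXD^0$, with each $\CXD^0$ discrete. Now Theorem~III gives an isomorphism of topological groups $\rho_U^0 : \C(U)^0 \isom \piab{U}^0$; moreover $\rho_U^0$ is compatible with the two projective systems, because the reciprocity map is functorial and by construction sends the class of a closed point $x$ to $\Frob_x$, which lands in the correct quotient. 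So $\rho_U^0$ is an isomorphism of the two inverse limits $\varprojlim_D \CXD^0 \isom \varprojlim_D \piabXD^0$.

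The remaining point is to promote this isomorphism of limits to an isomorphism at each finite level, i.e. to check that $\rho_U^0$ carries the kernel of $\C(U)^0 \to \CXD^0$ isomorphically onto the kernel of $\piab{U}^0 \to \piabXD^0$. For the target side this is immediate from the description of $\piabXD^0$ as the quotient classifying coverings with ramification bounded by $D$. For the source side one uses the boundedness condition built into $\CXD$: an element of $k(Z)^\times$ lies in $k(Z)_D^\times$ precisely when its ramification at the infinite places of $Z$ is bounded by $\psi_Z^* D$, so the transition maps $\C(U)^0 \to \CXD^0$ match the transition maps on the $\pi_1$-side under $\rho_U^0$. Concretely, one shows $\ker(\C(U)^0 \to \CXD^0)$ is a closed subgroup whose image under the isomorphism $\rho_U^0$ is exactly the closed subgroup defining $\piabXD^0$; since a continuous isomorphism of profinite groups restricts to an isomorphism of corresponding closed subgroups and induces an isomorphism on the quotients, we conclude $\CXD^0 \isom \piabXD^0$. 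Finiteness of both sides then follows since $\piabXD^0$ is a finite group (the covering data is of finite type, $k$ being finite).

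The main obstacle I anticipate is the precise comparison of the two filtrations — verifying that the congruence condition defining $k(Z)_D^\times$ translates, under the reciprocity map for curves, into exactly the Artin-conductor bound that cuts out $\piabXD^0$. This is where Kato's refined Artin conductor and its cycle-theoretic avatar (the key construction advertised in the abstract) enter: one must know that the local reciprocity maps at the boundary points are compatible with the two filtrations in the strongest sense, namely $\mathrm{fil}_D$ on the idele side corresponds to ramification bounded by $D$ on the Galois side. I would invoke the local ramification-theoretic input (local class field theory together with the compatibility of Artin conductors, as developed earlier in the paper) to match the two filtrations, after which the corollary is a formal consequence of Theorem~III.
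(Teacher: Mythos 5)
Your proposal is correct and follows essentially the same route as the paper: Corollary~IV is deduced from Theorem~III by matching the congruence filtration defining $\CXD$ with the conductor filtration $\fil{D}H^1(U)$ of Definition~\ref{gloram.def}, using classical local class field theory for the completed local fields of the curves $Z^N$ (which have finite, hence perfect, residue fields), exactly as in the paper's proof of Corollary~\ref{CFT.coro} citing Serre. One minor correction: Kato's refined Artin conductor and its cycle-theoretic avatar are not needed for this deduction --- they are inputs to the proof of Theorem~III itself --- since the ramification bound in Definition~\ref{gloram.def} is phrased entirely in terms of restrictions to curves, where only the classical correspondence between Artin conductors and congruence subgroups enters.
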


The finiteness of $\CXD^0$ is equivalent to the rank one case of Deligne's finiteness theorem 
(see \cite[Thm.\ 8.1]{EK}). Our arguments yield an alternative proof of this finiteness result. 
\def\art{{\mathrm{ar}}} 

\subsection*{Ramification theory}

The Pontryagin dual $\fil D H^1(U)$ of $\piabXD$ is the group of continuous characters 
$\chi:\piab U\to \qz$ such that for any integral curve $Z\subset U$, 
its restriction $\chi|_{Z}: \piab {\tilde{Z}} \to \qz$ to the normalization $\tilde{Z}$ of $Z$ satisfies the following inequality of Cartier divisors on $Z^N$, the smooth compactification of $\tilde{Z}$:
\[
\underset{y\in \Zinf}{\sum}\; \art_y(\chi|_{Z}) [y] \;\leq \;\psi_Z^* D,
\]
where $\art_y(\chi|_{Z})\in \bZ_{\ge 0}$ is the Artin conductor of $\chi|_{Z}$ at $y\in \Zinf$ and $\psi_Z^* D$
is the pullback of $D$ by the natural map $\psi_Z :Z^N \to X$ (see Definition~\ref{gloram.def}). 
\smallbreak

Our proof of Theorem~III depends in an essential way on ramification theory due to 
Kato \cite{Ka1} and its variant by Matsuda \cite{Ma}.
Let $\Klam$ be the henselization of $K=k(U)$ at the generic point $\lam$ of an irreducible
component $\Clam$ of $X\setminus U$ and let $H^1(\Klam)$ be the group of continuous characters $G_{\Klam} \to \qz$, where $G_{\Klam}$
is the absolute Galois group of $\Klam$. 
They introduced a ramification filtration $\fil m H^1(\Klam)$ ($m\in \bZ_{\geq 0}$) on $H^1(\Klam)$ 
which generalizes the ramification filtration for local fields with perfect residue fields (see \cite{Se}), 
and defined a natural injective map 
\begin{equation}\label{refinedArtin_intro}
\rsw_{\Klam}: \fil m H^1(\Klam)/\fil {m-1} H^1(\Klam)\; \hookrightarrow \Omega^1_X(m
\Clam) \otimes_{\cO_X} k(\Clam) \quad (m>1)
\end{equation}
which we call refined Artin conductor (indeed what Kato originally defined is refined Swan conductor and we use a variant for Artin conductor introduced by Matsuda), where $k(\Clam)$ is the function field of $\Clam$.
In case $C=X\setminus U$ is a simple normal crossing divisor on smooth $X$, results from the ramification theory imply 
\[
\fil D H^1(U) = \Ker\big(H^1(U) \to \underset{\lam\in I}{\bigoplus}\; H^1(\Klam)/\fil {\mlam} H^1(\Klam)\big).
\]
Here $H^1(U)$ denotes the group of continuous characters $\chi:\piab U\to \qz$, 
$I$ is the set of the generic points $\lam$ of $C$ and $\mlam$ is the multiplicity of $\Clam$ in $D$.
\medbreak

Now the basic strategy of the proof of Theorem~III is as follows (see \S \ref{CFT} for the details).
By an argument due to Wiesend we are allowed to replace $X$ by an alteration $f:X'\to X$
and $U$ by a smooth open $U'\subset f^{-1}(U)$. Then a Lefschetz theorem for $\piabXD$
(cf. \cite{KeSLef}) reduces the proof to the case where $X$ is a smooth projective
surface and $C=X\setminus U$ is a simple normal crossing divisor. 
The proof then proceeds by induction on the multiplicity of $D$ reducing to the tame case $D=C$. 
A key point is the construction of a natural map, which we call the {\em cycle conductor}, defined for Cartier divisors $D$ such that $D \geq 2C$:

\begin{equation}\label{eq.cycleconductor}
\rswM_{X,D}: \CXD^\vee:=\Hom(\CXD,\qz)\; \to\; H^0(C,\Omega^1_X(D+ \Xi)\otimes_{\cO_X}\cO_C),
\end{equation}
where $\Xi\subset X$ is an auxiliary effective Cartier divisor independent of $D$ such that it contains
none of the irreducible components of $C$. It satisfies  
\begin{equation}\label{Kercc}
\Ker(\rswM_{X,D}) = \C(X,D-C)^\vee\subset \CXD^\vee,
\end{equation}
and the following diagram
\begin{equation*}
\xymatrix{
\FH U D \ar[r]\ar[d]^{\Psi_{X,D}}& \FH {\Klam} {\mlam} \ar[r]^{\hskip -30pt \rsw_{\Klam}} 
&\Omega^1_X(D)\otimes_{\cO_X} k(\Clam) \\
\CXD^\vee \ar[rr]^{\rswM_{X,D}} && H^0(C,\Omega^1_X(D+\Xi)\otimes_{\cO_X}\cO_C)\ar[u]\\
}
\end{equation*}
commutes. Here $\Psi_{X,D}$ is the dual of the reciprocity map $\CXD \to \piabXD$ induced by $\rho_U$.
Therefore we consider the cycle conductor $\rswM_{X,D}$ as a cycle theoretic avatar of the refined Artin conductor of 
Kato and Matsuda.
\bigskip

By duality, the definition of cycle conductors is reduced to the construction of a natural map
\begin{equation}\label{cycleconductor_intro}
\phi_{X,D}: H^1(C,\Omega^1_X(-D+C-\Xi)\otimes_{\cO_X}\cO_C) \to \CXD
\end{equation}
such that the following sequence is exact
\[
H^1(C,\Omega^1_X(-D+C-\Xi)\otimes_{\cO_X}\cO_C) \rmapo{\phi_{X,D}} \CXD \to \C(X,D-C) \to 0.
\]

\subsection*{General base fields}

Let now $k$ be an arbitrary perfect field of characteristic $p>0$ and let  $U$ be a smooth variety of dimension $d$ over $k$ as above. We pose the following question. 

\begin{quest}
Is the natural map 
\[
\tau_U: \C(U) \to \C^{KS}(U)
\]
to Kato--Saito class group over a general perfect field $k$ an isomorphism?
\end{quest}

Recall that the Kato--Saito class group is defined in terms of Nisnevich cohomology groups
\begin{equation}\label{CKSU}
\C^{KS}(U) = \varprojlim_{D}  H^d(X_{\rm Nis} , \mathcal K^M_d(X,D) ).
\end{equation}
Here $ \mathcal K^M_d(X,D)$ is the relative Milnor $K$-sheaf of \cite{KS2} and $D$ runs
through all effective Cartier divisors on $X$ with $|D|\subset X\setminus U$.
\medbreak

In case $k$ is finite, the main result of \cite{KS2} with \eqref{factorization} implies that the reciprocity map $\rho_U^0$ 
(cf. \eqref{recmap.intro}) is an isomorphism if and only if $\tau_U$ is an isomorphism.
One should think of Question~V and our main theorem as a particular case of a
  Nisnevich descent and a motivic duality statement in a conjectural world of mixed motives with modulus which still has to be developed (see \cite{KSY}). In view of this lack of
a conceptional framework to approach the problem, our construction of the cycle conductor
\eqref{eq.cycleconductor} has to be technical and ad hoc. 
\medbreak

We hope that our technique may be used to approach Question~I in the higher rank case by constructing 
a non-abelian version of the cycle conductor \eqref{eq.cycleconductor}, where $\CXD^\vee$ is replaced by the set of skeleton sheaves on $X$ of a higher rank with ramification bounded by $D$ and refined Artin conductor is 
replaced by its non-abelian version constructed in \cite{TSa}.

\medskip

\begin{center}
\rule{3cm}{0.5mm}
\end{center}

\medskip

We give an overview of the content of the paper.

In \S1 we introduce a class group $\WU$ studied by Wiesend \cite{Wi}.
We define some filtrations on $\WU$ and explain their basic properties. 
Our class group $\CXD$ can be defined as a quotient of $\WU$ by a certain filtration.
We also introduce a tool to produce relations in $\WU$.

In \S2 we review some results on ramification theory.
The first subsection treats local ramification theory for henselian discrete valuation fields whose residue fields are not
necessarily perfect, originally due to Kato \cite{Ka1}, \cite{Ka} and \cite{Ka3}.
The refined Artin conductor, see \eqref{refinedArtin_intro}, is introduced, which plays a key role in this paper.
In the second subsection, some implications on global ramification theory are given.

In \S3 the reciprocity map $\rho_U$ is defined and the Existence Theorem is stated.
The basic strategy of the proof of the Existence Theorem is explained.
We explain an argument due to Wiesend, which allows us to replace $X$ by an alteration.
We reduce the proof to the case $\dim(X)=2$ by using the Lefschetz theorem for abelian fundamental groups allowing ramification along some divisor, which is proved in \cite{KeSLef}.

In \S4 we introduce the map $\phi_{X,D}$, see \eqref{cycleconductor_intro}.
It is the dual of the cycle conductor which is a key ingredient of the proof of the Existence Theorem.
Two key theorems are stated (Theorem \ref{keythm} and Theorem \ref{keythm2}). 
The first theorem gives a characterization of $\phi_{X,D}$ by its local components 
which are defined for pairs $(x,Z)$ where $x$ is a regular closed point of $C$ and $Z\subset X$ is 
an integral curve which intersects transversally with $C$ at $x$. 
We also state a lemma (see Lemma \ref{movinglemfinite}) which implies the key property \eqref{Kercc}.
The second theorem states a compatibility of $\phi_{X,D}$ with the refined Artin conductor.
The proof of the Existence Theorem is completed in \S5 using these key theorems.

In \S\ref{recclosedpoint} we introduce a local component of $\phi_{X,D}$ which only depends on a 
regular closed point of $C$, but not on a curve as above. 
It is used in the proof of the first key Theorem \ref{keythm} given in \S\ref{keythmproof}
as well as that of the second key Theorem \ref{keythm2} given in \S\ref{keythmproofII}.

The proof of Theorem \ref{keythm} depends on three technical key lemmas 
(Lemma~\ref{movinglemfinite}, Lemma~\ref{lem.3term} and Lemma~\ref{keylemIIfinite}),
which are restated in \S\ref{keylemmas} over a general perfect field.
The proof of these lemma occupies the later sections \S10 through \S14. 
The tool to produce relations in $\WU$ introduced in \S1 will play a basic role in the proof.

\medskip

There is work related to our main results by H.\ Russell involving a geometric method based on his joint work
with K.\ Kato on Albanese varieties with modulus. 

\bigskip  

{\em Acknowledgments.}
We would like to thank A.\ Abbes and T.\ Saito for much advice and for improving our understanding of
ramification theory.
We are very grateful to the referee for numerous constructive comments which brought about
substantial improvement of the paper. 
We profited from discussions with H.\ Russell on different versions of relative Chow groups with modulus.
The first author learned about the ideas of Deligne and Drinfeld in his joint work with
H.\ Esnault. He would like to thank her cordially for this prolific collaboration.
The proof of the main theorem of this paper hinges on seminal work of Kato on ramification theory and 
class field theory for higher local fields. We would like to express our admiration of the depth of ideas in his work.


\maketitle

\bigskip

\section{Wiesend class group and filtrations}\label{classgroup}
\bigskip

In the whole paper we fix a perfect field $k$ with $\ch(k)=p>0$. At many places we have to assume $p\not=2$.
Let $X$  be a proper normal scheme over $k$ and $C$ be the support of an effective Cartier
divisor on $X$ and put $U=X\setminus C$.
Note that $C$ is a reduced closed subscheme of pure codimension one.
\begin{defi}\label{curve.def}
Let $Z_1(X)^+$ be the monoid of effective $1$-cycles on $X$ and 
\[
\ZXC\subset \ZX
\]
be the submonoid of the cycles $Z$ such that none of the prime components of $Z$ is contained in $C$.
Take 
\[Z=\underset{1\leq i\leq r}{\sum}\; n_i Z_i \;\in \ZX,
\]
where $Z_1,\dots,Z_r$ are the prime components of $Z$ and $n_i\in \bZ_{\geq 0}$. 
We write
\[
k(Z)^\times =k(Z_1)^\times\oplus\cdots\oplus k(Z_r)^\times,\quad
\]
\[
|Z|= \underset{1\leq i\leq r}{\cup}\; Z_i\;\subset X,\quad
I_Z=\underset{1\leq i\leq r}{\prod}\; (I_{Z_i})^{n_i}\;\subset \cO_X,
\]
where $I_{Z_i}\subset \cO_X$ is the ideal of $Z_i$.
We say $Z$ is reduced if $n_i=1$ for all $1\leq i\leq r$ and integral if it is reduced and $r=1$.
We say $Z$ intersects $C$ transversally at $x\in X$ (denoted by $Z\Cap C$ at $x$) 
if $|Z|$ and $C$ are regular at $x$ and the intersection multiplicity 
\[
(Z,C)_x:={\rm length}_{\cO_{X,x}}(\cO_{X,x}/I_Z+I_C)=1.
\]
We say $Z$ intersects $C$ transversally (denoted by $Z\Cap C$) 
if $Z\Cap C$ at all $x\in Z\cap C$.
\end{defi}

\begin{defi}\label{classgroup.def}
For $Z\in \ZXC$, let $\psi_Z:Z^N\to |Z|$ be the normalization and put 
\[
\Zinf=\{y\in Z^N\;|\; \psi_Z(x)\in |Z|\cap C\}.
\]
For $y\in \Zinf$, let $k(Z)_y$ be the henselization of $k(Z)$ at $y$ and put
\[
k(Z)_\infty =\underset{y\in \Zinf}{\prod}\; k(Z)_y
\qaq
k(Z)_\infty^\times =\underset{y\in \Zinf}{\prod}\; k(Z)_y^\times.
\]
The Wiesend class group of $U$ is defined as
\begin{equation}\label{WCG}
W(U)=\Coker\Big(\underset{Z\subset X}{\bigoplus} \; k(Z)^\times \xrightarrow{\delta} 
\underset{Z\subset X}{\bigoplus}\; k(Z)_\infty^\times\;\oplus\; Z_0(U)\Big),
\end{equation}
where $Z$ ranges over the integral elements of $\ZXC$ \cite{Wi}, \cite{KeSc}. Here we map $k(Z)^\times$
diagonally in  $k(Z)_\infty^\times$ 
 and 
$k(Z)^\times \to Z_0(U)$ is the composite map
\[
\k(Z)^\times \rmapo{\div_{Z\cap U}} Z_0(Z\cap U)\hookrightarrow Z_0(U).
\]
Obviously $W(U)$ depends only on $U$, i.e.\ is independent of $(X,C)$ such that
$U=X\setminus C$.
\end{defi}

For a morphism $f:U' \to U$ of smooth varieties there is a canonical induced morphism
\begin{equation}\label{eq.Wnorm}
f_*: W(U') \to W(U),
\end{equation}
see \cite{Wi} and \cite[Sec.\ 7]{KeSc}. If $f$ is finite we also speak of the norm map and write $N_f $ for $f_*$.

\begin{defi}\label{symbol.def}
Let $Z\in \ZXC$.
\begin{itemize}
\item[(1)]
Assume $Z$ integral. For $x\in Z\cap C$, we have the natural map
\[
\sym {\hbox{ }} {Z,x}: 
\underset{y\in\psi_Z^{-1}(x)}{\bigoplus}\; k(Z)_y^\times \to \WU
\]
where $\psi_Z:Z^N\to Z$ is the normalization.
Taking the sum of these maps for $x\in Z\cap C$, we get
\[
\sym {\hbox{ }} {Z}: 
k(Z)_\infty^\times \to \WU.
\]
\item[(2)]
In general we write $Z=\underset{i\in I}{\sum}e_i Z_i$ where $\{Z_i\}_{i\in I}$ are the 
prime components of $Z$ and $e_i\in \bZ_{\geq 0}$, and define
\[
\sym {\hbox{ }} {Z,x}=\underset{i\in I}{\sum}\; e_i\sym {\hbox{ }} {Z_i,x}\;;\;
\underset{y\in\psi_Z^{-1}(x)}{\bigoplus}\; k(Z)_y^\times \to \WU,
\]
\[
\sym {\hbox{ }} {Z}=\underset{i\in I}{\sum} \; e_i\sym {\hbox{ }} {Z_i}\;;\;
k(Z)_\infty^\times \to \WU.
\]
where $\psi_Z:Z^N\to |Z|$ is the normalization.
\end{itemize}
\end{defi}

Let the notation be as in Definition \ref{symbol.def} and $Z\in \ZXC$.
Write $\cO_Z=\cO_X/I_Z$ and 
$\cO_{Z,x}^h$ for the henselization of $\cO_{Z,x}$ for $x\in |Z|$. We also write 
\[
\cO_{Z,C\cap Z}^h =\underset{x\in Z\cap C}{\prod}\; \cO^h_{Z,x},
\quad
\cO_{Z^N,C\cap Z}^h=\cO_{Z,C\cap Z}^h\otimes_{\cO_{Z}} {\cO_{Z^N}}=
\underset{y\in\psi_Z^{-1}(Z\cap C)}{\prod} \cO_{Z^N,y}^h .
\]
We have the natural maps
\begin{equation*}
\cO_{Z,C\cap Z}^h  \to  \cO_{Z^N,C\cap Z}^h  \hookrightarrow k(Z)_\infty\\
\end{equation*}

\bigskip

\begin{defi}\label{filtration.def0}
Let $D$ be an effective Cartier divisor such that $|D|= C$
and let $I_D=\cO_X(-D)$ be the ideal sheaf of $D$.
\begin{itemize}
\item[(1)]
We define $\FXCC D \subset \WU$ as the subgroup generated by 
\[
\sym {1+I_D\cO_{Z,C\cap Z}^h}{Z}
\]
for all $Z\in \ZXC$.
\item[(2)]
We define $\hFXCC D \subset \WU$ as the subgroup generated by 
\[
\sym {1+I_D \cO_{Z^N,C\cap Z}^h}{Z}
\]
for all $Z\in \ZXC$.
Note $\FXCC D \subset \hFXCC D$.
\item[(3)]
We define $\hWU\subset \WU$ as the subgroup generated by 
\[
\sym {1+\fm\cO_{Z^N,C\cap Z}^h}{Z}
\]
for all $Z\in \ZXC$, where $\fm$ is the Jacobson radical of $\cO_{Z^N,C\cap Z}^h$.
Note that $\hWU$ depends only on $U$ and $\hFXCC D\subset \hWU$ if $|D|=C$ and $\hWU/\FXCC D$ is $p$-primary torsion.
\item[(4)]
For a dense open subset $V\subset X$ containing the generic points of $C$, we define 
\[
\FXCCapV D= \underset{G}{\sum} \;\sym{1+\cO^h_{G,G\cap C}(-D)}{G}\subset \FXCC {D},
\]
where $G$ ranges over $\ZXC$ such that $G\Cap C$ and $G\cap C\subset V$.
In case $V=X$ we simply denote $\FXCCapV D=\FXCCap D$.
\end{itemize}
\end{defi}

\begin{rem}\label{rem.Suslinhom}
It is shown in \cite[Thm 3.1]{Sc} that there is a natural isomorphism
\[
W(U)/\hWU \simeq H_0^{sing}(U,\mathbb Z),
\]
where the right hand side is Suslin's singular homology.
\end{rem}

\begin{defi}\label{def.CXD}
Under the notation of Definition \ref{filtration.def0}, we put
\[
\CXD =\WU/\hFXCC {D}.
\]
By the weak approximation theorem, we have an isomorphism
\[
\CXD \simeq \Coker\Big(\underset{Z\subset X}{\bigoplus} \; k(Z)_{D}^\times \to Z_0(U)\Big),
\]
where $Z$ ranges over the integral elements of $\ZXC$, and 
\[
\begin{aligned}
k(Z)^\times \supset k(Z)_{D}^\times 
&=\Ker\big(k(Z)^\times \to \underset{y\in \Zinf}{\prod}\; k(Z)_y^\times/1+I_{D}\cO_{Z^N,y}\big)\\
&=\underset{y\in \Zinf}{\bigcap}\;\Ker\big(\cO_{Z^N,y}^\times \to 
(\cO_{Z^N,y}/I_D \cO_{Z^N,y})^\times \big)\;.
\end{aligned}
\]
Thus $\CXD$ is an extension of the Chow group of zero-cycles of $U$.
\end{defi}

\begin{lem}\label{hFWnorm}
Let $f:X'\to X$ be a morphism with $f(X') \cap U \ne \varnothing$ and let $D$ be an effective Cartier divisor on $X$ with
$|D|\subset C$. Set $U' = f^{-1}(U)$ and
 $D'=f^*D$. Then the pushforward \eqref{eq.Wnorm} satisfies
 $f_*(\hFXCCd {D'})\subset \hFXCC D$.
\end{lem}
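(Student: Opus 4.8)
The plan is to reduce to generators and then to a standard estimate for norm maps on unit filtrations of henselian discrete valuation fields. Since $f_*$ is a group homomorphism and, by Definitions~\ref{symbol.def} and \ref{filtration.def0}(2), the group $\hFXCCd{D'}$ is generated by the local symbols $\sym{a'}{Z',x'}$ where $Z'$ runs over the integral elements of $Z_1(X',C')^+$, $x'\in Z'\cap C'$, and $a'=(a'_{y'})_{y'}\in\prod_{y'\in\psi_{Z'}^{-1}(x')}\big(1+I_{D'}\cO_{{Z'}^N,y'}^h\big)$, it suffices to prove $f_*\big(\sym{a'}{Z',x'}\big)\in\hFXCC D$ for each such triple $(Z',x',a')$.

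First I would treat the degenerate cases. If $Z'\cap C'=\varnothing$ there is nothing to prove, so assume $x'$ exists. Then $f$ does not contract $Z'$: otherwise $f(Z')=\{x\}$ for a closed point $x$ with $x\notin C$ (as $Z'\not\subset C'=f^{-1}(C)$), forcing $Z'\subset f^{-1}(U)=U'$ and hence $Z'\cap C'=\varnothing$. Thus $Z:=\overline{f(Z')}$ with its reduced structure is an integral curve on $X$ with $Z\not\subset C$, so $Z\in\ZXC$; the finite field extension $k(Z)\hookrightarrow k(Z')$ induces a finite morphism $g:{Z'}^N\to Z^N$ of smooth proper curves satisfying $\psi_Z\circ g=f\circ\psi_{Z'}$, and consequently $g$ maps $\psi_{Z'}^{-1}(x')$ into $\psi_Z^{-1}(x)$, where $x:=f(x')\in Z\cap C$. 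By the construction of the pushforward on Wiesend class groups and its compatibility with the norm maps of the henselian local fields at the points at infinity (\cite{Wi}, \cite[\S7]{KeSc}),
\[
f_*\big(\sym{a'}{Z',x'}\big)=\sym{b}{Z,x},\qquad
b_y=\prod_{\substack{y'\in\psi_{Z'}^{-1}(x')\\ g(y')=y}} N_{k(Z')_{y'}/k(Z)_y}\big(a'_{y'}\big)\quad\big(y\in\psi_Z^{-1}(x)\big).
\]

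It then remains to check $b_y\in 1+I_D\cO_{Z^N,y}^h$ for every $y\in\psi_Z^{-1}(x)$; granting this, $\sym{b}{Z,x}$ (extended by $1$ at the remaining points at infinity) is one of the generators of $\hFXCC D$, which finishes the proof after summing over $x'$ and over the generators of $\hFXCCd{D'}$. Fix $y'\in\psi_{Z'}^{-1}(x')$, put $y=g(y')$, and let $e$ be the ramification index of $k(Z')_{y'}/k(Z)_y$. Since $D'=f^*D$ and $\psi_Z\circ g=f\circ\psi_{Z'}$, we have $\psi_{Z'}^*D'=g^*\psi_Z^*D$, so $I_{D'}\cO_{{Z'}^N,y'}=I_D\cO_{{Z'}^N,y'}=\fm_{y'}^{em}$, where $m=\ord_y(\psi_Z^*D)$ and $I_D\cO_{Z^N,y}=\fm_y^m$. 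The standard bound $N_{k(Z')_{y'}/k(Z)_y}\big(1+\fm_{y'}^{em}\big)\subseteq 1+\fm_y^m$ (which for $m=0$ merely says units go to units), applied to $a'_{y'}\in 1+\fm_{y'}^{em}$, gives $N_{k(Z')_{y'}/k(Z)_y}(a'_{y'})\in 1+\fm_y^m=1+I_D\cO_{Z^N,y}^h$, whence $b_y\in 1+I_D\cO_{Z^N,y}^h$. I expect the only point needing genuine care to be the precise compatibility of $f_*$ with the local symbols and norm maps at infinity; once that is in place, the remainder is the elementary norm estimate together with the bookkeeping identity $I_{D'}=I_D\cO_{X'}$ coming from $D'=f^*D$.
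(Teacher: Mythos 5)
Your proof is correct and follows the same route the paper intends: the paper's own proof is a one-line appeal to "the definition and standard properties of the norm map for local fields," and your argument simply writes out those details (reduction to generators, exclusion of contracted curves, compatibility of $f_*$ with the local norms at infinity, the identity $I_{D'}\cO_{{Z'}^N,y'}=\fm_{y'}^{em}$ coming from $D'=f^*D$, and the standard bound $N(1+\fm_{y'}^{em})\subseteq 1+\fm_y^{m}$). No gaps.
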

\begin{proof}
This is a direct consequence of the definition and standard properties of the norm map for
local fields. 
\end{proof} 
\bigskip

In what follows we assume $\dim(X)=2$. 

\begin{defi}\label{Cartierdiv.def}
Let $X$ be a projective smooth surface over $k$.
Let $\ZXCD$ be the monoid of effective Cartier divisors on $X$ and $\ZXCC\subset \ZXCD$ be 
the submonoid of such Cartier divisors $D$ that none of the prime components of $D$ is 
contained in $C$.
Then $\ZX$ coincides with $\ZXCD$ and $\ZXC$ coincides with $\ZXCC$.
\end{defi}

\begin{defi}\label{def.cC}
Let $\cC$ be the category of triples $(X,C)$, where 
\begin{itemize}
\item 
$X$ is a projective smooth surface over $k$,
\item
$C$ is a reduced Cartier divisor on $X$.
\end{itemize} 
A morphism $f:(X',C') \to (X,C)$ in $\cC$ is a surjective map $f:X'\to X$ of schemes 
such that $C'=f^{-1}(C)_{\rm red}$.
For $f$ as above and for $D\in \ZXCD$, we let $f^*D\in \ZXCD$ be the pullback of $D$ as a Cartier divisor. 
Let $\cC_X\subset \cC$ be the category of the objects and the morphisms in $\cC$ over $X$.
\end{defi}

\begin{defi}\label{def.cB}
Let $X=(X,C)$ be in $\cC$.
\begin{itemize}
\item[(1)]
Let $\chBX\subset \cC_X$ be the subcategory of the object $(\tX,\tC)$, where
$g:\tX \to X$ is the composite of successive blowups at closed points in the preimages of $C$. 
\item[(2)]
Let $\cB_X\subset \chBX$ be the subcategory of the object $(\tX,\tC)$, where
$g:\tX \to X$ is the composite of successive blowups at 
closed points of regular loci of preimages of $C$.
\end{itemize}
\end{defi}

\begin{lem}\label{filtration.lem}
Let $X=(X,C)$ be in $\cC$ and $D\in \ZXCD$ such that $|D|=C$. 
For $g:(\tX,\tC)\to (X,C)$ in $\chBX$, we have 
\[
\FXCC D\subset \FtXCC {g^*D} \subset \hFXCC D.
\]
We have 
\[
\hFXCC D = \indlim {g:\tX\to X} \; \FtXCC {g^*D},
\]
where $g: (\tX,\tC)\to (X,C)$ ranges over $\chBX$. 
\end{lem}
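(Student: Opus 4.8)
The plan is to analyze the two chains of inclusions separately, both resting on the fact that for $g:(\tX,\tC)\to(X,C)$ in $\chBX$, the curves $Z$ on $X$ not contained in $C$ are related to curves on $\tX$ via strict transforms, and on the comparison of the congruence conditions defining the subgroups $F^{(\bullet)}$. First I would set up notation: for an integral $Z\in\ZXC$ with strict transform $\tZ\subset\tX$, the normalizations $Z^N$ and $\tZ^N$ coincide (normalization is unchanged under blowup of a surface along points), so $k(Z)=k(\tZ)$, the infinity locus $\Zinf$ is the same computed on either side since $g^{-1}(C)_{\mathrm{red}}=\tC$, and the symbol maps $\sym{\ }{Z}$ and $\sym{\ }{\tZ}$ into $\WU$ agree (here $U=\tX\setminus\tC = g^{-1}(U)$, and $\WU$ is intrinsic to $U$ by Definition \ref{classgroup.def}). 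The only thing that changes between $X$ and $\tX$ is the local ring $\cO_{Z,C\cap Z}^h$ appearing in the definition of $F^{(D)}$, and the ideal $I_D\cdot(-)$ there: on $\tX$ we use $\cO_{\tZ,\tC\cap\tZ}^h$ and $I_{g^*D}$.

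For the inclusion $\FXCC D\subset\FtXCC{g^*D}$: given a generator $\sym{1+I_D\cO_{Z,C\cap Z}^h}{Z}$ of $\FXCC D$, I want to exhibit it as a generator (or sum of generators) of $\FtXCC{g^*D}$ for the strict transform $\tZ$. Since $\cO_{Z,x}^h\to\cO_{\tZ,x'}^h$ for points $x'$ of $\tZ$ over $x$ is a local homomorphism and $g^*I_D$ generates $I_{g^*D}$, the image of $1+I_D\cO_{Z,C\cap Z}^h$ lands in $1+I_{g^*D}\cO_{\tZ,\tC\cap\tZ}^h$; combined with the compatibility of symbol maps this gives the containment. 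For $\FtXCC{g^*D}\subset\hFXCC D$: a generator $\sym{1+I_{g^*D}\cO_{\tZ,\tC\cap\tZ}^h}{\tZ}$ must be shown to lie in $\hFXCC D=\sum\sym{1+I_D\cO_{Z^N,C\cap Z}^h}{Z}$. The key computation here is that the ideal $I_{g^*D}\cO_{\tZ,\tC\cap\tZ}^h$, pushed up along $\cO_{\tZ,\tC\cap\tZ}^h\to\cO_{\tZ^N,\tC\cap\tZ}^h=\cO_{Z^N,C\cap Z}^h$, is contained in $I_D\cO_{Z^N,C\cap Z}^h$ — indeed $g^*D$ and $D$ pull back to the \emph{same} divisor on the common normalization $Z^N=\tZ^N$, since $\psi_Z:Z^N\to|Z|$ and $\psi_{\tZ}:\tZ^N\to|\tZ|$ compose with $g$ compatibly and $|D|=C$. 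So $1+I_{g^*D}\cO_{\tZ,\tC\cap\tZ}^h$ maps into $1+I_D\cO_{Z^N,C\cap Z}^h$, giving the inclusion.

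For the colimit statement $\hFXCC D=\indlim{g}\FtXCC{g^*D}$: the inclusion $\supseteq$ is the second containment just proved (for every $g$), so I only need $\subseteq$. A generator of $\hFXCC D$ is $\sym{u}{Z}$ with $u\in1+I_D\cO_{Z^N,C\cap Z}^h$ for some integral $Z$. The idea is to find a single blowup tower $g:\tX\to X$ in $\chBX$, depending on $Z$ and $u$, such that the strict transform $\tZ$ of $Z$ on $\tX$ has the property that $\cO_{\tZ,\tC\cap\tZ}^h$ already contains $u$ — equivalently, so that $\tZ$ is regular (hence $\cO_{\tZ,\tC\cap\tZ}^h=\cO_{\tZ^N,\tC\cap\tZ}^h=\cO_{Z^N,C\cap Z}^h$) at the finitely many points of $\tZ$ over $Z\cap C$. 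This is achieved by repeatedly blowing up the singular points of $Z$ lying over $C$: embedded resolution of the curve $Z$ on the surface $X$ by point blowups makes the strict transform smooth after finitely many steps, and we only ever need to blow up points of (preimages of) $C$ — wait, the points we blow up are singular points of $Z$ that happen to lie on $C$, which are in particular closed points of preimages of $C$, so the tower lies in $\chBX$ (not necessarily in $\cB_X$, but $\chBX$ is what is required). Once $\tZ$ is regular along $g^{-1}(Z\cap C)$, $u\in1+I_{g^*D}\cO_{\tZ,\tC\cap\tZ}^h$ because $g^*D$ and $D$ induce the same divisor on $Z^N=\tZ^N=\tZ$ near those points. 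Hence $\sym{u}{Z}=\sym{u}{\tZ}\in\FtXCC{g^*D}$, and since $\chBX$ is filtered under refinement of blowup towers the union over $g$ is a direct limit, completing the proof.

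The main obstacle I expect is the colimit direction: one must be careful that the blowups needed to regularize the strict transform of a \emph{given} curve $Z$ along its intersection with $C$ are blowups at closed points (embedded resolution by point blowups on a smooth surface) \emph{and} that these points lie on the successive total transforms of $C$, so that the tower genuinely lies in $\chBX$; one must also check that the construction is uniform enough that the $F^{(g^*D)}$ form a directed system (any two towers are dominated by a common refinement, which holds since point blowups of a surface commute up to further blowup). The inclusions themselves are essentially bookkeeping with local rings, norm maps, and the identification of normalizations, all of which are "standard properties of the norm map for local fields" in the spirit of Lemma \ref{hFWnorm}.
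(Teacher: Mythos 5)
Your proof is correct and takes essentially the same route as the paper's: both rest on the chain of inclusions $\cO^h_{Z,C\cap Z}\subset\cO^h_{Z',\tC\cap Z'}\subset\cO^h_{Z^N,C\cap Z}$ for the proper transform $Z'$ of an integral $Z$ (together with the fact that $I_{g^*D}$ is generated by $I_D$), which gives the two containments, and on resolving the proper transform of a given $Z$ by finitely many point blowups to obtain the colimit statement. Your worry about the resolving tower lying in $\chBX$ is moot, since the only points one needs to blow up lie over $Z\cap C$ (and $\chBX$ in any case allows blowups at arbitrary closed points of preimages of $C$, unlike $\cB_X$).
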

\begin{proof}
Take integral $Z\in \ZXCC$ and let $Z'\in \ZXCCt$ be its proper transform.
Then $Z'$ is finite over $Z$ and we have
\[
\cO^h_{Z,C\cap Z}\subset \cO^h_{Z',\tC\cap Z'}\subset \cO^h_{Z^N,C\cap Z}.
\]
The first assertion follows from these facts. The second assertion follows from 
the fact that for any integral $Z\in \ZXCC$, there is $g:\tX\to X$ in $\chBX$ such that
the proper transform of $Z$ in $\tX$ is regular (see \cite[Appendix Th.A.1]{SaSa}).
\end{proof}

\begin{defi}\label{filtration.def}
For $X=(X,C)$ and $D$ as in Lemma \ref{filtration.lem}, we put
\[
\tFXCC D = \indlim {g:\tX\to X} \; \FtXCC {g^*D},
\]
where $g: (\tX,\tC)\to (X,C)$ ranges over $\cB_X$. Lemma \ref{filtration.lem} implies
\[
\FXCC D\subset\tFXCC D\subset  \hFXCC D.
\]
For $g:(\tX,\tC)\to (X,C)$ in $\cB_X$, we have 
\begin{equation}\label{eq.tFXCC}
\tFtXCC {g^*D}=\tFXCC D.
\end{equation}
\end{defi}

\begin{rem}\label{filtration.rem}
For $Z\in \ZXCC$, we have an isomorphism
\[
\cO_{Z,C\cap Z}\otimes_{\cO_X}\cO_C \simeq 
\underset{x\in Z\cap C}{\prod}\;\cO^h_{Z,x}\otimes_{\cO_X}\cO_C.
\]
For $D,D'\in \Div(X)^+$ with $D'\geq D$ and $|D|=|D'|=C$, we have isomorphisms
\[
\frac{1+I_D\cO_{Z,x}}{1+I_{D'}\cO_{Z,x}} \simeq \frac{1+I_D\cO_{Z,x}^h }{1+I_{D'}\cO_{Z,x}^h },\quad
\frac{1+I_D\cO_{Z,Z\cap C}}{1+I_{D'}\cO_{Z,Z\cap C}}
\simeq
\underset{x\in Z\cap C}{\bigoplus}\;
\frac{1+I_D\cO_{Z,x}^h }{1+I_{D'}\cO_{Z,x}^h },
\]
and we have
\[\sym {1+I_D\cO_{Z,x}^h}{Z,x}\; \subset \sym {1+I_D\cO_{Z,x}} {Z,x} + \FXCC {D'}, \]
\[\underset{x\in Z\cap C}{\sum}\;
\sym {1+I_D\cO_{Z,x}^h}{Z,x}\; \subset \sym {1+I_D\cO_{Z,C\cap Z}} {Z} + \FXCC {D'}. \]
\end{rem}
\medbreak

Let $X=(X,C)$ be in $\cC$.
We introduce a tool to produce relations in $\WU$ by using symbols in the Milnor $K$-group 
$\KM 2 {k(X)}$ of the function field $k(X)$ of $X$.

\begin{lem}\label{classgroup.lem1}
Let $a,b\in k(X)^\times$ and assume we can write as divisors
\begin{equation*}\label{classgroupcondition}
\div_X(a)=Z_a^+ - Z_a^-+W_a ,\quad \div_X(b)=Z_b^+ - Z_b^-+W_b,
\end{equation*}
where $Z_a^+, Z_a^-,Z_b^+, Z_b^-\;\in \Div(X,C)^+$ such that any common component of 
$Z_a^+\cup Z_a^-$ and $Z_b^+\cup Z_b^-$ does not intersect with $C$, and $W_a,W_b$ have support in $C$. Then 
\[
 \partial \{a,b\} := 
\sym{a}{Z_b^+}-\sym{a}{Z_b^-}-\sym{b}{Z_a^+}+\sym{b}{Z_a^-},
\]
vanishes in $\WU$.
Here the first term $\sym{a}{Z_b^+}$ denotes $\sym{a_{|Z_b^+}}{Z_b^+}$ where 
$a_{|Z_b^+}$ is the image of $a$ in $k(Z_b^+)_\infty^\times$, which is well-defined.
The other terms are defined similarly.
\end{lem}

\begin{proof}
We have a decomposition in $ \Div(X,C)^+$:
\[ Z_a^+ = \Phi_a^+ + E_a^+,\;\;  Z_a^- = \Phi_a^- + E_a^-,\;\;
Z_b^+ = \Phi_b^+ + E_b^+,\;\; Z_b^- = \Phi_b^- + E_b^-,\]
where $ E_a^{\pm}$ and $E_b^{\pm}$ do not intersect $C$, and every irreducible component of 
$\Phi_a^{\pm}$ and $\Phi_b^{\pm}$ intersects $C$. The assumption implies that $a$ (resp. $b$) is invertible 
at any generic point of $\Phi_b^{\pm}$ (resp. $\Phi_a^{\pm}$). 
Since $k(Z_a^{\pm})_\infty^\times= k(\Phi_a^{\pm})_\infty^\times$ and 
$k(Z_b^{\pm})_\infty^\times= k(\Phi_b^{\pm})_\infty^\times$, this implies that 
\[a_{|Z_b^{\pm}}\in k(Z_b^{\pm})_\infty^\times \qaq
b_{|Z_a^{\pm}}\in k(Z_a^{\pm})_\infty^\times\]
are well-defined. 

For $Z\in \Div(X,C)^+$ and $a\in k(X)^\times$ which is invertible at any generic point of $Z$, 
we write
\[a_{|Z}= \big((a_{|Z_i})^{e_i}\big)_{1\leq i\leq r}\;\in k(Z)^\times=\underset{1\leq i\leq r}{\prod} k(Z_i)^\times,\]
where $Z_1,\dots,Z_r$ are the irreducible components of $Z$ and $e_i$ is the multiplicity of $Z_i$ in $Z$
(see Definition \ref{curve.def}). Put 
\[
\begin{aligned}
& \alpha^+=a_{|\Phi_b^+} \in k(\Phi_b^+)^\times, \;\; \alpha^-=a_{|\Phi_b^-}  \in k(\Phi_b^-)^\times,\\
& \beta^+=b_{|\Phi_a^+}  \in k(\Phi_a^+)^\times,\;\; \beta^-=b_{|\Phi_a^-} \in k(\Phi_a^-)^\times.
\end{aligned}\]
For an integral curve $E\subset X$ such that $E\cap C=\varnothing$, let
$\gamma_E\in k(E)^\times$ be the image $\{a,b\}$ of the tame symbol
\[\partial_E: K_2(k(X)) \to k(E)^\times.\]
Obviously, the elements
\[
\delta( \alpha^+) ,  \delta( \alpha^-), \delta( \beta^+), \delta( \beta^-), \delta(\gamma_E) 
\]
map to zero in $ \WU$, with $\delta$ as in \eqref{WCG}. In order to finish the proof of the lemma it suffices to
show the equality:
\[
 \partial\{a,b\} = \delta( \alpha^+) - \delta( \alpha^-) -  \delta( \beta^+) + \delta(
 \beta^-) + \sum_{E} \delta(\gamma_E)  \in  \underset{Z\subset X}{\bigoplus}\; k(Z)_\infty^\times\;\oplus\; Z_0(U),
\]
where the last sum ranges over the irreducible components $E$ of $E_a^{\pm}\cup E_b^{\pm}$. 
This follows from the fact that the contributions of the right hand side at any closed point $x\in U$ cancel out
as a consequence of the Gersten complex for $K$-theory
\[
K_2(k(X))  \rmapo{\partial_y} \bigoplus_{y \in \Spec (\cO_{U,x})^{(1)}} K_1(y)  \to K_0(x)=\Z.
\]
\end{proof}

\begin{lem}\label{classgroup.lem2}
Fix $D\in \Div(X)^+$ with $|D|=C$, and $F\in \Div(X,C)^+$ and $a\in H^0(X,\cO_X(-D+F))$ with $a\not=0$.
Define $Z\in \Div(X)^+$ by
\[ Z= \div_X(1+a) + F.\]
\begin{itemize}
\item[(1)]
We have $Z\cap C=F\cap C$.
\item[(2)]
Let $b\in k(X)^\times$ and assume we can write as divisors
\[\div_X(b) = F_1- F_2+W,\]
where $W$ has support in $C$, and $F_1, F_2\in \Div(X,C)^+$ such that  
$Z\cup F$ and $F_1\cup F_2$ have no common irreducible component which passes through $F\cap C$. 
Then we have 
\[\sym{1+a}{F_1}-\sym{1+a}{F_2}-\sym{b}{Z}+\sym{b}{F}=0 \;\in \WU.\]
\item[(3)]
In (2) assume further $F_i=Z_i+ G_i$ for $i=1,2$, where $Z_i,G_i\in \Div(X,C)^+$ such that 
$G_i\cap F\cap C=\varnothing$. Then we have 
\[\sym{1+a}{Z_1}-\sym{1+a}{Z_2}-\sym{b}{Z}+\sym{b}{F}\;\in \FXCC {D}.\]
\item[(4)]
Let $b\in k(X)^\times$ be such that $b= u \pi^n$, where $u\in \cO_{X,F\cap C}^\times$ and 
$\pi\in \cO_{X,F\cap C}$ is a local equation of $C$ around $F\cap C$ and $n\in \bZ$. Then we have
\[\sym{b}{Z}-\sym{b}{F}\;\in \FXCC {D}.\]
\end{itemize}
\end{lem}
\begin{proof}
For $x\in C \backslash F$, $1+a$ is regular in a neighborhood of $x$ and its restriction to $C$ is $1$.
Hence $x\not\in Z$.
For $x\in F\cap C$, we can write
$\displaystyle{ a =u \pi/f}$, where $u\in \cO_{X,x}$ and $f$ (resp. $\pi$) is a local equation of $F$ (resp. $D$)
at $x$. Then $f(1+a)= f+u\pi$ is a local equation of $Z$ which vanishes at $x$ since $f$ and $\pi$ do. 
So $x\in Z$, which proves (1). 

By (1), the assumption of (2) implies that any common component of $Z\cup F$ and $F_1\cup F_2$
does not intersect $C$. Hence (2) follows from Lemma \ref{classgroup.lem1}.

As for (3) note $a|_{G_i}\in \cO_{G_i,G_i\cap C}(-D)$ since $a\in H^0(X,\cO_X(-D+F))$ and
$G_i\cap F\cap C=\varnothing$. This implies $\sym{1+a}{G_i}\in \FXCC {D}$ and (3) follows from (2).

The assumption of (4) implies $\div_X(b) = G_1- G_2+W$,
where $W$ has support in $C$, and $G_1,G_2\in \Div(X,C)^+$ such that $(G_1\cup G_2) \cap F\cap C=\varnothing$ .
Thus (4) follows from (3).
\end{proof}


\section{Review of ramification theory}\label{ReviewRT}

\subsection{Local ramification theory}\label{RTlocal}
\bigskip

In this subsection $K$ denotes a henselian discrete valuation field of $\ch(K)=p>0$ with 
 ring  of integers $\cO_K$ and residue field $E$.
Let $\pi$ be a prime element of $\cO_K$ and $\fmK=(\pi)\subset \cO_K$ be the maximal ideal.
By the Artin--Schreier--Witt theory, we have a natural isomorphism for $s\in \bZ_{\geq 1}$,
\begin{equation}\label{ASW.eq}
\delta_s: W_s(K)/(1-F)W_s(K) \isom H^1(K,\psz),
\end{equation}
where $W_s(K)$ is the ring of Witt vectors of length $s$ and $F$ is the Frobenius.
We have the Brylinski--Kato filtration
\[
\fillog m W_s(K) = \{(a_{s-1},\dots,a_1,a_0)\in W_s(K)\;|\; p^i v_K(a_i)\geq -m\},
\]
where $v_K$ is the normalized valuation of $K$. In this paper we use its non-log version introduced by 
Matsuda \cite{Ma}:
\[
\fil m W_s(K) = \fillog {m-1} W_s(K) + V^{s-s'} \fillog {m} W_{s'}(K),
\]
where $s'=\min\{s,\ord_p(m)\}$ and $V: W_{s-1}(K) \to W_s(K)$ is the Verschiebung. 
We define ramification filtrations on $H^1(K):=H^1(K,\qz)$ as
\[ 
\FHlog K m = H^1(K)\{p'\} \oplus \underset{s\geq1}{\cup} \delta_s(\fillog m W_s(K)) \quad
\quad (m\ge 0 )  ,
\]
\[ 
\FH K m = H^1(K)\{p'\} \oplus \underset{s\geq1}{\cup} \delta_s(\fil m W_s(K))\quad
\quad (m\ge 1 )    ,
\]
where $H^1(K)\{p'\}$ is the prime-to-$p$ part of $H^1(K)$.
We note that $\FH K m$ is shifted by one from Matsuda's filtration
\cite[Def.3.1.1]{Ma}.
We also let $\FH {K} 0$ be the subgroup of all unramified Galois characters.

\begin{defi}
For $\chi \in H^1(K)$ we denote the minimal $m$ with $\chi \in \FH{K} m$ by $\art_K(\chi)$
and call it the Artin conductor of $\chi$.
\end{defi}

In case the field $E$ is perfect this definition coincides with the classical definition, see
\cite[Prop.\ 6.8]{Ka}.

We have the following fact (cf.\ \cite{Ka} and \cite{Ma}).

\begin{lem}\label{CFT.lem0}
\begin{itemize}
\item[(1)]
$\FH {K} 1$ is the subgroup of tamely ramified characters.
\item[(2)]
$\FH {K} {m}\subset \FHlog {K} m \subset \FH {K} {m+1}$.
\item[(3)]
$\FH {K} m=\FHlog {K} {m-1}$ if $(m,p)=1$.
\end{itemize}
\end{lem}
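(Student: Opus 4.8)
The plan is to derive all three parts of Lemma~\ref{CFT.lem0} directly from the definitions of the Brylinski--Kato filtration $\fillog m W_s(K)$, Matsuda's non-log filtration $\fil m W_s(K)$, and the Artin--Schreier--Witt isomorphism $\delta_s$, citing the corresponding statements in \cite{Ka} and \cite{Ma} where the content is genuinely nontrivial. The key observation throughout is that the prime-to-$p$ part $H^1(K)\{p'\}$ behaves exactly as in classical tame ramification theory, so everything reduces to controlling the $p$-part via the Witt vector filtrations.

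For part~(1), I would argue that a character of order prime to $p$ lies in $\FH K 1$ by construction (it sits in the $H^1(K)\{p'\}$ summand), and that it is tamely ramified since wild ramification is a pro-$p$ phenomenon. Conversely, for the $p$-part one checks $\fil 1 W_s(K)$: by definition $\fil 1 W_s(K) = \fillog 0 W_s(K) + V^{s-s'}\fillog 1 W_{s'}(K)$ with $s' = \min\{s, \ord_p(1)\} = 0$, so the second term vanishes and $\fil 1 W_s(K) = \fillog 0 W_s(K) = \{(a_i) : p^i v_K(a_i) \geq 0\}$, i.e.\ all entries are integral. Under $\delta_s$ these are precisely the unramified-plus-tame characters; combined with the explicit computation of the Artin--Schreier conductor this gives that $\FH K 1$ is exactly the tame part. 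For part~(3), when $(m,p)=1$ we have $s' = \min\{s,\ord_p(m)\} = 0$, so again $V^{s-s'}\fillog m W_{s'}(K) = V^s\fillog m W_0(K) = 0$, whence $\fil m W_s(K) = \fillog{m-1} W_s(K)$ for every $s$; taking the union over $s$ and adding the prime-to-$p$ part (which is unaffected since $m \geq 1$) yields $\FH K m = \FHlog K {m-1}$.

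Part~(2) is the one requiring the most care, and I expect the inclusion $\FHlog K m \subset \FH K {m+1}$ to be the main obstacle, since it involves comparing the log filtration at level $m$ with the non-log filtration at level $m+1$ entry by entry in Witt coordinates. The left inclusion $\FH K m \subset \FHlog K m$ should follow from $\fil m W_s(K) \subset \fillog m W_s(K)$: the first summand $\fillog{m-1}W_s(K)$ is contained in $\fillog m W_s(K)$ by monotonicity of the Brylinski--Kato filtration, and for the Verschiebung term one uses that $V$ shifts indices so that $V^{s-s'}\fillog m W_{s'}(K) \subset \fillog m W_s(K)$ — this is essentially the compatibility of the log filtration with $V$. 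For the right inclusion, I would show $\fillog m W_s(K) \subset \fil{m+1} W_s(K)$: since $\fil{m+1}W_s(K) \supset \fillog m W_s(K)$ is literally the first summand in the definition of $\fil{m+1}W_s(K) = \fillog m W_s(K) + V^{s-s'}\fillog{m+1}W_{s'}(K)$, this is immediate. Assembling the three term-by-term Witt-level inclusions, passing to the union over $s \geq 1$, and handling the prime-to-$p$ summand (present in all three groups when the indices are $\geq 1$, and absent only from $\FH K 0$, which is why the statement is phrased with $\Klam$ and $m \geq 1$ implicitly) completes the proof. Throughout, the genuine ramification-theoretic input — that $\delta_s$ intertwines these filtrations with the actual higher ramification groups — is quoted from Kato and Matsuda rather than reproved.
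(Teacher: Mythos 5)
Your proposal is correct, but it is worth noting that the paper itself offers no argument at all for this lemma: it is stated as "the following fact (cf.\ [Ka] and [Ma])" and the reader is sent to Kato and Matsuda. So your write-up is genuinely more than what the paper does, and the way you have stratified the content is the right one. Parts (2) and (3) are, as you show, purely formal consequences of the definition $\fil m W_s(K)=\fillog{m-1}W_s(K)+V^{s-s'}\fillog{m}W_{s'}(K)$: when $(m,p)=1$ one has $s'=\min\{s,\ord_p(m)\}=0$ and $W_0(K)=0$, so the Verschiebung term dies and (3) follows; for (2) the inclusion $\fillog{m}W_s(K)\subset\fil{m+1}W_s(K)$ is literally the first summand of the definition, and $\fil m W_s(K)\subset\fillog m W_s(K)$ follows from monotonicity of the Brylinski--Kato filtration together with $V^{s-s'}\fillog{m}W_{s'}(K)\subset\fillog{m}W_s(K)$. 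On that last point you are right that it is "compatibility of the log filtration with $V$", but since the paper writes Witt vectors as $(a_{s-1},\dots,a_0)$ with condition $p^iv_K(a_i)\ge -m$, it is worth one explicit sentence that $V$ inserts a zero in the \emph{leading} slot $a_{s-1}$ (the one with the most restrictive condition), so the components of the shorter vector land in the low-index slots where the conditions literally coincide; with the opposite convention the inclusion would fail, so this is the one place in (2) where the convention must actually be checked. The only genuinely ramification-theoretic input is in (1): $\fil 1 W_s(K)=\fillog 0 W_s(K)=W_s(\cO_K)$ is formal, but the identification of $\delta_s(W_s(\cO_K))$ with exactly the unramified $p$-primary characters (in particular the "only if" direction, that a tame $p$-primary character has a Witt representative with integral entries) is Brylinski--Kato theory, and you correctly quote it rather than reprove it, which is consistent with the paper's own citation. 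Finally, the $\Klam$ in part (2) of the statement is just a typo for $K$, as your reading assumes.
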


The structure of graded quotients: 
\[
\gr m H^1(K)= \fil m H^1(K)/\fil {m-1} H^1(K) \quad(m>1)
\]
are described as follows. 
Let $\Omega_K^1$ be the absolute K\"ahler differential module and put
\[
\fil m \Omega_K^1 =\fm_K^{-m}\otimes_{\cO_K} \Omega_{\cO_K}^1.
\]
We have an isomorphism
\begin{equation}\label{grOmega}
\gr m \Omega_K^1 = \fil m \Omega_K^1/\fil {m-1} \Omega_K^1 \simeq 
\fmK^{-m} \Omega_{\cO_K}^1\otimes_{\cO_K} E.
\end{equation}
We have the maps
\begin{equation}\label{Fsd}
F^sd: W_s(K) \to \Omega_K^1\;;\; (a_{s-1},\dots,a_1,a_0) \to 
\underset{i=0}{\overset{s-1}{\sum}} \; a_i^{p^i-1} da_i.
\end{equation}
and one can check
$F^sd(\fil m W_s(K))\subset \fil m\Omega_K^1$. 

\begin{theo}\label{thm.Masuda}(\cite[3.2.3]{Ma})
Assume $p\not=2$ and $m>1$.
\begin{itemize}
\item[(1)]
The maps $F^sd$ induces an injective map
\begin{equation}\label{rswK}
\rsw_K: \gr m  H^1(K) \hookrightarrow \gr m \Omega_K^1.
\end{equation}
\item[(2)]
If the residue field of $K$ is perfect the map \eqref{rswK} is surjective.
\end{itemize}
\end{theo}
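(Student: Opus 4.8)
\emph{Overall strategy.} The plan is to follow Matsuda's argument in \cite{Ma}, which transposes to the non-logarithmic filtration the method Kato used for the logarithmic (Swan) version \cite{Ka1}, \cite{Ka}. Every assertion is reduced, through the Artin--Schreier--Witt isomorphisms $\delta_s$ of \eqref{ASW.eq}, to a statement about the filtered groups $W_s(K)$ and the maps $F^sd$. Since $H^1(K)\{p'\}$ is annihilated by $F^sd$ by construction and has vanishing $\gr m$ for $m>1$, it suffices to work with $\bZ/p^s\bZ$-coefficients for each $s$ and then pass to the colimit over $s$; one checks along the way that the $F^sd$ are compatible with the transition maps $H^1(K,\bZ/p^s\bZ)\to H^1(K,\bZ/p^{s+1}\bZ)$, so that they glue on $\fil m H^1(K)=H^1(K)\{p'\}\oplus\bigcup_s\delta_s(\fil m W_s(K))$. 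Lemma~\ref{CFT.lem0} is used at various points to pass between the logarithmic and non-logarithmic filtrations.

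\emph{Filtration compatibility and descent to $H^1$.} The first computations I would carry out are the inclusions $F^sd(\fil m W_s(K))\subseteq\fil m\Omega^1_K$ (the ``one can check'' in the text) and $F^sd(\fillog{m-1}W_s(K))\subseteq\fil{m-1}\Omega^1_K$; these are direct from the explicit descriptions of $\fil m W_s(K)$, $\fillog m W_s(K)$ and $\fil m\Omega^1_K=\fmK^{-m}\otimes_{\cO_K}\Omega^1_{\cO_K}$, the non-logarithmic correction term $V^{s-s'}\fillog m W_{s'}(K)$ in the definition of $\fil m W_s(K)$ being exactly what makes the first inclusion hold without slack. The more substantial point is to show that the composite $\fil m W_s(K)\xrightarrow{F^sd}\fil m\Omega^1_K\twoheadrightarrow\gr m\Omega^1_K$ is additive --- the ``carries'' of Witt addition must be shown to contribute only in $\fil{m-1}\Omega^1_K$ --- and that it annihilates $(1-F)W_s(K)\cap\fil m W_s(K)$; the latter is the coboundary computation, in which the constraint $(1-F)b\in\fil m W_s(K)$ is shown to force $F^sd((1-F)b)$ into $\fil{m-1}\Omega^1_K$, using e.g.\ that $d(c^p)=0$ in characteristic $p$ so that the terms produced by $F$ sit in a deeper step of the $\pi$-adic filtration. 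This gives the natural map $\fil m H^1(K)\to\fil m\Omega^1_K$ of part~(1), with $\fil{m-1}H^1(K)$ in its kernel by the case $m-1$ of the first inclusion, and hence the additive homomorphism $\rsw_K\colon\gr m H^1(K)\to\gr m\Omega^1_K$.

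\emph{Injectivity --- the main obstacle.} Given $\chi=\delta_s(a)$ with $a=(a_{s-1},\dots,a_0)\in\fil m W_s(K)$ and $F^sd(a)\in\fil{m-1}\Omega^1_K$, one wants $\chi\in\fil{m-1}H^1(K)$. The heart of the argument is a normal-form lemma for Artin--Schreier--Witt classes: modulo $(1-F)W_s(K)$ one may choose $a$ so that the components realising the filtration level are ``reduced'', having only finitely many bad terms in their $\pi$-expansions and controlled leading exponents. For such a representative the image of $F^sd(a)$ in $\gr m\Omega^1_K\cong\fmK^{-m}\Omega^1_{\cO_K}\otimes_{\cO_K}E$ (cf.\ \eqref{grOmega}) is computed from the leading differential of the dominant component, so its vanishing forces either a strict drop in the pole order of that component or a congruence permitting one further normalisation step that pushes $a$ into $\fil{m-1}W_s(K)$; here the analysis splits according to whether $p\mid m$ or $(p,m)=1$, and the correction term in $\fil m W_s(K)$ re-enters. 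This case analysis --- in essence \cite[\S 3]{Ma}, building on \cite{Ka} --- is what I expect to be the technically heaviest part of the proof.

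\emph{Surjectivity when $E$ is perfect.} If $E$ is perfect then $\Omega^1_E=0$, so $\gr m\Omega^1_K\cong E\cdot\pi^{-m}d\pi$ is one-dimensional over $E$, and one has to hit every class $\bar u\,\pi^{-m}d\pi$. When $(p,m)=1$ one uses an Artin--Schreier character $\delta_1(v)$ for a suitable $v\in K$, placed in the correct step by Lemma~\ref{CFT.lem0}(3) (which gives $\fil m H^1(K)=\fillog{m-1}H^1(K)$ in this case) and with $\rsw_K(\delta_1(v))$ computed by the previous step; when $p\mid m$ one uses instead a Witt vector of length $\ord_p(m)+1$ whose top component has a pole of the appropriate order, reducing to the coprime case via Verschiebung. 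More economically, one may simply invoke \cite[Prop.\ 6.8]{Ka} to identify $\gr m H^1(K)$ and $\gr m\Omega^1_K$ with the classical ramification-theoretic objects for perfect residue fields, for which the surjectivity is standard. Combining the four steps yields both assertions of the theorem.
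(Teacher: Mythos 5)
The paper does not prove this theorem: it is quoted from Matsuda \cite{Ma} (the non-logarithmic variant of Kato's refined Swan conductor), so there is no internal argument to compare yours against. Judged as a reconstruction of \cite[\S 3]{Ma}, your road map — reduce to the $p$-primary part via $\delta_s$, check that $F^sd$ respects the filtrations, descend through $1-F$, a normal-form lemma for injectivity, explicit generators (or \cite[Prop.\ 6.8]{Ka}) for surjectivity over a perfect residue field — is the right one.

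There is, however, one concretely false step: the inclusion $F^sd(\fillog{m-1}W_s(K))\subseteq\fil{m-1}\Omega^1_K$. Already for $s=1$ and $a=u\pi^{-(m-1)}$ with $p\nmid m-1$ one has $F^1d(a)=da=\pi^{-(m-1)}du-(m-1)u\pi^{-m}d\pi$, a pole of order exactly $m$; the correct statement is $F^sd(\fillog{m-1}W_s(K))\subseteq\fil{m}\Omega^1_K$, in line with Lemma~\ref{CFT.lem0}(2). Worse, if your inclusion held, $\rsw_K$ would annihilate the subgroup $\fillog{m-1}H^1(K)/\fil{m-1}H^1(K)$ of $\gr m H^1(K)$, which for $p\mid m$ equals $\fillog{m-1}H^1(K)/\fillog{m-2}H^1(K)$ (by Lemma~\ref{CFT.lem0}(3)) and is typically nonzero for imperfect $E$ — contradicting the injectivity you are proving. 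Fortunately nothing downstream depends on it: well-definedness of $\rsw_K$ on $\gr m$ only needs $F^sd(\fil{m-1}W_s(K))\subseteq\fil{m-1}\Omega^1_K$, i.e.\ your first inclusion with $m$ replaced by $m-1$; so simply delete the false claim. Two smaller points: $F^sd$ is additive on the nose (it is the integral form of $p^{1-s}d$ of the top ghost component), so no argument about Witt-addition carries is needed — the real issue in descending through $\delta_s$ is, as you correctly identify, that $F^sd\circ F=0$ forces $F^sd((1-F)b)=F^sd(b)$, which must then be bounded using $(1-F)b\in\fil m W_s(K)$. And the injectivity step, where essentially all of Matsuda's work lies, is only named in your sketch, not carried out; as it stands your text is an outline of \cite{Ma} rather than a self-contained proof — which is also all the paper itself offers.
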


The map $\rsw_K$ is called the refined Artin conductor for $K$.


\begin{defi}\label{filMilnorK}
Let $K$ be as before and $K_N^M(K)$ be the $N$-th Milnor $K$-group of $K$. 
For an integer $m\geq 1$, we define $V^m\KM N K\subset \KM N K$ as a subgroup generated by
the elements of the form
\[
\{1+ a,b_1,\dots,b_{N-1}\}\qaq \{1+ a\pi,b_1,\dots,b_{N-2},\pi\},
\]
where $a\in \fmK^m$ and $b_1,\dots,b_N\in \cO_K^\times$.
\end{defi}

The following lemma is proved by a similar argument as the proof of \cite[Lem.(4.2)]{BK}.

\begin{lem}\label{lem.grKM}
Assume $\ch(E)\not=2$.
There is a canonical surjective map
\[
\rho^m_K: \fmK^{m-1} \Omega^{N-1}_{\cO_K}\otimes_{\cO_K} E \to V^{m-1}\KM N K/V^{m}\KM N K,
\]
such that
\[
\rho^m_K(adb_1\wedge \cdots \wedge db_{N-1})=  \{1+ ab_1\cdots b_{N-1},b_1,\dots,b_{N-1}\}.
 \]
where $a\in \fmK^{m-1}$ and $b_1,\dots,b_N\in \cO_K$.
\end{lem}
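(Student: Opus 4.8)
The plan is to follow the proof of \cite[Lem.\ (4.2)]{BK} closely. Since $V^{m-1}\KM N K$ is only defined for $m\ge 2$, I assume $m\ge 2$. First I would construct an additive map
\[
\widetilde\rho:\fmK^{m-1}\Omega^{N-1}_{\cO_K}\lra V^{m-1}\KM N K/V^{m}\KM N K
\]
on generators by $\widetilde\rho(a\,db_1\wedge\cdots\wedge db_{N-1})=\{1+ab_1\cdots b_{N-1},b_1,\dots,b_{N-1}\}\bmod V^m\KM N K$, for $a\in\fmK^{m-1}$ and $b_i\in\cO_K$ (a symbol with a zero entry being interpreted as $0$). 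It lands in $V^{m-1}\KM N K$ directly from Definition~\ref{filMilnorK}, after reordering a possible non-unit entry to the last slot. Two elementary facts coming straight from Definition~\ref{filMilnorK} will be used repeatedly: $\{1+w,c_1,\dots,c_{N-1}\}\in V^m\KM N K$ whenever $w\in\fmK^{m}$ and all $c_i\in\cO_K^\times$; and $\{1+w,c_1,\dots,c_{N-2},\pi\}\in V^m\KM N K$ whenever $w\in\fmK^{m+1}$ and all $c_i\in\cO_K^\times$ (write $w=a'\pi$ with $a'\in\fmK^{m}$). Note also the ``logarithmic'' rewriting $a\,db_1\wedge\cdots\wedge db_{N-1}=(ab_1\cdots b_{N-1})\,\tfrac{db_1}{b_1}\wedge\cdots\wedge\tfrac{db_{N-1}}{b_{N-1}}$ for $b_i$ units, under which $\widetilde\rho$ takes the expected logarithmic shape.

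Next I would check that $\widetilde\rho$ respects the defining relations of $\Omega^{N-1}_{\cO_K}$. Skew-symmetry in the $b_i$ and the Leibniz rule $d(bb')=b\,db'+b'\,db$ follow directly from the multilinearity and alternating property of Milnor symbols, once the coefficient $ab_1\cdots b_{N-1}$ is tracked. Additivity of $\widetilde\rho$ in the coefficient $a$ reduces, using $(1+c)(1+c')=1+c+c'+cc'$ with $cc'\in\fmK^{2(m-1)}\subset\fmK^{m}$ for $c,c'\in\fmK^{m-1}$, to the statement that the class of $\{1+s,b_1,\dots,b_{N-1}\}$ depends only on $s$ modulo a suitable power of $\fmK$, which is again one of the two elementary facts above. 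The delicate relations are the remaining ones, namely additivity of $d$ (that $d(b+b')=db+db'$) and $db\wedge db=0$; here I would argue by the Steinberg relation $\{x,1-x\}=0$, its consequences $\{x,-x\}=0$ and $\{1+Ax,x\}=-\{1+Ax,-A\}$, and $\{u,u\}=\{u,-1\}$, manipulating the resulting sign terms until each identity is reduced to a symbol of one of the two shapes above lying in $V^m\KM N K$. This is the technical heart of the argument and the step I expect to require the most care; it is precisely the computation carried out in the proof of \cite[Lem.\ (4.2)]{BK}.

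Granting well-definedness of $\widetilde\rho$, it remains to see that $\widetilde\rho$ kills $\fmK^{m}\Omega^{N-1}_{\cO_K}$, so that it factors through $\fmK^{m-1}\Omega^{N-1}_{\cO_K}\otimes_{\cO_K}E$ and yields $\rho^m_K$ with the asserted formula. Indeed $\fmK^{m}\Omega^{N-1}_{\cO_K}$ is generated by forms $a\,db_1\wedge\cdots\wedge db_{N-1}$ with $a\in\fmK^{m}$; using $d(u\pi)=\pi\,du+u\,d\pi$ one reduces to the case where each $b_i$ is a unit or equals $\pi$ with at most one $b_i=\pi$ (two factors $d\pi$ giving $0$), and then $\widetilde\rho$ of such a form is one of the two shapes shown to lie in $V^m\KM N K$. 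Finally, $\rho^m_K$ is surjective: by Definition~\ref{filMilnorK} the group $V^{m-1}\KM N K$ is generated by the symbols $\{1+a,b_1,\dots,b_{N-1}\}$ and $\{1+a\pi,b_1,\dots,b_{N-2},\pi\}$ with $a\in\fmK^{m-1}$ and $b_i\in\cO_K^\times$, and these are the images under $\rho^m_K$ of $a(b_1\cdots b_{N-1})^{-1}\,db_1\wedge\cdots\wedge db_{N-1}$ and of $\pm\,a(b_1\cdots b_{N-2})^{-1}\,d\pi\wedge db_1\wedge\cdots\wedge db_{N-2}$ respectively — the latter using crucially that $\Omega^{N-1}_{\cO_K}$ is the absolute differential module, so that $d\pi$ is available.
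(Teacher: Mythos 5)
Your proposal is correct and takes essentially the same route as the paper, which gives no argument beyond the remark that the lemma "is proved by a similar argument as the proof of [BK, Lem.\ (4.2)]"; you flesh out exactly that Bloch--Kato-style argument (define the map on generators, verify the relations of the differential module using the two elementary membership facts for $V^m\KM N K$ and the Steinberg relation, check that $\fmK^{m}\Omega^{N-1}_{\cO_K}$ dies, and read off surjectivity from the generators in Definition~\ref{filMilnorK}). Your identification of the genuinely delicate relations and of the role of $d\pi$ in the absolute differential module for hitting the second family of generators is exactly where the content of [BK, Lem.\ (4.2)] lies.
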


\def\pairI{\langle\;,\, \rangle_{\Omega}}
\def\pairII{\langle\;,\, \rangle_K}
\def\vGam{\varGamma}
\medbreak

Let $K$ be an $N$-dimensional local field, namely there is a sequence of fields
$k_0,\dots,k_N$ such that $k_0$ is finite, $k_N=K$, and for $1\leq i\leq N$, $k_i$ is a henselian discrete valuation field with residue field $k_{i-1}$. 
In \cite{Ka1} Kato defined the so-called reciprocity map for $K$:
\begin{equation}\label{recK}
\Psi_K: H^1(K) \to \Hom(\KM N K,\qz).
\end{equation}

\begin{lem}\label{lemRes}
Assume $\ch(K)=p>0$ with $p\not=2$.
\begin{itemize}
\item[$(i)$]
For $m\in \bZ_{\geq 1}$ and $\chi\in H^1(K)$, we have an equivalence of conditions:
\[
 \chi\in \FH K m \; \Longleftrightarrow\; \Psi_K(\chi)(V^{m}\KM N K)=0.
\] 
\item[$(ii)$]
The following diagram is commutative
\begin{equation}\label{commutativitypairing}
\xymatrix{ 
\FH K m \; \ar[r]^{\hskip -60pt\Psi_K } \ar[d]_{-\rsw_K}  &\Hom(\KM N K/V^{m}\KM N K,\qz) 
\ar[d]^{(\rho^m_K)^\vee} \\
\fmK^{-m} \Omega_{\cO_K}^1\otimes_{\cO_K} E \ar[r]^{\hskip -30pt\sigma}
 &\Hom(\fmK^{m-1} \Omega_{\cO_K}^{N-1}\otimes_{\cO_K} E,\qz) \\
}\end{equation}
where the right vertical map is induced by $\rho^m_K$ and $\sigma$ is induced by the pairing
\begin{multline*}
\pairI : 
\fmK^{-m} \Omega_{\cO_K}^1\otimes_{\cO_K} E \times \fmK^{m-1}\Omega_{\cO_K}^{N-1}\otimes_{\cO_K} E \to
\fmK^{-1}\Omega_{\cO_K}^{N}\otimes_{\cO_K} E \to \bF_p \simeq\pz, 
\end{multline*}
where the last map is induced by $Res^{\Omega}_{K/\bF_p}: \Omega_K^{N} \to \bF_p$,
which is the composite of the residue map $Res^{\Omega}_{K/k_0}: \Omega_K^{N} \to k_0$ from \cite[\S2 Prop.3]{Ka1}
and the trace map $k_0\to \bF_p$.
\end{itemize}
\end{lem}
\medbreak

A variant of $(i)$ and $(ii)$ for $\FHlog K m$ is stated in \cite[\S 3.5]{Ka3}. 
We will sketch a proof of the lemma in the appendix \S\ref{appendix}.

\subsection{Global ramification theory}\label{RTglobal}

Let $X$ be a normal variety over a perfect field $k$. Let $U\subset X$ be an open
subscheme which is smooth over $k$ and whose reduced complement $C\subset X$ is
the support of an effective Cartier divisor. Our
aim in this section is to introduce the abelian fundamental group $\pi_1^\ab(X,D)$
classifying abelian
\'etale coverings of $U$ with ramification bounded by $D$. Here $D\in \Div(X)^+$ is an effective
divisor with support in $C$.

Let $I$ be the set of generic points of $C$ and $\Clam=\overline{\{\lam\}}$ for $\lam\in I$.
For $\lam\in I$ let $\Klam$ be the henselization of $K=k(X)$ at $\lam$.
Note that $\Klam$ is a henselian discrete valuation field with residue field $k(\Clam)$. 
We write $H^1(U)$ for the \'etale cohomology group $H^1(U,\Q/\Z)$.

\begin{prop}\label{filU.lem}
\mbox{}
\begin{itemize}
\item[(1)]
Assume $C$ is regular at a closed point $x$ and $x\in \Clam$ for $\lam\in I$. 
Let $F\in \ZXC$ be such that $F\Cap C$ at $x$ and let $k(F)_x$ be the henselization of $k(F)$ at $x$.
Take $\chi\in H^1(U)$ and let $\chi|_{\Klam}\in H^1(\Klam)$ and $\chi|_{F,x} \in H^1(k(F)_x)$ be its restrictions.
For an integer $m\ge 0$, we have an implication:
\begin{equation*}\label{filU.lem.eq}
\chi|_{\Klam}\in \FH {\Klam} m \;\Longrightarrow\; \chi|_{F,x}\in \FH {k(F)_x} m.
\end{equation*}
\item[(2)]
Assume $C=C_\lambda$ is regular and irreducible. Let $T_X$ be the tangent sheaf of $X$.
There is  a dense open subset $V_\chi\subset \mathbb P(T_X |_{C_\lambda})$ (depending on $\chi$)
such that for any integral  $F\in \ZXC$ and for any $x$ with $F\Cap C$ at $x$ the implication
$$
T_F(x) \in V_\chi  \;\Longrightarrow\;    \art_{\Klam}(\chi|_{\Klam})  =   \art_{k(F)_x}(\chi|_{F,x}) 
$$
holds.
\item[(3)]
Assume $C$ is a simple normal crossing divisor in a neighborhood of a closed point $x\in C$.
Let $g:X'=Bl_x(X) \to X$ be the blowup at $x$ and $E\subset X'$ be the exceptional divisor and $K_E$ be the henselization of $K$ at its generic point. 
For a Cartier divisor $D$ supported on $C$ we put
\[
m_E=\underset{\lam\in I_x}{\sum}\;\mlam (D),
\]
where $I_x$ be the set of irreducible components of $C$ containing $x$ and $m_\lambda(D)$ is the multiplicity of $D$ at $\lambda$. 
Then, for 
\[
\chi\in \Ker \big(H^1(U) \to \underset{\lam\in I_x}{\bigoplus}\; H^1(\Klam)/\FH \Klam {m_\lambda(D)}\big),
\] 
we have $\chi|_{K_E}\in \FH {K_E}{m_E}$.
\end{itemize}
\end{prop}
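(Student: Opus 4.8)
\emph{Overall plan.} In all three parts the plan is to reduce to a character $\chi$ of $p$-power order $p^s$ — the prime-to-$p$ part is tame, hence lies in $\fil 1$, and the remaining low-conductor cases follow from Zariski--Nagata purity — to represent $\chi$ near $x$ by an Artin--Schreier--Witt vector, and to compare pole orders along the relevant valuations, using that the defining inequalities of the Brylinski--Kato and Matsuda filtrations on Witt vectors have the shape $p^j v(a_j)\ge -c$. Part \emph{(1)} is the basic such comparison: since $F$ is transversal to $C$ at $x$, a local equation of $C$ restricts to a uniformiser of $\OO_{F,x}$, so $v_{k(F)_x}(a_j|_F)\ge v_{\Klam}(a_j)$ for a representative spread out with poles only along $C$ near $x$, and the filtration bound is thereby inherited. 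Part \emph{(2)} I would deduce from (1) together with the non-degeneracy of the refined Artin conductor (Theorem~\ref{thm.Masuda}): putting $m=\art_{\Klam}(\chi)$, the element $\rsw_{\Klam}(\chi)\in\Omega^1_X(mC_\lambda)\otimes_{\OO_X}k(C_\lambda)$ is nonzero, and by the explicit $F^sd$-formula $\rsw_{k(F)_x}(\chi|_{F,x})$ is its restriction along $F$; this restriction is nonzero exactly when the tangent direction $T_F(x)$ avoids the proper closed subset of $\P(T_X|_{C_\lambda})$ along which $\rsw_{\Klam}(\chi)$ is annihilated by tangent vectors, and that complement is the desired dense open $V_\chi$.

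\emph{Part (3).} After splitting off the prime-to-$p$ part — tame, hence in $\fil 1\subseteq\FH{K_E}{m_E}$ once some $m_\lambda:=m_\lambda(D)\ge1$, while if $\chi$ is unramified along every $\Clam$ with $\lambda\in I_x$ then purity extends it to an \'etale cover past $x$, hence past $E$ — we may assume $\chi$ has order $p^s$ and is ramified along some branch through $x$. Choose regular parameters, among them $(t_\lambda)_{\lambda\in I_x}$ with $C_\lambda=\{t_\lambda=0\}$ near $x$, so that $v_E$ is the monomial valuation assigning weight $1$ to every parameter; then $m_E=\sum_{\lambda\in I_x}m_\lambda$. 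On an affine neighbourhood $U_x\ni x$ in $U$, represent $\chi|_{U_x}$ by $\mathbf a\in W_s(\Gamma(U_x,\OO_X))$, so $\mathbf a$ has poles near $x$ only along the $C_\lambda$. The key step is to modify $\mathbf a$, one branch at a time, so that for every $\lambda\in I_x$ and every component $a_j$ one has $p^j v_\lambda(a_j)\ge -(m_\lambda-1)$ \emph{simultaneously} (when $p^s\mid m_\lambda$ one gets only $-m_\lambda$, which is harmless below). A reduction along $C_\lambda$ subtracts $\wp(\mathbf b)$ where the pertinent component of $\mathbf b$ is a lift to $\Gamma(U_x,\OO_X)$ of a $p$-th root, taken in $k(C_\lambda)$, of a leading pole coefficient of $\mathbf a$; that coefficient has poles inside $k(C_\lambda)$ along the divisors $C_\lambda\cap C_{\lambda'}$ of order at most the $C_{\lambda'}$-pole order of $\mathbf a$, so its $p$-th root has pole order at most a $p$-th of that, and hence $\wp(\mathbf b)=F(\mathbf b)-\mathbf b$ does not increase the $C_{\lambda'}$-pole of $\mathbf a$ for any $\lambda'$. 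Iterating lowers the $C_\lambda$-pole strictly, and cycling through $I_x$ yields the simultaneous bound; that one may push the pole down to $m_\lambda-1$ is exactly the content of $\chi|_{\Klam}\in\FH{\Klam}{m_\lambda}$ through Matsuda's description, the refined Artin conductor being the only obstruction to a further reduction.

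\emph{Conclusion and the main obstacle.} For $f\in\Gamma(U_x,\OO_X)$ with poles near $x$ only along the $C_\lambda$ one has $v_E(f)\ge\sum_{\lambda\in I_x}v_\lambda(f)$: passing to $\widehat{\OO}_{X,x}$ and comparing monomial expansions, $v_E$ counts every exponent with weight $1$ while $v_\lambda$ reads off only the $t_\lambda$-exponent, the rest being $\ge0$. Applying this to each $a_j$ gives $p^j v_E(a_j)\ge\sum_\lambda p^j v_\lambda(a_j)\ge -(m_E-|I_x|)$, so $\mathbf a|_{K_E}\in\fillog{m_E-|I_x|}W_s(K_E)$ and therefore $\chi|_{K_E}\in\FHlog{K_E}{m_E-|I_x|}\subseteq\FH{K_E}{m_E-|I_x|+1}\subseteq\FH{K_E}{m_E}$, using Lemma~\ref{CFT.lem0}(2) and $|I_x|\ge1$. (One could instead first apply (2) to $(X',E)$ — legitimate since $E$ is regular and irreducible near $\eta_E$, where $g^{-1}(C)_{\mathrm{red}}=E$ — to replace $\art_{K_E}(\chi)$ by $\art_{k(F)_x}(\chi|_{F,x})$ for a curve $F$ smooth at $x$ with general tangent direction, hence transversal to every $C_\lambda$, then run the same reduction over $k(F)_x$, whose residue field $\kappa(x)$ is perfect.) The hard part is the key step of Part~(3): producing a \emph{single} Artin--Schreier--Witt representative simultaneously witnessing all the conductor bounds along the branches through $x$ without the reductions along different branches interfering, together with the bookkeeping of logarithmic versus non-logarithmic filtrations and the Verschiebung corrections in Matsuda's filtration (which govern precisely the degenerate case $p^s\mid m_\lambda$). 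This delicate local analysis is the sort of statement the paper isolates as a key lemma in its later sections.
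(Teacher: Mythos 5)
Your skeleton reproduces the strategy of the results the paper actually invokes: the paper's entire proof of this proposition is the citation ``(1) and (2) follow from [Ma, (7.2.1)]; (3) is proved by the same argument as [Ka2, Th.\ (8.1)] using [Ma, Cor.\ 4.2.2]'', and several of your numerical points are exactly right (the inequality $v_E(f)\ge\sum_{\lambda\in I_x}v_\lambda(f)$ for $f$ with poles only along $C$ near $x$, the slack of $|I_x|$, the passage $\fillog{}{}\subset\fil{}{}$ via Lemma~\ref{CFT.lem0}(2)). But there is a genuine gap at the step everything hinges on, in all three parts: the existence near $x$ of a \emph{single} Artin--Schreier--Witt representative of $\chi$ that is regular on $U$ and simultaneously realizes the Matsuda bounds along each branch $C_\lambda$ through $x$. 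The hypothesis $\chi|_{K_\lambda}\in\fil{m_\lambda}H^1(K_\lambda)$ only produces, for each $\lambda$ separately, a representative in $W_s(K_\lambda)$ achieving the bound; it differs from the representative that is regular on $U$ near $x$ by $\wp(\mathbf b_\lambda)$ with $\mathbf b_\lambda\in W_s(K_\lambda)$, whose polar locus can a priori contain new divisors through $x$, so neither representative does both jobs. Your branch-by-branch reduction does not close this: whether a reduction step can be performed with $\mathbf b$ whose leading coefficient is a $p$-th power \emph{in the imperfect field} $k(C_\lambda)$ and which stays regular outside $C$ is governed by the structure of the graded pieces (including the $d\log$-component, where one must solve Artin--Schreier equations in $k(C_\lambda)$, which may be impossible without an extension), and identifying the terminal pole order of such ``clean'' reductions with $\art_{K_\lambda}(\chi)$ is precisely the content of Kato's Th.\ (7.1)/(8.1) and Matsuda's Cor.\ 4.2.2 --- i.e.\ of the theorems being proved. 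Your closing expectation that this step ``is the sort of statement the paper isolates as a key lemma in its later sections'' is also mistaken: the key lemmas of \S\ref{keylemmas} concern the class-group side (three-term relation, increasing order, moving), and the ramification-theoretic input to this proposition is outsourced entirely to [Ka2] and [Ma].

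Two smaller points. In (2), besides (1) you need that $\rsw_{K_\lambda}(\chi)$ is \emph{regular} at $x$ as a section of $\Omega^1_X(mC_\lambda)\otimes k(C_\lambda)$ (true only away from a finite set of points, whose fibres must be excised from $V_\chi$) and the restriction formula $\rsw_{k(F)_x}(\chi|_{F,x})=\rsw_{K_\lambda}(\chi)|_{F,x}$; both are again the content of [Ma, (7.2.1)], not consequences of what you have established. In (3), the claim that the Verschiebung-corrected case ($p^j v_\lambda(a_j)=-m_\lambda$ for low $j$) is ``harmless'' is not verified by your displayed computation, which uses $-(m_\lambda-1)$ throughout; it does work out, but only via the case analysis showing that any index $j$ for which every $\lambda\in I_x$ contributes the weak bound satisfies $j<\min_\lambda\ord_p(m_\lambda)\le\ord_p(m_E)$ and hence falls into the Verschiebung part of $\fil{m_E}W_s(K_E)$ --- an argument you would need to supply.
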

\begin{proof}
(1) and (2) follow from \cite[(7.2.1)]{Ma}. (3) is proved by the same argument as \cite[Th.(8.1)]{Ka} using \cite[Cor.4.2.2]{Ma} instead of 
\cite[Th.(7.1)]{Ka}.
\end{proof}

\medskip

\begin{coro} \label{globalrt.defram}
Assume $C$ is a simple normal crossing divisor.
For $\chi\in H^1(U)$ and a Cartier divisor $D$ supported on $C$, the following are equivalent
\begin{itemize}
\item[(1)] for all generic points $\lambda$ of  $C$ we have $\chi|_{K_\lambda} \in \fil{m_\lambda(D)} H^1(K_\lambda) $,
\item[(2)] for all integral  $Z \in  \ZXC$ 
and $x\in \Zinf$, we have (see Definition \ref{classgroup.def})
\[\chi|_{Z,x} \in \fil{m_x(\psi_Z^* D)} H^1( k(Z)_x )\;.\]
Here $\chi|_{Z,x} \in H^1( k(Z)_x )$ is the restriction of $\chi$ and $m_x$ is the multiplicity at $x$.  
\end{itemize}
\end{coro}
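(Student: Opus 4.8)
The plan is to deduce both implications from Proposition~\ref{filU.lem}, working one generic point of $C$ at a time and localizing the problem so that the relevant curve $Z$ meets $C$ transversally at a single smooth point. First I would observe that condition (2) is a statement about every pair $(Z,x)$ with $Z\in\ZXC$ integral and $x\in\Zinf$, while condition (1) is the special case where $Z$ runs through (the henselizations at) the generic points of $C$ — more precisely, one should pass between the filtration on $H^1(\Klam)$ and the filtration on $H^1(k(Z)_x)$ for curves $Z$. The key input for the direction $(1)\Rightarrow(2)$ is Proposition~\ref{filU.lem}(1): if $C$ is regular at $x$, $x\in\Clam$, and $F\Cap C$ at $x$, then $\chi|_{\Klam}\in\FH{\Klam}{m}$ forces $\chi|_{F,x}\in\FH{k(F)_x}{m}$. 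Since $C$ is a simple normal crossing divisor, at a point $x\in|Z|\cap C$ either exactly one component $\Clam$ of $C$ passes through $x$, in which case $m_x(\psi_Z^\ast D)=\mlam(D)\cdot(Z\cdot\Clam)_x$, or two components $\Clam,\Clam[\mu]$ meet at $x$; the second case is handled by first blowing up $x$ via Proposition~\ref{filU.lem}(3) to separate the components, reducing to the smooth-point case on the blowup.

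For the subtlety of intersection multiplicities $>1$ and of curves $Z$ that are singular or tangent to $C$ at $x$, I would resolve $Z$: choose $g:(\tX,\tC)\to(X,C)$ in $\chBX$ (a composite of blowups) so that the proper transform $\tZ$ of $Z$ is regular and meets $\tC$ transversally, using Lemma~\ref{filtration.lem}'s observation on existence of such $g$ together with Proposition~\ref{filU.lem}(3) to control $\art_{K_E}$ along each exceptional divisor $E$ in terms of the $\mlam(D)$. On $\tX$ the pullback $g^\ast D$ still has $|g^\ast D|\subset\tC$, and the multiplicities $\mlam(g^\ast D)$ for components $\tC[\lam]$ lying over old components are unchanged while for exceptional $E$ they are given by the $m_E$ formula of Proposition~\ref{filU.lem}(3); crucially the henselization $k(Z)_x$ is unaffected since $\tZ\to Z$ is an isomorphism near $x$ (it is finite and birational, $Z$ being a curve), so $\chi|_{Z,x}=\chi|_{\tZ,\tilde x}$. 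Then $(1)$ on $(X,C)$ implies $(1)$ on $(\tX,\tC)$ for the divisor $g^\ast D$ at every generic point of $\tC$, and transversality of $\tZ$ with $\tC$ lets us apply Proposition~\ref{filU.lem}(1) directly to get $\chi|_{\tZ,\tilde x}\in\FH{k(\tZ)_{\tilde x}}{m_{\tilde x}(\psi_{\tZ}^\ast g^\ast D)}$. Finally $m_{\tilde x}(\psi_{\tZ}^\ast g^\ast D)=m_x(\psi_Z^\ast D)$ by the projection formula for pullback of Cartier divisors along $g\circ\psi_{\tZ}=\psi_Z$ (near $x$), which gives (2).

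For the converse $(2)\Rightarrow(1)$ I would simply specialize: given a generic point $\lam$ of $C$, pick a general curve $Z\in\ZXC$ through a general smooth point $x$ of $\Clam$ with $Z\Cap C$ at $x$, arranged moreover so that $T_Z(x)$ lies in the dense open $V_\chi\subset\mathbb P(T_X|_{\Clam})$ of Proposition~\ref{filU.lem}(2); such $Z$ and $x$ exist because $C$ is a simple normal crossing divisor, hence $\Clam$ is smooth away from the other components, and one can produce curves with prescribed tangent direction at a chosen point (e.g.\ by moving in a very ample linear system). For such a choice, $(Z\cdot\Clam)_x=1$ so $m_x(\psi_Z^\ast D)=\mlam(D)$, and Proposition~\ref{filU.lem}(2) gives $\art_{\Klam}(\chi|_{\Klam})=\art_{k(Z)_x}(\chi|_{Z,x})\le\mlam(D)$ using hypothesis (2), i.e.\ $\chi|_{\Klam}\in\FH{\Klam}{\mlam(D)}$. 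I expect the main obstacle to be the bookkeeping in the $(1)\Rightarrow(2)$ direction: verifying that, after the resolving blowups, the hypothesis (1) is genuinely inherited at \emph{all} generic points of $\tC$ — including the exceptional ones — with the correct multiplicities, which is exactly where Proposition~\ref{filU.lem}(3) and the additivity formula $m_E=\sum_{\lam\in I_x}\mlam(D)$ must be combined carefully with the behaviour of $\psi_Z^\ast D$ under blowup.
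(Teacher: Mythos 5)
Your proposal is correct and takes essentially the same route as the paper, whose entire proof is the observation that $Z$ can be resolved by point blowups so that its strict transform becomes smooth and meets the total transform of $C$ transversally, after which Proposition~\ref{filU.lem} (parts (1) and (3) for one implication, part (2) for the converse) applies; your write-up just makes the multiplicity bookkeeping explicit. One cosmetic quibble: $\tZ\to Z$ need not be an isomorphism near $x$ when $Z$ is singular there --- the correct point is that $k(Z)_x$ is by definition attached to the point $x$ of the normalization $Z^N\cong\tZ$, so the identification $\chi|_{Z,x}=\chi|_{\tZ,\tilde x}$ holds anyway.
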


\begin{proof}
The implication (1)$\Rightarrow$(2) follows from Proposition~\ref{filU.lem}(1) and (3) by observing that 
for integral  $Z \in  \ZXC$ there is a chain of blowups in closed points such that the strict transform of $Z$ becomes smooth and such that its intersection with the total transform of $C$ is transversal.
The implication (2)$\Rightarrow$(1) follows from Proposition~\ref{filU.lem}(2).
\end{proof}

For general $X$ and $C$, not necessarily of normal crossing, we make the following definition.

\begin{defi} \label{gloram.def}
 For $D\in \Div(X)^+$ with support in $C$ 
we define $\fil D H^1(U)$ to be the subgroup of $\chi\in H^1(U)$ satisfying property (2)
in Corollary~\ref{globalrt.defram}.
Define
\begin{equation}\label{piabXD}
\piabXD=\Hom(\FH U D,\qz),
\end{equation}
endowed with the usual pro-finite topology of the dual.
\end{defi}

One should think of $\piabXD$ as the quotient of $\pi_1^{ab}(U)$ classifying abelian \'etale coverings of $U$
with ramification bounded by $D$.

\begin{prop}\label{grt.exhaust}
The filtration $\fil D H^1(U)$ is exhaustive, i.e.\ 
\[
\bigcup_D \fil D H^1(U) = H^1(U),
\]
where $D \in \Div(X)^+$ runs through all divisors with support in $C$. 
\end{prop}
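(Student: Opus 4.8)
The plan is to split a given $\chi\in H^1(U)=H^1(U,\Q/\Z)$, which has finite order, into its prime-to-$p$ part $\chi_{p'}$ and its $p$-primary part $\chi_p$, and to bound the ramification of each along all curves. By Definition~\ref{gloram.def} it suffices, for each summand $\eta$, to produce an effective Cartier divisor $E$ with $|E|\subset C$ so that $\art_y(\eta|_{Z,y})\le m_y(\psi_Z^*E)$ for every integral $Z\in\ZXC$ and every $y\in\Zinf$; since the local ramification filtrations are increasing and each term is a subgroup, $\chi$ then lies in $\fil{E_1+E_2}H^1(U)$. Fix an effective Cartier divisor $C_0$ with $|C_0|=C$. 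For $\chi_{p'}$ I would take $E=C_0$: for such $Z$ and $y$ the character $\chi_{p'}|_{Z,y}$ has order prime to $p$ while $k(Z)_y$ has residue characteristic $p$, so it is at most tamely ramified and $\art_y(\chi_{p'}|_{Z,y})\le1$; and since $Z\not\subset C$ and $\psi_Z(y)\in C$, the divisor $\psi_Z^*C_0$ has multiplicity $\ge1$ at $y$.

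For $\chi_p$ a bound that is \emph{uniform} over all curves is needed, which is not visible on $X$ directly, so I would first descend to the simple normal crossing case. By de Jong's alteration theorem choose $f\colon X'\to X$ with $X'$ smooth projective and $C':=f^{-1}(C)_{\rm red}$ a simple normal crossing divisor; set $U'=X'\setminus C'=f^{-1}(U)$ and $\eta':=f^*\chi_p\in H^1(U')$. Each conductor $a_\mu:=\art_{K'_{\mu}}(\eta'|_{K'_{\mu}})$ at a generic point $\mu$ of $C'$ is finite, so $\eta'\in\fil{D'}H^1(U')$ with $D':=\sum_\mu a_\mu C'_\mu$ by Corollary~\ref{globalrt.defram}. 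Fix also an effective Cartier divisor $D$ on $X$ with $|D|\subset C$ and $f^*D\ge D'$ (e.g.\ $D=(\max_\mu a_\mu)C_0$).

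It then remains to transfer the bound on $U'$ back to $U$. Given an integral $Z\in\ZXC$ with generic point $\eta_Z$, the fibre $X'_{\eta_Z}=X'\times_X\Spec k(Z)$ is a non-empty projective $k(Z)$-scheme whose degree, for a fixed projective embedding of $X'$ over $X$, is bounded by a constant $d_0$ independent of $Z$ (generic flatness leaves only finitely many Hilbert polynomials among the fibres of $f$); it thus has a closed point of residue degree $\le d_0$ over $k(Z)$, whose closure $Z'$ in $X'$ is an integral curve not contained in $C'$ with $[k(Z'):k(Z)]\le d_0$. The curve $W:=(Z')^N$ maps finitely to $Z^N$ by some $\rho$ of degree $\le d_0$, and its map $\psi_{Z'}$ to $X'$ satisfies $f\circ\psi_{Z'}=\psi_Z\circ\rho$. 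For $y\in\Zinf$ pick $w\in W$ over $y$; then $\psi_{Z'}(w)\in C'$, the restriction of $\chi_p|_{Z,y}$ to $k(W)_w$ is $\eta'|_{Z',w}$, and an elementary comparison of Artin conductors along the finite extension $k(Z)_y\subset k(W)_w$ — Swan conductors do not decrease under extension, and the discrepancy is bounded by the ramification index, hence by $d_0$ — together with $\eta'\in\fil{D'}H^1(U')$ gives
\[
\art_y(\chi_p|_{Z,y})\ \le\ d_0\bigl(\art_w(\eta'|_{Z',w})+1\bigr)\ \le\ d_0\bigl(m_w(\psi_{Z'}^*D')+1\bigr).
\]
Since $\psi_{Z'}^*D'\le\psi_{Z'}^*f^*D=\rho^*\psi_Z^*D$, hence $m_w(\psi_{Z'}^*D')\le d_0\,m_y(\psi_Z^*D)$, and $m_y(\psi_Z^*C_0)\ge1$, the right-hand side is at most $m_y(\psi_Z^*\tilde D)$ for the fixed effective Cartier divisor $\tilde D:=d_0^{2}(D+C_0)$, which is supported on $C$ and independent of $Z$ and $y$. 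Thus $\chi_p\in\fil{\tilde D}H^1(U)$, and with the first paragraph $\chi\in\fil{\tilde D+C_0}H^1(U)$, which proves exhaustiveness.

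The step I expect to be the main obstacle is this transfer: once one is in the simple normal crossing situation on $X'$, re-expressing the ramification bound in terms of the geometry of $X$ forces one both to control uniformly the degree of the auxiliary curve $Z'\subset X'$ attached to a given $Z\subset U$, and to compare Artin conductors along the resulting finite — possibly wildly ramified, even inseparable — extension $k(Z)_y\subset k(W)_w$ of henselian discrete valuation fields. Both are routine as soon as one allows enlarging the bounding divisor, but carrying out the estimates carefully is the technical heart of the argument.
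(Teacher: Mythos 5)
The paper does not give its own argument for this proposition --- it refers to \cite[Sec.\ 3.3]{EK} --- so there is nothing internal to compare with; judging your proof on its merits, the treatment of the prime-to-$p$ part and the use of Corollary~\ref{globalrt.defram} on the simple normal crossing model $X'$ are fine, but the transfer step back to $X$ contains a genuine gap. The asserted inequality $\art_y(\chi_p|_{Z,y})\le d_0\bigl(\art_w(\eta'|_{Z',w})+1\bigr)$ does not follow from the degree bound $[k(W)_w:k(Z)_y]\le d_0$, and the slogan ``Swan conductors do not decrease under extension, with discrepancy bounded by the ramification index'' is false. For a finite separable extension $L/K$ of such local fields one has $G_L^{(v)}=G_L\cap G_K^{(\varphi_{L/K}(v))}$, whence the bound in the direction you need is
\[
\mathrm{Sw}_K(\chi)\ \le\ \max\bigl(b(L/K),\ \mathrm{Sw}_L(\chi|_L)\bigr),
\]
where $b(L/K)$ is the largest upper ramification break of (the Galois closure of) $L/K$. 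Restriction can even kill the conductor outright: if $L$ contains the Artin--Schreier extension cut out by $\chi$, then $\mathrm{Sw}_L(\chi|_L)=0$ while $[L:K]=p$ and $\mathrm{Sw}_K(\chi)$ is arbitrarily large. So to descend a conductor bound from $k(W)_w$ to $k(Z)_y$ you must control the ramification (the largest break, equivalently the different) of that extension, not merely its degree.

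That control is exactly what your construction cannot provide. The extension $k(W)_w/k(Z)_y$ is a local factor of the fibre of the fixed alteration $f$ over the branch $\psi_Z$ at $y$, and its different is governed by the contact order of $Z$ at $x=\psi_Z(y)$ with the branch divisor of $f$; that divisor is \emph{not} supported on $C$, and a curve $Z$ may meet $C$ transversally at $x$ while being arbitrarily tangent there to a branch component through $x$. Then $m_y(\psi_Z^*C_0)=1$ but $b\bigl(k(W)_w/k(Z)_y\bigr)$ is unbounded as $Z$ varies, and no divisor $\tilde D$ supported on $C$ can absorb the error term. This is the missing idea: descending bounded ramification along an alteration is not an elementary conductor comparison. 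A route that works is to handle $\chi_p$ directly via Artin--Schreier--Witt theory: writing $\chi_p=\delta_s(a)$ with $a\in W_s(k(X))$ chosen (locally near each point of $C$, using an affine cover of $U$) to have poles along $C$ bounded by $N\,C_0$, the Brylinski--Kato estimate gives $\art_y(\chi_p|_{Z,y})\le N\,m_y(\psi_Z^*C_0)+1$ for every curve $Z$ and every $y\in\Zinf$ at once, with no alteration and no comparison of local extensions; this is in the spirit of the argument in \cite[Sec.\ 3.3]{EK}. Alternatively, if $X$ admits an honest modification (an isomorphism over $U$) with snc boundary, one can pull $\tilde D$ back, since then the curves and their normalizations do not change --- but that is resolution of singularities, not de Jong.
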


A proof can be found in \cite[Sec.\ 3.3]{EK}.


\section{Existence Theorem}\label{CFT}

\bigskip

In this section $k$ is assume to be finite.
Let $U$ be a smooth variety over $k$.
Choose a compactification $U\subset X$ with $X$ normal and proper over $k$ such that the
reduced subscheme $C=X\setminus U$ of $X$ is the support of an effective Cartier divisor on $X$. Put $K=k(X)$. 
In \S \ref{classgroup} we defined the relative Chow group of zero cycles $\CXD$, where $D\in
\Div(X)^+$ is a Cartier divisor with support in $C$. We endow this relative Chow group with the
discrete topology. We endow the group
\[
\C(U)=\varprojlim_D \CXD
\]
with the inverse limit topology. Here $D$ runs through all effective Cartier divisors on
$X$ with support in $C$.

\begin{lem}
The topological group $\C(U)$ does not depend on the choice of the compactification
$X$ of $U$.
\end{lem}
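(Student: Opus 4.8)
The plan is to compare the class groups $\C(U)$ built from two compactifications by dominating both with a third. Suppose $U \subset X_1$ and $U \subset X_2$ are two compactifications, each normal and proper over $k$ with $X_i \setminus U$ the support of an effective Cartier divisor $C_i$. First I would take the closure of $U$ in $X_1 \times_k X_2$ and normalize it, or more simply invoke a resolution/alteration argument to produce a normal proper $X_3$ with $U \subset X_3$ admitting proper morphisms $f_i : X_3 \to X_i$ restricting to the identity on $U$; one checks $X_3 \setminus U$ is again the support of an effective Cartier divisor (or passes to such an $X_3$ using that $C_i$ is Cartier and pulling back). The morphisms $f_i$ are proper and birational.

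Next I would use the functoriality of the Wiesend class group and of the filtrations. By Lemma~\ref{hFWnorm}, for an effective Cartier divisor $D$ on $X_i$ with $|D| \subset C_i$ and $D_3 = f_i^* D$, the pushforward $f_{i*} : \WU \to \WU$ (which on the Wiesend group is the identity, since $\WU$ depends only on $U$) sends $\hat F^{(1)}W(X_3, C_3)_{D_3}$ into $\hat F^{(1)}W(X_i,C_i)_{D}$, hence induces a surjection $\C(X_3, f_i^* D) \twoheadrightarrow \C(X_i, D)$. I would then argue the reverse inclusion of filtration subgroups as well: every curve $Z \subset U$ has a well-defined normalization $Z^N$ independent of the compactification — more precisely, $Z^N_{(3)} \to Z^N_{(i)}$ is an isomorphism of smooth compactifications of $\tilde Z$, and the infinite places $Z_\infty$ and the local rings $\cO_{Z^N,y}$ are intrinsic — so the congruence conditions defining $k(Z)^\times_D$ depend only on the divisor that $D$ cuts out on $Z^N$ via $\psi_Z$. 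Consequently $\hat F^{(1)}W(X_i,C_i)_D = \hat F^{(1)}W(X_3,C_3)_{D'}$ whenever $D$ and $D'$ pull back to the same divisor on every $Z^N$; in particular $f_{i*}$ identifies $\C(X_3, f_i^*D)$ with $\C(X_i,D)$ when the multiplicities of $D$ are chosen large enough relative to the exceptional locus. This gives, after passing to the limit, a continuous isomorphism $\C(U)_{X_3} \xrightarrow{\sim} \C(U)_{X_i}$ for $i = 1, 2$, and composing yields $\C(U)_{X_1} \cong \C(U)_{X_2}$.

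The key point making the limits match is cofinality: the effective Cartier divisors $f_i^* D$ (for $D$ on $X_i$) need not be cofinal among all divisors on $X_3$ supported on $C_3$, but every divisor $D_3$ on $X_3$ supported on $C_3$ is dominated, for the purposes of the filtration on $\WU$, by some $f_i^* D$ — because the extra components of $C_3$ over $C_i$ are $f_i$-exceptional, so imposing congruences along them only shrinks $\hat F^{(1)}W$ by a subgroup that is already killed once $D$ is large, using Lemma~\ref{filtration.lem}-type blow-up invariance. I expect this cofinality/exceptional-divisor bookkeeping to be the main obstacle: one must check carefully that the inverse system $\{\C(X_3, D_3)\}$ indexed by all $D_3$ has the same limit as the subsystem indexed by pullbacks from $X_i$, which amounts to showing that for each $D_3$ there is a $D$ on $X_i$ with $\hat F^{(1)}W(X_3,C_3)_{f_i^*D} \subset \hat F^{(1)}W(X_3,C_3)_{D_3}$, i.e.\ that congruence conditions coming from $X_i$ are eventually stronger than any fixed congruence condition on $X_3$. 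This follows from $Z_\infty$ and the semilocal ring $\cO^h_{Z^N, C\cap Z}$ being intrinsic to $U$ together with the properness of $f_i$, but the argument must be written with care for points of $C_3$ lying over the smooth locus versus the singular locus of $C_i$.
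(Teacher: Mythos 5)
Your proposal is correct and follows essentially the same route as the paper: dominate both compactifications by a third (the paper uses the normalization of the closure of the diagonal in $X_1\times_k X_2$), observe that the pushforward identifies $\C(X_3,f_i^*D)$ with $\C(X_i,D)$ because the congruence conditions are read off on the intrinsic normalizations $Z^N$, and conclude by cofinality of the pulled-back divisors. The only superfluous element is your hedge that the identification requires the multiplicities of $D$ to be large: it holds for every $D$, since $f_i^*D$ and $D$ cut out the same divisor on each $Z^N$, and cofinality itself is immediate from $f_i^*(nE_i)\ge D_3$ for $n\gg 0$, where $E_i$ is a Cartier divisor with support $X_i\setminus U$.
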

\begin{proof}
Let us write $\C(U\subset X)$ for the class group relative to the compactification $X$ in
the following.
Assume $U\subset X_1$ and $U\subset X_2$ are two compactifications. 
Considering the normalization of the Zariski closure of the diagonal $U\to X_1 \times_k X_2 $, we may assume that
there is a morphism $f:X_2\to X_1$ which is the identity on $U$. It is then sufficient to show that 
the pushforward map \eqref{eq.Wnorm}
\begin{equation}\label{cft.compeq} 
 f_* \;:\; \C(U\subset X_2)\to \C(U\subset X_1)
\end{equation}
is an isomorphism. For an effective Cartier divisor $D$ on $X_1$ with support in $X_1 \setminus U$, one easily see that
$f_* \;:\;  \C(X_2,f^* D ) \rightarrow  \C (X_1 , D) $ is an isomorphism (see Definition \ref{filtration.def0}(2)).
As the divisors $f^* D$ are cofinal in the system of all divisors on $X_2$ with support
in $X_2\setminus U$, the isomorphy of \eqref{cft.compeq} follows. 
\end{proof}
\bigskip

In fact it is also clear from the proof that $U \mapsto \C(U)$ is a covariant functor from the category
of smooth
varieties over $k$ to the category of topological abelian groups.

\begin{prop}\label{cft.recmap}
There is a unique continuous reciprocity homomorphism $\rho_U$ making the diagram 
\[
\xymatrix{
Z_0(U)  \ar[r]  \ar[rd] & \C(U)  \ar[d]^{\rho_U}  \\
 &  \pi_1^\ab(U)
}
\]
commutative. Here the diagonal arrow is induced by the Frobenius homomorphisms $\Frob_x :
\Z \to \pi_1^\ab(U)$ for closed points $x\in U$. Moreover, $\rho_U$ induces a
homomorphism
\[
\rho_{X,D} :\C(X,D) \to \pi_1^\ab(X,D).
\]
\end{prop}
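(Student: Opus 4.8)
The plan is to construct $\rho_U$ by first building a map at the level of the Wiesend class group $\WU$ and then showing it factors through $\CXD$ for every $D$. First I would note that any closed point $x \in U$ gives a Frobenius element $\Frob_x \in \piab U$, determined up to conjugacy but canonical in the abelian quotient; this defines $Z_0(U) \to \piab U$. For an integral curve $Z \in \ZXC$ and a point $y \in \Zinf$, the henselian local field $k(Z)_y$ has a local reciprocity map from classical local class field theory (the residue field is finite), giving $k(Z)_y^\times \to \piab {k(Z)_y} \to \piab {\tilde Z} \to \piab U$, where the last arrow comes from the natural map $\tilde Z = Z^N \setminus \Zinf \to U$ (after passing to the generic point of $Z$ inside $U$). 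This assembles to a homomorphism from $\bigoplus_Z k(Z)_\infty^\times \oplus Z_0(U) \to \piab U$; the compatibility of local and global reciprocity along the curve $\tilde Z$ (i.e. the sum of local reciprocity maps over all places of the global field $k(Z)$, including those over $U$, kills the image of $k(Z)^\times$) shows this descends to $\WU$, giving $\rho_U^W : \WU \to \piab U$.

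Next I would show $\rho_U^W$ kills the subgroup $\hFXCC D$, so that it factors through $\CXD = \WU/\hFXCC D$ and hence, after taking the inverse limit over $D$, defines the continuous map $\rho_U : \C(U) \to \piab U$. The point is that $\hFXCC D$ is generated by symbols $\sym{1+I_D\cO_{Z^N,C\cap Z}^h}{Z}$, and a unit in $1 + I_D\cO_{Z^N,y}$ at a point $y \in \Zinf$ lands, under the local reciprocity map $k(Z)_y^\times \to \piab {\tilde Z}$, in the subgroup classifying coverings of $\tilde Z$ whose ramification at $y$ is bounded by $m_y(\psi_Z^* D)$ — but by Corollary~\ref{globalrt.defram} and Definition~\ref{gloram.def}, a character $\chi \in \FH U D$ restricts on every such $\tilde Z$ to something with exactly that ramification bound, so pairing $\chi$ with $\rho_U^W$ of such a symbol gives zero. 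Thus $\rho_U^W(\hFXCC D)$ annihilates $\FH U D = \piabXD^\vee$, which is precisely the statement that $\rho_U^W$ induces $\rho_{X,D} : \CXD \to \piabXD$; passing to the limit gives the factorization $\C(U) \to \piab U = \varprojlim_D \piabXD$.

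Finally, for the commutativity of the triangle and uniqueness: commutativity with the diagonal $Z_0(U) \to \piab U$ is built into the construction (the $Z_0(U)$-component of $\rho_U^W$ is exactly the Frobenius map), and uniqueness follows because the image of $Z_0(U)$ in $\C(U)$ is dense — indeed $\CXD$ is by definition a quotient of $Z_0(U)$, so $Z_0(U) \to \CXD$ is surjective for every $D$, hence $Z_0(U) \to \C(U)$ has dense image, and a continuous homomorphism to the profinite group $\piab U$ is determined by its values on a dense subgroup. Continuity of $\rho_U$ is immediate since it is a limit of maps out of discrete groups into finite groups $\piabXD$.

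The main obstacle I expect is the verification that $\rho_U^W$ is well-defined on $\WU$, i.e.\ the reciprocity/sum formula along each curve $\tilde Z$ that forces the relation $\delta(\bigoplus_Z k(Z)^\times) \mapsto 0$. This is essentially the compatibility of the local reciprocity maps at the places over $\Zinf$ with the global class field theory of the curve $Z^N$ over $k$, together with the fact that the places over $U$ contribute Frobenius classes already accounted for in the $Z_0(U)$-term; making this bookkeeping precise — especially matching the sign conventions and the passage from $\piab {Z^N}$ through $\piab {\tilde Z}$ to $\piab U$ — is the delicate part, whereas the factorization through $\hFXCC D$ is then a comparatively formal consequence of the ramification-theoretic results recalled in \S\ref{RTglobal}.
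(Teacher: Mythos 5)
Your proposal is correct and follows essentially the same route as the paper, which simply cites Wiesend and Kerz--Schmidt for the construction of $r_U:\WU\to\piab U$ and then deduces the factorization from the ramification properties of classical local class field theory together with the exhaustiveness of the filtration $\fil D H^1(U)$ (Proposition~\ref{grt.exhaust}). One small imprecision: $\rho_U^W$ does not literally kill $\hFXCC D$ (its image is merely annihilated by $\fil D H^1(U)$, i.e.\ lies in $\Ker(\piab U\to\piabXD)$), but your subsequent argument states and uses exactly this corrected form, yielding $\rho_{X,D}:\CXD\to\piabXD$ and then $\rho_U$ by passing to the limit.
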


Recall that the pro-finite fundamental group  $\pi_1^\ab(X,D)$ classifies abelian
\'etale coverings of $U$ with ramification over $C$ bounded by the divisor $D$, see Definition~\ref{gloram.def}. 
In what follows, for a topological abelian group $M$, we write 
\[M^\vee=\Hom_\cont(M,\qz),\] 
where we endow $\qz$ with the discrete topology. 
\medbreak

\begin{proof}[Proof of Proposition \ref{cft.recmap}] 
In \cite{Wi}, \cite{KeSc} a continuous reciprocity homomorphism $r_U:\W(U)\to \pi_1^\ab(U)$ is constructed.
In order to accomplish the proof of the proposition we need some ramification theory. It is
sufficient to show that for any character 
\[
\chi \in (\pi_1^\ab(U))^\vee \cong H^1(U)
\]
there is a divisor $D\in \Div(X)^+$ with support in $C$ such that $r_U^*\chi \in
\Hom(W(U)\to \Q/\Z) $ factors through $\C(X,D)$. In view of Definition \ref{gloram.def},
ramification properties of classical local class field theory (see \cite[Sec.\ XV.2]{Se}) imply
that the map $r_U$ induces a map
\[
\Psi_{X,D} : \fil D H^1(U) \longrightarrow \C(X,D)^\vee .
\]
Finally, the proposition follows from Proposition \ref{grt.exhaust}. 
\end{proof}

Define topological groups $\C(U)^0$ and $\pi_1^\ab(U)^0$ as kernels in the commutative diagram
\begin{equation}
\xymatrix{
 0\ar[r] & \C(U)^0  \ar[r] \ar[d] & \C(U)    \ar[d]^{\rho_U}   \ar[r]^{f_*} &  \C(\Spec k)\ar[d]^{\rho_k} \\  
0\ar[r] & \pi_1^\ab(U)^0 \ar[r]  &  \pi_1^\ab(U) \ar[r]^{f_*} &  \piab {\Spec k}
}
\end{equation}
where $f:U \to \Spec k$ is the natural morphism.
Note that $ \C(\Spec k)=\bZ$ and $\rho_k$ maps $1\in\bZ$ to the Frobenius over $k$. Let
\[
 \rho^0_U\;:\; \C(U)^0  \to \pi_1^\ab(U)^0
\]
be the induced map.
\medbreak

Our main theorem says:

\begin{theo}[Existence Theorem] \label{CFT.mainthm}
Over a finite field $k$ with $\ch( k)\ne 2$, $\rho^0_U$ is an isomorphism of topological groups.
\end{theo}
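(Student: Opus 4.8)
\textbf{Proof plan for Theorem \ref{CFT.mainthm}.}
The plan is to combine a d\'evissage in the divisor $D$ with the cycle conductor of \eqref{eq.cycleconductor} and a reduction-of-dimension argument, so that the statement over an arbitrary finite field follows from the two-dimensional case with simple normal crossings, which in turn is resolved by the exact sequence defining $\phi_{X,D}$ together with classical (tame) higher-dimensional class field theory. First I would record the formal reductions. Passing to the projective system over all $D$, it suffices to prove that $\rho_{X,D}^0\colon \CXD^0 \to \piabXD^0$ is an isomorphism of finite groups for every effective Cartier divisor $D$ supported on $C$; taking Pontryagin duals, and using that $\piabXD = \Hom(\FH U D,\qz)$ by Definition \ref{gloram.def}, this is equivalent to showing that the dual map $\Psi_{X,D}\colon \FH U D \to (\CXD)^\vee$ of Proposition \ref{cft.recmap} is an isomorphism after restricting to the degree-zero parts. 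So the whole theorem becomes the statement: for all $(X,C)$ with $k$ finite, $\ch(k)\neq 2$, and all $D$, the map $\Psi_{X,D}$ is bijective on the subgroups killed by the degree.

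Next I would carry out the geometric reduction, following Wiesend. Using the pushforward compatibility of Lemma \ref{hFWnorm} together with a trace/transfer argument on $H^1$, an alteration $f\colon X'\to X$ (de Jong) and a suitable smooth open $U'\subset f^{-1}(U)$ let one replace $(X,C)$ by $(X',C')$; combined with the Lefschetz theorem for $\piabXD$ of \cite{KeSLef}, this reduces the problem to the case where $X$ is a smooth projective surface and $C=X\setminus U$ is a simple normal crossing divisor, exactly the setting of \S\ref{classgroup} from Definition \ref{Cartierdiv.def} onward. I would then induct on the total multiplicity of $D$. The base case is the tame case $D=C$ (or $D$ reduced), where $\CXD/(\text{divisibility})$ is essentially $H_0^{\sing}(U,\bZ)$ by Remark \ref{rem.Suslinhom}, and the assertion is the known tame higher-dimensional class field theory over finite fields (\cite{SS}, \cite{KeSc}). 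For the inductive step with $D\geq 2C$, I would use the cycle conductor: the commutative square in the introduction relates $\Psi_{X,D}$ to the refined Artin conductor $\rsw_{\Klam}$ of Kato--Matsuda (Theorem \ref{thm.Masuda}) via the maps $\rswM_{X,D}$ and $\phi_{X,D}$, and the key exact sequence
\[
H^1(C,\Omega^1_X(-D+C-\Xi)\otimes_{\cO_X}\cO_C) \xrightarrow{\ \phi_{X,D}\ } \CXD \to \C(X,D-C)\to 0
\]
(Theorem \ref{keythm}) identifies the ``associated graded'' piece of $\CXD$ over $\C(X,D-C)$ with a coherent cohomology group. Dually, $\Ker(\rswM_{X,D})=\C(X,D-C)^\vee$. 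A diagram chase then shows that $\Psi_{X,D}$ is an isomorphism on degree-zero parts provided (i) $\Psi_{X,D-C}$ is, which is the inductive hypothesis, and (ii) the induced map $\gr^D\FH U D \to \coker\big(\C(X,D-C)^\vee \hookrightarrow \CXD^\vee\big)$ is an isomorphism; the latter is precisely the compatibility of $\phi_{X,D}$ with $\rsw$ from the second key theorem, together with the surjectivity/injectivity of $\rsw_{\Klam}$ and of $\phi_{X,D}$ in the coherent setting. The exhaustiveness of the filtration $\fil\bullet H^1(U)$ (Proposition \ref{grt.exhaust}) guarantees that letting $D$ grow recovers all of $\pi_1^\ab(U)^0$, so no character is missed.

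The main obstacle is step (ii): establishing that the cycle conductor $\rswM_{X,D}$ (equivalently its predual $\phi_{X,D}$) is \emph{well-defined} and \emph{precisely matches} the refined Artin conductor, i.e.\ that the local components assembled in \S4 descend to a map on $\CXD^\vee$ independent of the chosen local parameters and that the displayed square genuinely commutes. This is exactly where the technical key lemmas of \S6 (proved over \S9--\S13 using the relation-producing tool of Lemma \ref{classgroup.lem1} via symbols in $\KM 2{k(X)}$) are needed, and where the restriction $\ch(k)\neq 2$ enters. Granting those lemmas, the surjectivity of $\Psi_{X,D}^0$ follows from surjectivity of $\rsw_{\Klam}$ in the perfect-residue-field case (Theorem \ref{thm.Masuda}(2)) applied to the generic points of $C$ plus the coherent exact sequence, and injectivity follows from $\Ker(\rswM_{X,D})=\C(X,D-C)^\vee$ together with the injectivity of $\rsw_{\Klam}$ and induction; the finiteness of all groups involved is then automatic, reproving the rank-one case of Deligne's finiteness theorem as in Corollary IV.
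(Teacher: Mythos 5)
Your overall architecture (Wiesend's trick plus de Jong to reach a smooth projective surface with simple normal crossing boundary, the Lefschetz theorem of \cite{KeSLef} to reduce to $\dim X=2$, the tame case as base case, and an induction on the modulus driven by the cycle conductor and Theorems \ref{keythm}--\ref{keythm2}) is the paper's. But your inductive step (ii) has a genuine gap exactly where you invoke ``surjectivity of $\rsw_{\Klam}$ in the perfect-residue-field case (Theorem \ref{thm.Masuda}(2)) applied to the generic points of $C$.'' For a surface $X$ over the finite field $k$, the residue field of $\Klam$ is the function field $k(\Clam)$ of a curve, which is \emph{imperfect}; Theorem \ref{thm.Masuda}(2) does not apply, and $\rsw_{\Klam}\colon \gr{m}H^1(\Klam)\hookrightarrow \gr m\Omega^1_{\Klam}$ is in general far from surjective. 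Consequently you cannot conclude that every element of $\coker\big(\C(X,D-C)^\vee\to \CXD^\vee\big)$, which Theorem \ref{keythm} identifies with (the dual of) a coherent cohomology group, is hit by a character coming from $\fil D H^1(U)$. A second, related point you elide: to match $\gr$ of the global filtration with $\bigoplus_\lam \gr{\mlam}H^1(\Klam)$ one must kill the boundary map into $H^2(X,\qz)$ on graded pieces; over a finite field this group can have $p$-torsion, so the localization sequence does not split off the graded pieces for free.

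The paper repairs both defects by a fibration argument that your proposal omits. After the reduction to a surface, one chooses (using Lemma \ref{cft.trick} again and \cite[XI, Prop.\ 3.3]{SGA4}) a fibration $f\colon X\to S$ over a smooth proper curve with a section, passes to the geometric generic fiber $X_{\bar\eta}$ over $\overline{k(S)}$, and works with the direct limit over pairs $\Sigma=(T,\theta)$ of pointed covers of the base, so that $\fil m H^1(U_{\bar\eta})=\varinjlim_\Sigma \fil{mC+\theta}H^1(U_\Sigma)$. In this limit the relevant local fields $K_\lam^-$ have algebraically closed (hence perfect) residue fields, so $\rsw_\lam$ \emph{is} surjective onto $\Omega^1_{X_{\bar\eta}}(mC)\otimes k(C_\lam)$, and $H^2(X_{\bar\eta},\qz)$ has no $p$-torsion by \cite[X, Thm.\ 5.1]{SGA4}, which forces the boundary $\iota$ to vanish on graded pieces and yields the exact sequence \eqref{pf.lex1}. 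Only then does the diagram chase you describe go through, proving surjectivity of $\Psi^{(m)}_{\bar\eta}$ by induction on $m$ and hence property ${\bf I}_U$; injectivity, by the way, is obtained from Chebotarev density rather than from injectivity of the conductor maps. Without this passage to the geometric generic fiber your d\'evissage stalls at the first wild step.
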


\begin{coro}\label{CFT.coro} Assume $\ch(k)\ne 2$.
For an effective divisor $D\in \Div(X)^+$ with support in $C$, 
$\rho^0_U$ induces an isomorphism of finite groups
\[
\rho_{X,D}\;:\; \CXD^0 \xrightarrow{\sim} \piabXD^0.
\]
\end{coro}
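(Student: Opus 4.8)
The plan is to deduce this corollary from the Existence Theorem (Theorem \ref{CFT.mainthm}) by passing to quotients of the pro-finite groups $\C(U)^0$ and $\pi_1^\ab(U)^0$ compatibly with the divisor filtration. First I would observe that, by the very definition $\C(U) = \varprojlim_D \CXD$ with the inverse-limit topology and each $\CXD$ discrete, the quotient map $\C(U) \to \CXD$ is continuous and its kernel is an open subgroup; restricting to degree-$0$ parts gives a continuous surjection $\C(U)^0 \twoheadrightarrow \CXD^0$. Dually, $\piabXD = \Hom(\FH U D, \qz)$ sits inside $\piab U = \Hom(H^1(U), \qz)$ as the quotient dual to the inclusion $\FH U D \hookrightarrow H^1(U)$, and by Proposition \ref{grt.exhaust} the filtration $\fil D H^1(U)$ is exhaustive, so $\piab U = \varprojlim_D \piabXD$; again restricting to the degree-$0$ part gives a continuous surjection $\piab U^0 \twoheadrightarrow \piabXD^0$. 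Proposition \ref{cft.recmap} already supplies the map $\rho_{X,D}$ and the commutativity of the square relating $\rho_U^0$ to $\rho_{X,D}$, so it only remains to check that $\rho_U^0$ being an isomorphism forces each $\rho_{X,D}$ to be one.

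The key step is to identify the kernel of $\C(U)^0 \to \CXD^0$ with the preimage under $\rho_U^0$ of the kernel of $\piab U^0 \to \piabXD^0$. Concretely: a class $\alpha \in \C(U)^0$ dies in $\CXD^0$ precisely when it lies in the closure of the subgroup generated by the filtration step $\hFXCC{D}$ (more precisely, in the kernel of the projection to $\WU/\hFXCC{D}$), and $\rho_U^0(\alpha)$ dies in $\piabXD^0$ precisely when $r_U^*\chi$ kills $\alpha$ for every $\chi \in \fil D H^1(U)$. That these two conditions coincide is exactly the content of the compatibility already recorded in the proof of Proposition \ref{cft.recmap}, namely that $r_U$ induces $\Psi_{X,D}\colon \fil D H^1(U) \to \CXD^\vee$ and that this $\Psi_{X,D}$ is, by classical local class field theory (see \cite[Sec.\ XV.2]{Se}), precisely the transpose of $\rho_{X,D}$. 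Granting this, since $\rho_U^0$ is an isomorphism, it carries the kernel of $\C(U)^0 \to \CXD^0$ isomorphically onto the kernel of $\piab U^0 \to \piabXD^0$, and hence descends to an isomorphism on the quotients, which is $\rho_{X,D}$ restricted to degree-$0$ parts.

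For the finiteness assertion, I would argue that $\piabXD^0$ is finite by standard facts: $\piabXD$ is a quotient of $\piab X$ for $X$ proper over the finite field $k$, and the theory of \cite{EK} (or the reduction to a smooth surface carried out in \S\ref{CFT}, together with the boundedness of ramification built into $\fil D H^1(U)$) shows $\FH U D$ is finite, whence its dual $\piabXD$ is finite and so is the subgroup $\piabXD^0$. Since $\rho_{X,D}^0$ is an isomorphism of topological groups and the target is finite discrete, $\CXD^0$ is finite as well. The main obstacle I anticipate is not any deep input — all the hard work is in Theorem \ref{CFT.mainthm} — but rather a careful bookkeeping of the duality: one must verify that the inverse limit $\varprojlim_D \CXD^\vee = \C(U)^\vee$ is compatible with the identification $\varinjlim_D \FH U D = H^1(U)$, i.e.\ that taking Pontryagin duals interchanges the defining limit for $\C(U)$ with the exhaustive union in Proposition \ref{grt.exhaust}, and that this interchange is compatible on degree-$0$ parts with the degree and augmentation maps. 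This is routine given that each $\CXD$ is finite (after restricting to the degree-$0$ part it is, by the finiteness just discussed applied in reverse, or directly from Deligne's finiteness theorem), so Pontryagin duality is exact and commutes with the relevant limits.
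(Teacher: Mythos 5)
Your overall strategy coincides with the paper's (whose proof of this corollary is a one-line reduction to Theorem \ref{CFT.mainthm} via Definition \ref{gloram.def} and the classical conductor correspondence of \cite[Sec.\ XV.2]{Se}, the structural version being Lemma \ref{cft.proper}): dualize, match the kernel of $\C(U)^0\to\CXD^0$ with the kernel of $\piab U^0\to\piabXD^0$ using the equivalence ``$\chi|_{Z,y}$ has Artin conductor $\le m_y(\psi_Z^*D)$ if and only if the corresponding character of $k(Z)_y^\times$ kills $1+I_D\cO_{Z^N,y}$'', and pass to quotients. One caveat: this equivalence is an if-and-only-if, whereas Proposition \ref{cft.recmap} only records the ``if'' direction (well-definedness of $\Psi_{X,D}$); your kernel identification needs the converse as well. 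It is the same classical statement from local class field theory, so this is acceptable, but you should say explicitly that you are using both directions.

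Your finiteness argument, however, contains a genuine error: $\FH U D$ is \emph{not} finite --- it contains all unramified classes, in particular the image of $H^1(\Spec k,\qz)\cong\qz$ --- and correspondingly $\piabXD$ is infinite (it surjects onto $\piab{\Spec k}\cong\widehat{\bZ}$). Only the degree-zero part $\piabXD^0$ is finite, and this is not a formal consequence of boundedness of ramification: it is the finiteness theorem of \cite{KeSLef}, which is exactly what the paper invokes in Lemma \ref{cft.proper}. Likewise $\CXD$ is not finite, so your closing claim that Pontryagin duality commutes with the relevant limits ``since each $\CXD$ is finite'' must be restricted to degree-zero parts, and the finiteness of $\CXD^0$ is only obtained \emph{after} transporting the finiteness of $\piabXD^0$ across the isomorphism (the introduction notes that finiteness of $\CXD^0$ is equivalent to the rank-one case of Deligne's finiteness theorem, so it cannot be taken as an input). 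With the finiteness of $\piabXD^0$ correctly sourced from \cite{KeSLef} (or \cite{EK}), the remainder of your argument goes through.
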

\begin{proof}
In view of Definition \ref{gloram.def}, the corollary follows from the Theorem~\ref{CFT.mainthm} by using standard 
ramification properties in local class field theory as is explained in \cite[Sec.\ XV.2]{Se}.
\end{proof}

The proof of the Existence Theorem is begun in this section and completed in \S\ref{CFT2} assuming some technical lemmas that will be shown in later
sections.

\bigskip

Now we start the proof of the Existence Theorem \ref{CFT.mainthm}. Consider the
following property
\begin{itemize}
\item[] ${\bf I}_{U}$: $\rho_U$ induces a surjection
\begin{equation}\label{tameCFT.eq1}
\Psi_{U} : H^1(U,\Q / \Z ) \to \C(U)^\vee = \Hom_\cont ( \C(U) , \Q / \Z ).
\end{equation}
\end{itemize}

Note that we already know that the map \eqref{tameCFT.eq1} is injective by Chebotarev
density theorem \cite{Se2}.

\medskip

We now give an overview of the steps in the proof of the Existence
Theorem~\ref{CFT.mainthm}.

\begin{itemize}
\item In Lemma \ref{cft.proper} we show that property  ${\bf I}_{U}$
  implies the Existence Theorem for the triple $(X,C,U)$.
\item In Lemma \ref{cft.trick} combined with de Jong's alteration theorem we show how to
  reduce the proof of  ${\bf I}_{U}$ to the situation where $C$ is a simple normal
  crossing divisor.
\item We use a Lefschetz hyperplane theorem \cite{KeSLef} ($C$ simple normal crossing) which allows us to reduce the
  proof of ${\bf I}_{U}$ to the case $\dim(X)=2$.
\item In  \S \ref{keytheorem}  we study  (for $\dim(X)=2$) ramification filtrations on the Galois side and the
  class group side and compare graded pieces to complete the proof of  ${\bf I}_{U}$ in \S
  \ref{CFT2}. The
  understanding of the filtration on the class group side is our key new ingredient. 
\end{itemize}

\begin{lem}\label{cft.proper}
Property ${\bf I}_U$ implies that the map $\rho_U$ in
Theorem~\ref{CFT.mainthm} is an isomorphism of topological groups.
\end{lem}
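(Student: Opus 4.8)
The statement to prove is Lemma~\ref{cft.proper}: assuming property ${\bf I}_U$ (surjectivity of $\Psi_U$, equivalently of $\rho_U^\vee$), the reciprocity map $\rho_U$ is an isomorphism of topological groups; and one wants this in the form stated in Theorem~\ref{CFT.mainthm}, i.e.\ after passing to the degree-zero parts $\rho_U^0$. I would organize the argument around the Pontryagin dual. Since $\C(U) = \varprojlim_D \CXD$ with each $\CXD$ discrete, Pontryagin duality gives $\C(U)^\vee = \varinjlim_D \CXD^\vee$, a discrete torsion group, and $\C(U)$ is its (compact, profinite) dual; similarly $\piab{U}^\vee \cong H^1(U)$. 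So it suffices to prove that $\Psi_U \colon H^1(U) \to \C(U)^\vee$ is an isomorphism of discrete abelian groups, and then dualize.

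\emph{First step: injectivity of $\Psi_U$.} This is already recorded in the text right after the statement of property ${\bf I}_U$: it follows from the Chebotarev density theorem, since a character $\chi \in H^1(U)$ whose pullback $r_U^*\chi$ kills all of $\C(U)$ in particular vanishes on all classes $[x]$ of closed points, hence $\chi(\Frob_x) = 0$ for every closed point $x \in U$, forcing $\chi = 0$. \emph{Second step: surjectivity of $\Psi_U$} is exactly the hypothesis ${\bf I}_U$. Combining, $\Psi_U$ is an isomorphism of discrete groups, so its dual $\rho_U \colon \C(U) \to \piab{U}$ is an isomorphism of the profinite completions — but one must be slightly careful: $\rho_U$ is the dual of $\Psi_U$ only after identifying $\C(U)$ with $(\C(U)^\vee)^\vee$, which holds because $\C(U)$ is profinite and $\C(U)^\vee$ is discrete torsion (double duality for locally compact abelian groups). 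Thus $\rho_U$ is a continuous bijective homomorphism of profinite groups, hence a homeomorphism, hence an isomorphism of topological groups.

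\emph{Third step: descending to degree-zero parts.} Chasing the commutative diagram defining $\C(U)^0$ and $\piab{U}^0$: both rows are exact, the right-hand vertical map $\rho_k \colon \C(\Spec k) = \bZ \to \piab{\Spec k} = \widehat{\bZ}$ is the canonical (injective, dense-image) inclusion sending $1$ to Frobenius, and the middle map $\rho_U$ is now known to be an isomorphism. Since $\rho_U$ is an isomorphism and $\rho_k$ is injective, the snake lemma (or a direct diagram chase) shows $\rho_U^0 \colon \C(U)^0 \to \piab{U}^0$ is an isomorphism of abstract groups; continuity and openness are inherited from $\rho_U$ because $\C(U)^0$ and $\piab{U}^0$ carry the subspace topologies and $\rho_U$ restricts to a homeomorphism between the kernels of the horizontal maps. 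Conversely, if one prefers to prove Theorem~\ref{CFT.mainthm} first and deduce the isomorphy of $\rho_U$ itself, one notes $\rho_U$ and $\rho_U^0$ differ only by the maps on $\bZ$ and $\widehat{\bZ}$, and the latter are understood; but the cleaner logical order is the one above.

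\emph{Main obstacle.} The genuinely substantive input — property ${\bf I}_U$, the surjectivity of $\Psi_U$ — is \emph{not} proved here; it is the heart of the whole paper and is established only in later sections via the cycle conductor and the reduction to surfaces. Within Lemma~\ref{cft.proper} itself, the only real subtlety is bookkeeping with Pontryagin duality for the non-discrete, non-compact groups in play: one must use that $\C(U)$ is profinite so that $\C(U) \cong (\C(U)^\vee)^\vee$ and that a continuous bijection of profinite (or, for the degree-zero parts, locally compact $\sigma$-compact) groups is automatically a topological isomorphism. None of this is hard, but it is the place where care is needed; the algebraic content reduces entirely to the injectivity already granted by Chebotarev plus the hypothesis ${\bf I}_U$.
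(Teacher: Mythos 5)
Your second step contains a genuine error: $\C(U)$ is \emph{not} profinite, and $\rho_U\colon \C(U)\to \piab U$ is \emph{not} an isomorphism — only $\rho_U^0$ is, which is why the theorem is stated for the degree-zero parts. The degree map exhibits $\bZ$ as a discrete quotient of $\C(U)$ (it factors through some discrete $\CXD$), and a compact group has no infinite discrete quotient; concretely, already for $U=\Spec k$ the map is $\bZ\hookrightarrow\widehat{\bZ}$, injective with dense image but not surjective. For the same reason the $\qz$-valued double dual $(\C(U)^\vee)^\vee$ is not $\C(U)$ but (roughly) replaces the $\bZ$-part by $\widehat{\bZ}$, so you cannot recover $\rho_U$ as the dual of $\Psi_U$. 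Since your third step feeds the (false) isomorphy of $\rho_U$ into the snake lemma, it collapses as well. A correct dualization must be carried out on the degree-zero parts, and there you face a second gap: to pass from "$(\rho_U^0)^\vee$ is an isomorphism'' back to "$\rho_U^0$ is an isomorphism'' you need the groups involved to be (pro)finite, and the finiteness of $\CXD^0$ is not available a priori — in this paper it is a \emph{consequence} of the argument, not an input.

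The paper's proof supplies exactly the missing ingredient: it first invokes the finiteness of $\piabXD^0$ from the Lefschetz paper \cite{KeSLef}, which reduces the claim to showing that $\rho_{X,D}^0\colon\CXD^0\to\piabXD^0$ is an isomorphism of abstract groups for each fixed $D$; dually, that $\Psi_{X,D}\colon \FH U D\to \CXD^\vee$ is an isomorphism. Injectivity is Chebotarev, as you say; but surjectivity at the \emph{finite level} $D$ does not follow formally from ${\bf I}_U$: given $\chi\in\CXD^\vee\subset\C(U)^\vee$, property ${\bf I}_U$ produces $\chi'\in H^1(U)$ with $\Psi_U(\chi')=\chi$, and one must then check $\chi'\in\FH U D$, which is where the classical ramification theory of local fields (the correspondence between the congruence filtration on local units and the Artin-conductor filtration, \cite[Sec.\ XV.2]{Se}) enters. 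Once $\Psi_{X,D}$ is an isomorphism and $\piabXD^0$ is finite, $\CXD^0$ is finite too, duality for finite groups gives $\rho_{X,D}^0$ an isomorphism, and passing to the inverse limit over $D$ yields $\rho_U^0$ as an isomorphism of profinite groups. So the skeleton of your argument (Chebotarev plus ${\bf I}_U$, then dualize) is the right idea, but it must be run at finite level $D$ with the finiteness input from \cite{KeSLef} and the local ramification comparison; as written, the global duality step is false.
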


\begin{proof}
As $\pi_1^\ab(X,D)^0$ is finite by \cite{KeSLef} for any effective divisor $D$ with
$|D|\subset C$, it is enough to show that $\rho_U$
induces an isomorphism
$\C(X,D)^0 \to \pi_1^\ab(X,D)^0 $ of abstract groups. 
It is sufficient to show that dually
 $$\Psi_{X,D}: \fil D H^1(U)\to  \C(X,D)^\vee $$
is an isomorphism. The latter is a direct consequence of ${\bf I}_{U}$ and classical
ramification theory for local fields.
\end{proof}

\bigskip

We next introduce certain reduction techniques for property ${\bf I}_U$, based on methods of Wiesend. 
In the following lemma we denote by $f:X'\to X$ an alteration with $X'$ normal. By $U' \subset
f^{-1}(U)$ we denote an open smooth subscheme of $X'$, which is the complement of the
support of an effective
Cartier divisor.
We use the notation $$f:U'\to U, \quad C'=X'\setminus U'.$$

\begin{lem}[Wiesend trick]\label{cft.trick} \mbox{}
\begin{itemize}
\item[(i)] For $f:X'\to X$ and $U'\subset f^{-1}(U)$ as above, the implication $  {\bf I}_{U'}
  \Rightarrow  {\bf I}_U $ holds.
\item[(ii)] Assume that for any character $\chi \in \C(U)^\vee$  we can find $f:X'\to X$ and $U' \subset X'$ as above such that $f^* (\chi) =0$. Then property
  ${\bf I}_U$  holds.
\end{itemize}
\end{lem}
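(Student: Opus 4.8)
The plan is to prove both parts by exploiting functoriality of the reciprocity map under the pushforward $f_*$ on Wiesend class groups, together with the fact that $f_*$ is surjective for an alteration.

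\medskip

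\textbf{Proof strategy for (i).} For an alteration $f : X' \to X$ with $X'$ normal and $U' \subset f^{-1}(U)$ smooth open as in the statement, the key input is that the pushforward $f_* : W(U') \to W(U)$ of \eqref{eq.Wnorm} has \emph{finite cokernel}; more precisely, after passing to degree-zero parts it is surjective, since $f : U' \to U$ is dominant and generically finite of some degree $n$, so $f_* \circ f^*$ is multiplication by $n$ on $Z_0(U)$ and hence the image of $f_*$ has finite index. Dually, $f^* : W(U)^\vee \to W(U')^\vee$ has the property that $\ker(f^*)$ is finite (it is killed by $n$). The first step is to check, using Lemma~\ref{hFWnorm}, that $f_*$ is compatible with the filtrations, i.e.\ $f_*(\hat F^{(\bullet)} W(U'))$ lands in the corresponding filtration on $W(U)$, so that $f_*$ induces a continuous map $\C(U') \to \C(U)$ and dually $f^* : \C(U)^\vee \to \C(U')^\vee$. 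The second step is the compatibility $\rho_{U'} \circ f^* = f^* \circ \rho_U$ of the reciprocity maps with pullback on characters, which follows from the construction of $r_U$ in \cite{Wi}, \cite{KeSc} and the fact that the diagram relating $Z_0$ of $U'$ and $U$ to the two fundamental groups commutes via the Frobenius homomorphisms. Now assume $\mathbf I_{U'}$. Given $\xi \in \C(U)^\vee$, we must produce $\chi \in H^1(U)$ with $\Psi_U(\chi) = \xi$. Pull back: $f^*\xi \in \C(U')^\vee$ is hit by some $\chi' \in H^1(U')$, i.e.\ $\Psi_{U'}(\chi') = f^*\xi$. One then uses a corestriction/transfer argument: the trace $\mathrm{tr}_{U'/U}(\chi') \in H^1(U)$ satisfies (by the projection formula for Wiesend class groups, $\Psi_U(\mathrm{tr}\, \chi') = f_*(\Psi_{U'}(\chi'))$ composed with the appropriate identification) a relation that, after comparing with $n\cdot\xi$ and using that the cokernel of $f^* : \C(U)^\vee \to \C(U')^\vee$ issue is controlled, pins down $\xi$ up to the already-known injectivity of $\Psi_U$. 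The cleanest formulation is: $\Psi_U$ is injective (known, by Chebotarev), its image contains $f_*(\mathrm{im}\,\Psi_{U'}) = f_*(\C(U')^\vee)$, which has finite index; a separate argument with varying alterations, or with the fact that $H^1(U)$ and $\C(U)^\vee$ are both torsion and $\Psi_U$ is a map of torsion groups whose image has finite index and which becomes surjective after a finite extension, forces surjectivity.

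\medskip

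\textbf{Proof strategy for (ii).} Assume every $\chi \in \C(U)^\vee$ dies after some alteration $f : X' \to X$ and restriction to a suitable $U' \subset X'$, in the sense that $f^*\chi = 0$ in $\C(U')^\vee$ — here I read $f^*\chi$ as the pullback along $f_* : \C(U') \to \C(U)$. We want $\mathbf I_U$, i.e.\ surjectivity of $\Psi_U : H^1(U) \to \C(U)^\vee$. Take $\xi \in \C(U)^\vee$ and choose $f : X' \to X$, $U' \subset f^{-1}(U)$ with $f^*\xi = 0$. Since $\xi \mapsto 0$ under $f^* : \C(U)^\vee \to \C(U')^\vee$, and since $f^*$ has finite kernel killed by the generic degree $n$ of $f$, we get $n\xi = 0$. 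Hmm — that only shows $\xi$ is $n$-torsion, not that it is in the image. The correct mechanism: by part (i) applied to the hypothesis of (ii), $\mathbf I_{U'}$ combined with property $\mathbf I_{U'}$ being \emph{trivially available} when the relevant character vanishes... The actual argument is that $\xi$ lifting problem reduces along $f_*$: one shows $\C(U)^\vee / \mathrm{im}(\Psi_U)$ is a torsion group each of whose elements is killed by pulling back to some alteration and then using that $H^1$ of the alteration surjects onto $\C(U')^\vee$ in the relevant piece, so applying the norm $f_*$ on Galois characters brings the class back; the vanishing hypothesis $f^*\xi = 0$ then shows the obstruction class in the cokernel maps to zero under an \emph{injective} transfer, hence is itself zero.

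\medskip

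\textbf{Main obstacle.} The genuine difficulty is the careful bookkeeping of pushforward versus pullback and of degree-zero parts: one must establish the precise projection formula $\Psi_U \circ \mathrm{tr}_{U'/U} = f_* \circ \Psi_{U'}$ (equivalently $f^* \circ \Psi_U = \Psi_{U'} \circ \mathrm{res}$) and control the failure of surjectivity/injectivity of $f_*$ and $f^*$ only up to the generic degree $n$, which suffices because all groups in sight are torsion and one is free to vary the alteration. No ramification theory beyond what is quoted in Proposition~\ref{cft.recmap} is needed here; the content is entirely the functoriality of the Wiesend reciprocity map under alterations, for which I would cite \cite{Wi} and \cite[Sec.\ 7]{KeSc}, and the elementary observation that a map of torsion abelian groups that becomes an isomorphism on degree-zero parts after a finite-degree base change, and is already injective, is an isomorphism.
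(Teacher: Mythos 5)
Your proposal has a genuine gap at the decisive step of both parts. The corestriction/projection-formula argument you outline for (i) only shows that $n\xi$ lies in $\mathrm{im}(\Psi_U)$, where $n$ is the generic degree of the alteration; since $H^1(U)$ and $\C(U)^\vee$ are torsion groups, this gives no information about $\xi$ itself, and the ``elementary observation'' you invoke at the end (an injective map of torsion groups with finite-index image is surjective) is false --- consider $n\bZ/n^2\bZ\hookrightarrow \bZ/n^2\bZ$. In (ii) you notice the same problem ($f^*\xi=0$ only yields $n\xi=0$), but the proposed fix remains circular. What is actually needed, and what the paper uses, is Wiesend's theorem (\cite[Prop.\ 3.7]{KeSc}): a character of the class group whose pullback along a finite \'etale covering of a dense open subscheme vanishes is of the form $\Psi_U(\chi)$ for some $\chi\in H^1(U)$. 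This is real content (one approximates by curves, uses one-dimensional class field theory along fibers, and Chebotarev density); it is not a formal consequence of the functoriality of $f_*$ and $f^*$.

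With that input the argument for (i) is short: given $\xi\in\C(U)^\vee$, property ${\bf I}_{U'}$ gives $\sigma\in H^1(U')$ with $\Psi_{U'}(\sigma)=f^*\xi$; passing to a further alteration $X''\to X'$ that trivializes the covering defined by $\sigma$, one may assume $f^*\xi=0$ in $\C(U')^\vee$. After shrinking $U'$ one factors $U'\to f(U')$ as a finite surjective radicial map followed by a finite \'etale map; the radicial part changes neither $H^1$ nor $\C(\,\cdot\,)^\vee$, so one is reduced to the case where $f$ is generically \'etale and $f^*\xi=0$, which is exactly the situation covered by Wiesend's result. Part (ii) is the same argument starting directly from the hypothesis $f^*\xi=0$. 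These two reduction steps --- killing $\sigma$ by a further alteration, and splitting off the radicial part --- are also absent from your proposal and are needed before one can quote \cite{KeSc}.
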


\begin{proof}
We first explain the proof of (i). 
Consider the commutative diagram of abstract groups
\begin{equation}\label{cft.cartsq}
\xymatrix{
H^1(U)  \ar[r]^{\Psi_U} \ar[d]_{f^*} & \C(U)^\vee  \ar[d]^{f^*} \\
H^1(U')  \ar[r]_{\Psi_{U'}}  & \C(U')^\vee 
}
\end{equation}
It is sufficient to see that a character $\chi \in
\C(U)^\vee$ such that $f^*(\chi) $ is  of the form $\Psi_{U'}(\sigma)$ with $\sigma \in H^1(U')$ is in  the image of
$\Psi_U$. We can choose another alteration $f':X'' \to X'$ with the property that
$f'^{-1}(\sigma)=0$. This means that without loss of generality we can assume that
$f^*(\chi) = 0 \in \C(U')^\vee$.

Shrinking $U'$ we can also assume that $U'\to f(U') \subset X$ is the composition of a
finite surjective radicial map $U' \to U_\et$ and a finite \'etale map $U_\et \to f(U')$.
Then the maps 
\[
H^1(U_\et) \to H^1(U') \quad \text{ and } \quad \C(U_\et)^\vee \to \C(U')^\vee
\]
are isomorphisms: For the first map this is clear. As for the second, in view of the definition of
the norm map for the Wiesend class groups (cf. \cite[Lem.7.3]{KeSc} and its proof), 
it follows from the facts that the pushforward map $Z_0(U') \to Z_0(U_\et)$ is an isomorphism,  and that 
for a finite surjective radicial covering $Z'\to Z$ of integral normal curves, 
the norm map $k(Z')^\times \to k(Z)^\times$ is an isomorphism as well as 
the norm map $k(Z')_y^\times \to k(Z)_x^\times$ for the henselizations at closed points $x\in Z$
and $y\in Z'$ lying over $x$.
Therefore we can without loss of generality assume that $X'\to X$ is generically \'etale.
In this situation we finally conclude that $\chi$ is in the image of $\Psi_U$ by using
Wiesend's method, see
\cite[Prop.\ 3.7]{KeSc}.

The proof of (ii) is a variant of the proof of (i).
\end{proof}

\begin{lem}\label{cft.lefschetz}
Assume that property ${\bf I}_U$ holds for all smooth varieties $U$ with $\dim(U)=2$. Then
it holds for arbitrary smooth $U$.
\end{lem}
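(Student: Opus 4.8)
The plan is to reduce the general case to the surface case by a Lefschetz-type hyperplane section argument, cutting $U$ by generic hyperplanes until we reach dimension two. The statement to prove is: if property ${\bf I}_U$ holds for all smooth $U$ of dimension $2$, then it holds for all smooth $U$. By Lemma \ref{cft.trick}(i) we are free to replace $X$ by an alteration and $U$ by a smooth open subscheme of the preimage, so using de Jong's alteration theorem we may assume from the start that $X$ is smooth projective and $C = X \setminus U$ is a simple normal crossing divisor. We may also assume $\dim(X) = d \geq 3$, as the cases $d \leq 2$ are the hypothesis (together with the trivial cases of dimension $0$ and $1$, the latter being classical class field theory for curves).

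The main step is then: by Lemma \ref{cft.trick}(ii), it suffices to show that for every character $\chi \in \C(U)^\vee$ there is an alteration $f : X' \to X$ and a smooth open $U' \subset f^{-1}(U)$ such that $f^*\chi = 0$. Since $\Psi_U$ is injective and $\chi$ is continuous, $\chi$ factors through some $\C(X,D)^\vee = \fil D H^1(U)$; so $\chi$ corresponds to an étale covering of $U$ with ramification bounded by $D$. The idea is to take $Y \subset X$ to be a sufficiently general smooth hypersurface section (in a high multiple of an ample divisor) meeting $C$ transversally, set $E = C \cap Y$ and $V = Y \setminus E$, and invoke the Lefschetz theorem for $\pi_1^\ab(X,D)$ from \cite{KeSLef}: the restriction map $\pi_1^\ab(Y, E \cdot Y\text{-part of }D) \to \pi_1^\ab(X,D)$ is surjective (indeed an isomorphism in the appropriate range), so dually $\fil{D|_Y} H^1(V) \to \fil D H^1(U)$ is injective, and iterating $d - 2$ times we land on a surface $Y_0 \subset \cdots \subset X$ where, by the dimension-two case of ${\bf I}_{Y_0}$, the character $\chi|_{Y_0}$ is realized by a class group character $\Psi_{Y_0}(\sigma)$. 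To descend this back, one compares the class-group side via the compatible norm/restriction maps $\C(Y_0, \cdot) \to \C(U, \cdot)$: surjectivity of $\Psi_U$ follows once one knows $\chi$ is detected on curves, and every curve $Z$ on $U$ can be moved into a chain of such hypersurface sections after a further alteration. Concretely, after alteration we arrange that $\chi$ pulls back from $X$ and becomes trivial on $Y_0$ in dimension two, hence trivial on $U'$, which is exactly the hypothesis of Lemma \ref{cft.trick}(ii).

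The hard part will be making the Lefschetz descent interact correctly with both the ramification filtration and the functoriality of the class group $\C(-)$: one must check that a general hyperplane section $Y$ meets $C$ transversally \emph{and} is chosen generically enough that no information about the character $\chi$ is lost at the boundary — precisely the role of the open dense subsets $V_\chi$ in Proposition \ref{filU.lem}(2) — so that $\art$ is preserved under restriction to $Y$, and that the bounding divisor $D$ restricts to a divisor controlling ramification of $\chi|_V$. The Lefschetz theorem of \cite{KeSLef} is stated for $\pi_1^\ab(X,D)$ with $C$ normal crossing and it is exactly this transversality that lets us keep $C \cap Y$ normal crossing on $Y$, so the induction stays inside the class of pairs to which the hypothesis applies. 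Assuming \cite{KeSLef} as a black box, the argument is then a routine, if delicate, dévissage: reduce to $\chi$ of bounded ramification, cut down to a surface, kill $\chi$ there, and push the vanishing back up using Lemma \ref{cft.trick}(ii).
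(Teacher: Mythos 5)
Your overall toolkit matches the paper's: reduce to $X$ smooth projective with $C$ simple normal crossing via de Jong and the Wiesend trick, then use the Lefschetz theorem of \cite{KeSLef} for a high-degree smooth hypersurface section $Y\subset X$ meeting $C$ transversally, inducting on dimension. But the core of the argument --- the step that actually deduces ${\bf I}_U$ from the surface case --- is missing, and the way you try to close the loop does not work.

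The gap is this. Suppose you have cut down to $V_0=Y_0\cap U$ of dimension two, found $\sigma\in H^1(V_0)$ with $\Psi_{V_0}(\sigma)=\chi|_{V_0}$, and lifted $\sigma$ to $\tilde\sigma\in \fil D H^1(U)$ using the Lefschetz isomorphism. Replacing $\chi$ by $\chi-\Psi_U(\tilde\sigma)$, you now have a character of $\C(X,D)$ vanishing on the image of $i_*:\C(Y_0,E)\to \C(X,D)$, and you must explain why it vanishes identically. Nothing in your write-up does this: a character trivial on the image of a single fixed hypersurface section need not be trivial, since $i_*$ for a fixed $Y_0$ is not surjective onto $\C(X,D)^0$. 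Your appeal to Lemma \ref{cft.trick}(ii) is a misapplication --- restriction to a hypersurface section is not an alteration of $X$ (it is not generically finite surjective), so ``$\chi$ trivial on $Y_0$, hence trivial on $U'$'' is not a hypothesis that lemma can consume. The paper resolves exactly this point by arguing with cycles rather than characters: it fixes a modulus $D$, proves $\rho_{X,D}:\C(X,D)^0\to\pi_1^\ab(X,D)^0$ is surjective by Chebotarev density together with the finiteness of $\pi_1^\ab(X,D)^0$ from \cite{KeSLef}, and proves injectivity by choosing, for each $\alpha\in\Ker(\rho_{X,D})$, a Bertini hypersurface section $Y$ \emph{passing through the support of a zero-cycle representing $\alpha$}, so that $\alpha=i_*\beta$ with $\beta\in\C(Y,E)^0$; since $\rho_{Y,E}$ is an isomorphism by induction and $\pi_1^\ab(Y,E)^0\to\pi_1^\ab(X,D)^0$ is an isomorphism for $n\gg0$, a diagram chase gives $\beta=0$, hence $\alpha=0$. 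The essential idea you are missing is that the section $Y$ must be chosen \emph{depending on the cycle}, not once and for all, and that the argument should be run on the group side (kernel of $\rho_{X,D}$) rather than the dual side. A smaller but symptomatic slip: the Lefschetz surjection $\pi_1^\ab(Y,E)\twoheadrightarrow\pi_1^\ab(X,D)$ dualizes to an \emph{injection} $\fil D H^1(U)\hookrightarrow \fil E H^1(V)$ given by restriction; there is no natural map $\fil{E}H^1(V)\to\fil D H^1(U)$ as you wrote.
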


\begin{proof}
By Lemma~\ref{cft.proper} we obtain Corollary~\ref{CFT.coro} for two-dimensional $X$. In
the general case we reduce the proof of property ${\bf I}_U$ to the case $C$ is simple
normal crossing and $X$ is projective by Lemma~\ref{cft.trick} and de Jong's alteration
theorem~\cite{dJ}. This means that for such $(X,C,U)$ we have to show that the map
\[
\C(X,D)^0 \to \pi_1^\ab(X,D)^0
\]
is an isomorphism for all $D$.

Let $\cL$ be an ample line bundle on $X$.
Let $i:Y \hookrightarrow X$ be a smooth hypersurface section, which is the zero locus of some section of
$\cL^{\otimes n}$ ($n\gg 0$), such that $Y\times_X C$ is a reduced simple normal crossing divisor on $Y$
and let $E=Y\times_X D$. Consider the commutative diagram
\[
\xymatrix{
 & \C(Y,E)^0  \ar[r]^{\sim}_{\rho_{Y,E}}   \ar[d]_{i_*} &   \pi_1^\ab(Y,E)^0  \ar[d]^{\wr}\\
 Z_0(U)^0   \ar@{->>}[r] & \C(X,D)^0 \ar@{->>}[r]_{\rho_{X,D}}  & \pi_1^\ab (X,D)^0 
}
\]
The map $\rho_{Y,E}$ is an isomorphism by induction on dimension. The right vertical map
is an isomorphism for $n$ sufficiently large \cite{KeSLef}. The map $\rho_{X,D}$ is
surjective because of Chebotarev density \cite{Se2} and the finiteness of $ \pi_1^\ab
(X,D)^0$, see  \cite{KeSLef}. So we have to show injectivity of  $\rho_{X,D}$.

 For an $\alpha\in \C(X,D)^0$ with $\rho_{X,D} ( \alpha) =0$ use a Bertini argument to
 choose  $Y$ as above 
 which contains the support of a lift of $\alpha$ to $Z_0(U)$. Then $\alpha$ is in the
 image of $i_*$. A diagram chase shows that $\alpha = 0$.

\end{proof}

\section{Cycle conductor}\label{keytheorem}
\bigskip

Let the notation be as in \S\ref{classgroup}. 
Let $X=(X,C)$ be in $\cC$ and recall $\dim(X)=2$ (cf. Definition \ref{def.cC}).
Let $\{\Clam\}_{\lam\in I}$ be the set of prime components of $C$.
Fix a Cartier divisor 
\begin{equation}\label{eqD1}
\displaystyle{D=\underset{\lambda\in I}{\sum} \mlam \Clam\qwith \mlam\geq 2}.
\end{equation}
For a Cartier divisor $F$ on $X$ and $Z\in \ZXCC$, we write 
\[
\cO_Z(F) = \cO_{Z}\otimes_{\cO_X}\cO_X(F).
\]
\medbreak
In this section we assume $k$ is finite and introduce the key homomorphism
called the {\em cycle conductor} for $(X,D)$:
\begin{equation}\label{Rsw2}
\rswMXD: \CXD^\vee \to H^0(C,\Omega^1_X(D+\Xi)\otimes_{\cO_X}\cO_C),
\end{equation}
and state its basic properties. Here $\Xi\in \Div(X,C)^+$ is some sufficiently big Cartier
divisor introduced below. First we note the canonical duality isomorphism
\begin{equation}\label{duality1}
H^0(C,\Omega^1_X(D+\Xi)\otimes_{\cO_X}\cO_C)\simeq H^1(C,\Omega^1_X(-D+C-\Xi)\otimes_{\cO_X}\cO_C)^\vee.
\end{equation}
Indeed, letting $\wC$ be the dualizing sheaf of $C$, we have
\[
\wC \simeq \cExt^1_{\cO_X}(\cO_C,\Omega^2_X) \simeq \Omega^2_X(C)\otimes_{\cO_X}\cO_C,
\]
where the second isomorphism follows from the long exact sequence for $\cExt$ induced by the exact sequence
$ 0\to \cO_X(-C) \to \cO_X \to \cO_C \to 0$.
Thus the Serre duality implies that the pairing
\[
\Omega^1_X(D+\Xi)\otimes_{\cO_X} \Omega^1_X(-D+C-\Xi)\otimes_{\cO_X}\cO_C
\to \Omega^2_X(C)\otimes_{\cO_X}\cO_C\simeq \wC
\]
induces a perfect pairing of abelian groups
\begin{equation}\label{duality0}
H^0(C,\Omega^1_X(D+\Xi)\otimes_{\cO_X}\cO_C) \times H^1(C,\Omega^1_X(-D+C-\Xi)\otimes_{\cO_X}\cO_C)
\to H^1(C,\wC) \rmapo {Tr_{C/\bF_p}} \pz,
\end{equation}
where $Tr_{C/\bF_p}$ is the composite
$H^1(C,\wC)\rmapo {Tr_{C/k}} k \rmapo {Tr_{k/\bF_p}} \pz$. This induces \eqref{duality1}.
Hence, by rewriting $D$ by $D+C$, the construction of \eqref{Rsw2} is reduced to that of its dual map: 
\begin{equation*}\label{rswMdual0}
H^1(C,\Omega^1_X(-D+C-\Xi)\otimes_{\cO_X}\cO_C) \to \CXD.
\end{equation*}
or equivalently that of a map (see Theorem \ref{keythm} below):
\begin{equation}\label{rswMdual}
\phiD: H^1(C,\Omega^1_X(-D-\Xi)\otimes_{\cO_X}\cO_C) \to \WU/\hFXCC {D+C}
\end{equation}
for a Cartier divisor 
\begin{equation}\label{eqD0}
\displaystyle{D=\underset{\lambda\in I}{\sum} \mlam \Clam\qwith \mlam\geq 1}.
\end{equation}
\bigskip


Let $x$ be a regular closed point of $C$.
Let $Z\in \ZXCC$ be such that $Z\Cap C$ at $x$.
Locally on a neighborhood of $x$, we have an exact sequence
\begin{equation}\label{OmegaXZes}
0\to \Omega^1_X(-D) \to \Omega^1_X(\log Z)(-D) \to \cO_Z(-D) \to 0.
\end{equation}
Tensored with $\cO_C$, it induces a boundary map
\begin{equation}\label{pZx}
\partial_{Z,x} : \cO_{Z,x}(-D)\otimes \k(x) \to H^1_x(C,\Omega^1_X(-D)\otimes_{\cO_X}\cO_C). 
\end{equation}
We will let $\partial_{Z,x}$ denote also the composite of $\partial_{Z,x}$ and the natural map
\[ H^1_x(C,\Omega^1_X(-D)\otimes_{\cO_X}\cO_C) \to  H^1(C,\Omega^1_X(-D)\otimes_{\cO_X}\cO_C).\]
On the other hand we define a map
\begin{equation}\label{muZx}
\mu_{Z,x} : \cO_{Z,x}(-D)\otimes \k(x) \to \WU/\FXCC {D+C}
\end{equation}
as the composite of the natural injection 
\[\cO_{Z,x}(-D)\otimes \k(x) \to k(Z)_x^\times/(1+I_{D+C}\cO^h_{Z,x}) \;;\; a \to 1+a\]
and the map 
\[k(Z)_x^\times/(1+I_{D+C}\cO^h_{Z,x}) \to \WU/\FXCC {D+C}\]
induced by $\sym {\hbox{ }} {Z,x}$ (cf. Definition~\ref{symbol.def} and the notation below it).
\medbreak

We now state the first key theorem for the proof of Theorem \ref{CFT.mainthm}.
Its proof will be given in \S\ref{keythmproof}. 
Recall that $k$ is assumed to be finite in this section.

\begin{theo}\label{keythm}
Assume $p=\ch(k)\not=2$. 
There exists $\Xi\in \ZXCC$ (cf. Definition~\ref{Cartierdiv.def}) and a natural map
\[
\phiD: H^1(C,\Omega^1_X(-D-\Xi)\otimes_{\cO_X}\cO_C)\to W(U)/\hFXCC {D+C}
\]
such that $C_{sing}\subset\Xi$ and $\Xi$ is independent of $D$ as in \eqref{eqD0}, and that the following conditions hold:
\begin{itemize}
\item[$(i)$]
For any closed point $x$ of $C\backslash \Xi$ and $Z\in \ZXCC$ such that $Z\Cap C$ at $x$, 
the following diagram is commutative:
\begin{equation}\label{keythm.CDZ}
\xymatrix{
\cO_{Z,x}(-D)\otimes \k(x) \ar[r]^{\hskip -30pt\partial_{Z,x}} \ar[rd]_{\mu_{Z,x}} &  
H^1(C,\Omega^1_X(-D-\Xi)\otimes_{\cO_X}\cO_C) \ar[d]^{\phiD} \\
& \hskip 30pt W(U)/\hFXCC {D+C}.\\
}
\end{equation}
\item[$(ii)$]\;\;
$\Image(\phiD) = \Image(\hFXCC {D})$.
\end{itemize}
\end{theo}

\begin{rem}\label{keythm.rem0}
The images of $\partial_{Z,x}$ for closed points $x$ of $C\backslash \Xi$ and $Z\in \ZXCC$ with 
$Z\Cap C$ at $x$ generate $H^1(C,\Omega^1_X(-D-\Xi)\otimes_{\cO_X}\cO_C)$.
Thus the condition $(i)$ uniquely characterizes $\phiD$.
\end{rem}

Theorem~\ref{keythm}$(ii)$ follows from $(i)$ and Lemma~\ref{movinglemfinite} below,
whose proof will be given in \S\ref{keylemproofV} 
(see Lemma~\ref{movinglem} and the leitfaden in \S\ref{keylemmas}).
It concerns moving elements of $\WU$ to symbols on curves transversal to $C$.
Take any dense open subset $V\subset X$ containing the generic points of $C$ and recall Definition \ref{filtration.def0}. 

\begin{lem}[moving]\label{movinglemfinite}
Assume $p\not=2$. 
For any integer $N>0$, we have 
\[
\hFXCC D  \subset \FXCCapV D \;+\;  \hFXCC {D+N\cdot C}.
\]
\end{lem}

\bigskip

For $D$ as \eqref{eqD1} let
\begin{equation*}
\rswMXD: \CXD^\vee \to H^0(C,\Omega^1_X(D+\Xi)\otimes_{\cO_X}\cO_C),
\end{equation*}
be the map induced by 
\[\phiDC : H^1(C,\Omega^1_X(-D+C-\Xi)\otimes_{\cO_X}\cO_C) \to \CXD=W(U)/\hFXCC {D} \]
using the duality \eqref{duality1}. By Theorem \ref{keythm}$(ii)$ we have an exact sequence
\begin{equation}\label{rswMXDexactseq}
0 \to \CXDC^\vee \to \CXD^\vee \rmapo{\rswMXD } H^0(C,\Omega^1_X(D+\Xi)\otimes_{\cO_X}\cO_C).
\end{equation}
The second key theorem concerns compatibility of the cycle conductor with ramification theory reviewed in \S\ref{ReviewRT}.
Its proof will be given in \S\ref{keythmproofII}.

\begin{theo}\label{keythm2}
For $\lam\in I$, let $\mlam$ be as in \eqref{eqD1} and 
\[
 \rsw_{\Klam} :\gr {\mlam} H^1(\Klam) \to  \fil {\mlam} \Omega^1_{\Klam}
\] 
be the refined Artin conductor for $\Klam$ in Theorem~\ref{thm.Masuda}. We note
\[
\Omega^1_X(D)\otimes_{\cO_X} k(\Clam) \simeq \gr {\mlam} \Omega^1_{\Klam}.
\]
Then the following diagram commutes
\begin{equation*}\label{Rsw.commute2}
\xymatrix{
\FH U D \ar[r]\ar[d]^{\Psi_{X,D}}& \FH {\Klam} {\mlam} \ar[r]^{\hskip -30pt -\rsw_{\Klam}} 
&\Omega^1_X(D)\otimes_{\cO_X} k(\Clam) \\
\CXD^\vee \ar[rr]^{\rswMXD} && H^0(C,\Omega^1_X(D+\Xi)\otimes_{\cO_X}\cO_C)\ar[u]^{\iota_\lam}\;. \\
}
\end{equation*}
\end{theo}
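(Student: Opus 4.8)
The plan is to reduce the commutativity of the diagram to a local residue identity at a general regular point of $\Clam$, and then match both sides of that identity by a direct residue computation. Since $\Clam\not\subset\Xi$, the map $\iota_\lam$ is simply localization at the generic point $\lam$ of $\Clam$, and what must be shown is $\iota_\lam\bigl(\rswMXD(\Psi_{X,D}(\chi))\bigr)=\rsw_{\Klam}(\chi|_{\Klam})$ in $\gr{\mlam}\Omega^1_{\Klam}\simeq\Omega^1_X(D)\otimes_{\cO_X}k(\Clam)$ for every $\chi\in\FH U D$; here $\chi|_{\Klam}\in\FH{\Klam}{\mlam}$ by Proposition~\ref{filU.lem}(2), so both sides lie in $\gr{\mlam}$, and one may assume $\art_{\Klam}(\chi|_{\Klam})=\mlam$, the other case being trivial. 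Two elements of $\gr{\mlam}\Omega^1_{\Klam}$ are equal as soon as they pair the same way, under the local Grothendieck residue pairing at every regular point $x\in C\setminus\Xi$, with the whole group $\Omega^1_X(-D+C)\otimes_{\cO_X}\cO_{C,x}(x)/\Omega^1_X(-D+C)\otimes_{\cO_X}\cO_{C,x}$, so it suffices to compare these local pairings.

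First I would unwind the left-hand side through the reciprocity map. Fix such a point $x$ and a residue class represented by $\xi=\tfrac1f(\alpha\,d\pi+\beta\,df)$ for regular parameters $(\pi,f)$ at $x$ with $\pi$ a local equation of $C$. By compatibility of local and global Serre duality the residue pairing of $\iota_\lam(\rswMXD(\Psi_{X,D}(\chi)))$ with $\xi$ equals the global pairing \eqref{duality0} of $\rswMXD(\Psi_{X,D}(\chi))$ with $\iota_x(\xi)$; since $\rswMXD$ is the transpose of $\phiDC$ (Theorem~\ref{keythm} applied with $D$ replaced by $D-C$, which over a finite field yields a genuine map into $\CXD$ by Remark~\ref{keythm.rem}) and $\Psi_{X,D}(\chi)=\chi\circ\rho_{X,D}$, this equals $\chi\bigl(\rho_{X,D}(\mu_x(\xi))\bigr)$ by Theorem~\ref{keythm}(i). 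Now $\mu_x(\xi)=\sym{1+(\beta-\alpha)}{F,x}+\sym{1+\alpha}{F_\pi,x}$ with $F=\divXx f$ and $F_\pi=\divXx{f+\pi}$ both transversal to $C$ at $x$, and because $\rho_U$ is glued from local reciprocity maps along curves on $U$ (\cite{Wi}, \cite{KeSc}), each term $\chi(\rho_{X,D}(\sym{1+g}{Z,x}))$ with $Z\in\{F,F_\pi\}$ is the local class field theory pairing for the one-dimensional local field $k(Z)_x$; here $\chi|_{Z,x}\in\FH{k(Z)_x}{\mlam}$ by Proposition~\ref{filU.lem}(1) and $g\in\fm_{Z,x}^{\mlam-1}$, so by Kato's formula for this pairing in terms of the refined conductor (the $\Psi_K$-diagram of \S\ref{RTlocal} together with Lemma~\ref{lem.grKM} and Theorem~\ref{thm.Masuda}) the term equals $\mathrm{Tr}_{k(x)/\bF_p}\,\mathrm{res}_{k(Z)_x}\bigl(g\cdot\rsw_{k(Z)_x}(\chi|_{Z,x})\bigr)$.

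Next I would identify the refined conductors on $F$ and $F_\pi$. Shrinking to a general point $x$ of $\Clam$ and a general tangent direction at $x$, so that the tangent lines of both $F$ and $F_\pi$ lie in the good locus of Proposition~\ref{filU.lem}(2), one gets $\art_{k(F)_x}(\chi|_{F,x})=\art_{k(F_\pi)_x}(\chi|_{F_\pi,x})=\mlam$, and, by the functoriality of the refined conductor under transversal restriction to a curve (which is \cite[(7.2.1)]{Ma}, underlying Proposition~\ref{filU.lem}), $\rsw_{k(F)_x}(\chi|_{F,x})$ and $\rsw_{k(F_\pi)_x}(\chi|_{F_\pi,x})$ are the restrictions to $F$, resp.\ $F_\pi$, of $\omega_\lam:=\rsw_{\Klam}(\chi|_{\Klam})$. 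Writing $\omega_\lam$ at $x$ as $\pi^{-\mlam}(u\,d\pi+w\,df)$ and using $df|_F=0$ and $d(f+\pi)|_{F_\pi}=0$, these restrictions become explicit, and substituting into the two residue terms of the previous step the whole question collapses to the identity
\[
\mathrm{res}_{k(F)_x}\bigl((\beta-\alpha)|_F\cdot\omega_\lam|_F\bigr)+\mathrm{res}_{k(F_\pi)_x}\bigl(\alpha|_{F_\pi}\cdot\omega_\lam|_{F_\pi}\bigr)=\mathrm{Res}_{(\Clam,x)}\bigl(\omega_\lam\wedge\xi\bigr),
\]
whose right-hand side is exactly the residue pairing computing $\iota_\lam(\rsw_{\Klam}(\chi|_{\Klam}))$ against $\xi$; writing $\alpha=\pi^{\mlam-1}\alpha_0$, $\beta=\pi^{\mlam-1}\beta_0$, both sides evaluate to $\mathrm{Tr}_{k(x)/\bF_p}\bigl(u|_x\beta_0|_x-w|_x\alpha_0|_x\bigr)$ by a short two-dimensional residue manipulation. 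Since this holds for all $\xi$ and all general $x\in C\setminus\Xi$, and these pairings separate points of $\gr{\mlam}\Omega^1_{\Klam}$, the diagram commutes.

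The main obstacle, in my view, is the displayed residue identity and, above all, making it consistent with every normalization in Kato's local pairing formula and in the Serre duality \eqref{duality0}: this is the one step that has to be carried out by hand, and it is precisely the rigorous form of the cancellation that makes the ad hoc formula \eqref{mufpi} work (and that underlies Proposition~\ref{lem.muindep}); it is also where the hypothesis $\ch(k)\ne 2$ enters, through the well-definedness of $\mu_x$. A secondary delicate point is the transversal-restriction functoriality of the refined Artin conductor invoked above, which must be read off carefully from Matsuda's work; everything else — reducing global to local duality, and the compatibility of $\rho_U$ with local reciprocity on curves — is routine.
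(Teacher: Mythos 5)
Your argument is correct in outline, but it takes a genuinely different route from the paper. The paper proves Theorem~\ref{keythm2} by passing through two\emph{-dimensional} local class field theory: it sets up the Kato--Saito idele class groups $\C^{KS}(A_x,D)$ and $\C^{KS}(X,D)$, and the whole content is concentrated in Lemma~\ref{lem.nu}, whose part (2) is a $K_2$-symbol computation in the two-dimensional local field $\Klam$ showing that $\nu_{A}\bigl(\tfrac1f(\alpha\,d\pi+\beta\,df)\bigr)$ coincides with the image of the three tame symbols defining $\mu_{\pi,f}$, and whose part (1) invokes Kato's explicit duality (property $(ii)$ of $\Psi_K$ in \S\ref{RTlocal}) together with \cite[Prop.\ 4.2.3]{Ma}; the theorem is then four commutative squares stacked on top of each other. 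You instead stay entirely in dimension one: you expand $\mu_x(\xi)$ into its two symbols on $F$ and $F_\pi$, evaluate each by the Schmid--Witt/Kato residue formula for the one-dimensional local fields $k(F)_x$, $k(F_\pi)_x$, and close the circle with the restriction property of the \emph{refined} conductor along transversally meeting curves in good tangent directions. Your concluding residue identity (which I checked: both sides give $\mathrm{Tr}_{k(x)/\bF_p}(u\beta_0-w\alpha_0)$ up to the usual sign conventions) is precisely the one-dimensional avatar of Lemma~\ref{lem.nu}. What the paper's route buys is that it never needs the refined-conductor restriction formula --- only the two-dimensional duality, which is already required elsewhere; what your route buys is that it bypasses the Kato--Saito machinery of \S\ref{keythmproofII} altogether. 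The one point you should nail down is the input you yourself flag: Proposition~\ref{filU.lem}(2) as stated only controls the integer conductor, so the statement $\rsw_{k(F)_x}(\chi|_{F,x})=\omega_\lam|_F$ for good directions must genuinely be extracted from \cite[(7.2.1)]{Ma}; and you must also note that, for a fixed class $\xi\in\Lambda_x$, Proposition~\ref{lem.muindep} is what lets you recompute $\mu_x(\xi)$ with respect to a generically chosen $f$ so that \emph{both} tangent directions $[f]$ and $[f+\pi]$ lie in $V_\chi$. With those two points made explicit, the proof is complete.
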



\section{Proof of Existence theorem}\label{CFT2}

\bigskip

Let the notation be as in \S\ref{CFT}. In this section we always assume $\ch (k)\ne 2$ and $\dim(X)=2$, 
but we do not generally assume that $C$ is simple normal crossing or that $X$ is smooth. 
 We prove property ${\bf I}_{U}$ in this case. Using Lemma~\ref{cft.lefschetz} we are
 going to reduce 
the proof of the Existence  Theorem~\ref{CFT.mainthm} to Theorems \ref{keythm} and
\ref{keythm2}. 

\medbreak
 
We start with the tame case, which is essentially due to Wiesend. 
Note that the second statement of the following proposition is motivated by the fact that an abelian \'etale covering $U'\to U$ is tame along $C$ if and only if its pullback to integral $F\in \ZXC$ such that $F\Cap C$ is 
tame along $C\cap F$, see Proposition~\ref{filU.lem}.

\begin{prop}\label{tameCFT} Assume $\dim(X)=2$ and that $X/k$ is smooth. 
The reciprocity map $\rho_U$ induces an isomorphism of finite groups
\[
\CXC^0 \xrightarrow{\sim} \piabXC^0.
\]
 Moreover the closure of the image of
$\FXCCap C$ in $\C(U)$
is equal to $\hFXCC C$.
\end{prop}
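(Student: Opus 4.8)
The plan is to prove the first statement—that $\rho_U$ induces an isomorphism $\CXC^0 \xrightarrow{\sim} \piabXC^0$—by translating it back to Wiesend's tame class field theory. By Corollary~\ref{CFT.coro} and the definition of $\piabXC$ (Definition~\ref{gloram.def}), it suffices to show that $\Psi_{X,C}\colon \fil C H^1(U) \to \CXC^\vee$ is an isomorphism. The left-hand side $\fil C H^1(U)$ is, by Lemma~\ref{CFT.lem0}(1) and Corollary~\ref{globalrt.defram}, exactly the group of characters of $\pi_1^\ab(U)$ that are tamely ramified along $C$, i.e.\ $H^1(U_t)$ for the maximal tame quotient. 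On the other hand, by Definition~\ref{def.CXD} the group $\CXC = W(U)/\hFXCC{C}$ is the tame quotient of the Wiesend class group, which is the object for which tame higher-dimensional class field theory over finite fields is already established in \cite{SS}, \cite{Wi}, \cite{KeSc}. First I would recall that $\rho_U$ is induced by the Wiesend reciprocity map $r_U\colon W(U)\to \pi_1^\ab(U)$ of Proposition~\ref{cft.recmap}, and that classical local ramification theory shows $r_U$ kills $\hFXCC{C}$ precisely when restricted to tame characters; injectivity of $\Psi_{X,C}$ is already known by Chebotarev (as noted after \eqref{tameCFT.eq1}), and surjectivity is the content of tame reciprocity in dimension two. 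Finiteness of $\piabXC^0$ is provided by \cite{KeSLef}.

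The second statement—that the closure of the image of $\FXCCap C$ in $\C(U)$ equals $\hFXCC C$—is the more delicate point, since $\FXCCap C$ is generated only by symbols $\sym{1+\cO_{G,G\cap C}(-C)}{G,x}$ coming from curves $G$ meeting $C$ transversally at a single regular point $x$ (Definition~\ref{filtration.def0}(4)), whereas $\hFXCC C$ is defined using all curves $Z\in\ZXC$ and the full congruence conditions on the normalization $Z^N$. The inclusion of the closure of the image of $\FXCCap C$ into $\hFXCC C$ is immediate from the definitions, so the real work is the reverse inclusion. Here I would argue as follows: given an arbitrary integral $Z\in\ZXC$ and a unit $u\in 1+\fm\cO_{Z^N,C\cap Z}^h$, the symbol $\sym u Z$ must be approximated, modulo any chosen finite-level quotient $\CXD$, by a sum of transversal symbols. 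The key tools are (a) Lemma~\ref{filtration.lem}, which lets us pass to a blowup $g\colon\tX\to X$ making the proper transform $Z'$ of $Z$ regular and meeting $\tC$ transversally—so that on $\tX$ the element already lies in $\FXtCCap{g^*C}$ type symbols; and (b) a moving/deformation argument using Lemma~\ref{classgroup.lem1} (the $K_2$-relations $\partial\{a,b\}=0$) to express the contribution of a curve through a point in terms of curves transversal to $C$, at the cost of auxiliary curves whose symbols we can control. Since $\C(U)=\varprojlim_D \CXD$ carries the inverse limit topology with each $\CXD$ discrete, "closure" means: for every $D$ with $|D|\subset C$, the image of $\FXCCap C$ in $\CXD$ surjects onto the image of $\hFXCC C$; this is the statement I would actually prove, for each $D$ separately.

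The main obstacle I anticipate is exactly this transversalization step: showing that an arbitrary congruence symbol on a possibly singular curve $Z$, or on a curve tangent to $C$, can be rewritten modulo $\hFXCC D$ as a combination of symbols supported on curves meeting $C$ transversally at regular points of $C\cap V$. The blowup trick handles tangency and singularity of $Z$ itself, but one must then descend the resulting transversal symbols on $\tX$ back down to $X$ using the norm/pushforward compatibility (Lemma~\ref{hFWnorm} and \eqref{eq.Wnorm}), and check that a transversal symbol on $\tX$ over an exceptional point pushes forward into the subgroup generated by transversal symbols on $X$—which requires choosing the blowup centers away from $\Xi$ and the finitely many bad points, a Bertini-type genericity argument. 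A secondary subtlety is that $\hFXCC C = \hWU$ (noted in Definition~\ref{filtration.def0}(3) when $|D|=C$), so one is really identifying two descriptions of the "wild-free" part of $W(U)$; the tame CFT isomorphism from the first statement, combined with the fact that $\piabXC$ sees only tame ramification, can be fed back in to show no further relations are needed, i.e.\ that the image of $\FXCCap C$ is not merely dense in some larger subgroup but has closure exactly $\hFXCC C$. I would organize the proof so that the first statement is proved first and then used, via duality and Chebotarev, to pin down the second.
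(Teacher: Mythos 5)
Your treatment of the first statement is in the same spirit as the paper's: the paper simply invokes \cite[Thm.\ 8.3]{KeSc} (the isomorphism $\WU^0/\hWU \isom \piabXC^0$) and says the ``verbatim same argument'' gives the proposition. Two cautions, though. First, you cannot literally cite Corollary~\ref{CFT.coro} to set up the duality reduction, since that corollary is a consequence of the Existence Theorem, which in turn uses this proposition (via Corollary~\ref{tameCFT.cor}); the reduction you want is the one in Lemma~\ref{cft.proper}. Second, $\hFXCC{C}$ is in general a \emph{proper} subgroup of $\hWU$ (for a curve $Z$ tangent to $C$ one has $I_C\cO_{Z^N,y}\subsetneq \fm_y$), so the known statement about $\WU^0/\hWU$ does not formally imply the one about $\CXC^0=\WU^0/\hFXCC{C}\cap \WU^0$; one really must re-run the Wiesend induction with the smaller subgroup, not just quote the result.

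The genuine gap is in the second statement. Your fallback --- feeding the tame isomorphism back in ``via duality and Chebotarev'' to conclude that the closure of the image of $\FXCCap{C}$ is all of (the closure of the image of) $\hFXCC{C}$ --- is circular at this point of the paper. To run that duality argument you would need to know that every continuous character of $\C(U)$ annihilating the transversal symbols $\sym{1+\cO_{G,x}(-C)}{G,x}$ comes from some $\chi\in H^1(U)$, so that Proposition~\ref{filU.lem} can be applied to deduce that $\chi$ is tame and hence kills $\hFXCC{C}$; but surjectivity of $H^1(U)\to\C(U)^\vee$ is exactly property ${\bf I}_U$, i.e.\ the Existence Theorem. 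The non-circular route --- and what the paper's ``verbatim same argument'' amounts to --- is to prove directly that $\rho_U$ also induces an isomorphism from $\WU^0$ modulo the closure of $\FXCCap{C}$ onto $\piabXC^0$, and then compare the two isomorphisms to force the two subgroups to coincide. Your alternative, purely cycle-theoretic route (blowups via Lemma~\ref{filtration.lem} plus $K_2$-relations from Lemma~\ref{classgroup.lem1} to move an arbitrary symbol $\sym{u}{Z}$ into transversal position modulo each $\hFXCC{D}$) is viable in principle --- it is the tame shadow of Lemmas~\ref{keylem2} and~\ref{movinglem} --- but as written it is only a plan: the step ``auxiliary curves whose symbols we can control'' is precisely the content of several of the later technical sections and cannot be taken for granted here.
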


Here $C$ is considered as a reduced effective Cartier divisor.

\begin{proof}
It is shown in \cite[Thm.\ 8.3]{KeSc} (see also Remark \ref{rem.Suslinhom}) that $\rho_U$ induces an isomorphism
\[
\WU^0/\hWU \xrightarrow{\sim} \piabXC^0,
\]
where $\WU^0=\Ker\big(\WU \rmapo {f_*} W(\Spec k)\big)$ for the natural map $f:U\to \Spec k$ (note $W(\Spec k)=\bZ$).
The verbatim same argument shows the proposition.
\end{proof}

\begin{coro}\label{tameCFT.cor}
 Assume $\dim(X)=2$.
For any effective Cartier divisor $D$ on $X$ with $|D|= C$, $\C(X,D)^0$ is torsion of finite exponent and 
$\hWU/ \hFXCC {D}$ is of finite exponent of $p$-power.
\end{coro}
\begin{proof}
Using de Jong's alteration result \cite{dJ} and a norm trick we can assume without loss of
generality that $X/k$ is smooth.
Then by Proposition~\ref{tameCFT}, we have
\[
\WU^0/(\FXCCap {C} +   \hFXCC {D}) \cong \WU^0/  \hFXCC {C},
\]
and it is isomorphic to $\pi_1^\ab(X,C)^0$, which is finite by \cite[Th.2.7]{KeSc}.
On the other hand $p^m \FXCCap {C}\subset \FXCC {D}$ if $p^m C\geq D$ since
for $F\in \ZXCC$ such that $F\Cap C$ at $x\in F\cap C$, 
$(1+\cO_{F,x}(-C))^{p^m}\subset 1+\cO_{F,x}(-p^m C)$.
\end{proof}

\medbreak

Now we turn to the proof of property ${\bf I}_{U}$ in the wild case.                              
By Wiesend's trick (Lemma~\ref{cft.trick}) and a standard fibration technique \cite[XI, Prop. 3.3]{SGA4}, 
we can assume that there is a proper smooth curve $S$ over $k$ and morphisms
\[
f:X\to S\qaq \sigma: S \to X
\]
where $f$ is a proper surjective morphism with smooth generic fiber and $\sigma$ is a
section of $f$. Let $J$ be the set of generic points $\lam$ of $C$ which lie over the generic
point $\eta$ of $S$ and let $C_\lam = \overline{\{\lam\}}$ be the closure of $\lambda\in J$ in $X$. 
We can assume:
\begin{itemize}
\item
$f(\sigma(S)\cap C)$ does not contain $\eta$.
\item $C$ is a Cartier divisor on $X$. 
\item The induced morphism $\Clam\to S$ is an isomorphism for each $\lam \in J$.
\item $f|_U: U \to S$ is smooth.
\end{itemize}

Let us fix an algebraic closure $\overline{k(S)}$ of $k(S)$. Write $\bar \eta = \Spec\,
\overline{k(S)}$. Let us consider pairs $\Sigma = (T,\theta)$ where
\begin{itemize}
\item
 $T$ is the normalization of $S$ in a finite subextension of $k(S)$  in the field extension $k(S)
\subset \overline{k(S)}$ 
\item $\theta$ is an effective divisor on $T$.
\end{itemize}
Clearly for such $\Sigma=(T,\theta)$ there is a canonical map $T \to S$. We define a directed 
 partial ordering on the set of all $\Sigma$ by setting $$\Sigma_1=(T_1,\theta_1)\le
 \Sigma_2 = (T_2, \theta_2) , $$ 
if $k(T_1) \subset k(T_2)$, which means that  the map $T_2 \to S$ factors canonically through 
\[T_2 \xrightarrow{g_{\Sigma_2,\Sigma_1}} T_1  \to S\;\;\text{ and if }\;
g_{\Sigma_2,\Sigma_1}^* (\theta_1) \le \theta_2 .\]
Let $\XSig$ be the normalization of $X\times_S T$ and 
write $\CSig\in \Div(X_\Sigma)^+ $ for the pullback of $\cup_{\lambda\in J} C_\lambda $ to $\XSig$.
We also write $\thSig\in \Div(X_\Sigma)^+$ for the pullback of $\theta$ to $\XSig$.
By $U_\Sigma$ we denote the preimage of $U$ in $X_\Sigma \setminus {\rm supp}(\thSig)$.
Using the compatibility of \'etale cohomology with directed inverse limits of schemes we
get an isomorphism
\begin{equation}\label{pf.eqlim1}
\varinjlim_\Sigma  H^1( U_\Sigma )  \xrightarrow{\sim} H^1(U_{\bar \eta} ).
\end{equation} 

Thinking of $U_{\bar \eta} $ as a smooth curve over $\bar \eta$ with compactification
$X_{\bar \eta} $ we endow  the cohomology group $H^1(U_{\bar \eta} )$ with the
ramification filtration 
$$\fil m H^1(U_{\bar\eta}) =     
\Ker \big(H^1(U_{\bar\eta}) \to \underset{\lam\in J}{\bigoplus}\; H^1(K_{\lam}^- )/ \fil m H^1( K_\lam^-) \big)
$$ 
where $K_{\lam}^-$ is the quotient field of the henselization of $X_{\bar\eta}$ at the preimage of $\lam$.
Note that
\begin{equation}\label{pf.eqlim1.1}
\fil m H^1(U_{\bar\eta}) =  \varinjlim_\Sigma \fil m H^1(U_{\Sigma,\eta}),
\end{equation} 
where $U_{\Sigma,\eta}=U_{\Sigma}\times_S \eta$ and 
$$ \fil m H^1(U_{\Sigma,\eta}) =     
\Ker \big(H^1(U_{\Sigma,\eta}) \to \underset{\lam\in J}{\bigoplus}\; H^1(K_{\Sigma,\lam})/ 
\fil m H^1( K_{\Sigma,\lam}) \big),
$$ 
and $K_{\Sigma,\lam}$ is the quotient field of the henselization of $\XSig$ at the preimage of $\lam$.
If we fix $T$ and take ${\rm supp}(\theta)$ large enough, then $X_\Sigma \setminus{\rm supp}(\thSig)$ is smooth and 
$\CSig\setminus {\rm supp}(\thSig)$ is a regular divisor on it.
Hence Corollary~\ref{globalrt.defram} implies 
\begin{equation}\label{pf.eqlim1.2} 
\fil m H^1(U_{\Sigma,\eta}) = \varinjlim_\theta \fil{m\CSig+\thSig}  H^1( \USig ) ,
\end{equation} 
where $\theta$ ranges over the effective divisors on $T$ and $\fil{m\CSig+\thSig}  H^1( \USig)$ is defined as
Definition \ref{gloram.def} for the effective Cartier divisor $m\CSig+\thSig$ on $X_\Sigma$.
Combining \eqref{pf.eqlim1}, \eqref{pf.eqlim1.1} and \eqref{pf.eqlim1.2}, we get an isomorphism
\begin{equation}\label{pf.eqlim2}
\fil m H^1(U_{\bar \eta} ) \xrightarrow{\sim} \varinjlim_\Sigma \fil{m\CSig+\thSig}  H^1( \USig ) .
\end{equation}
Composing \eqref{pf.eqlim2} with the dual reciprocity map (see Proposition~\ref{cft.recmap}),
we get a homomorphism
\begin{equation}
\Psi_{\bar\eta}^{(m)} : \fil m H^1(U_{\bar\eta} )  \to \varinjlim_\Sigma \C(X_\Sigma, m\CSig + \thSig)^\vee.
\end{equation}
By Wiesend's trick (Lemma~\ref{cft.trick}), the surjectivity of $\Psi_{\bar\eta}^{(m)}$ implies ${\bf I}_{U}$. 
\medbreak

The following result is essentially due to Wiesend.

\begin{lem}[Wiesend]
The map  $ \Psi_{\bar\eta}^{(1)} $ is surjective.
\end{lem}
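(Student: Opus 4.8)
The plan is to prove surjectivity of $\Psi^{(1)}_{\bar\eta}$ by reducing to the tame class field theory result over finite fields together with a passage to the limit over the tower $\Sigma$. First I would observe that $\fil 1 H^1(U_{\bar\eta})$ is the direct limit over $\Sigma$ of the groups $\fil{C+\theta}H^1(U_\Sigma)$ by \eqref{pf.eqlim2} with $m=1$, so it suffices to show that for each $\Sigma=(T,\theta)$ the map
\[
\fil{C+\theta}H^1(U_\Sigma)\to \C(X_\Sigma,C+\theta)^\vee
\]
has image that, after passing to the limit over larger $\Sigma$, exhausts $\varinjlim_\Sigma \C(X_\Sigma,C+\theta)^\vee$. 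Each $X_\Sigma$ is a normal projective surface over the finite field $k$ and $C+\theta$ (or rather $(C+\theta)_{\rm red}$) is the complement divisor; the filtration index $C+\theta$ is the \emph{minimal} wild index, i.e.\ equals the reduced boundary up to multiplicities that only affect $\theta$, so by Corollary~\ref{globalrt.defram} the characters in $\fil{C+\theta}H^1(U_\Sigma)$ are exactly those tamely ramified along $C$. Hence $\Psi_{X_\Sigma,C+\theta}$ is, up to the $\theta$-part of the boundary which we can absorb by enlarging $\Sigma$, the dual of the reciprocity map in the \emph{tame} case.

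Next I would invoke Proposition~\ref{tameCFT}: for a projective surface over the finite field $k$, $\rho_U$ induces an isomorphism $\C(X_\Sigma,C)^0\xrightarrow{\sim}\pi_1^{\rm ab}(X_\Sigma,C)^0$, equivalently $\Psi_{X_\Sigma,C}:\fil C H^1(U_\Sigma)\to \C(X_\Sigma,C)^\vee$ is an isomorphism on the degree-zero parts. The divisor $\theta$ can be killed: since $\theta$ is supported away from $C$ (it comes from a divisor on $T$ pulled back, disjoint from the components $C_\lambda$ lying over $\eta$), enlarging $\theta$ only shrinks $U_\Sigma$, and in the direct limit $\varinjlim_\Sigma H^1(U_\Sigma)\cong H^1(U_{\bar\eta})$ and $\varinjlim_\Sigma \C(X_\Sigma,C+\theta)^\vee$ is computed compatibly; the tame isomorphism on each finite level therefore assembles to surjectivity (indeed bijectivity on degree-zero parts) in the limit. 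Concretely, given $\varphi\in \varinjlim_\Sigma\C(X_\Sigma,C+\theta)^\vee$, represent it by some $\varphi_\Sigma\in \C(X_\Sigma,C+\theta)^\vee$; enlarging $\Sigma$ so that the $\theta$-ramification becomes trivial on a further cover, we may view $\varphi_\Sigma$ as an element of $\C(X_{\Sigma'},C)^\vee$, which by Proposition~\ref{tameCFT} lifts to $\fil C H^1(U_{\Sigma'})\subset \fil{C+\theta'}H^1(U_{\Sigma'})$, hence to a class in $\fil 1 H^1(U_{\bar\eta})$ mapping to $\varphi$.

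The main obstacle I anticipate is the bookkeeping of the degree maps and the difference between $\C(\,\cdot\,)$ and $\C(\,\cdot\,)^0$: Proposition~\ref{tameCFT} is stated for the degree-zero quotients, and one must check that the full duals $\C(X_\Sigma,C+\theta)^\vee$ are handled correctly, using that the degree part contributes the (split off) unramified-along-everything characters, which are controlled by class field theory of the base curve $S$ and its covers $T$. A second technical point is making precise that enlarging $\theta$ in the tower really does trivialize the prime-to-$C$ ramification compatibly with the transition maps $g_{\Sigma_2,\Sigma_1}$ — this is exactly the cofinality statement underlying \eqref{pf.eqlim1} and should follow from the exhaustiveness of the ramification filtration, Proposition~\ref{grt.exhaust}, applied fibrewise. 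Once these compatibilities are in place, the surjectivity of $\Psi^{(1)}_{\bar\eta}$ follows formally from the finite-level tame isomorphisms, and this is essentially Wiesend's original argument as recorded in \cite{KeSc}.
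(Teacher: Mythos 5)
The central step of your argument --- ``enlarging $\Sigma$ so that the $\theta$-ramification becomes trivial on a further cover, we may view $\varphi_\Sigma$ as an element of $\C(X_{\Sigma'},C)^\vee$'' --- is not justified and is false in general. A character of $\C(X_\Sigma, C+\theta)$ is tame along the horizontal components $C_\lambda$ but may be \emph{wildly} ramified along the vertical fibres over $|\theta|$, and a base change $T'\to T$ in the base curve does not kill this ramification unless the character is pulled back from $T$: for instance an Artin--Schreier character $y^p-y=a/\pi_t^n$, with $\pi_t$ a uniformizer at $t\in|\theta|$ and $a$ non-constant along the fibre over $t$, stays wildly ramified after any base change in $T$. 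Consequently Proposition~\ref{tameCFT}, which concerns the reduced modulus $C$ (equivalently $C+\theta_{\rm red}$), does not reach the group $\C(X_\Sigma, C+\theta)^\vee$ you need to exhaust; your argument only produces the ``fully tame'' part of the target, and the wild vertical part does not disappear in the limit over $\Sigma$.

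The paper handles exactly this vertical wild part by a two-step mechanism that your sketch is missing. First, by Corollary~\ref{tameCFT.cor} the character $\chi$ is killed by some integer $n$; restricting $\chi$ along the section $\sigma:T\to X_\Sigma$ gives a character of $\C(T,\theta')$, which by one-dimensional class field theory is a Galois character of $T\setminus|\theta'|$ and \emph{can} be killed by enlarging $\Sigma$ --- this is the only part of $\chi$ for which base change in $T$ helps. Second, for what remains one uses that the maximal abelian covering of the generic fibre $(U_\Sigma)_\eta$ which is tame, $n$-torsion and split over $\sigma$ is finite; spreading it out to a quasi-finite $U'\to U_\Sigma$ and applying base change for tame fundamental groups together with one-dimensional class field theory along the fibres of $U_\Sigma\to T$ shows that $\chi$ pulls back to zero on $U'$, whence Wiesend's trick (Lemma~\ref{cft.trick}) puts $\chi$ in the image. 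Without the splitting along $\sigma$ and the fibrewise argument, the reduction to the tame surface case cannot be made to work.
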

\begin{proof}
Consider $\chi \in \C(X_\Sigma, \CSig + \theta)^\vee $ for some $\Sigma=(T,\theta)$. By
Wiesend's trick Lemma~\ref{cft.trick} it is enough to construct a quasi-finite map $U' \to U_\Sigma$ with dense image such that the pullback of $\chi$ to $U'$ vanishes.  
This is proved by the same argument as the proof of \cite[Prop.3.6]{KeSc}.
For convenience of the readers we recall it (see the first part of \cite[\S4]{KeSc}).

By Corollary~\ref{tameCFT.cor}, $\C(\XSig, \CSig + \thSig)^0$ is of finite exponent so that the map
\[
\varinjlim_n \Hom_\cont (\C(\XSig, \CSig + \thSig), \Z / n  )  \xrightarrow{\sim} \C(\XSig, \CSig + \thSig)^\vee
\]
is an isomorphism, where $n$ ranges over all positive integers. Thus we can find $n>0$ such that 
$$\chi \in  \Hom_\cont (\C(\XSig, \CSig + \thSig), \nz) \subset \C(\XSig, \CSig + \thSig)^\vee.$$
If we pull back $\chi$ along the section $\sigma: T \to X_\Sigma$ we get a character in
$\C(T, \theta'  )^\vee$ for some effective divisor $\theta'$ on $T$. By one-dimensional global class field
theory it comes from a cohomology element of $H^1(T \setminus |\theta'| )$ via the dual reciprocity map. 
By making a base change in the base $T$, we can assume that this cohomology element vanishes.   

The maximal abelian pro-finite \'etale covering $U'\to (U_\Sigma)_\eta$ which splits over the image of
$\sigma$, and whose Galois group is $n$-torsion and which is tame along $(\CSig)_\eta$ is finite.
Let $\chi':W(U') \to \nz$ be the pullback of $\chi$ via the composite map 
\[W(U') \to W(\USig) \twoheadrightarrow \C(\XSig, \CSig + \thSig),\]
where the first map is the map \eqref{eq.Wnorm} induced by $U'\to \USig$.
By a specialization argument for tame fundamental groups and one-dimensional class field theory for the fibers of
$U_\Sigma \to T$, we can deduce that the pullback of $\chi'$ to the
class group of the fibers of $U'\to T$ are trivial (see the last paragraph of page 2579 of \cite{KeSc}).
By the Chebotarev density theorem (see \cite[Th.7]{Se2}) this implies $\chi'$ is trivial as desired.
\end{proof}

\medskip

We now prove $\Psi_{\bar\eta}^{(m)} $ is surjective for  $m\ge 2$ by induction on $m$.
Consider the exact localization sequence
$$
0\to H^1(X_{\bar\eta}) \to H^1(U_{\bar\eta}) \to \underset{\lam\in J}{\bigoplus}\;
H^1(K_\lam^-)
 \rmapo{\iota} H^2(X_{\bar\eta},\qz).
$$
It induces the exact sequences in the commutative diagram
\[
\xymatrix{
0 \ar[r] &  H^1(X_{\bar\eta}) \ar[r] \ar@{=}[d] & \fil {m-1}  H^1(U_{\bar\eta}) \ar[r] \ar[d] & \bigoplus_{\lam\in J}\;
\fil {m-1} H^1(K_\lam^-) \ar[r]^(0.65){\iota}  \ar[d] & \iota( \fil {m-1} ) \ar[d] \ar[r]
& 0\\
0 \ar[r] &  H^1(X_{\bar\eta}) \ar[r] & \fil {m}  H^1(U_{\bar\eta}) \ar[r] & \bigoplus_{\lam\in J}\;
\fil {m} H^1(K_\lam^-) \ar[r]^(0.65){\iota} & \iota( \fil {m} )  \ar[r]
& 0
}
\]
Here the vertical maps are the canonical inclusions. Taking cokernels of the vertical maps we
get an exact sequence
\begin{equation}\label{pf.lex1}
0 \to \gr m  H^1(U_{\bar\eta}) \to  \bigoplus_{\lam\in J}  \gr m H^1(K_\lam^-)  \xrightarrow{\iota} \iota( \fil {m} ) / \iota( \fil {m-1} ) \to 0.
\end{equation}
The map $\iota$ in  \eqref{pf.lex1} vanishes, because $ \iota( \fil {m} ) / \iota( \fil
{m-1} )$ is a subquotient of the cohomology group
$H^2(X_{\bar\eta},\qz)$, which has no $p$-torsion by \cite[X, Thm.\ 5.1]{SGA4}, and $ \gr m H^1(K_\lam^-)$ is 
a $p$-primary torsion group. This implies the exactness of the left column of the following diagram:
\begin{equation}\label{pf.diagram2}
\xymatrix{
\fil{m-1} H^1(U_{\bar\eta}) \ar[r]^(0.4){\Psi^{(m-1)}_{\bar\eta}} \ar[d] & 
\varinjlim_\Sigma \C(\XSig,(m-1)\CSig+ \thSig)^\vee
\ar[d] \\
\fil{m} H^1(U_{\bar\eta}) \ar[r]^(0.4){\Psi^{(m)}_{\bar\eta}} \ar[d] & 
\varinjlim_\Sigma \C(\XSig,m\CSig+ \thSig)^\vee
\ar[d]^{\rm cc}\\
 \bigoplus_{\lam\in  J}  \gr m H^1(K_\lam^-)  \ar@{->>}[r]^(0.45){-\oplus_\lam {\rsw}_\lam}  \ar[d] &  
\bigoplus_{\lam\in J}  \Omega^1_{X_{\bar\eta}} (mC)\otimes  \overline{k(C_\lam)} \\
0 &  
}
\end{equation}
The map $\rm cc$ is induced by the cycle conductor defined in \S \ref{keytheorem} as follows.
For $\Sigma=(T,\theta)$ take the minimal desingularization $\tXSig\ \to \XSig$ and 
let $\tCSig$ be the proper transform of $\CSig$ in $\tXSig$. Note that the projection $\tCSig\to \CSig$ is an 
isomorphism. Let $\tthSig\in \Div(\tXSig)^+$ be the pullback of $\theta$ to $\tXSig$.
If ${\rm supp}(\theta)\subset T$ is sufficiently large, we have 
$\tXSig \setminus {\rm supp}(\tthSig) \simeq \XSig \setminus {\rm supp}(\thSig)$ so that the natural map
\[\varprojlim_\theta \C(\tXSig,m\tCSig+ \tthSig) \to \varprojlim_\theta  \C(\XSig,m\CSig+ \thSig)\]
is an isomorphism, where the limit is taken over the effective divisors on a fixed $T$.
Hence Theorem~\ref{keythm} and \eqref{rswMXDexactseq} applied to $(\tXSig,m\tCSig+ \tthSig)$ implies an exact sequence
\[
\varinjlim_\theta \C(\XSig,(m-1)\CSig+ \thSig)^\vee \to \varinjlim_\theta \C(\XSig,m\CSig+ \thSig)^\vee
\rmapo{cc} \bigoplus_{\lam\in J}  \Omega^1_{\XSig} (m\CSig)\otimes  k(C_\lam). \]
The right vertical sequence in \eqref{pf.diagram2} is deduced from this so it is exact.
The map ${\rsw}_\lam$ in \eqref{pf.diagram2} is the refined Artin conductor for $K_\lam^-$ recalled in 
Theorem~\ref{thm.Masuda} and it is surjective. The lower square of \eqref{pf.diagram2} commutes by Theorem~\ref{keythm2}. The commutativity of the upper square is obvious.
A diagram chase shows that the surjectivity of $\Psi^{(m-1)}_{\bar\eta}$ implies the
surjectivity of $\Psi^{(m)}_{\bar\eta}$. This finishes the induction and therefore the
proof of Theorem~\ref{CFT.mainthm}. 


\section{Reciprocity at a closed point}\label{recclosedpoint}

Let the notation be as in \S\ref{classgroup}. 
Let $X=(X,C)$ be in $\cC$ (cf. Definition \ref{def.cC}).
The purpose of this section is  to reduce Proposition~\ref{prop.Recpoint} and Corollary~\ref{coro.Recpoint} to 
Lemma \ref{lem.3term} below.
It will play a crucial role in the proof of Theorem~\ref{keythm} in \S\ref{keythmproof}.
We note that the arguments in this section work over any perfect field $k$ (not necessarily finite).
\medbreak

For a regular closed point $x$ of $C$, there is a natural isomorphism
\[
H^1_x(C,\Omega^1_X(-D)\otimes_{\cO_X}\cO_C)\simeq
\frac{\Omega^1_X(-D)\otimes_{\cO_X} k(\Clam)}{\Omega^1_X(-D)\otimes_{\cO_X} \cO_{C,x}}
\]
where $\Clam$ is the irreducible component of $C$ containing $x$. 
Thus we have a natural map
\begin{equation}\label{keythmproof.eq0}
\iota_x: \frac{\Omega^1_X(-D)\otimes_{\cO_X} \cO_{C,x}(x)}
{\Omega^1_X(-D)\otimes_{\cO_X} \cO_{C,x}}  \to H^1_x(C,\Omega^1_X(-D)\otimes_{\cO_X}\cO_C)
\end{equation}
We will let $\iota_x$ denote also the composite of $\iota_x$ and the natural map
\[ H^1_x(C,\Omega^1_X(-D)\otimes_{\cO_X}\cO_C) \to  H^1(C,\Omega^1_X(-D)\otimes_{\cO_X}\cO_C).\]
Let $Z\in \ZXCC$ be such that $Z\Cap C$ at $x$.
Locally on a neighborhood of $x$, we have a commutative diagram of exact sequences (cf. \eqref{OmegaXZes})
\begin{equation*}\label{OmegaXZes2}
\xymatrix{
0\ar[r] & \Omega^1_X(-D) \ar[r]\ar[d]^{=} & \Omega^1_X(\log Z)(-D) \ar[r]\ar[d] & \cO_Z(-D) \ar[r]\ar[d] & 0 \\ 
0\ar[r] & \Omega^1_X(-D) \ar[r] & \Omega^1_X(Z-D) \ar[r] & \Omega^1_X(Z-D)\otimes_{\cO_X} \cO_Z \ar[r] & 0\;.\\
}
\end{equation*}
Tensored with $\cO_C$, it induces a commutative diagram of boundary maps
\begin{equation*}\label{pdZx}
\xymatrix{
\cO_{Z,x}(-D)\otimes \k(x) \ar[r]^{\hskip -20pt\partial_{Z,x}} \ar[d]^{d_{Z,x}} & H^1_x(C,\Omega^1_X(-D)\otimes_{\cO_X}\cO_C)\ar[d]^{=} \\ 
\frac{\Omega^1_X(-D)\otimes_{\cO_X} \cO_{C,x}(x)}
{\Omega^1_X(-D)\otimes_{\cO_X} \cO_{C,x}}   \ar[r]^{\hskip -20pt\iota_x} & H^1_x(C,\Omega^1_X(-D)\otimes_{\cO_X}\cO_C) \\ 
}
\end{equation*}
where $d_{Z,x}$ has the following explicit description:
Let $f\in \cO_{X,x}$ be a local equation of $Z$ at $x$. Then, for $a\in \cO_{X,x}(-D)$, we have
\begin{equation}\label{dZxexplicit}
d_{Z,x}(a_{|Z}) = a_{|C}\cdot \frac{df}{f} \;\in \frac{\Omega^1_X(-D)\otimes_{\cO_X} \cO_{C,x}(x)}
{\Omega^1_X(-D)\otimes_{\cO_X} \cO_{C,x}} .
\end{equation}

\begin{prop}\label{prop.Recpoint}
Let $x$ be a regular closed point of $C$.
Let $F,Z_1,Z_2\in \ZXCC$ be such that $F\Cap C$ at $x$, and $F\Cap Z_i$ and $Z_i\Cap C$ at $x$ for $i=1,2$.
For $a\in \cO_{F,x}(-D)\otimes \k(x)$ and $b_i\in \cO_{Z_i,x}(-D)\otimes \k(x)$ for $i=1,2$ satisfying
\[
d_{F,x}(a)= d_{Z_1,x}(b_1) +  d_{Z_2,x}(b_2)\;\in \;  \frac{\Omega^1_X(-D)\otimes_{\cO_X} \cO_{C,x}(x)}
{\Omega^1_X(-D)\otimes_{\cO_X} \cO_{C,x}},
\]
we have
\[\mu_{F,x}(a)= \mu_{Z_1,x}(b_1) +  \mu_{Z_2,x}(b_2)\;\in \; \WU/\FXCC {D+C}.\]
\end{prop}

Before going to the proof, we give its corollary. 

\begin{coro}\label{coro.Recpoint}
For a regular closed point $x\in C$, there exists a unique homomorphism
\[
\mu_x: \frac{\Omega^1_X(-D)\otimes_{\cO_X} \cO_{C,x}(x)}{\Omega^1_X(-D)\otimes_{\cO_X} \cO_{C,x}} \to\WU/\FXCC {D+C}
\]
such that for every $Z\in \ZXCC$ with $Z\Cap C$ at $x$, the following diagram is commutative
\begin{equation}\label{recpoint.CD}
\xymatrix{
\cO_{Z,x}(-D)\otimes \k(x) \ar[r]^{\hskip -10pt d_{Z,x}} \ar[rd]_{\mu_{Z,x}} &  
\frac{\Omega^1_X(-D)\otimes_{\cO_X} \cO_{C,x}(x)}{\Omega^1_X(-D)\otimes_{\cO_X} \cO_{C,x}}  \ar[d]^{\mu_x} \\
& \hskip 30pt W(U)/\FXCC {D+C}.\\
}\end{equation}
\end{coro}

\begin{rem}\label{prop.Recpoint.rem}
By Proposition \ref{prop.Recpoint}, Theorem~\ref{keythm}$(i)$
is equivalent to the commutativity of the following diagram for any closed point $x$ of $C\backslash \Xi$:
\begin{equation}\label{keythm.CD}
\xymatrix{
&  H^1(C,\Omega^1_X(-D-\Xi)\otimes_{\cO_X}\cO_C) \\
\frac{\Omega^1_X(-D)\otimes_{\cO_X} {\cO_{C,x}(x)}}{\Omega^1_X(-D)\otimes_{\cO_X} {\cO_{C,x}}}
 \ar[ru]^{\iota_x} \ar[rd]^{\mu_x} & \Biggl\downarrow\;\phiD\\
& \hskip 30pt W(U)/\hFXCC {D+C}.\\
}
\end{equation}
\end{rem}
\bigskip

Now we deduce Corollary~\ref{coro.Recpoint} from Proposition~\ref{prop.Recpoint}.
Put
\begin{equation}\label{Lx}
\Lx=\frac{\Omega^1_X(-D)\otimes_{\cO_X} {\cO_{C,x}(x)}}{\Omega^1_X(-D)\otimes_{\cO_X} {\cO_{C,x}}}.
\end{equation}
We have
\[\Omega^1_{X,x}=\cO_{X,x}\cdot d\pi\oplus \cO_{X,x}\cdot df.\]
Hence any element $\xi\in \Lx$ is written in a unique way as
\begin{equation}\label{Lxbasis}
\xi=\frac{1}{f}(\alpha d\pi + \beta df)\qwith \alpha,\beta\in \cO_{X,x}(-D)\mod \cO_{X,x}(-D-C).
\end{equation}
For $Z\in \ZXCC$ such that $Z\Cap C$ at $x$, put
\[A(Z)=\cO_{Z,x}\otimes\k(x).\]
Let $F,Z\in \ZXCC$ be such that $F\Cap C$, $F\Cap Z$, $Z\Cap C$ at $x$.
From \eqref{dZxexplicit} one easily see that the map
\[d_{F,Z} : A(F)\oplus A(Z) \to \Lx\;;\; (a,b) \to d_{F,x}(a) + d_{Z,x}(b)\]
is an isomorphism. Define the composite map
\[\mu_{F,Z} : \Lx \rmapo{(d_{F,Z})^{-1}} A(F)\oplus A(Z) \rmapo{\mu_{F,x}\oplus \mu_{Z,x}} \WU/\FXCC{D+C}.\]
We claim $\mu_{F,Z}$ independent of the choice of $F,Z$ as above.
Indeed, assume given two choices $F,Z_1$ and $F,Z_2$ as above and $a,a'\in A(F)$ and $b_i\in A(Z_i)$ with $i=1,2$.
Then, by Proposition~\ref{prop.Recpoint} an equality
\[d_{F,x}(a) + d_{Z_1,x}(b_1) = d_{F,x}(a') + d_{Z_2,x}(b_2)\]
implies an equality
\[\mu_{F,x}(a) + \mu_{Z_1,x}(b_1) = \mu_{F,x}(a') + \mu_{Z_2,x}(b_2).\]
Hence we get $ \mu_{F,Z_1}=\mu_{F,Z_2}$. By the same argument we also get $ \mu_{F_1,Z}=\mu_{F_2,Z}$
for two choices $F_1,Z$ and $F_2,Z$ as above. This implies the desired claim.
The commutativity of \eqref{recpoint.CD} is obvious from the construction and the proof of
Corollary~\ref{coro.Recpoint} is complete.
\medbreak

\begin{rem}\label{coro.Recpoint.rem}
The above argument gives the following explicit description of $\mu_x$.
For $g\in \cO_{X,x}$, let $\divXx g$ denote
the effective Cartier divisor on $X$ obtained from $\div_X(g)$ by removing its components which do not
contain $x$. Note $\divXx g=\divXx {ug}$ for $u\in \cO_{X,x}^\times$. 
Take a system of regular parameter $(\pi,f)$ of $\cO_{X,x}$ such that $\pi$ is a local parameter of $C$ at $x$. 
Then we have (cf. \eqref{Lxbasis})
\begin{equation}\label{mufpi}
 \mu_x \big(\frac{1}{f}(\alpha d\pi + \beta df) \big)= \sym{1+(\beta-\alpha)}{F,x} + \sym{1+\alpha}{F_\pi,x},
\end{equation}
where $F=\divXx f,\; F_\pi=\divXx{f+\pi}\in \ZXCC$.
\end{rem} 
\medbreak

We deduce Proposition~\ref{prop.Recpoint} from the following lemma whose proof will be given in \S\ref{keylemproofI}
(see Lemma~\ref{lem.3term2} in \S\ref{keylemmas}).

\begin{lem}\label{lem.3term}
Let $x$ be a regular closed point of $C$.
Let $F,Z_1,Z_2\in \ZXCC$ be such that $F\Cap C$ at $x$, and $F\Cap Z_i$ and $Z_i\Cap C$ at $x$ for $i=1,2$.
Let $(\pi,f)$ be a system of regular parameters such that 
\begin{equation}\label{lem.3term.eq}
F=\divXx f,\; Z_1=\divXx {u_1f+\pi},\; Z_2=\divXx {u_2f+\pi},\; C=\divXx {\pi},
\end{equation}
where $u_1,u_2\in \cO_{X,x}^\times$. For $\alpha\in \cO_{X,x}(-D)$, we have
\[
\sym{1-(u_1-u_2)\alpha}{F,x} + \sym{1-u_1\alpha}{Z_1,x} -\sym{1-u_2\alpha}{Z_2,x}\;\in 
\FXCC {D+C}.
\]
\end{lem}
\bigskip

By the assumption of Proposition~\ref{prop.Recpoint}, $F,Z_1,Z_2$ can be described as \eqref{lem.3term.eq}.
For $a,b_1,b_2\in \cO_{X,x}(-D)$, assume an equality
\[d_{F,x}(a_{|F})= d_{Z_1,x}((b_1)_{|Z_1}) +  d_{Z_2,x}((b_2)_{|Z_2})\;\in \;  \Lx\]
holds. Using \eqref{dZxexplicit} and \eqref{lem.3term.eq}, one can compute
\begin{equation}\label{lem.3term.eq2.5}
d_{Z_i,x}((b_i)_{|Z_i}) = \frac{df+ u_i^{-1} d\pi}{f}\qfor i=1,2.
\end{equation}
Thus \eqref{lem.3term.eq2.5} is equivalent to equalities
\[\overline{a} = \overline{b_1} + \overline{b_1},\; \overline{u_2b_1} + \overline{u_1b_2}=0
\;\in\; \cO_{X,x}(-D)\otimes\k(x),\]
where $\overline{z}\in\cO_{X,x}(-D)\otimes\k(x)$  is the residue class of $z\in \cO_{X,x}(-D)$.
This implies that there exists $\alpha\in \cO_{X,x}(-D)$ such that 
\[\overline{b_1}=\overline{u_1\alpha},\; \; \;\overline{b_2}=-\overline{u_2\alpha},\;\; 
\overline{a}=\overline{(u_1-u_2)\alpha}.\]
Hence the desired equality of Proposition~\ref{prop.Recpoint} follows from Lemma~\ref{lem.3term}.


\section{Reciprocity along the boundary}\label{keythmproof}

The purpose of this section is to reduce Theorem \ref{keythm}$(i)$ to Lemma \ref{lem.3term} and Lemma \ref{keylemIIfinite} below. 
In this section we always assume $k$ is finite. 
A key point is Proposition~\ref{keythm.prop} concerning reciprocity property of
the map $\mu_x$ from Corollary~\ref{coro.Recpoint} when $x$ moves along $C$.
First we prove a preliminary lemma.

\begin{lem}\label{keythmproof.lem0}
Let $X$ and $C$ be as in Theorem \ref{keythm}.
Let $k'/k$ be a finite Galois extension with $G=\Gal(k'/k)$ and put $X'=X\otimes_k k'$ and $C'=C\otimes_k k'$. 
Assume that there exists $\Xi'\in \Div(X',C')^+$ such that $C'_{sing}\subset\Xi'$ and $\Xi'$ is independent of 
$D$ as in \eqref{eqD0}  and a map defined for every $D'=D\otimes_k k'$ with $D$ as in \eqref{eqD0}:
\[ \phi_{X',D'}: H^1(C',\Omega^1_{X'}(-D'-\Xi')\otimes_{\cO_{X'}}\cO_{C'}) \to \WUkp/\hFXCCkp {D'+C'},\]
such that the following diagram commutes (cf. Remark \ref{prop.Recpoint.rem}):
\begin{equation}\label{keythm.CD2}
\xymatrix{
\underset{y|x}{\bigoplus}\;
\frac{\Omega^1_{X'}(-D')\otimes_{\cO_{X'}} {\cO_{C',y}(y)}}{\Omega^1_{X'}(-D')\otimes_{\cO_{X'}} {\cO_{C',y}}}
\ar[r]^{\hskip -20pt\sum_{y|x}\iota_y} \ar[rd]_{\hskip -10pt\sum_{y|x}\mu_y} &  
\hskip 10pt H^1(C',\Omega^1_{X'}(-D'-\Xi')\otimes_{\cO_{X'}}\cO_{C'}) \ar[d]^{\phi_{X',D'}}\\
& \hskip 30pt \WUkp/\hFXCCkp {D'+C'},\\
}
\end{equation}
where $x$ is any fixed regular closed point of $C$ not lying in the image of $\Xi'$ and $y$ ranges over the points of $C'$ lying over $x$.
Then there exists $\Xi\in \ZXCC$ such that $C_{sing}\subset\Xi$ and $\Xi$ is independent of $D$ as in \eqref{eqD0} 
and a map defined for every $D$ as in \eqref{eqD0}:
\[\phiD: H^1(C,\Omega^1_X(-D-\Xi)\otimes_{\cO_X}\cO_C)\to W(U)/\hFXCC {D+C},\]
which satisfies the condition $(i)$ of Theorem \ref{keythm}.
\end{lem}
\begin{proof}
We may replace $\Xi'$ by the sum of its Galois conjugates to assume that there exists $\Xi\in \Div(X,C)^+$ such that
$\Xi'=\Xi\otimes_k k'$. Note that \eqref{keythm.CD2} implies that $\phi_{X',D'}$ is $G$-equivariant 
since so are $\sum_{y|x}\iota_y$ and $\sum_{y|x}\mu_y$. The trace map induces an isomorphism
\[Tr_{k'/k}: H^1(C',\Omega^1_{X'}(-D'-\Xi')\otimes_{\cO_{X'}}\cO_{C'})_G \isom H^1(C,\Omega^1_X(-D-\Xi)\otimes_{\cO_X}\cO_C),\]
where $M_G$ denotes the coinvariants of a $G$-module $M$. 
We then define $\phi_{X,D}$ as the composite
\begin{multline*}
H^1(C,\Omega^1_X(-D-\Xi)\otimes_{\cO_X}\cO_C) \rmapo{(Tr_{k'/k})^{-1}} 
H^1(C',\Omega^1_{X'}(-D'-\Xi')\otimes_{\cO_{X'}}\cO_{C'})_G\\
\rmapo {\phi_{X',D'}}  \Big(\WUkp/\hFXCCkp {D'+C'}\Big)_G \rmapo {N_{k'/k}} \WU/\hFXCC {D+C},
\end{multline*}
where the last map is induced by the norm map $N_{k'/k}: \WUkp \to \WU$.
The condition of Remark \ref{prop.Recpoint.rem} for $\phi_{X,D}$ follows from \eqref{keythm.CD2} thanks to
the commutativity of the following diagrams
\[
\xymatrix{
\underset{y|x}{\bigoplus}\;
\frac{\Omega^1_{X'}(-D')\otimes_{\cO_{X'}} {\cO_{C',y}(y)}}{\Omega^1_{X'}(-D')\otimes_{\cO_{X'}} {\cO_{C',y}}}
\ar[r]^{\hskip -10pt\sum_{y|x}\mu_y} \ar[d]^{Tr_{k'/k}} & \;\; \WUL/\hFXCCkp {D'+C'}\ar[d]^{N_{k'/k}}\\
\frac{\Omega^1_X(-D)\otimes_{\cO_X} {\cO_{C,x}(x)}}{\Omega^1_X(-D)\otimes_{\cO_X} {\cO_{C,x}}} 
\ar[r]^{\hskip -20pt\mu_x} & \WU/\hFXCC {D+C},\\
}\]
\[
\xymatrix{
\underset{y|x}{\bigoplus}\;
\frac{\Omega^1_{X'}(-D')\otimes_{\cO_{X'}} {\cO_{C',y}(y)}}{\Omega^1_{X'}(-D')\otimes_{\cO_{X'}} {\cO_{C',y}}}
\ar[r]^{\hskip -20pt\sum_{y|x}\iota_y} \ar[d]^{Tr_{k'/k}} & \;\;H^1(C',\Omega^1_{X'}(-D'-\Xi')\otimes_{\cO_{X'}}\cO_{C'}) 
\ar[d]^{Tr_{k'/k}}\\
\frac{\Omega^1_X(-D)\otimes_{\cO_X} {\cO_{C,x}(x)}}{\Omega^1_X(-D)\otimes_{\cO_X} {\cO_{C,x}}} 
\ar[r]^{\hskip -20pt\iota_x} &
H^1(C,\Omega^1_X(-D-\Xi)\otimes_{\cO_X}\cO_C).\\
}\]
\end{proof}

\begin{defi}\label{pencil.def}
Let $(X,C)$ be in $\cC$ (see Definition \ref{def.cC}). 
\begin{itemize}
\item[(1)]
Let $H\subset X$ be a hyperplane section. For an integer $d>0$ 
let $\cL(d)=|dH|$ be the linear system on $X$ of hypersurface sections of degree $d$.
For $t\in \cL(d)$ let $F_t\subset X$ be the corresponding section.
We write $Gr(1,\cL(d))$ for the Grassmannian variety of lines in $\cL(d)$.
\item[(2)]
A pencil $\{F_t\}_{t\in L}$ of hypersurface sections parametrized by $L\in Gr(1,\cL(d))$,
is admissible for $(X,C)$ if $\Delta_L\cap C=\varnothing$ for the axis $\Delta_L$ of $L$ and 
$F_t\Cap C $ for almost all $t\in L$.
\end{itemize}
\end{defi}
By \cite[XVIII 6.6.1]{SGA7} and Lemma \ref{keythmproof.lem0}, we may assume by replacing $k$ by its finite extension
that for a sufficiently large $d$, there exists $L\in Gr(1,\cL(d))$ admissible for $(X,C)$.
In what follows we fix such $L\in Gr(1,\cL(d))$ and $\pi\in k(X)$ satisfying:
\begin{equation}\label{eq.pi}
\div_X(\pi)=C + G_0 -G_\infty\qwith G_0,G_\infty\in \ZXCC. 
\end{equation}
We also fix a finite set $\TL\subset L$ such that 
\begin{equation}\label{eq.Sigma}
\begin{aligned}
&F_t\Cap C \qaq F_t\cap C\cap (G_0\cup G_\infty)=\varnothing\qfor t\in L-\TL.\\
\end{aligned}
\end{equation}
We have the rational map
\[
h_L: X\cdots\to L\;;\; x\to t\text{ such that } x\in F_t.
\]
By Definition \ref{pencil.def}(2) $h_L$ is defined at any point of $C$ and it gives rise to
\[
\cO_{L,t}\hookrightarrow \cO_{X,x}\qfor t\in L\text{ and } x\in F_t\cap C.
\]

\begin{lem}\label{keythmproof.lem1}
For each $t\in L$, choose a prime element $f_t\in \cO_{L,t}$. 
\begin{itemize}
\item[(1)]
For $t\in L-\TL$ and $x\in F_t\cap C$, we have 
\begin{equation*}
\Omega^1_{X,x}=\cO_{X,x}\cdot d\pi\oplus \cO_{X,x}\cdot df_t.
\end{equation*}
\item[(2)]
There exists an effective divisor $\theta$ on $L$ independent of the choice of $f_t$ such that $|\theta|=\TL$ and that for any $t\in \TL$ and $x\in F_t\cap C$ and for any 
\[
\omega=\frac{1}{f_t}(\xi_1 d\pi + \xi_2 df_t)\in \Omega^1_X\otimes_{\cO_X} k(C)\qwith \xi_i\in 
k(C)=\underset{\lam\in I}{\prod}\;k(\Clam),
\]
we have the implication
\[
\omega\in \Omega^1_X(-\Fth) \otimes_{\cO_X}\cO_{C,x}\Rightarrow \xi_i\in \cO_{C,x}(-F_t),
\]
where 
$\Fth=\underset{t\in \TL}{\sum}\; e_t F_t$ for $\theta=\underset{t\in \TL}{\sum}\; e_t t$ with
$e_t\in \bZ_{\geq 1}$.
\end{itemize}
\end{lem}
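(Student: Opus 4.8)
For part (1) the plan is a transversality computation; for part (2) it is a uniform bound on the ramification of $h_L$ along $C$.

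For (1), I would fix $t\in L-\TL$ and $x\in F_t\cap C$ and write $f_t$ also for its pullback $h_L^*f_t\in\cO_{X,x}$ (defined since $x$ lies off the base locus $\Delta_L$, so $h_L$ is a morphism near $x$ with $h_L(x)=t$). By \eqref{eq.Sigma} we have $x\notin G_0\cup G_\infty$, so by \eqref{eq.pi} the function $\pi$ is a local equation at $x$ of the divisor $C$, which is regular at $x$ because $F_t\Cap C$ at $x$; likewise $f_t$ is a local equation at $x$ for the scheme-theoretic fibre $h_L^{-1}(t)$, which agrees with $F_t$ near $x$, and $F_t$ is reduced and regular at $x$ since $F_t\Cap C$ forces $(F_t,C)_x=1$. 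But $(F_t,C)_x=1$ says exactly that $(\pi,f_t)$ generates the maximal ideal $\fm_x$ of the two-dimensional regular local ring $\cO_{X,x}$, i.e.\ is a regular system of parameters; as $k$ is perfect, $k(x)/k$ is separable, so $\fm_x/\fm_x^2\isom\Omega^1_X\otimes k(x)$, and since $\Omega^1_X$ is locally free of rank $2$, Nakayama's lemma gives $\Omega^1_{X,x}=\cO_{X,x}\,d\pi\oplus\cO_{X,x}\,df_t$. This settles (1).

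For (2), the starting observation is that $h_L$ restricts to a \emph{separable} finite morphism on every component of $C$ not contained in a fibre of $h_L$: an inseparable such morphism would, in characteristic $p$, be totally ramified everywhere, making every one of its fibres divisible by $p$, so that $F_t$ would fail to meet $C$ transversally for \emph{every} $t$, contradicting $F_t\Cap C$ for almost all $t$. Hence each such component $\Clam$ carries a finite ramification divisor $R_{\lam}$ for $h_L|_{\Clam}$ (taken on the normalization of $\Clam$), supported over the finitely many $t$ at which some fibre meets $C$ non-transversally, and hence over $\TL$; the finitely many points of $(G_0\cup G_\infty)\cap C$ likewise lie over $\TL$ by \eqref{eq.Sigma}. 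I would then take $\theta=\sum_{t\in\TL}e_t\,t$ with each $e_t\ge1$ chosen so large that $e_t\,(F_t\cdot\Clam)_x\ge\ord_x(R_{\lam})+\ord_x\bigl((\pi/v)|_{\Clam}\bigr)$ for every $x\in F_t\cap C$ and every component $\Clam$ through $x$, where $v$ is a local equation of $\Clam$ at $x$ (the last term is finite, since $\pi$ has order $1$ along $\Clam$, and it records the discrepancy between $\pi$ and a genuine local equation of $C$). Only finitely many triples $(t,\lam,x)$ occur and each $(F_t\cdot\Clam)_x\ge1$, so such $e_t$ exist; clearly $|\theta|=\TL$, and $\theta$ involves none of the choices of $f_t$.

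The crux of (2), and the step I expect to be the main obstacle, is the local implication itself. Suppose $\omega=\tfrac1{f_t}(\xi_1\,d\pi+\xi_2\,df_t)$ lies in $\Omega^1_X(-\Fth)\otimes\cO_{C,x}$, which near $x$ equals $f_t^{e_t}\bigl(\Omega^1_X\otimes\cO_{C,x}\bigr)$ because $f_t=h_L^*f_t$ is a local equation for $F_t$ near $x$ and no fibre in $\TL$ other than $F_t$ passes through $x$. Expanding $d\pi$ and $df_t$ in an $\cO_{X,x}$-basis of $\Omega^1_{X,x}$ adapted to a regular system of parameters and applying Cramer's rule to the resulting linear system in $\xi_1,\xi_2$, one gets $\xi_i\cdot(d\pi\wedge df_t)\in f_t^{e_t+1}\bigl(\Omega^2_X\otimes\cO_{C,x}\bigr)$. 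Restricted to a component $\Clam$, the image of $d\pi\wedge df_t$ is up to a unit the Jacobian of the pair, and a direct computation shows it vanishes at $x$ to order exactly $\ord_x(R_{\lam})+\ord_x\bigl((\pi/v)|_{\Clam}\bigr)$, whereas $f_t$ vanishes along $\Clam$ at $x$ to order $(F_t\cdot\Clam)_x\ge1$; by the choice of $e_t$ the factor $f_t^{e_t+1}$ is divisible by $f_t\cdot(d\pi\wedge df_t)|_{\Clam}$, so cancelling yields $\xi_i\in f_t\,\cO_{C,x}=\cO_{C,x}(-F_t)$, which is (2). The genuinely delicate point is that $C$ may be singular at $x$: then $\cO_{C,x}$ is not a product of discrete valuation rings but only embeds, with bounded conductor, in the product of the $\cO_{\Clam,x}$ over the branches $\Clam$ through $x$, and the Cramer computation has to be carried out on the normalizations of the $\Clam$ and the conclusion transported back through this conductor, which forces a further harmless enlargement of the $e_t$ and accounts for most of the bookkeeping. (If $x$ lies on a component $\Clam\subset F_t$ the implication is vacuous, since $\tfrac1{f_t}$ is then undefined on $\Clam$.)
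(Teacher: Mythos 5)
Your argument is correct and follows the same route as the paper: part (1) is exactly the paper's observation that $(\pi,f_t)$ is a regular system of parameters at $x$, and part (2) amounts to the paper's comparison of the lattice $\Theta_x=\cO_{C,x}\cdot\frac{d\pi}{f_t}\oplus\cO_{C,x}\cdot\frac{df_t}{f_t}$ with $\Omega^1_X\otimes_{\cO_X}\cO_{C,x}$, absorbing the (finite, $f_t$-independent) discrepancy into $\theta$. The only difference is one of detail: where the paper simply asserts the containment $\Omega^1_X\otimes_{\cO_X}\cO_{C,x}(-\Fth)\subset\cO_{C,x}(-F_t)\cdot\Theta_x$ for suitable $\theta$, you make the discrepancy explicit via the order of vanishing of $d\pi\wedge df_t$ along $C$ (the ramification divisor of $h_L|_C$ plus the correction from $G_0\cup G_\infty$), which is a legitimate and more informative way to see the same fact.
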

\begin{proof}
(1) follows from the fact that $(\pi,f_t)$ is a system of regular parameters of $\cO_{X,x}$
if $t\in L-\TL$ and $x\in F_t\cap C$. To show (2), note
\[
\Omega^1_X\otimes_{\cO_X} k(C) = k(C)\cdot \frac{d\pi}{f_t}\oplus k(C)\cdot \frac{d f_t}{f_t}
\]
and put
\[
\Theta_x= \cO_{C,x} \cdot \frac{d\pi}{f_t}\oplus \cO_{C,x}\cdot \frac{d f_t}{f_t}\;\subset\; \Omega^1_X\otimes_{\cO_X} k(C).
\]
We see that $\Theta_x$ is independent of the choice $f_t$, namely 
for $f'_t=u f_t$ with $u\in \cO_{L,t}^\times$, 
\[
 \Theta_x=\cO_{C,x} \cdot \frac{d\pi}{f'_t}\oplus \cO_{C,x}\cdot \frac{d f'_t}{f'_t}.
\]
Thus (2) follows from the fact that there exists an effective divisor $\theta$ on $L$ 
such that $|\theta|= \TL$ and that for any $t\in \TL$ and $x\in F_t\cap C$, we have
\[
\Omega^1_X\otimes_{\cO_X} \cO_{C,x}(-\Fth) \subset \cO_{C,x}(-F_t) \cdot \Theta_x.
\]
\end{proof}

We fix $\theta$ as in Lemma \ref{keythmproof.lem1} and put 
\begin{equation}\label{eqXi}
\Xi=\Fth + G_\infty \in \ZXCC.
\end{equation}
Note that $\Xi$ is independent of $D$ as in \eqref{eqD0}. 
Let $B$ be a finite subset of $C$ such that 
\begin{equation}\label{eq.B}
B\cap \Xi=\varnothing \qaq h_L(B)\cap \TL=\varnothing.
\end{equation}
Note that this implies that $B$ consists of regular points of $C$ by \eqref{eq.Sigma}.
For $D$ as in \eqref{eqD0} consider the maps
\begin{equation*}\label{keythmproof.eq1}
\begin{CD}
\underset{x\in B}{\bigoplus}\; \Lx @>{\psi_{L,B}}>> H^1(C,\WX(-D-\Xi)\otimes\cO_C) \\
@VV{\sum \mu_x}V \\
\WU/\FXCC {D+C}\\
\end{CD}
\end{equation*}
where $\Lx$ is defined as \eqref{Lx} and $\psi_{L,B}$ is induced by \eqref {keythmproof.eq0}. 
Taking $B$ large enough, we assume that $\psi_{L,B}$ is surjective.
We have
\[
\Ker(\psi_{L,B})= \Image\big(H^0(C,\WX(-D-\Xi)\otimes\cO_C(B))\big) .
\]

\begin{prop}\label{keythm.prop}
Assume $p\not=2$. Take
\[
\omega\in H^0(C,\WX(-D-\Xi)\otimes\cO_C(B))
\]
and let $\omega_x\in \Lx$ be the image of $\omega$ for $x\in B$. Then we have
\begin{equation}\label{keythm.prop.eq0}
\underset{x\in B}{\sum}\;\mu_x(\omega_x)\;\in\;\hFXCC {D+C} .
\end{equation}
\end{prop}

\begin{rem}\label{keythm.prop.rem0}
It suffices to show the proposition after enlarging $B$.
\end{rem}
\medbreak

The proposition implies the existence of a map
\[
\phi_{L,B}: H^1(C,\WX(-D-\Xi)\otimes\cO_C) \to \WU/\hFXCC {D+C}
\]
such that the following diagram commutes
\begin{equation}\label{keythm.prop.eq}
\xymatrix{
\underset{x\in B}{\bigoplus}\;\Lx \ar[r]^{\hskip -40pt \psi_{L,B}}\ar[d]^{\sum \mu_x} & H^1(C,\WX(-D-\Xi)\otimes\cO_C)\ar[dl]^{\phi_{L,B}} \\
\WU/\hFXCC {D+C} \\
}
\end{equation}
This implies that $\phi_L=\phi_{L,B}$ is independent of $B$ (depending only on $L$ and $\Xi$).
Take any $x\in C\backslash \Xi$. Note $t=h_L(x)\not\in \TL$ (cf. \eqref{eqXi} and Lemma \ref{keythmproof.lem1}(2)) so that 
we can choose such $B$ that $x\in B$. 
Then \eqref{keythm.prop.eq} implies that $\phi_L=\phi_{L,B}$ satisfies \eqref{keythm.CD} for $x$, 
which completes the proof of Theorem \ref{keythm}$(i)$ by Remark~\ref{prop.Recpoint.rem}.
\bigskip

Let $\displaystyle{D=\underset{\lambda\in I}{\sum} \mlam \Clam}$ be as \eqref{eqD0}.
For $\lam\in I$ consider the map
\[h_\lam: H^0(C,\WX(-D-\Xi)\otimes\cO_C(B)) \to  \WX(-D)\otimes k(\Clam) \to 
\Omega^1_{k(\Clam)}\otimes\cO_{X}(-D).\]

\begin{claim}\label{keythm.prop.preclaim2}
After enlarging $B$, we may assume $h_\lam(\omega)\not=0$ for any $\lam\in I$.
\end{claim}
\begin{proof}
We claim that after enlarging $B$, we can find 
\[\xi_\lam\in H^0(C,\WX(-D-\Xi)\otimes\cO_C(B))\qfor \lam\in I\]
such that $h_\lam(\xi_\mu)=0$ for $\mu\in I-\{\lam\}$ and $h_\lam(\xi_\lam)\not=0$.
Admit the claim for the moment.
We may assume further that $h_\lam(\xi_\lam)\not= h_\lam(\omega)$ for any $\lam\in I$ after replacing
$\xi_\lam$ by $c\cdot \xi_\lam$ with $c\in k-\{0,1\}$ if necessary.
Then, putting 
\[\omega_1= \underset{\lam\in I}{\sum}\;\xi_\lam,\quad \omega_2=\omega-\omega_1,\]
we have $h_\lam(\omega_i)\not=0$ for all $\lam\in I$ and $i=1,2$. Noting $\omega=\omega_1+\omega_2$,
\eqref{keythm.prop.eq0} for $\omega$ follows from that for $\omega_1$ and $\omega_2$, which proves 
Claim \ref{keythm.prop.preclaim2}.
 
It remains to show the claim. By Bertini's theorem, we can take $F_1,F_2\in \cL(d_1)$ for a sufficiently large 
$d_1>0$ such that $F_i\Cap C$ for $i=1,2$.
Take $f\in k(X)^\times$ such that $\div_X(f)=F_1-F_2$ and put
\[ df\in H^0(X,\Omega^1_X(2 F_2)).\]
For a sufficiently large $d_2>>d_1$, we can take $F\in \cL(d_2)$ such that 
\[F \Cap C,\; F\supset B,\; F\cap \Xi\cap C =\varnothing,\; h_L(F\cap C)\cap \TL=\varnothing,\] 
and $F_\lam\in \cL(d_2)$ for $\lam\in I$ such that
\[ F_\lam = D + \underset{\mu\in I-\{\lam\}}{\sum} \Cmu + 2 F_2 + \Xi + G_\lam \qwith   G_\lam\in \Div(X,C)^+.\]
Then $B_F=F\cap C$ satisfies the condition \eqref{eq.B} and $B\subset B_F$.
Taking $g_\lam\in k(X)^\times$ such that $\div_X(g_\lam)=F_\lam- F$ for $\lam\in I$, we have
\[ g_\lam df \in H^0(X,\Omega^1_X(-D - \underset{\mu\in I-\{\lam\}}{\sum} \Cmu - \Xi + F)).\]
Let $\xi_\lam$ be the image of $g_\lam df$ under the composite map
\begin{multline*}
H^0(X,\Omega^1_X(-D - \underset{\mu\in I-\{\lam\}}{\sum} \Cmu - \Xi + F)) \hookrightarrow
H^0(X,\Omega^1_X(-D - \Xi + F)) \\
\to H^0(C,\Omega^1_X(-D - \Xi)\otimes\cO_C(B_F)).
\end{multline*}
Then $(\xi_\lam)_{\lam\in I}$ satisfies the claimed condition for $B_F$ instead of $B$.
\end{proof}
\medbreak

We now start the proof of Proposition~\ref{keythm.prop}.
We choose an isomorphism over $k$:
\[\iota: L\simeq \bP^1_k=\Proj(k[T_0,T_1]).\]
For a finite extension $\bF_q$ of $k$, let $L(\bF_q)$ denote the set of points $x\in L$ such that there exists an embedding $k(x)\to \bF_q$ 
(note that the notation is not the standard one that means the set of $k$-morphisms $\Spec \F_q \to L$).
Put 
\[\Liotaqo=\{t\in L(\bF_q)|\; \iota(t)\not=0,\infty\},\;\text{ where } 0=(1:0), \infty=(0:1)\in \bP^1_k.\]
By Lemma \ref{keythmproof.lem0} we may assume that $k$ is large enough so that after a coordinate transformation of $\bP^1_{k}$, we have
\begin{itemize}
\item[$(*1)$]
$h_L(B) \cup \TL \subset \Liotaqo$ for a finite extension $\bF_q$ of $k$. 
\end{itemize}
By Claim \ref{keythm.prop.preclaim2} we may assume
\begin{itemize}
\item[$(*2)$]
$h_\lam(\omega)\not=0$ for any $\lam\in I$.
\end{itemize}
For $t\in L-\TL$ and $x\in F_t\cap C$, let $\omega_{C,x}$ be the image of $\omega$ under the map 
\[
H^0(C,\WX(-D-\Xi)\otimes\cO_C(B)) \to  \WX(-D)\otimes\cO_{C,x}(B) \to \Omega^1_{C,x}(-D)\otimes \cO_C(B).
\]
By $(*2)$, $\omega_{C,x}\not=0$. 
Choose an isomorphism $s :\Omega^1_{C,x}(-D)\otimes \cO_C(B)\simeq \cO_{C,x}$ as $\cO_{C,x}$-modules.
Then the order of $s(\omega_{C,x})\in \cO_{C,x}$ is independent of the choice and is denoted by
$\ord_x(\omega_{C,x})$. The set 
\[\{x\in \underset{t\in L-\TL}{\bigcup} F_t\cap C\;|\; \ord_x(\omega_{C,x})\not=0\}\]
is finite for $\omega$ fixed. Therefore we can choose $\iota:L\simeq \bP^1_k$ (possibly after replacing $k$ by a finite extension) in such a way that the following condition holds.
\begin{itemize}
\item[$(*3)$]
$\ord_x(\omega_{C,x})=0$ for any $x\in  F_{\infty}\cap C$. 
\end{itemize}
For $t\in L-\TL$ and $x\in F_t\cap C$, if $x\not \in B$, then $\omega_x=0\in \Lx$ so that
$\mu_x(\omega_x)=0$. Therefore Proposition~\ref{keythm.prop} follows from the following claim.

\begin{claim}\label{keythm.claim1}
Under the conditions $(*1)$, $(*2)$ and $(*3)$, we have
\[
\underset{t\in \Liotaqo\backslash\TL}{\sum}\;\underset{x\in F_t\cap C}{\sum}\;\mu_x(\omega_x)\;\in\;
\hFXCC {D+C}.
\]
\end{claim}


\begin{rem}\label{keythm.claim1.rem}
It suffices to prove the claim after replacing $\bF_q$ by its finite extension.
\end{rem}

\begin{proof}
We let $0,\infty$ denote the closed points of $L$ which correspond to 
$0,\infty\in \bP^1_{\bF_p}$ by $\iota:L\simeq \bP^1_k=\Proj(k[T_0,T_1])$. Put
\begin{equation}\label{keythm.eq0}
\rho=T_0/T_1\in \bF_p(\bP^1)\qaq b= 1 - \rho^{q-1},
\end{equation}
considered as elements of $k(L)\subset k(X)$ via $\iota: L\simeq \bP^1_k$ and $h_L$. Put 
\begin{equation}\label{keythm.eq0.5}
W_b=\div_X(b+\pi) + G_\infty + (q-1) F_0. \quad\text{(cf. \eqref{eq.pi})}
\end{equation}
Note
\begin{equation}\label{keythm.eq1}
\div_{X}(b)= \underset{t\in \Liotaqo}{\sum} F_t\; - (q-1)\cdot F_0,
\end{equation}

\begin{claim}\label{keythmproof.claim3}
\begin{itemize}
\item[(1)]
$W_b\in \ZXCC$ and
$W_b\cap C\subset \underset{t\in \Liotaqo}{\bigcup} F_t\cap C.$
\item[(2)]
For $t\in \Liotaqo \backslash \TL$ and $x\in F_t\cap C$,
$W_b\Cap C$ at $x$ and $\div_{X,x}(b+\pi)$ (cf. Remark \ref{coro.Recpoint.rem})
is the irreducible component of $W_b$ containing $x$.
\item[(3)]
For $t\in \Liotaqo$ and $x\in F_t\cap C \cap W_b$, we have
\[
\cO_{W_b,x}(-F_t-G_\infty)\subset \cO_{W_b,x}(-C).
\]
\end{itemize}
\end{claim}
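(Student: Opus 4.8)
The plan is to prove the three parts of the claim by reducing everything to local computations at the points of $C$, using the explicit forms of $b=1-X^{-(q-1)}$ and of $\pi$ near such a point. First I would record two preliminary facts. (a) \emph{No prime component $C_\lambda$ of $C$ lies in a fibre $F_t$}: since $\Delta_L\cap C=\varnothing$ the rational map $h_L$ is a genuine morphism along $C$, and its restriction to $C_\lambda$ is non‑constant because its fibres $C_\lambda\cap F_t$ have positive degree $d\,(C_\lambda\cdot H)$; in particular $\ord_{C_\lambda}(b)=0$, while $\ord_{C_\lambda}(\pi)=1$ by \eqref{eq.pi}. (b) For $x\in C$ the unique fibre through $x$ is $F_{t_0}$ with $t_0:=h_L(x)$, so by \eqref{keythm.eq1} the local divisor of $b$ at $x$ is $F_{t_0}$ if $t_0\in\Liotaqo$, equals $-(q-1)F_0$ if $t_0=0$, and is $0$ otherwise; moreover in the last case $b\in\cO_{X,x}^\times$ and $b(x)=1-X(x)^{-(q-1)}\ne 0$, since $X(x)^{q-1}=1$ would force $X(x)\in\F_q^\times$, i.e.\ $t_0\in\Liotaq$. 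Finally, whenever $t_0\notin\TL$ then \eqref{eq.Sigma} gives $x\notin G_0\cup G_\infty$, so $\pi$ is a local equation of $C$ at $x$; only over the finite set $\TL$ is the local zero/polar behaviour of $\pi$ controlled by $G_0$ and $G_\infty$.

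For part (1): $W_b\ge 0$ because the polar divisor of $b+\pi$ is dominated by that of $b$ plus that of $\pi$, i.e.\ by $(q-1)F_0+G_\infty$, so $\div_X(b+\pi)\ge-(q-1)F_0-G_\infty$. No component of $W_b$ lies in $C$, since $F_0,G_\infty\in\ZXCC$ (for $F_0$ note $0\notin\TL$ by $(*1)$, whence $F_0\Cap C$) and $\ord_{C_\lambda}(b+\pi)=0$ by fact (a). The substantive point is the inclusion $W_b\cap C\subset\bigcup_{t\in\Liotaqo}F_t\cap C$: given $x\in C$ with $t_0=h_L(x)\notin\Liotaqo$ I would compute $\div_{X,x}(b+\pi)$ from the normal forms of (a), (b) — cases $t_0=0$, $t_0=\infty$, and $t_0\notin\Liotaq\cup\{0,\infty\}$, the last case further according to whether $x$ lies over $\TL$ — and verify that the corrections $G_\infty+(q-1)F_0$ added in \eqref{keythm.eq0.5} are absorbed into the polar part of $\div_{X,x}(b+\pi)$, so that $W_b$ has no component through $x$.

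For part (2): let $t\in\Liotaqo\setminus\TL$ and $x\in F_t\cap C$. By Lemma~\ref{keythmproof.lem1}(1), $(\pi,f_t)$ is a system of regular parameters at $x$; by fact (a)–(b) the local divisor of $b$ at $x$ is $F_t$, so $b=u_bf_t$ for a unit $u_b\in\cO_{X,x}^\times$; and by \eqref{eq.Sigma}, $\pi$ is a local equation of $C$ at $x$ with $x\notin G_\infty\cup F_0$. Then $b+\pi=u_bf_t+\pi$ has nonzero class in $\fm_x/\fm_x^2$, so $\div_{X,x}(b+\pi)$ is a smooth curve germ at $x$; since $x\notin G_\infty\cup F_0$ this germ is all of $W_b$ near $x$, hence is the (irreducible) component of $W_b$ through $x$, and $W_b\Cap C$ at $x$ because $(b+\pi,\pi)=(f_t,\pi)$ has colength one in $\cO_{X,x}$. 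For part (3): for $t\in\Liotaqo$ and $x\in F_t\cap C$, writing $c,s,w$ for local equations at $x$ of $C$, of the part of $G_\infty$ through $x$, and of the part of $G_0$ through $x$, the same local bookkeeping as in (1) gives $\cO_X(-W_b)_x=(u_bf_ts+cw)$ with $u_b\in\cO_{X,x}^\times$, and then
\[
\cO_X(-F_t-G_\infty)_x=(f_ts)\ \subset\ (c,\ u_bf_ts+cw)=(c,\ f_ts)=\cO_X(-C)_x+\cO_X(-W_b)_x,
\]
which is precisely the assertion $\cO_{W_b,x}(-F_t-G_\infty)\subset\cO_{W_b,x}(-C)$.

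I expect the main obstacle to be the case analysis in part (1): one must keep careful track, at the finitely many exceptional points $x\in F_t\cap C$ with $t\in\TL$, of the precise local divisor of $\pi$ (whose zero and polar parts there are governed by $G_0$ and $G_\infty$) and of $b$, in order to be sure both that $W_b$ does not acquire a component inside $C$ and that $W_b$ meets $C$ only over points of $\Liotaqo$; this is exactly where the hypotheses on $\pi$ and on $\TL$ in \eqref{eq.pi}, \eqref{eq.Sigma} and $(*1)$ are used. By contrast, parts (2) and (3) are essentially definition‑chasing once one has the local normal form $b=u_bf_t$ near a point of $F_t\cap C$.
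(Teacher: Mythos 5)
Your parts (2) and (3) are correct and essentially coincide with the paper's own argument: for (3) the paper sets $\pi'=\pi\pi_\infty$ (with $\pi_\infty$ a local equation of the part of $G_\infty$ through $x$), notes $W_b=\div_X(\pi_\infty b+\pi')$ locally at $x$, and concludes $\cO_{W_b,x}(-F_t-G_\infty)=b\pi_\infty\cO_{W_b,x}=\pi'\cO_{W_b,x}\subset\cO_{W_b,x}(-C)$ --- this is exactly your computation with the ideal $(u_bf_ts+cw)$. Likewise your case $t_0=0$ in part (1) reproduces the one computation the paper singles out, namely that $b+\pi=(X^{q-1}-1+\pi X^{q-1})/X^{q-1}$ has a numerator which is a unit along $F_0\cap C$, so $W_b\cap F_0\cap C=\varnothing$.

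The gap is in the remaining case of part (1). Your guiding assertion --- that for $x\in C$ with $t_0=h_L(x)\notin \Liotaqo$ the corrections $G_\infty+(q-1)F_0$ of \eqref{keythm.eq0.5} are absorbed into the polar part of $\div_{X,x}(b+\pi)$ ``so that $W_b$ has no component through $x$'' --- is false at every point $x\in G_\infty\cap C$. At such a point one has $\pi=v\,cw/\pi_\infty$ with $v$ a unit, hence locally $W_b=\div_{X,x}(b\pi_\infty+vcw)$, and \emph{both} terms $b\pi_\infty$ and $vcw$ vanish at $x$ (the first because $\pi_\infty(x)=0$, the second because $c(x)=0$); so $W_b$ passes through every point of $G_\infty\cap C$, whether or not $t_0\in\Liotaqo$ and whether or not $b$ is a unit there. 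These points all lie over $\TL$ by \eqref{eq.Sigma}, and the inclusion asserted in (1) holds only if one additionally knows $h_L(G_\infty\cap C)\subset \Liotaqo$ --- for finite $k$ this is arranged by taking $q$ large enough that the finitely many closed points of $h_L(G_\infty\cap C)$ become $\F_q$-rational (they avoid $0,\infty$ because $0,\infty\notin\TL$ by $(*1)$). Without this input your sub-case ``$t_0\notin\Liotaq\cup\{0,\infty\}$ and $x$ over $\TL$'' does not close: carried out as described, the verification would contradict what you are trying to prove. That these points of $G_\infty\cap C$ really do lie on $W_b$ is also what the surrounding proof presupposes, since it uses part (3) precisely to control the contributions of $W_b$ at points of $F_t\cap C$ with $t\in\TL\cap\Liotaqo$.
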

\begin{proof}
(1) and (2) follow immediately from \eqref{eq.pi} and \eqref{eq.Sigma} and \eqref{keythm.eq1}
except that $W_b\cap F_0\cap C=\varnothing$, which holds since $C=\div_X(\pi)$ and
$W_b=\div_X(\sigma^{q-1}-1+\pi \sigma^{q-1})$ locally at $F_0\cap C$, where
$\sigma=\rho^{-1}$ is a local parameter of $F_0$ at $F_0\cap C$.
The last fact is checked by using \eqref{eq.Sigma} and $(*1)$ and
$\displaystyle{b+\pi=\frac{\sigma^{q-1}-1+\pi \sigma^{q-1}}{\sigma^{q-1}}.}$
To show (3), let $\pi_\infty$ be a local parameter of $G_\infty$ at $x$ if $x\in G_\infty$ and
$\pi_\infty=1$ otherwise. Putting $\pi'=\pi\pi_\infty$, $\pi'\cO_{W_b,x}\subset \cO_{W_b,x}(-C)$
by \eqref{eq.pi}. Note $x\not\in F_0$ since $F_t \cap F_0\cap C=\varnothing$ for $t\in \Liotaqo$.
Hence \eqref{keythm.eq0.5} implies $W_b=\div_X(b\pi_\infty + \pi')$ locally at $x$. Thus we get
\[
\cO_{W_b,x}(-F_t-G_\infty)=b\pi_\infty \cO_{W_b,x}=\pi'\cO_{W_b,x}\subset \cO_{W_b,x}(-C).
\] 
This completes the proof of Claim \ref{keythmproof.claim3}.
\end{proof}

For $\omega$ from Proposition~\ref{keythm.prop} we write
\[
\omega=\frac{1}{b}(\alpha d\pi + \beta db)\;\text{ in }\; \Omega^1_X(-D) \otimes_{\cO_X} k(C)
\qwith \alpha,\beta\in \cO_X(-D)\otimes_{\cO_X} k(C).
\]
Recall $\omega\in H^0(C,\Omega_X(-D+\Xi)\otimes\cO_C(B))$ and 
$B\subset\underset{t\in \Liotaqo}{\bigcup} F_t\cap C$.
Noting \eqref{keythm.eq1} and 
\begin{equation}\label{keythm.eq2}
\dlog {b} = \frac{\rho^{q-2}d\rho}{1-\rho^{q-1}} = \frac{d\sigma}{\sigma(1-\sigma^{q-1})}\;\;(\sigma=\rho^{-1}),
\end{equation}
Lemma \ref{keythmproof.lem1} implies
\begin{equation}\label{keythm.eq2.5}
\begin{aligned}
& \alpha\in H^0(C,\cO_C(-D+(q-1) F_{0} - \underset{t\in \TL}{\sum} F_t -G_\infty)),\\
& \beta\in H^0(C,\cO_C(-D+(q-2) F_{\infty} - \underset{t\in \TL\cup\{0\}}{\sum} F_t- G_\infty)),
\end{aligned}
\end{equation}
where for a divisor $\Gamma$ on $X$, we write $\cO_C(\Gamma)=\cO_X(\Gamma)\otimes_{\cO_X}\cO_C$.
Since $F_0$ and $F_\infty$ are ample divisors on $X$, the restriction maps
\[
H^0(X,\cO_X(-D +(q-1) F_{0} - \underset{t\in \TL}{\sum} F_t -G_\infty))\to
H^0(C,\cO_C(-D +(q-1) F_{0} - \underset{t\in \TL}{\sum} F_t -G_\infty))),
\]
\[
H^0(X,\cO_X(-D  +(q-2) F_{\infty} - \underset{t\in \TL\cup\{0\}}{\sum} F_t-G_\infty))\to
H^0(C,\cO_C(-D  +(q-2) F_{\infty} - \underset{t\in \TL\cup\{0\}}{\sum} F_t-G_\infty))
\]
are surjective for $q$ sufficiently large (cf. Remark \ref{keythm.claim1.rem}). 
Thus we can take 
\begin{equation}\label{keythm.eq3.1}
\begin{aligned}
&\talpha \in H^0(X,\cO_X(-D +(q-1) F_{0} - \underset{t\in \TL}{\sum} F_t -G_\infty)),\\
&\tbeta \in H^0(X,\cO_X(-D  +(q-2) F_{\infty} - \underset{t\in \TL\cup\{0\}}{\sum} F_t-G_\infty)),\\
\end{aligned}
\end{equation}
such that $\omega= \tomega\otimes k(C)$ with
\[
 \tomega=\frac{1}{b}(\talpha d\pi + \tbeta db) \in \Omega^1_X(-D)\otimes_{\cO_X} \cO_{X,C},
\] 
where $\cO_{X,C}$ is the semi-local ring of $X$ at the generic points of $C$.
By Claim \ref{keythmproof.claim3}(2) and \eqref{mufpi}, 
for $t\in \Liotaqo\backslash \TL$ and $x\in F_t\cap C$, we have
\[
\mu_x(\omega_x) = \sym {1+\tbeta}{F_t,x} - \sym{1+\talpha}{F_t,x} + \sym{1+\talpha}{W_b,x}.
\]
Hence Claim~\ref{keythm.claim1} follows from the following.

\begin{claim}\label{keythm.claim2}
Under the assumption of $(*3)$ we have
\begin{equation}\label{keythmproof.eq3}
\underset{t\in \Liotaqo\backslash \TL}{\sum}\;
\sym {1+\tbeta}{F_t} \;\in \; \hFXCC {D+C} ,
\end{equation}
\begin{equation}\label{keythmproof.eq4}
\underset{t\in \Liotaqo\backslash \TL}{\sum}\;
\big(\sym{1+\talpha}{F_t} - \underset{x\in F_t\cap C}{\sum}\sym{1+\talpha}{W_b,x}\big) \;\in \; \FXCC {D+C}.
\end{equation}
\end{claim}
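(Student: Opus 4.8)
The plan is to derive both assertions as relations in $\WU$ coming from the symbol relation of Lemma~\ref{classgroup.lem1}, applied to two well-chosen pairs of rational functions; no further ramification input is needed. The only geometric facts used are the divisor computations already recorded in \eqref{keythm.eq1}, \eqref{keythm.eq0.5} and Claim~\ref{keythmproof.claim3}, together with the shape of the linear systems defining $\talpha$ and $\tbeta$.

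For \eqref{keythmproof.eq3} I would take $a=1+\tbeta$ and $b$ the function of \eqref{keythm.eq0}. Since $\tbeta$ vanishes along each $\Clam$ to order $\ge\mlam\ge 1$, the divisor $\div_X(1+\tbeta)$ has no prime component in $C$ and $1+\tbeta$ is a unit (with value $1$) at every point of $C$; its only pole is along $F_\infty$, so $\div_X(1+\tbeta)=V-(q-2)F_\infty$ with $V$ effective, $|V|\cap C=\varnothing$, and --- after enlarging $d$ (hence $q$) and choosing $\tbeta$ generically --- $|V|$ sharing no component with $F_0,F_\infty$ or the finitely many $F_t$. On the other side $\div_X(b)=\sum_{t\in\Liotaqo}F_t-(q-1)F_0$ by \eqref{keythm.eq1}, with no component in $C$. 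Lemma~\ref{classgroup.lem1} then gives, in $\WU$,
\[
\sum_{t\in\Liotaqo}\sym{(1+\tbeta)|_{F_t}}{F_t}-(q-1)\sym{(1+\tbeta)|_{F_0}}{F_0}-\sym{b|_V}{V}+(q-2)\sym{b|_{F_\infty}}{F_\infty}=0.
\]
Here $\sym{b|_V}{V}=0$ because $|V|\cap C=\varnothing$; $b|_{F_\infty}=1$ since $\rho=1/X$ vanishes on $F_\infty$ and $b=1-\rho^{q-1}$; and $\tbeta|_{F_t}=0$ for all $t\in\TL\cup\{0\}$ by the shape of the linear system, which kills the $F_0$-term and reduces the first sum to a sum over $t\in\Liotaqo\setminus\TL$. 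Thus $\sum_{t\in\Liotaqo\setminus\TL}\sym{1+\tbeta}{F_t}=0$ in $\WU$ --- even stronger than \eqref{keythmproof.eq3}.

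For \eqref{keythmproof.eq4} I would instead take $a=1+\talpha$ and $b'=(b+\pi)/b=1+\pi/b$; the factor $1/b$ is inserted precisely to cancel the pole of $b+\pi$ along $F_0$, which would otherwise share a component with the pole $(q-1)F_0$ of $1+\talpha$. As before $\div_X(1+\talpha)=V'-(q-1)F_0$ with $|V'|\cap C=\varnothing$, and using \eqref{keythm.eq0.5} and \eqref{keythm.eq1} one computes $\div_X(b')=W_b-G_\infty-\sum_{t\in\Liotaqo}F_t$, which has no component in $C$, while $W_b,G_\infty,F_0$ and the $F_t$ pairwise share no component (Claim~\ref{keythmproof.claim3}). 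Lemma~\ref{classgroup.lem1} then produces a relation all of whose terms vanish except $\sym{(1+\talpha)|_{W_b}}{W_b}$ and $\sum_{t\in\Liotaqo}\sym{(1+\talpha)|_{F_t}}{F_t}$: indeed $\sym{b'|_{V'}}{V'}=0$ ($|V'|\cap C=\varnothing$), $(1+\talpha)|_{G_\infty}=1$ ($\talpha$ vanishes on $G_\infty$), $b'|_{F_0}=1$ ($\pi/b$ vanishes on $F_0$, to order $q-1$ since $b$ has a pole of that order there), and $\talpha|_{F_t}=0$ for $t\in\TL$. Hence $\sum_{t\in\Liotaqo\setminus\TL}\sym{1+\talpha}{F_t}=\sym{(1+\talpha)|_{W_b}}{W_b}$. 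By Claim~\ref{keythmproof.claim3}(1)--(2) the support $W_b\cap C$ is $\bigsqcup_{t\in\Liotaqo\setminus\TL}(F_t\cap C)$ together with a finite set $S$ of points over $\TL\cap\Liotaqo$, so substituting the decomposition $\sym{(1+\talpha)|_{W_b}}{W_b}=\sum_{t\in\Liotaqo\setminus\TL}\sum_{x\in F_t\cap C}\sym{1+\talpha}{W_b,x}+\sum_{x\in S}\sym{1+\talpha}{W_b,x}$ turns the left side of \eqref{keythmproof.eq4} into $\sum_{x\in S}\sym{1+\talpha}{W_b,x}$. Finally, for $x\in S$ lying over $t\in\TL$ the function $\talpha$ is divisible near $x$ by $I_D\cdot I_{F_t}$ (and by $I_D\cdot I_{F_t}\cdot I_{G_\infty}$ if $x\in G_\infty$), and Claim~\ref{keythmproof.claim3}(3) forces $\talpha|_{W_b}\in I_{D+C}\cO_{W_b,x}$; thus $(1+\talpha)|_{W_b}\in 1+I_{D+C}\cO_{W_b,C\cap W_b}$ near $x$, so $\sym{1+\talpha}{W_b,x}\in\FXCC{D+C}$ by Definition~\ref{filtration.def0}(1). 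This proves \eqref{keythmproof.eq4}.

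The main obstacle I anticipate is not the symbol calculus but the geometric bookkeeping: verifying the "no common component'' hypotheses of Lemma~\ref{classgroup.lem1} for the zero loci $V,V'$ (arranged by taking $d$, hence $q$, large and perturbing the lifts $\talpha,\tbeta$), and controlling the finitely many points of $W_b\cap C$ lying over $\TL$, where Claim~\ref{keythmproof.claim3}(3) is exactly what pushes the residual symbols into $\FXCC{D+C}$.
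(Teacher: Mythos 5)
Your overall strategy (apply Lemma~\ref{classgroup.lem1} to $\{1+\tbeta,b\}$ and to $\{1+\talpha,(b+\pi)/b\}$) is the same as the paper's, but there is a genuine gap at the decisive point: the assertion that the effective part $V$ of $\div_X(1+\tbeta)$ (and likewise $V'$ for $1+\talpha$) is disjoint from $C$ is false. At a point $x\in F_\infty\cap C$ the function $\tbeta$ is \emph{not} regular --- it has a pole of order $q-2$ along $F_\infty$ while vanishing along $D$ --- so locally $1+\tbeta=(f^{q-2}+\piD u)/f^{q-2}$ and the numerator vanishes at $x$. Hence $V=Z_{\tbeta}$ with $Z_{\tbeta}\cap C=F_\infty\cap C\neq\varnothing$ (this is exactly the setup of Definition~\ref{specialcurve}: $\tbeta\in\cP_{D,q-2}(F_\infty)$ and $Z_a\cap C=F\cap C$). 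The term you discard, $\sym{b|_{Z_{\tbeta}}}{Z_{\tbeta}}=\sym{1-\rho^{q-1}}{Z_{\tbeta}}$, is therefore nonzero and is in fact the entire content of \eqref{keythmproof.eq3}: since $1-\rho^{q-1}\in 1+f^{q-1}\cO_{Z_{\tbeta},F_\infty\cap C}$, it is controlled only by the ``increasing order'' Key Lemma~\ref{keylemII}(2), which requires $e=q-2\geq \mlam(p^n-1)$ and produces precisely the error terms $\hFXCC{D+C}+p^n\hWU$ appearing in the statement. Your conclusion that the sum is literally zero in $\WU$ --- ``even stronger'' than the claim --- should have been a warning: if that were true the $p^n\hWU$ correction would be superfluous, and the whole inductive mechanism of \S\ref{CFT2} (which feeds on exactly this graded discrepancy) would collapse.

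The same error occurs in your treatment of \eqref{keythmproof.eq4}: $V'=Z'=\div_X(1+\talpha)+(q-1)F_0$ meets $C$ along $F_0\cap C$, where locally $Z'=\div_X(X^{q-1}+\piD\gamma)$, so the term $\sym{(b+\pi)/b}{Z'}$ cannot be dropped on the grounds that $|V'|\cap C=\varnothing$. Here the omission is repairable without deep input: one checks directly that $((b+\pi)/b)|_{Z'}=1+\gamma\pi\piD/(1-X^{q-1})\in 1+\cO_{Z',Z'\cap C}(-D-C)$, so this symbol lies in $\FXCC{D+C}$ (this is the computation the paper does, and it is the reason $(b+\pi)/b$ rather than $b+\pi$ is used). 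Your handling of the residual points of $W_b\cap C$ over $\TL$ via Claim~\ref{keythmproof.claim3}(3) agrees with the paper and is fine. So \eqref{keythmproof.eq4} is salvageable with one added local computation, but \eqref{keythmproof.eq3} as you argue it is missing the main idea.
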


For the proof of the claim we need Lemma~\ref{keylemIIfinite} below. 

\begin{defi}\label{specialcurve}
Let $F\in\ZXCC$ be a reduced effective Cartier divisor such that $F\Cap C$.
For an integer $e>0$, let $\cP_{D,e}(F)_{(X,C)}= \cP_{D,e}(F)$ denote the set of $a\in H^0(X,\cO_X(-D + e F))$ 
satisfying the condition:
\begin{enumerate}
\item[$(\circledast)$]
The image $a_{F\cap C}\in \cO_{C,F\cap C}(-D + e F)$ of $a$ is a basis as an $\cO_{C,F\cap C}$-module.
\end{enumerate}
Here we note that $\cO_{C,F\cap C}$ is a semi-local ring. For $a\in \cP_{D,e}(F)$, put
\[Z_a=\div_X(1+a) + e F.\]
By Lemma \ref{classgroup.lem2}(1), $Z_a\in \ZXCC$ such that $Z_a\cap C= F\cap C$.
\end{defi}

Fix  $\pi, \piD, f\in \cO_{X,F\cap C}$ such that locally at $F\cap C$
\[C=\div_X(\pi),\; D=\div_X(\piD),\; F=\div_X(f).\]
By the assumption $(\pi,f)$ is a system of regular parameters in $\cO_{X,F\cap C}$. 
The condition $(\circledast)$ is equivalent to the condition that locally at $x\in F\cap C$, 
\begin{equation}\label{specialcurve-eq1}
Z_a=\div_X(f^e+\piD\cdot u) \qwith u\in \cO_{X,F\cap C}^\times.
\end{equation}

The proof of the following lemma will be given later (see Lemma~\ref{keylemII} in \S\ref{keylemmas}).

\begin{lem}[increasing order]\label{keylemIIfinite}
Let $Z_a$ with $a\in \cP_{D,e}(F)$ be as above and take $x\in Z_a\cap C=F\cap C$. Assume 
\[
\begin{aligned}
&H^1(X,\cO_X(-2D-C+ (e-1)F))=H^1(C,\cO_C(-2D+F))=0,\\
\end{aligned}
\leqno{(\star)}
\]
Assume further $p\not=2$. There exists a constant $c>0$ depending only on $X$ and $D$ such that for $e\geq c$, we have 
\[
\sym {1+{f^{e+1}}\cO_{Z_a,F\cap C}} {Z_a,x} \subset \hFXCC {D+C}.
\]
\end{lem}

Note that \eqref{specialcurve-eq1} implies $f^{e+1}|_{Z_a} =- u f\piD$ so that 
$\sym {1+{f^{e+1}}\cO_{Z_a,F\cap C}} {Z_a,x}$ lies in $\FXCC {D}$ without any assumption.

\begin{rem} 
By Serre's vanishing theorem the condition $(\star)$ of Lemma \ref{keylemIIfinite} is satisfied if
$F\in \cL(d)$ for $d>0$ sufficiently large (see Definition \ref{pencil.def}).
\end{rem}
\medbreak\noindent
{\it Proof of Claim~\ref{keythm.claim2}.} We first claim 
$\tbeta \in \cP_{D,q-2}(F_\infty).$
Indeed the condition $(*3)$ implies that the image of $\beta$ under the restriction map 
(cf. \eqref{keythm.eq2.5})
\[H^0(C,\cO_C(-D+(q-2) F_{\infty})) \to \cO_{C,F_\infty\cap C}(-D+(q-2) F_{\infty})\]
is a basis as an $\cO_{C,F_\infty\cap C}$-module.
Note (cf. Definition \ref{specialcurve} and \eqref{keythm.eq3.1})
\[\begin{aligned} 
&\div_X(1+\tbeta)= Z_{\tbeta} - (q-2) F_\infty , \quad Z_{\tbeta}\cap C =F_\infty \cap C,\\
& (1+{\tbeta})|_{F_t} =1 \qfor t\in \TL\cup\{0\} \qaq b|_{F_\infty}=1.
\end{aligned}\]
In view of \eqref{keythm.eq1} $\div_X(b)$ and $\div_X(1+\tbeta)$ have no common component which intersects $C$.
Hence we may apply Lemma \ref{classgroup.lem2}(2) to $1+{\tbeta}$ and $b$ to get
\begin{equation*}
\underset{t\in \Liotaqo\backslash \TL}{\sum} 
\sym {1+{\tbeta}} {F_t}\;+\;\sym {1-\rho^{q-1}} {Z_{\tbeta}}=0 \;\in \WU .
\end{equation*}
Taking $q$ sufficiently large (cf. Remark \ref{keythm.claim1.rem}), 
Lemma \ref{keylemIIfinite} (where we take $e=q-2$ and $F=F_\infty$) implies 
$\sym {1-\rho^{q-1}} {Z_{\tbeta}}\in \hFXCC {D+C} $, which proves \eqref{keythmproof.eq3}.
\medbreak

To show \eqref{keythmproof.eq4}, put
\begin{equation}\label{keythmproof.eq4.5}
Z'=\div_X(1+\talpha)+ (q-1)F_0 \in \ZXCC.
\end{equation}
By Lemma \ref{classgroup.lem2}(1) we have $Z'\cap C = F_0\cap C$.
Locally at $F_0\cap C$, 
\begin{equation}\label{keythmproof.eq5}
Z'=\div_X(\sigma^{q-1} + \piD \gamma)\;\; (\gamma\in \cO_{X,F_0\cap C}), 
\end{equation}
where $\sigma=\rho^{-1}$ and $\piD$ is a local parameter of $D$ at $F_0\cap C$.
By \eqref{keythm.eq0.5} and \eqref{keythm.eq1},
\begin{equation}\label{keythmproof.eq5.5}
\div_X(\frac{b+\pi}{b}) =W_b-G_\infty -\underset{t\in \Liotaqo}{\sum} F_t.
\end{equation}
Since $G_\infty\cap  F_0 \cap C=W_b\cap  F_0\cap C=\varnothing$ by Claim~\ref{keythmproof.claim3}(1),
$\div_X(1+\talpha)$ and $\div_X(\frac{b+\pi}{b})$ have no common component which intersects $C$.
Hence we may apply Lemma \ref{classgroup.lem2}(2) to $1+\talpha$ and $\frac{b+\pi}{b}$ to get 
\begin{multline}\label{keythmproof.eq6}
\sym {1+\talpha} {W_b}\; - \underset{t\in \Liotaqo\backslash \TL}{\sum} \sym {1+\talpha} {F_t}\; \\
 - \;\sym {\frac{b+\pi}{b}} {Z'}\;+ (q-1)\sym {\frac{b+\pi}{b}} {F_0}=0\in \WU,
\end{multline}
where we used the fact 
$(1+{\talpha})_{|G_\infty} =1$ and $(1+{\talpha})_{|F_t} =1$ for $t\in \TL$ in view of \eqref{keythm.eq3.1}.
We claim
\begin{equation}\label{keythmproof.eq7}
\sym {1+\talpha} {W_b}\; - \underset{t\in \Liotaqo\backslash \TL}{\sum}\;
\underset{x\in F_t\cap C}{\sum}\sym{1+\talpha}{W_b,x}\;\in \FXCC {D+C}.
\end{equation}
Indeed, by Claim \ref{keythmproof.claim3}(1),
\[
\sym {1+\talpha} {W_b}=\underset{t\in \Liotaqo}{\sum}\;
\underset{x\in F_t\cap C}{\sum}\sym{1+\talpha}{W_b,x}.
\]
For $t\in \TL$ and $x\in F_t\cap C$, we have $x\not\in F_0$ since $F_0\cap F_t\cap C=\varnothing$.
In view of \eqref{keythm.eq3.1} this implies $\talpha_{|W_b}\in \cO_{W_b,x}(-D-F_t-G_\infty)$.
Since $\cO_{W_b,x}(-D-F_t-G_\infty)\subset \cO_{W_b,x}(-D-C)$ by Claim \ref{keythmproof.claim3}(3),
we get $\sym{1+\talpha}{W_b,x}\in \FXCC{D+C}$.
\medbreak

By \eqref{keythmproof.eq7} we are reduced to showing that the last two terms of \eqref{keythmproof.eq6} belong to $\FXCC {D+C}$. The assertion follows from the fact
\[
\frac{b+\pi}{b} = 1 - \frac{\pi \sigma^{q-1}}{1-\sigma^{q-1}}\quad (\sigma=\rho^{-1})
\]
and that we have in view of \eqref{keythmproof.eq5}, 
\[
(\frac{b+\pi}{b})_{|Z'} = 1 + \frac{\gamma \pi \piD}{1-\sigma^{q-1}}\;\in 1+\cO_{Z',Z'\cap C}(-D-C).
\]
This reduces the proof of \eqref{keythmproof.eq4} and that of Theorem \ref{keythm} to Lemma \ref{keylemIIfinite}.

\end{proof}


\section{Compatibility with ramification theory}\label{keythmproofII}
\bigskip
\def\FACCKS#1{F^{(#1)}W^{KS}(A,C)}
\def\WUKS{W^{KS}(U)}
\def\WUxKS{W^{KS}(U_x)}
\def\FXCCKS#1{F^{(#1)}W^{KS}(X,C)}
\def\FACCxKS#1{F^{(#1)}W^{KS}(A_x,C_x)}

In this section we prove Theorem \ref{keythm2}. We need some preliminaries.
\par

\subsection{Review of class class field theory for two-dimensional local rings.}\label{reviewCFTlocalring}
Let $(A,\fm_A)$ be an excellent regular henselian two-dimensional local domain with the quotient field $K$.
Assume $F=A/\fm_A$ is finite. Let $P$ be the set of prime ideals of height one in $A$.
For $\fp\in P$ let $\Ap$ be the henselization of $A$ at $\fp$ and 
$\Kp$ (resp. $\kp$) be the quotient (resp. residue) field of $\Ap$.
Let $C$ be a reduced effective Cartier divisor on $\Spec(A)$ and put $U=\Spec(A)-C$.
Let $P_C\subset P$ be the subset of $\fp$ lying on $C$. For $\lam\in P_C$, let $\Klam$ be the quotient field of 
the henselization of $A$ at $\lam$.

For an effective Cartier divisor $D$ on $\Spec(A)$ with $|D|=C$ ($|D|$ denotes the support of $D$),
we consider the subgroup of $H^1(U)=H^1(U,\qz)$:
\[
\FH U D =\Ker\big(H^1(U) \to \underset{\lam\in P_C}{\bigoplus}\; H^1(\Klam)/\FH \Klam \mlam \big).
\]
where $\mlam\in \bZ_{>0}$ for $\lam\in P_C$ is the multiplicity of $\lam$ in $D$ (see \ref{RTlocal} for the notation).
We introduce an idele class group which controls $\piab U$:
\begin{equation}\label{CAD0}
\WUKS:=\Coker\big(\K2 K \rmapo{\partial=(\partial_\fp,\partial_\lam)} \underset{\fp\in P-P_C}{\bigoplus}\;\kp^\times\;\oplus\; \underset{\lam\in P_C}{\bigoplus}\; \K2 {\Klam}\big),
\end{equation}
where $\partial_\fp$ for $\fp\not\in P_C$ is the tame symbol and $\partial_\lam$ for $\lam\in P_C$ is the map induced by $K\to \Klam$. We put
\begin{equation}\label{CAD}
\C^{KS}(A,D)= \WUKS/ \FACCKS D,
\end{equation}
where $\FACCKS D \subset \WUKS$ is the subgroup generated by the images of 
$V^{\mlam}\K2 {\Klam}\subset \K2 {\Klam}$ for $\lam\in P_C$.
By the reciprocity law for $A$ we have a canonical map (cf. \cite[1.9]{Sa}, \cite[(2.9)]{Sa2}, \cite[Ch.I]{Sa3})
\[
\Psi_U^{KS} : \FH U D \to \Hom(\C^{KS}(A,D),\qz)
\]
such that the following diagrams are commutative for $\fp\not\in P_C$ and $\lam\in P_C$:
\begin{equation}\label{CKS-CD}
\xymatrix{ 
\FH U D \; \ar[r]^{\hskip -30pt\Psi_U^{KS}} \ar[d]  &\Hom(\C^{KS}(A,D),\qz)\ar[d] \\
H^1(\kp)  \ar[r]^{\hskip -30pt\Psi_{\kp}} &\Hom(\kp^\times,\qz), \\}
\end{equation}
\[\xymatrix{ 
\FH U D \; \ar[r]^{\hskip -30pt\Psi_U^{KS}} \ar[d]  &\Hom(\C^{KS}(A,D),\qz)\ar[d] \\
\FH {\Klam} {\mlam}  \ar[r]^{\hskip -50pt\Psi_{\Klam}}
 &\Hom(\K2 {\Klam}/V^{\mlam}\K2 {\Klam},\qz) \\
}\]
where $\Psi_{\kp}$ (resp. $\Psi_{\Klam}$) is the map \eqref{recK} for the $1$-dimensional (resp.
$2$-dimensional) local field $\kp$ (resp. $\Klam$) (cf. \eqref{recK}).

\begin{rem}\label{rem.CKS}
Let $I_D\subset A$ be the ideal defining $D$. For $\alpha\in I_D$ and $\fp\in P-P_C$, the image in $\WUKS$ of 
$1+\alpha \mod \fp\in \kp^\times$ lies in $\FACCKS D$. Indeed, let $f\in A$ be such that $\fp=(f)$ and put
$\xi=\{1+\alpha,f\}\in \K2 K$. Then one easily sees
\[
\begin{aligned}
&\partial_\lam(\xi) \in V^{\mlam}\K2 {\Klam}\qfor \lam\in P_C,\\
&\partial_\fq(\xi) =  
\left.\left\{\begin{gathered}
 1+\alpha \mod \fp \\ 
 0 \\
\end{gathered}\right.\quad
\begin{aligned}
&\text{for $\fq=\fp$}\\
&\text{for $\fq\in P-P_C-\{\fp\}$}
\end{aligned}\right.
\end{aligned}
\] 
\end{rem}

\bigskip

Now we assume $D=\Spec(A/(\pi^m))$ ($m\in \bZ_{\geq 1}$), where $\pi\in A$ is such that $\lam=(\pi)\in P$ and that
$\Blam=A/(\pi)$ is regular. Let $\fmlam=\fm_A\Blam$ be the maximal ideal of $\Blam$.
Define
\[
\nu_A: \pi^{m-1} \Omega_A^1\otimes_A \fmlam^{-1} \to \C^{KS}(A,D)
\]
as the composite
\[
 \pi^{m-1} \Omega_A^1\otimes_A \fmlam^{-1} \hookrightarrow \pi^{m-1} \Omega_{\Alam}^1\otimes_{\Alam} \klam 
\rmapo {\rho^m_{\Klam}} \K2 {\Klam}/V^{m}\K2{\Klam} \to \C^{KS}(A,D),
\]
where $\rho^m_{\Klam}$ is the map from Lemma \ref{lem.grKM}.

\begin{lem}\label{lem.nu0}
The above map induces a map
\[
\nu_A: \pi^{m-1} \Omega_A^1\otimes_A \fmlam^{-1}\otimes_{\Blam} F \to \C^{KS}(A,D).
\]
\end{lem}
\begin{proof}
Choosing $f\in A$ such that $f\mod (\pi)\in \Blam$ is a generator of $\fmlam$, 
an element $\omega\in \pi^{m-1} \Omega_A^1\otimes_A \fmlam^{-1}$ is written as
\[ \omega = \frac{\pi^{m-1}}{f} (a d\pi + b df)\quad (a,b\in A).\]
The description of $\rho^m_{\Klam}$ in Lemma \ref{lem.grKM} shows that
$\rho^m_{\Klam}(\nu_A(f\omega))$ is the image of 
\[\gamma:=\{1+\pi^m a,\pi\} + \{1+\pi^{m-1}f b,f\}\;\in K_2(K)\]
under $\partial_\lam$ in \eqref{CAD0}. One easily check that the images of $\gamma$  
under $\partial_\fp$ for $\fp\in P-\{\lam\}$ in \eqref{CAD0} vanish.
This proves the desired assertion.
\end{proof}

\begin{lem}\label{lem.nu}
Assume $p\not=2$. Let $(\pi,f)$ be a system of regular parameters of $A$.
\begin{itemize}
\item[(1)]
The image of the composite
\[
\rsw_A:\FH U m \to \FH {\Klam} m \rmapo {\rsw_{\Klam}} \frac{1}{\pi^{m}}  \Omega_{\Alam}^1\otimes_{\Alam}\klam
\]
is contained in 
$\frac{1}{\pi^{m}} \Omega_{A}^1\otimes_{A}\Blam$.
The diagram
\[\xymatrix{ 
\FH U m \; \ar[r]^{\hskip -10pt -\rsw_A} \ar[d]^{\Psi_U^{KS}} 
&\frac{1}{\pi^{m}} \Omega_{A}^1\otimes_{A}\Blam \ar[r]
&\frac{1}{\pi^{m}} \Omega_{A}^1\otimes_{A} F \ar[ld]^{\tau_A}  \\
\big(\C^{KS}(A,D)\big)^\vee   \ar[r]^{\hskip -30pt(\nu_A)^\vee}
 &\hskip 20pt \big(\pi^{m-1}\Omega_{A}^1\otimes_{A}\fmlam^{-1}\otimes_{\Blam} F\big)^\vee, \\
}\]
is commutative, where $\tau_A$ is induced by the pairing
\begin{multline*}
\frac{1}{\pi^{m}} \Omega_{A}^1\otimes_{A}\Blam \times \pi^{m-1}\Omega_{A}^1\otimes_{A}\fmlam^{-1} \to
\pi^{-1}\Omega_{A}^2\otimes_{A}\fmlam^{-1} \rmapo {Res_\lam} \fmlam^{-1}\Omega^{1}_{\Blam} \\
\rmapo {Res_{\fmlam}} F=\Blam/\fmlam \rmapo{Tr_{F/\bF_p}} \bF_p\simeq\pz.
\end{multline*}
Moreover $\tau_A$ is an isomorphism.
\item[(2)]
Putting $\fp_1=(f),\; \fp_2=(\pi+f)\in P$, we have
\[
\xi=\nu_A(\frac{1}{f}(\alpha d\pi +\beta df)) \qfor \alpha,\beta\in (\pi^{m-1})
\]
is the image in $\C^{KS}(A,D)$ of 
\[
\eta=\sym {1+\beta}{k(\fp_1)} - \sym {1+\alpha}{k(\fp_1)} + \sym {1+\alpha}{k(\fp_2)}\;\in \underset{\fp\in P-P_C}{\bigoplus}\;\kp^\times.
\]
\end{itemize}
\end{lem}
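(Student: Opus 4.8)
The plan is to deduce both assertions from Kato's class field theory for the two-dimensional local field $\Klam$ recalled in \S\ref{RTlocal}: part~(1) by chasing the compatibility square relating $\Psi_U^{KS}$ to $\Psi_{\Klam}$ against property $(ii)$ of $\Psi_{\Klam}$, and part~(2) by evaluating $\rho^m_{\Klam}$ on $\omega_0:=\tfrac1f(\alpha\,d\pi+\beta\,df)$ explicitly and feeding the result into a relation in $\WUKS$ coming from a Milnor $K_2$-symbol over $K$.

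For part~(1) I would first prove the integrality statement, that $\rsw_A$ maps into $\tfrac1{\pi^m}\Omega^1_A\otimes_A\Blam$. As $\rsw$ is insensitive to the prime-to-$p$ part, take $\chi$ $p$-primary, represented by a Witt vector over $\cO(U)=A[1/\pi]$; since $\chi|_{\Klam}\in\FH{\Klam}{m}$ and, $\Blam$ being regular, $\Clam$ is a smooth divisor, the representative may be adjusted inside $\fil m W_s(\Klam)$ with denominators only in $\pi$, and computing $\rsw_A(\chi)=\rsw_{\Klam}(\chi|_{\Klam})$ via the maps $F^sd$ shows it has a pole of order at most $m$ along $\lam$ and is otherwise regular -- this is the clean smooth-divisor case of the theory of Kato \cite{Ka} and Matsuda \cite{Ma}. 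Granting this, the square commutes by a chase: for $\omega\in\pi^{m-1}\Omega^1_A\otimes_A\fmlam^{-1}$, the definition of $\nu_A$ identifies $\langle\Psi_U^{KS}(\chi),\nu_A(\omega)\rangle$ with $\langle\Psi_U^{KS}(\chi),[\rho^m_{\Klam}(\omega)]\rangle$; the compatibility of $\Psi_U^{KS}$ with $\Psi_{\Klam}$ rewrites this as $\langle\Psi_{\Klam}(\chi|_{\Klam}),\rho^m_{\Klam}(\omega)\rangle$; property $(ii)$ of $\Psi_{\Klam}$ rewrites it as $\langle\tau(\rsw_{\Klam}(\chi|_{\Klam})),\omega\rangle=\langle\tau(\rsw_A(\chi)),\omega\rangle$; and by the integrality step $\rsw_A(\chi)$ lies in the subspace on which Kato's residue pairing $\tau$ restricts to $\tau_A$, which amounts to the compatibility of his higher-local-field residue symbol with the iterated algebraic residues $Res_\lam$, $Res_{\fmlam}$ from \cite[\S2]{Ka1}. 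This gives $\langle\tau(\rsw_A(\chi)),\omega\rangle=\langle\tau_A(\rsw_A(\chi)),\omega\rangle$, as required.

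For part~(2) I would compute $\nu_A(\omega_0)$ by hand. It is, by definition, the image in $\C^{KS}(A,D)=\WUKS/\FACCKS D$ of $\rho^m_{\Klam}(\omega_0)$. Using the identity $\omega_0=\tfrac{\beta-\alpha}{f}\,df+\tfrac{\alpha}{f}\,d(\pi+f)$ in $\Omega^1$, the additivity of $\rho^m_{\Klam}$ and the formula of Lemma~\ref{lem.grKM} ($\rho^m_K(a\,db)=\{1+ab,b\}$), one gets $\rho^m_{\Klam}(\omega_0)=\{1+(\beta-\alpha),f\}+\{1+\tfrac{\alpha(\pi+f)}{f},\pi+f\}$; since $\tfrac{1+\alpha(\pi+f)/f}{1+\alpha}=1+\tfrac{\alpha\pi}{f(1+\alpha)}\in 1+\fm_{\Klam}^m$ while $\pi+f\in\cO_{\Klam}^\times$, the second symbol equals $\{1+\alpha,\pi+f\}$ modulo $V^m\K2{\Klam}$. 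Both symbols are the images under $K\to\Klam$, i.e.\ the $\partial_\lam$, of the corresponding elements of $\K2 K$, and since $\WUKS$ is a cokernel of $\partial$ the $\lam$-component of each equals minus the sum of its tame symbols at the $\fp\in P-P_C$. Assuming $m\ge2$, so that $\alpha,\beta\in(\pi^{m-1})\subseteq\fm_A$ and hence $1+\alpha$ and $1+(\beta-\alpha)$ are units of $A$, the only nontrivial tame symbols sit at $\fp_1=(f)$ and $\fp_2=(\pi+f)$ and equal $\sym{1+(\beta-\alpha)}{k(\fp_1)}$ and $\sym{1+\alpha}{k(\fp_2)}$; thus $\nu_A(\omega_0)$ is the class of $\sym{1+(\beta-\alpha)}{k(\fp_1)}+\sym{1+\alpha}{k(\fp_2)}$. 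Finally $\tfrac{(1+\beta-\alpha)(1+\alpha)}{1+\beta}=1+\tfrac{\alpha(\beta-\alpha)}{1+\beta}$ with $\alpha(\beta-\alpha)\in(\pi^{2m-2})\subseteq I_D$, so by Remark~\ref{rem.CKS} one has $\sym{1+(\beta-\alpha)}{k(\fp_1)}\equiv\sym{1+\beta}{k(\fp_1)}-\sym{1+\alpha}{k(\fp_1)}$ in $\C^{KS}(A,D)$, whence $\nu_A(\omega_0)=\eta$. (Signs here are those of the tame-symbol convention fixed in \S\ref{RTlocal}; the case $m=1$, where $1+\alpha$ need not be a unit, is the tame situation and is treated separately.)

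I expect the main obstacle to be the integrality assertion of part~(1) -- that the refined Swan conductor of a character extending over $U$ acquires no spurious pole at the closed point of $\Clam$ -- together with the compatibility of Kato's higher-local-field residue with the iterated algebraic residues $Res_\lam$, $Res_{\fmlam}$; once these are in place, the rest of part~(1) is a formal diagram chase and part~(2) is a bookkeeping of tame symbols organized by Remark~\ref{rem.CKS}.
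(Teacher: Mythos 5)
Your proposal is correct and follows the paper's overall architecture: part (1) is obtained by combining the integrality of the refined conductor with property $(ii)$ of $\Psi_{\Klam}$ and the compatibility \eqref{CKS-CD} (for the integrality the paper simply cites \cite[Prop.\ 4.2.3]{Ma}, which is exactly the statement you sketch, so your "main obstacle" is a quoted result rather than something to be reproved), and part (2) is obtained by evaluating $\rho^m_{\Klam}$ on the form and then playing global $K_2$-symbols against the cokernel presentation of $W^{KS}(U)$. The genuine divergence is in part (2). The paper decomposes $\omega_0:=\frac1f(\alpha\,d\pi+\beta\,df)$ as $\frac{\alpha}{f}d\pi+\frac{\beta}{f}df$, obtaining $\{1+\beta,f\}+\{1+\alpha\pi/f,\pi\}$ in $K_2(\Klam)$, and must then establish the congruence $\{1+\alpha,\frac{f+\pi}{\pi}\}\equiv\{1+\alpha\pi/f,\pi\}$ modulo $V^{m}K_2(\Klam)$ via the Steinberg-type identity $\{1+\alpha,\frac{f+\pi}{\pi}\}=-\{1+(1+\alpha)^{-1}\alpha\pi/f,-\alpha(1+\frac{\pi}{f})\}$. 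Your decomposition $\omega_0=\frac{\beta-\alpha}{f}df+\frac{\alpha}{f}d(\pi+f)$ front-loads that work into the differential form: the resulting symbols $\{1+(\beta-\alpha),f\}$ and (after your one-line reduction modulo $V^m$) $\{1+\alpha,\pi+f\}$ lift to global symbols whose tame components visibly concentrate at $\fp_1$ and $\fp_2$, and the price is the final adjustment $\sym{1+(\beta-\alpha)}{k(\fp_1)}\equiv\sym{1+\beta}{k(\fp_1)}-\sym{1+\alpha}{k(\fp_1)}$, which you correctly reduce to Remark~\ref{rem.CKS} via $\alpha(\beta-\alpha)\in(\pi^{2m-2})\subset I_D$. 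This trades a $K_2$-identity for an elementary congruence, and it also makes the appearance of all three terms of $\eta$ more transparent than in the paper's bookkeeping.

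Two small caveats. Your argument genuinely needs $m\ge 2$: both the adjustment step just mentioned and the claim that $1+\alpha$ and $1+(\beta-\alpha)$ are units of $A$ (so that the tame symbols sit only at $\fp_1,\fp_2$) use it. You flag this and defer $m=1$ without treating it; since the lemma is only invoked in Theorem~\ref{keythm2} where $D\ge 2C$, this is harmless, and the paper's computation carries the same implicit restriction. Second, watch the sign in the cokernel bookkeeping: you assert that the $\lam$-component equals \emph{minus} the sum of the tame symbols and then drop the minus in the next sentence. The net sign depends on the orientation of the tame symbol, which the paper does not pin down either, so this is a convention issue rather than a gap, but it should be made consistent.
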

\begin{proof}
The first (resp. second) assertion of (1) follows from \cite[Prop.4.2.1]{Ma} (resp. \eqref{commutativitypairing}).
Noting that $f$ generates $\fmlam$, $\tau_A$ is induced by the pairing
\[\langle \;,\;\rangle: 
\frac{1}{\pi^{m}} \Omega_{A}^1\otimes_{A} F \;\times\; 
\frac{\pi^{m-1}}{f}\Omega_{A}^1\otimes_{A} F \to \bF_p
\]
such that for $\alpha,\beta,\gamma,\delta\in F$
\[ \langle \alpha \frac{d\pi}{\pi^{m}} + \beta \frac{d f}{\pi^{m}}\;,
\gamma\frac{\pi^{m-1}d\pi}{f} + \delta\frac{\pi^{m-1}d f}{f} \;\rangle = 
\pm Tr_{F/\bF_p}(\alpha\delta- \beta\gamma).\]
This shows that the pairing is non-degenerate so that $\tau_A$ is an isomorphism.

To show (2), note
\[ \frac{1}{f}(\alpha d\pi +\beta df) = \frac{\alpha}{\pi+ f} d(\pi+f) +\frac{\beta-\alpha}{f} df
\;\;\in\; \pi^{m-1} \Omega_A^1\otimes_A \fmlam^{-1}.\]
Hence its image under the map
\[\pi^{m-1} \Omega_A^1\otimes_A \fmlam^{-1} \hookrightarrow \pi^{m-1} \Omega_{\Alam}^1\otimes_{\Alam} \klam 
\rmapo {\rho^m_{\Klam}} \K2 {\Klam}/V^{m}\K2{\Klam} \]
is 
\[\{1+\alpha, \pi+f\} + \{1+(\beta-\alpha),f\} \equiv \{1+\alpha, \pi+f\} + \{1+\beta,f\} -  \{1+\alpha,f\}.\]
This implies the desired assertion.
\end{proof}
\bigskip

\subsection{Review of class field theory of Kato-Saito.}\label{reviewCFTKS} 
Now we come back to the global setting of \S \ref{keytheorem} where $k$ is assumed finite.
We also assume $|D|=C$.
For a closed point $x\in C$, let $A_x$ be the henselization of $\cO_{X,x}$, and $U_x$ (resp. $C_x$, resp. $D_x$) are the base change of $U=X-C$ (resp. $C$, resp. $D$) via $\Spec(A_x) \to X$.
Let $K_x$ be the fraction field of $A_x$ and $P_x$ be the set of prime ideals of height one in $A_x$.
For $\lam\in I$, let $\Klam$ be the fraction field of the henselization of $\cO_{X,\lam}$.
We introduce the idele class group for $U$:
\[
\WUKS=\Coker\big(\underset{Z\subset X}{\bigoplus}\; k(Z)^\times \oplus \underset{\lam\in I}{\bigoplus}\; \K2 {\Klam}
\; \rmapo{\partial}\; 
Z_0(U) \oplus \underset{x\in C}{\bigoplus}\; \WUxKS\big),
\]
where $\WUxKS$ is defined as in \eqref{CAD0} for $U_x$, and $Z$ ranges over integral curves on $X$ not contained in $C$, and 
$\partial$ is induced by the maps:
\[
\partial_1: k(Z)^\times\to Z_0(U),\quad \partial_2: k(Z)^\times \to \underset{x\in C}{\bigoplus}\; \WUxKS,
\]
\[
\partial_3: \K2 {\Klam}\to Z_0(U),\quad \partial_4: \K2 {\Klam} \to \underset{x\in C}{\bigoplus}\; \WUxKS,
\]
where $\partial_1$ is the divisor map, and $\partial_2$ is induced by the identification (cf. Definition \ref{classgroup.def}):
\begin{equation}\label{k(Z)infty}
k(Z)_{\infty}^\times = \underset{x\in Z\cap C}{\bigoplus}\;\underset{\fp\in P_{x,Z}}{\bigoplus} \kp^\times,
\end{equation}
where $P_{x,Z}$ denotes the set of $\fp\in P_x$ lying over $Z$, and $\partial_3$ is the zero map, and $\partial_4$ is
induced by the natural map 
\[\Klam \to \underset{x\in C}{\prod}\;\underset{\xi\in P_{x,\lam}}{\prod} K_\xi,\]
where $P_{x,\lam}$ is the set of $\xi\in P_x$ which lies over $\Clam$ (cf. \eqref{eqD1})
and $K_\xi$ is the fraction field of the henselization of $A_x$ at $\xi$. 
We then put
\[
\C^{KS}(X,D) = \WUKS /\FXCCKS D,
\]
where $\FXCCKS D\subset \WUKS$ is the subgroup generated by the image of $\FACCxKS D$ for $x\in C$.
We remark that the argument of \cite[(1.6)]{KS2} shows an isomorphism (cf. \eqref{CKSU})
\[
\C^{KS}(X,D) \isom  H^d(X_{\rm Nis} , \mathcal K^M_d(X,D) ),
\]
where $ \mathcal K^M_d(X,D)$ is the relative Milnor $K$-sheaf introduced in \cite[(1.3)]{KS2}.
By the class field theory developed in \cite[Ch.II~\S3 Th.1]{KS} and \cite[Th.9.1]{KS2}, we have a canonical isomorphism
\begin{equation}\label{recCKS}
\Psi_U^{KS}: \FH U D \isom \Hom(\C^{KS}(X,D),\qz)
\end{equation}
which fits into the commutative diagram for any closed point $x\in C$,
\begin{equation}\label{CD1}
\xymatrix{ 
\FH U D \; \ar[r]^{\Psi_U^{KS}} \ar[d]  & \C^{KS}(X,D)^\vee\ar[d] \\
\FH {U_x} {D_x}  \ar[r]^{\Psi_{U_x}^{KS}} & \C_x^{KS}(X,D)^\vee \\
}
\end{equation}
In view of \eqref{k(Z)infty}, there is a natural map
\begin{equation}\label{epsionX}
\epsilon_U: \WU =\Coker\Big(\underset{Z\subset X}{\bigoplus} \; k(Z)^\times \to 
\underset{Z\subset X}{\bigoplus}\; k(Z)_\infty^\times\;\oplus\; Z_0(U)\Big) \;\to \WUKS\;.
\end{equation}
By Remark \ref{rem.CKS} it induces a canonical map
\begin{equation*}
\epsilon_{X,D}: \WU/\FXCC {D} \to \C^{KS}(X,D).
\end{equation*}
The commutativity of the diagrams \eqref{CKS-CD} and \eqref{CD1} implies that the diagram
\begin{equation}\label{CD1b}
\xymatrix{ 
\FH U D \; \ar[r]^{\Psi_U^{KS}}  \ar[d]^{\Psi_{X,D}}  & \C^{KS}(X,D)^\vee\ar[d]^{(\epsilon_{X,D})^\vee} \\
\CXD^\vee \ar[r] & \big(\WU/\FXCC {D}\big)^\vee \\
}\end{equation}
commutes, where we recall $\CXD=\WU/\hFXCC D$ (cf. Definition~\ref{def.CXD}).
\medbreak

\subsection{Proof of Theorem \ref{keythm2}.}
By Lemma \ref{lem.nu}(1) the image of composite map
\[\FH U D \to \FH {\Klam} {\mlam} \rmapo {\rsw_{\Klam}} \Omega^1_X(D)\otimes_{\cO_X} k(\Clam)\]
is contained in 
$H^0(\Clam^o, \Omega^1_X(D)\otimes_{\cO_X} \cO_{\Clam})$,
where $\Clam^o$ denotes a dense open subset of $\Clam$ where $C$ is smooth. Since the natural map
\[ H^0(\Clam^o, \Omega^1_X(D)\otimes_{\cO_X} \cO_{\Clam})\to \underset{x\in \Clam^o}{\prod}\; 
\Omega^1_X(D)\otimes_{\cO_X} \k(x)\]
is injective, we are reduced to showing the commutativity of the diagram
\[
\xymatrix{ 
\FH U D \; \ar[r] \ar[d] & H^0(\Clam^o, \Omega^1_X(D)\otimes_{\cO_X} \cO_{\Clam}) \ar[r] 
&\Omega^1_X(D)\otimes_{\cO_X} \k(x)\\
\C(X,D)^\vee \ar[r]^{\hskip -35pt \rswM_X} & H^0(C,\Omega^1_X(D+ \Xi)\otimes_{\cO_X}\cO_C) \ar[ru] \\
}\]
for each $x\in \Clam^o$. This follows from the commutative diagram \eqref{CD1b} and 
the following commutative diagram by noting that $\tau_{A_x}$ is an isomorphism 
by Lemma \ref{lem.nu}(1):
\[
\xymatrix{ 
\FH U D \; \ar[r] \ar[d]^{\Psi_U^{KS}\hskip 30pt \boxed{I}}  & \FH {U_x} {D_x} 
\ar[r]^{-\rsw_{U_x}} \ar[d]^{\Psi_{U_x}^{KS} \hskip 30pt\boxed{III}}  
& \Omega_X^1(D)\otimes_{\cO_X}  \cO_{C,x} \ar[d] \\
 \C^{KS}(X,D)^\vee \ar[r]\ar[d]^{(\epsilon_{X,D})^\vee\hskip 20pt \boxed{II}} &  
\C_x^{KS}(X,D)^\vee\ar[d]^{(\nu_{A_x})^\vee}
& \Omega_X^1(D)\otimes_{\cO_X}  \k(x) \ar[ld]^{\hskip 30pt\tau_{A_x}} \\
 \big(\WU/\FXCC {D}\big)^\vee\; \ar[r]_{\hskip 30pt(\mu_x)^\vee} & \big(\Lam_x \big)^\vee\\
\CXD^\vee\ar[u]_{\hskip 130pt \boxed{IV}} \ar[rr]_{\rswM_X}& & 
H^0(C,\Omega^1_X(D+ \Xi)\otimes_{\cO_X}\cO_C)\ar[uu] \\
}\] 
Here 
$\Lam_x= \Omega_X^1(-D+C)\otimes_{\cO_X}\cO_{C,x}(x)\otimes_{\cO_{C,x}}\k(x)$ and 
$\rsw_{U_x}$ arises from Lemma \ref{lem.nu}(1).
The commutativity of $\boxed{I}$ comes from \eqref{CD1}, that of $\boxed{II}$ from Lemma \ref{lem.nu}(2)
and \eqref{mufpi}, that of $\boxed{III}$ from Lemma \ref{lem.nu}(1), and 
that of $\boxed{IV}$ from the definition of $\rswM_X$ and the commutativity of
\[
\xymatrix{ 
H^0(C,\Omega^1_X(D+ \Xi)\otimes_{\cO_X}\cO_C) \ar[r] \ar[d] & \Omega^1_X(D)\otimes_{\cO_X}\k(x) \ar[d]^{\tau_{A_x}}\\
H^1(C,\Omega^1_X(-D+C -\Xi)\otimes_{\cO_X}\cO_C)^\vee \ar[r]^{\hskip 60pt(\iota_x)^\vee} & \big(\Lambda_x \big)^\vee \\
}\]
where the left vertical map is induced by \eqref{duality1} and $\iota_x$ comes from \eqref{keythmproof.eq0}. 
This completes the proof of Theorem \ref{keythm2}.

\begin{rem}
In case $C$ is a simple normal crossing divisor on $X$, one can show the map \eqref{epsionX} induces a map
\[\epsilon_{X,D}: \CXD=\WU/\hFXCC D  \to \C^{KS}(X,D).\]
In view of \eqref{recCKS}, the main result Corollary \ref{CFT.coro} is equivalent to that
the map is an isomorphism.
\end{rem}



\section{Key Lemmas}\label{keylemmas}
\bigskip

In order to finish the proof of the main results of the paper, it remains to prove three key lemmas
(Lemma~\ref{movinglemfinite}, Lemma~\ref{lem.3term} and Lemma~\ref{keylemIIfinite}).
In this section we restate them over a general perfect field and give a leitfaden for the proofs,
which occupies the remaining sections \S\ref{keylemproofI} through \S\ref{keylemproofV}.
In the rest of the paper, $k$ is assumed only perfect (not necessarily finite).
\medbreak

Let the notation be as in \S\ref{classgroup}.
Let $(X,C)$ be in $\cC$ (see Definition \ref{def.cC}) and $\{\Clam\}_{\lam\in I}$ be the set of prime components of $C$.
Fix a Cartier divisor 
\begin{equation}\label{eqDkeylemmas0}
\displaystyle{D=\underset{\lambda\in I}{\sum} \mlam \Clam}
\qwith \mlam\geq 1.
\end{equation}

The first key lemma is a relation among three symbols (see Lemma~\ref{lem.3term}). 

\begin{lem}[three term relation]\label{lem.3term2}
Let $x$ be a regular closed point of $C$.
Let $F,Z_1,Z_2\in \ZXCC$ be such that $F\Cap C$ at $x$, and $F\Cap Z_i$ and $Z_i\Cap C$ at $x$ for $i=1,2$.
Let $(\pi,f)$ be a system of regular parameters such that locally at $x$, 
\begin{equation*}
F=\div_X(f),\; Z_1=\div_X(\pi- v_1f),\; Z_2=\div_X(\pi- v_2f),\; C=\div_X(\pi),
\end{equation*}
where $v_1,v_2\in \cO_{X,x}^\times$. For $\alpha\in \cO_{X,x}(-D)$, we have
\[
\sym{1 + (v_1-v_2)\alpha}{F,x} + \sym{1 + v_1\alpha}{Z_1,x} -\sym{1+v_2\alpha}{Z_2,x}\;\in 
\FXCC {D+C}.
\]
\end{lem}
\bigskip

The second key lemma refines Lemma~\ref{keylemIIfinite} to the case over a perfect field. 

\begin{lem}[increasing order]\label{keylemII}
Let $Z_a$ with $a\in \cP_{D,e}(F)$ be as Definition \ref{specialcurve} and take $x\in Z_a\cap C$. 
Let $\lam\in I$ be such that $x\in \Clam$. Assume 
\[
\begin{aligned}
&H^1(X,\cO_X(-2D-C+ (e-1)F))=H^1(C,\cO_C(-2D+F))=0,\\
\end{aligned}
\leqno{(*)}
\]
Assume $p\not=2$ and $e\geq \mlam(p^n-1)$ for a given integer $n>0$. Then we have 
\begin{equation}\label{keylemII.eq}
\sym {1+{f^{e+1}}\cO_{Z_a,F\cap C}} {Z_a,x} \subset \hFXCC {D+C} + p^n \hWU.
\end{equation}
\end{lem}

Lemma~\ref{keylemIIfinite} follows from Lemma~\ref{keylemII} and Corollary \ref{tameCFT.cor}
which implies $p^n \hWU \subset \hFXCC {D+C}$ for some $n>0$ if $k$ is finite.
\bigskip

The last key lemma concerns moving elements of $\WU$ to symbols on curves transversal to $C$.
It refines Lemma~\ref{movinglemfinite} to the case over a perfect field. 
Take any dense open subset $V\subset X$ containing the generic points of $C$. Recall Definition \ref{filtration.def0}.

\begin{lem}[moving]\label{movinglem}
Assume $p\not=2$. For any integers $n,N>0$, we have 
\[
\hFXCC D  \subset \FXCCapV D \;+\;  \hFXCC {D+N\cdot C} + p^n\hWU.
\]
\end{lem}

Lemma~\ref{movinglemfinite} follows from Lemma~\ref{movinglem} and Corollary \ref{tameCFT.cor} as above.
\bigskip

In \S\ref{keylemproofI} through \S\ref{keylemproofV} we prove the following Lemmas and implications using
Lemma \ref{classgroup.lem2}.

In \S\ref{keylemproofI} we prove Lemma \ref{keylem3D-2} and prove Lemma \ref{lem.3term2} using
Lemma \ref{keylem3D-2}.

Lemma \ref{keylemII} is a direct consequence of Lemma \ref{keylem.cor} consisting of parts
(1) and (2). 

In \S\ref{keylemproofII} we prove Lemma \ref{keylemma} and state Lemma \ref{keylemma-2} consisting of parts
(1) and (2). Using Lemma \ref{keylemma}, we prove an implication Lemma \ref{keylemma-2}(1) (resp. (2))$\;\Rightarrow\;$Lemma \ref{keylem.cor}(1) (resp. (2)). We also prove Lemma \ref{keylemma-2}(1). This completes the proof of Lemma \ref{keylem.cor}(1) and reduces Lemma \ref{keylemII} to Lemma \ref{keylemma-2}(2).

In \S\ref{keylemproofIII} we state Lemma \ref{keylem2} consisting of parts (1) and (2) and prove
an implication Lemma \ref{keylem.cor}(1) (resp. (2))$\;\Rightarrow\;$Lemma \ref{keylem2}(1) (resp. (2)). 
This completes the proof of Lemma \ref{keylem2}(1) using Lemma \ref{keylem.cor}(1) proved in \S\ref{keylemproofII}.

In \S\ref{keylemproofIV} we prove an implication Lemma \ref{keylem2}(1)$\;\Rightarrow\;$Lemma \ref{keylem3D-3}.
Lemma \ref{keylemma-2}(2) is a direct consequence of Lemma \ref{keylem3D-3}.
Thus Lemma \ref{keylem.cor}(2) and Lemma \ref{keylem2}(2) are proved by the implications shown 
in \S\ref{keylemproofII} and \S\ref{keylemproofIII}.

Finally, in \S\ref{keylemproofIV} we prove Lemma \ref{movinglem} using Lemma \ref{keylem2}(2) and
Lemma \ref{keylem3D-2}.


\section{Proof of Key Lemma I}\label{keylemproofI}

Let the notation be as in \S\ref{keylemmas}.
In this section we will prove Lemma \ref{lem.3term2}. 

\begin{lem}\label{keylem3D-2}
Take a regular closed point $x\in C$ and $Z_1,Z_2\in \ZXCC$ such that $x\in Z_1\cap Z_2$ and 
that $Z_i\Cap C$ at $x$ for $i=1,2$ (cf. Definition \ref{curve.def}).
Assume $(Z_1,Z_2)_x \geq e+1$ for an integer $e\geq 1$.
Then
\[
\sym {1+\alpha} {Z_1,x} - \sym {1+\alpha} {Z_2,x} \in \FXCC {D+eC}
\qfor \alpha\in \cO_{X,x}(-D).
\]
\end{lem}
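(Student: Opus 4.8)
The plan is to reduce the comparison of the two symbols to a single application of the ``tool to produce relations in $\WU$'' from Lemma \ref{classgroup.lem1}, using a carefully chosen pair of functions $a,b\in k(X)^\times$. Concretely, I would first choose a system of regular parameters $(\pi,f_1)$ at $x$ with $C=\div_{X,x}(\pi)$ and $Z_1=\div_{X,x}(f_1)$ locally at $x$. Since $(Z_1,Z_2)_x\ge e+1$ and both meet $C$ transversally at $x$, one can write $Z_2=\div_{X,x}(f_2)$ with $f_2\equiv f_1 \pmod{\mathfrak m_{X,x}^{e+1}}$ after adjusting by a unit; more precisely $f_2 = u f_1 + \pi^{e+1} v$ or $f_2 = f_1 + (\text{higher order along } Z_1)$ for suitable $u\in\cO_{X,x}^\times$. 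The point is that the difference $f_2/f_1$ lies in $1 + I_{Z_1}^{e}\cap(\dots)$ at $x$, which is exactly what produces the gain by $eC$ in the filtration.

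Next I would pick the two functions for Lemma \ref{classgroup.lem1}. The natural choice is $a = 1+\alpha g$ for an appropriate auxiliary function $g$ (so that $a|_{Z_1}$ and $a|_{Z_2}$ recover $1+\alpha$ up to the correction terms, using $\alpha\in\cO_{X,x}(-D)$), and $b$ a function whose divisor on $X$, restricted to $\ZXCC$, picks out precisely $Z_1-Z_2$ near $x$ while being controlled (a unit, or with divisor supported away from $x$) elsewhere — for instance $b=f_2/f_1$ localized, globalized by multiplying by a function with support disjoint from $x\cap C$. Then $\partial\{a,b\}=0$ in $\WU$ by Lemma \ref{classgroup.lem1} gives an identity of the shape
\[
\sym{1+\alpha}{Z_1,x} - \sym{1+\alpha}{Z_2,x} \;=\; (\text{symbols supported away from }x) \;+\; (\text{error terms}),
\]
where the error terms come from the restriction of $a$ to the components of $\div_X(b)$ other than $Z_1,Z_2$, and from the discrepancy between $a|_{Z_i}$ and $1+\alpha$. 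The error terms should, by the congruence $f_2\equiv f_1$ modulo $\mathfrak m_{X,x}^{e+1}$ together with $\alpha\in\cO_{X,x}(-D)$, all land in $1+I_{D+eC}\cO_{Z,C\cap Z}^h$ for the relevant curves $Z$, hence in $\FXCC{D+eC}$ by Definition \ref{filtration.def0}(1). The symbols supported at points other than $x$ need to be absorbed: one arranges the globalization of $b$ so that these extra components either avoid $C$ entirely (so the corresponding symbols vanish in $\WU$, being images of $\div$) or lie in the deep enough congruence subgroup.

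The main obstacle I anticipate is the \textbf{globalization step}: Lemma \ref{classgroup.lem1} is a global statement requiring $a,b\in k(X)^\times$ with no common component among $Z_a^{\pm},Z_b^{\pm}$, whereas the input $(Z_1,Z_2,\pi,f_1,f_2)$ is only prescribed locally at $x$. One must produce honest global rational functions whose local behavior at $x$ is as above and whose behavior at all other points of $C$ (and at the other intersection points of $Z_1,Z_2$) is harmless — either the relevant restrictions are $1$, or they lie in $1+I_{D+eC}\cO^h$. This is a moving-lemma-type argument; I expect it to require choosing $b$ with enough freedom (e.g. multiplying $f_2/f_1$ by a unit congruent to $1$ modulo a large power of $I_C$ away from $x$, or replacing $Z_1,Z_2$ by linearly equivalent cycles through $x$) and then checking, component by component, that every contribution other than the desired $\sym{1+\alpha}{Z_1,x}-\sym{1+\alpha}{Z_2,x}$ lies in $\FXCC{D+eC}$. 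Once the bookkeeping of these error terms is organized — which is precisely the kind of computation deferred to the later sections — the conclusion is immediate. Finally, Lemma \ref{lem.3term2} is then deduced from Lemma \ref{keylem3D-2} by applying it with $e=1$ to the three pairs among $F,Z_1,Z_2$ (all passing through $x$ and transversal to $C$ there, with the prescribed parameters giving the needed intersection multiplicities) and combining the resulting congruences.
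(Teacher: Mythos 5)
Your high-level strategy is indeed the paper's: globalize the local data, apply the reciprocity relation $\partial\{a,b\}=0$ of Lemma \ref{classgroup.lem1} with $\div_X(b)$ realizing a divisor of the form $(Z_1+\cdots)-(Z_2+\cdots)$ and with $a$ encoding $1+\alpha$, and then push every other contribution into $\FXCC{D+eC}$. But there is a concrete gap in your accounting of the relation. You list the error terms as ``the restriction of $a$ to the components of $\div_X(b)$ other than $Z_1,Z_2$'' plus ``the discrepancy between $a|_{Z_i}$ and $1+\alpha$''; this omits the terms $\sym{b|_{Z_a^{\pm}}}{Z_a^{\pm}}$, i.e.\ the symbols of $b$ along the zero and polar divisors of $a$. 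These are unavoidable: $a$ cannot be a unit near $x$ (it must have a pole along some auxiliary curve through $x$ in order to restrict to $1+\alpha$, up to units, on both $Z_1$ and $Z_2$), and in the paper they appear as $-\epsilon\sym{b}{F}+\sym{b}{Z'}$ with $F$ a transversal curve along which $a$ has a pole of order $e+1$ and $Z'=\div_X(1+a)+\epsilon F$. They are not small and cannot be arranged away by a clever choice of $b$, since $b$ is forced to be genuinely nonconstant at the points of $F\cap C$. Disposing of them is the most delicate step of the proof: one constructs a unit $b'\in\cO_{X,F\cap C}^\times$ with $b'|_{Z'}=b|_{Z'}$ and $(b/b')|_{F}\in 1+\cO_{F,C\cap F}(-D-eC)$ (Claim \ref{keylem3D-2.claim3}), and then applies Lemma \ref{classgroup.lem1} a \emph{second} time, to $\{1+a,b'\}$, to kill $-\epsilon\sym{b'}{F}+\sym{b'}{Z'}$. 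Your proposal as written would stall exactly here.

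Two further points. The unspecified ``$a=1+\alpha g$'' hides where the factor $eC$ (rather than $C$) comes from: the paper takes $a$ to be a global section of $\cO_X(-D-(e+1)C+(e+1)F)$ approximating $\alpha\piD(\pi/f)^{e+1}$ modulo $(\pi^e,f^e)$. The pole of order $e+1$ along $F$ gives $a|_{Z_i}\equiv u_i^{e+1}\alpha\piD$ (writing $Z_i=\div_X(\pi-u_if)$ near $x$), the hypothesis $(Z_1,Z_2)_x\ge e+1$ yields $u_1\equiv u_2 \bmod (f^e)$ on $Z_2$ and hence $u_1^{e+1}\alpha\piD\equiv u_2^{e+1}\alpha\piD$ modulo $\cO_{Z_2,x}(-D-eC)$, and the zero of order $e+1$ along $C$ forces all contributions at points of $C$ other than $x$ deep into the filtration; so the gain enters through the shape of $a$, not of $b$. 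Finally, your closing claim that Lemma \ref{lem.3term2} follows by applying Lemma \ref{keylem3D-2} to the three pairs is not correct: in the main case of Lemma \ref{lem.3term2} the curves $F,Z_1,Z_2$ meet pairwise transversally at $x$, so Lemma \ref{keylem3D-2} says nothing about any pair, and the three symbols carry different multipliers $(u_1-u_2)\alpha$, $u_1\alpha$, $u_2\alpha$; the paper proves Lemma \ref{lem.3term2} by a separate argument of the same type and uses Lemma \ref{keylem3D-2} only to reduce to the case $Z_1\Cap Z_2$ and to replace $F$ by a tangent curve.
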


The following lemma will be used several times in this paper.

\begin{lem}\label{preliminarylem-1}
Let $X$ be a noetherian scheme, $E$ be an effective Cartier divisor on $X$, and $A$ be an effective Cartier divisor on $E$. Let $\cF$ be a coherent $\cO_X$-module such that
\[ H^1(X,\cF\otimes\cO_X(-E))= H^1(E,\cF|_{E} \otimes\cO_E(-A))=0.\]
Then the restriction map
$r_A: H^0(X,\cF) \to H^0(A,\cF|_{A})$
is surjective.
\end{lem}
\begin{proof}
The map $r_A$ factors as
\[H^0(X,\cF) \to H^0(E,\cF|_{E}) \to H^0(A,\cF|_{A}).\]
The first (resp. second) map is surjective due to the first (resp. second) vanishing of the cohomology group.
This proves the lemma.
\end{proof}
\medbreak

Now we start the proof of Lemma \ref{keylem3D-2}.
For a later purpose we assume only $e\geq 0$ (not necessarily $e\geq 1$).
For an integer $d>0$, let $\cL(d)=|dH|$ be as Definition \ref{pencil.def}. 
Take $d$ sufficiently large so that we can choose $F$ in $\cL(d)$ satisfying the conditions: 
\begin{itemize}
\item[$(\flat 1)$]
$F\Cap C$, $x\in F$, $F\cap C\cap (Z_1\cup Z_2- x)=\varnothing$, and $F\Cap Z_i$ at $x$ for $i=1,2$.
\item[$(\flat 2)$]
Let $E=(e+1) C$ and $A\subset E$ be the part of $(e+1) F|_{E}$ supported at $x$. 
Then 
\[H^1(X,\cO_X(-D-2E+(e+1)F))=H^1(E,\cO_X(-D-E+(e+1)F)\otimes\cO_E(-A)) =0.\]
\end{itemize}
Then take $d'$ sufficiently large (compared with $d$ chosen above)
so that we can choose integral hypersurface section $H'$ in $\cL(d')$ satisfying
\begin{itemize}
\item[$(\clubsuit 1)$]
$(F\cap C)-x\subset H'$ and $x\not\in H'$.
\item[$(\clubsuit 2)$]
$H'\Cap C$ at $(F\cap C)-x$ and $H'\Cap F$ at $(F\cap C)-x$. 
\end{itemize}
For $i=1,2$ we put 
\begin{equation}\label{keylem3D-2.claim4.eq-1}
F_i=Z_i+H' \; \in \ZXCC.
\end{equation}
Choose $\pi,\piD,f\in \cO_{X,F\cap C}$ such that locally at $F\cap C$,
\[
C=\div_X(\pi),\quad D=\div_X(\piD),\quad F=\div_X(f)
\]
so that $(\pi,f)$ is a system of regular parameters in $\cO_{X,F\cap C}$.
Let $\fm_{F\cap C}=(f,\pi)$ denote the radical of $\cO_{X,F\cap C}$.



\begin{claim}\label{keylem3D-2.claim4}
There exist $u_1,u_2\in \cO_{X,F\cap C}^\times$ such that $F_i = \div_X(\pi- u_i f)$ locally at $F\cap C$
and that 
\begin{equation}\label{keylem3D-2.claim4.eq0}
u_1-u_2\in (\pi-u_2f,f^e)\subset \fm_{F\cap C},\;\;
u_1-u_2\in \fm_y \qfor y\in (F\cap C)-x,
\end{equation}
where $\fm_y$ is the maximal ideal of $\cO_{X,y}$
(the second condition of \eqref{keylem3D-2.claim4.eq0} is contained in the first except the case $e=0$).
\end{claim}
\begin{proof}
Let $g_i\in \cO_{X,F\cap C}$ be a local equation of $F_i$ at $F\cap C$.
By ($\clubsuit 1$) we have $F\cap C\subset F_i$ so that $g_i\in \fm_{F\cap C}$ and we can write
$g_i=a_i\pi+b_i f$ for some $a_i,b_i\in \cO_{X,F\cap C}$. By ($\flat 1$) and ($\clubsuit 2$), 
$F_i\Cap F$ and $F_i\Cap C$ at any $y\in F\cap C$. 
It implies $a_i,b_i\in \cO_{X,F\cap C}^\times$ proving the first part of the claim.
By \eqref{keylem3D-2.claim4.eq-1} we have
\begin{equation}\label{keylem3D-2.claim4.eq1}
\cO_{X,F\cap C}/(\pi-u_2 f)=\cO_{F_2,F\cap C}= \cO_{Z_2,x}\times \cO_{H',F\cap C-x}.
\end{equation}
By $(\flat 1)$, locally at $F\cap C -x$. we have $H'=F_1=F_2$ so that
\[(\pi- u_1 f)-(\pi- u_2 f)=(u_1-u_2)f \equiv 0\in \cO_{H',F\cap C-x}.\]
By $(\clubsuit 2)$, $f$ is not a zero divisor in $\cO_{H',F\cap C-x}$ so we get  
\begin{equation}\label{keylem3D-2.claim4.eq1.2}
(u_1-u_2)|_{H'}=0\in \cO_{H',F\cap C-x}.
\end{equation}
This implies the second condition of \eqref{keylem3D-2.claim4.eq0}.
By the assumption of Lemma \ref{keylem3D-2} 
\begin{equation*}\label{keylem3D-2.claim4.eq1.5}
(Z_1,Z_2)_x = {\rm length}_{\cO_{Z_2,x}}\big(\cO_{Z_2,x}/((u_1-u_2)f)\big) \geq e+ 1,
\end{equation*}
which implies $(u_1-u_2)|_{Z_2}\in (f^e)\cO_{Z_2,x}$ noting that
$f$ generates the maximal ideal of $\cO_{Z_2,x}$ since $Z_2\Cap F$ at $x$.
In view of \eqref{keylem3D-2.claim4.eq1} and \eqref{keylem3D-2.claim4.eq1.2}, we get
\[
(u_1-u_2)|_{F_2}\in (f^e)\cO_{F_2,F\cap C},
\]
which proves the first condition of \eqref{keylem3D-2.claim4.eq0}. 
\end{proof}

\begin{claim}\label{keylem3D-2.claim0}
Let the notation be as Claim \ref{keylem3D-2.claim4}.
Take any $\vartheta\in \cO_{X,x}$. For $e\geq 1$ 
\[
\sym {1+u_1^{e+1} \vartheta\piD} {Z_1,x} - \sym {1+u_2^{e+1} \vartheta\piD} {Z_2,x} \in \FXCC {D+eC}.
\]
For $e=0$ we have
\[
\sym {1+(u_1-u_2) \vartheta\piD} {F,x} + \sym {1+u_1 \vartheta\piD} {Z_1,x} - \sym {1+u_2\vartheta\piD} {Z_2,x} \in \FXCC {D+C}.
\]
\end{claim}

By Claim \ref{keylem3D-2.claim4}, we have
\[(u_1-u_2)_{|Z_i}\in (f^e)\cO_{Z_i,x}=\cO_{Z_i,x}(-eC) \qfor i=1,2.\]
Hence the first assertion of Claim \ref{keylem3D-2.claim0} implies Lemma \ref{keylem3D-2} 
noting $u_1,u_2\in \cO_{X,x}^\times$.
\medbreak

We now prove Claim \ref{keylem3D-2.claim0}.
Putting
\[\cF=\cO_{X}(-D-(e+1)C+(e+1)F),\]
we have an isomorphism
\[
\hskip -100pt
\mu: 
\cF\otimes \frac{\cO_{X,x}}{(\pi^{e+1},f^{e+1})} 
\simeq \cO_{X,x}/(\pi^{e+1},f^{e+1})
\]
\[
c\cdot \piD(\frac{\pi}{f})^{e+1} \to c \mod (\pi^{e+1},f^{e+1})
\quad (c\in \cO_{X,x}).
\]
By the assumption $(\flat 2)$ Lemma \ref{preliminarylem-1} implies that 
the natural map
\[
\iota: H^0(X,\cF) \to \cF\otimes \cO_{X,x}/(\pi^{e+1},f^{e+1})
\]
is surjective so that we can find
\begin{equation}\label{keylem3D-2.eq0}
a\in H^0(X,\cO_{X}(-D-(e+1)C+(e+1)F))
\end{equation}
such that
\[
\mu(\iota(a))\equiv \vartheta\; \in\; \cO_{X,x}/(\pi^{e+1},f^{e+1}).
\]
It implies 
\begin{equation}\label{keylem3D-2.eq1}
a=\gamma \piD(\frac{\pi}{f})^{e+1}\qwith \gamma\in \cO_{X,F\cap C}
\text{  such that } \gamma-\vartheta\in (\pi^{e+1},f^{e+1})\cO_{X,x}
\end{equation}
Put
\begin{equation}\label{keylem3D-2.eq1.5}
Z=\div_X(1+a) + (e+1) F .
\end{equation}
By Lemma \ref{classgroup.lem2}(1) $Z\in \ZXCC$ and $Z\cap C=F\cap C$.
\eqref{keylem3D-2.eq1} implies  
\begin{equation}\label{keylem3D-2.eq2}
Z=\div_X(f^{e+1}+\piD\pi^{e+1} \cdot \gamma)\;\;\text{locally at $F\cap C$}.
\end{equation}
Put
$\displaystyle{b =\frac{\pi - u_1 f}{\pi - u_2 f}\;\in k(X)^\times.}$
By Claim \ref{keylem3D-2.claim4} we have
\[\div_X(b) =F_1-F_2 +G = Z_1-Z_2 + G,\]
where $G$ is a divisor with $G\cap F\cap C=\varnothing.$
Noting  $Z\cap C=F\cap C$, we may apply Lemma \ref{classgroup.lem2}(3) to $1+a$ and $b$ to get 
\begin{equation}\label{keylem3D-2.eq3}
\sym {1+a} {Z_1} -\sym {1+a} {Z_2} - \sym {b} {Z} + (e+1)\sym {b} {F} \;\in \FXCC {D+(e+1)C} .
\end{equation}
Since $F\cap C\cap (Z_1\cup Z_2 -x )=\varnothing$ by $(\flat 1)$, we get for $i=1,2$ 
\[a|_{Z_i}\in  \cO_{Z_i,y}(-D-(e+1)C)) \qfor y\in (Z_i\cap C) -x.\]
Noting $\pi= u_i f$ in $\cO_{Z_i,x}$ (cf. Claim \ref{keylem3D-2.claim4}) and that
$Z_i\Cap F$ and $Z_i\Cap C$ at $x$, \eqref{keylem3D-2.eq1} implies
\[
a|_{Z_i} =   u_i^{e+1}\gamma\piD
\equiv   u_i^{e+1}\vartheta\piD\; \in \frac{\cO_{Z_i,x}(-D)}{\cO_{Z_i,x}(-D-(e+1)C)}
\quad (i=1,2).
\]
Hence \eqref{keylem3D-2.eq3} implies
\[
 \sym {1+u_1^{e+1}\vartheta\piD} {Z_1,x} - \sym {1+u_2^{e+1}\vartheta\piD} {Z_2,x} 
 - \sym {b} {Z} + (e+1)\sym {b} {F} \;\in  \FXCC {D+(e+1)C}.
\]
Hence Claim \ref{keylem3D-2.claim0} follows from the following claims:
\begin{equation}\label{keylem3D-2.eq4}
 \sym {b} {Z}-(e+1) \sym {b} {F}  \;\in \;  \FXCC {D+eC} \qfor e\geq 1,
\end{equation}
\begin{equation}\label{keylem3D-2.eq4.5}
\sym {b} {Z} - \sym {b} {F} - \sym {1+(u_1-u_2) \vartheta\piD} {F,x} \;\in \;  \FXCC {D+C} \qfor e=0,
\end{equation}


\begin{claim}\label{keylem3D-2.claim3}
There exists $b'\in \cO_{X,F\cap C}^\times$ such that $b'|_{Z}=b|_{Z}\in \cO_{Z,F\cap C}$ and that 
\begin{equation}\label{keylem3D-2.claim3.eq1}
(b'/b)|_{F} \in 1+ \cO_{F,F\cap C}(-D-eC) \qfor e\geq1,
\end{equation}
and that for $e=0$, 
\begin{equation}\label{keylem3D-2.claim3.eq2}
(b'/b)|_{F} \in 1+ \cO_{F,y}(-D-C) \qfor y\in (F\cap C)-x,
\end{equation}
\begin{equation}\label{keylem3D-2.claim3.eq3}
(b'/b)|_{F}  \equiv 1 + (u_1-u_2)\vartheta\piD \in \frac{\cO_{F,x}(-D)}{\cO_{F,x}(-D-C)}.
\end{equation}
\end{claim}

First we finish the proof of Claim \ref{keylem3D-2.claim0} (and hence that of Lemma \ref{keylem3D-2})
assuming the above claim. Since $b'\in \cO_{X,F\cap C}^\times$, we may apply Lemma \ref{classgroup.lem2}(4) 
to $1+a$ and $b'$ to get 
\[\sym {b'} {Z} -(e+1) \sym {b'} {F} \;\in \;  \FXCC {D+(e+1)C}.\]
We have $\sym {b'} {Z}= \sym {b} {Z}$ since $b'|_{Z}=b|_{Z}\in \cO_{Z,F\cap C}$.
If $e\geq 1$, \eqref{keylem3D-2.claim3.eq1} implies
\[\sym {b'} {F} -\sym {b} {F} \;\in \;  \FXCC {D+eC},\]
which implies \eqref{keylem3D-2.eq4}. 
If $e=0$, \eqref{keylem3D-2.claim3.eq2} implies
$\sym {b'} {F,y} -\sym {b} {F,y} \;\in \;  \FXCC {D+C}$ for $y\in (F\cap C) -x$, and 
\eqref{keylem3D-2.claim3.eq3} implies
\[\sym {b'} {F,x} -\sym {b} {F,x} - \sym {1 + (u_1-u_2) \vartheta\piD} {F,x}  \;\in \;  \FXCC {D+C},\]
which implies \eqref{keylem3D-2.eq4.5}. This completes the proof of Claim \ref{keylem3D-2.claim0}.

\medbreak\noindent
{\it Proof of Claim \ref{keylem3D-2.claim3}:}\:
For $e\geq 1$ Claim \ref{keylem3D-2.claim4} implies 
\begin{equation}\label{keylem3D-2.claim3.eq4}
u_2-u_1=\eta(\pi-u_2f) + \beta f^e \qwith \eta,\beta\in \cO_{X,F\cap C}.
\end{equation}
For $e=0$ we put $\beta=u_2-u_1$ and $\eta=0$. Then
\[
b=\frac{\pi - u_1 f}{\pi - u_2 f}= 1+ \frac{(u_2-u_1)f}{\pi - u_2 f}= 1+\eta f + \frac{\beta f^{e+1}}{\pi - u_2 f}.
\]
\eqref{keylem3D-2.eq2} implies $f^{e+1}|_{Z}= -\gamma\piD\pi^{e+1}\in \cO_{Z,F\cap C}$ so that 
\[
\big(\frac{\beta f^{e+1}}{\pi - u_2 f}\big)|_{Z}=
\frac{-\beta \gamma\piD\pi^{e+1}\delta}{\pi^{e+1} - u_2^{e+1} f^{e+1}}=\theta|_{Z}\;\in \cO_{Z,F\cap C},
\]
where
\[
\delta=\frac{\pi^{e+1} - u_2^{e+1} f^{e+1}}{\pi - u_2 f},\;
\theta=\frac{-\beta \gamma\piD\delta}{1+\gamma u_2^{e+1} \piD} \;\in \cO_{X,F\cap C}.
\]
Thus, taking $b'=1+ \eta f+ \theta\in \cO_{X,F\cap C}$, we get $b|_{Z}=b'|_{Z}\in \cO_{Z,F\cap C}$.
On the other hand, we have $b|_{F}=1$ so that $(b'/b)|_{F}= 1+ \theta\in \cO_{F,F\cap C}$.
We have $\delta|_{F}=\pi^e$ so that $\theta|_{F}\in \piD\pi^{e}\cO_{F,F\cap C}$, which proves \eqref{keylem3D-2.claim3.eq1}.
Assume $e=0$. We have $\delta|_{F}=1$. Hence
\begin{equation}\label{keylem3D-2.claim3.eq6}
1+\theta|_{F}=1+\frac{(u_1-u_2) \gamma\piD }{1+\gamma u_2\piD}\; \in \cO_{F,F\cap C}.
\end{equation}
By \eqref{keylem3D-2.eq1} and Claim \ref{keylem3D-2.claim4}, we have
\[\gamma|_{F}-\vartheta|_{F}\in \pi\cO_{F,x},\qaq
(u_1-u_2)|_{F}\in \pi\cO_{F,y} \qfor y\in (F\cap C)-x.\]
Hence \eqref{keylem3D-2.claim3.eq6} implies \eqref{keylem3D-2.claim3.eq2} and \eqref{keylem3D-2.claim3.eq3},
and Claim \ref{keylem3D-2.claim3} is proved.
\bigskip

Finally we prove Lemma \ref{lem.3term2}. 
In case $(Z_1,Z_2)_x\geq 2$,  we have $v_1-v_2\in \fm_x$ so that
$(v_1-v_2)|_{F} \in \cO_{F,x}(-C)$ and $(v_1-v_2)|_{Z_i} \in \cO_{Z_i,x}(-C)$ for $i=1,2$.
Hence Lemma \ref{lem.3term2} follows from Lemma \ref{keylem3D-2}.
We assume $Z_1\Cap Z_2$ at $x$. 
By Lemma \ref{keylem3D-2}, we may replace $F$ by any curve which is regular at $x$ and tangent to $F$ at $x$.
Hence we may take $F$ as in the beginning of the proof of Lemma \ref{keylem3D-2}.
Then take $H'$ and $F_i$ for $i=1,2$ as before.
Then Claim \ref{keylem3D-2.claim4} and ($\clubsuit 1$) imply $Z_i = \div_X(\pi- u_i f)$ locally at $x$ for $i=1,2$.
Hence we get $u_i-v_i\in \fm_x$ so that $(u_i-v_i)|_{F} \in \cO_{F,x}(-C)$.
Thus Lemma \ref{lem.3term2} follows from the second assertion of Claim \ref{keylem3D-2.claim0}.


\section{Proof of Key Lemma II}\label{keylemproofII}

Let $(X,C)$ be in $\cC$ (see Definition \ref{def.cC}) and $\{\Clam\}_{\lam\in I}$ be the set of irreducible
components of $C$. Fix a Cartier divisor 
\begin{equation*}
\displaystyle{D=\underset{\lambda\in I}{\sum} \mlam \Clam}
\qwith \mlam\geq 1.
\end{equation*}

\begin{lem}\label{keylem.cor}
Let $Z_a$ with $a\in \cP_{D,e}(F)$ be as Definition \ref{specialcurve} and take $x\in Z_a\cap C$ and $\lam\in I$ such that $x\in \Clam$. Assume 
\[\begin{aligned}
&H^1(X,\cO_X(-2D-C+ (e-1)F))=H^1(C,\cO_C(-2D+F))=0,\\
\end{aligned}\]
\begin{itemize}
\item[(1)]
Assuming $e\geq \mlam$, we have 
\[
\sym {1+{f^{e+1}}\cO_{Z_a,F\cap C}} {Z_a,x} \subset \tFXCC {2D-\Clam} + \FXCC {D+C}.
\]
\item[(2)]
Assuming $(p,2\mlam)=1$ and $e\geq \mlam(p^n-1)$ for a given integer $n>0$, we have 
\[
\sym {1+{f^{e+1}}\cO_{Z_a,F\cap C}} {Z_a,x} \subset \hFXCC {D+C} + p^n \hWU.
\]
\end{itemize}
\end{lem}

Lemma~\ref{keylemII} is a consequence of Lemma~\ref{keylem.cor} by the following remark.

\begin{rem}\label{keylem.cor.rem}
Assuming $p\not=2$, Lemma~\ref{keylem.cor} implies the following: 
\begin{equation}\label{keylem.re.eq}
\sym {1+{f^{e+1}}\cO_{Z_a,F\cap C}} {Z_a,x} \subset \hFXCC {D+C} + p^n \hWU 
\end{equation}
for $e\geq \mlam(p^n-1)$. Indeed, if $\mlam\geq 2$, then $2D-\Clam\geq D+C$ and Lemma~\ref{keylem.cor}(1) implies
\eqref{keylem.re.eq} for $e\geq \mlam$.
If $\mlam= 1$, then Lemma~\ref{keylem.cor}(2) implies \eqref{keylem.re.eq} for $e\geq \mlam(p^n-1)$.
\end{rem}

In this section we prove Lemma~\ref{keylem.cor}(1).
The proof of Lemma~\ref{keylem.cor}(2) will be complete in \S\ref{keylemproofIV}.

\begin{lem}\label{keylemma}
Let the notation be as in Definition \ref{specialcurve} and the paragraph after it.
Fix an integer $1\leq \epsilon\leq e-1$.
Assume 
\begin{equation}\label{keylem.eq0}
H^1(X,\cO_X(-2D-C+\epsilon F))=H^1(C,\cO_C(-2D  + F))=0.
\end{equation}
For $a\in \cP_{D,e}(F)$, we have
\[
\sym {1+\frac{\piD^2}{f^\ep}\cO_{Z_a,F\cap C}}{Z_a}
\subset 
\sym {1+\frac{\piD^2\pi}{f^\ep}\cO_{Z_a,F\cap C}} {Z_a} +
\sym {1+\frac{\piD^2}{f}\cO_{Z_a,F\cap C}} {Z_a} + \FXCC{2D}.
\]
\end{lem}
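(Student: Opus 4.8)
\emph{Approach.} The plan is to globalise, through the relation‑producing mechanism of Lemma~\ref{classgroup.lem1}, the local $K_2$‑computation of Claim~\ref{heuristic.claim2}; the geometric bookkeeping is modelled on the proof of Lemma~\ref{keylem3D-2}. First I would reduce to generators: the left‑hand subgroup is generated by the individual symbols $\sym{1+\tfrac{\piD^2}{f^\epsilon}\gamma}{Z_a,x}$ with $x\in Z_a\cap C=F\cap C$ and $\gamma\in\cO^h_{Z_a,x}$, so it suffices to place each of these in the right‑hand subgroup. I would also record the basic relation $f^e\equiv-\piD u$ on $Z_a$ (cf.\ \eqref{specialcurve-eq1}), which makes $\tfrac{\piD^2}{f^\epsilon}\cO^h_{Z_a,x}=\piD f^{\,e-\epsilon}\cO^h_{Z_a,x}$ a genuine ideal of $\cO^h_{Z_a,x}$ (here $\epsilon\le e-1$ is used) and which lets one later rewrite restrictions of rational functions to $Z_a$ in terms of the powers $f^{\,e-\epsilon}$, $f^{\,e-1}$, $f^{\,e}$ of $f$.

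Next I would construct the rational function carrying the symbol. Using the first hypothesis $H^1(X,\cO_X(-2D-C+\epsilon F))=0$ for the surjectivity of $H^0(X,\cO_X(-2D+\epsilon F))\to H^0(C,\cO_C(-2D+\epsilon F))$, and the second, $H^1(C,\cO_C(-2D+F))=0$, to prescribe the restriction to $C$ along $F\cap C$ to the relevant finite order, a diagram chase of the kind used in Claim~\ref{keylem3D-2.claim1} yields a global section $\tilde a\in H^0(X,\cO_X(-2D+\epsilon F))$ which near $F\cap C$ represents $\tfrac{\piD^2}{f^\epsilon}\gamma$ up to an error in $\tfrac{\piD^2\pi}{f^\epsilon}\cO^h_{Z_a}$; this residual $\pi$‑divisible error is exactly the source of the first term on the right‑hand side, and it is the finest accuracy the $H^1(C,\cdot)$‑vanishing allows. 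Put $g=1+\tilde a$. Then $g\equiv 1$ on all of $C$, so its zero divisor is disjoint from $C$; it has a pole of some order $\epsilon'\le\epsilon$ along $F$; and $g|_{Z_a,x}\equiv 1+\tfrac{\piD^2}{f^\epsilon}\gamma$ modulo $1+\tfrac{\piD^2\pi}{f^\epsilon}\cO^h_{Z_a,x}$.

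Then I would produce the needed relations by two applications of Lemma~\ref{classgroup.lem1}. Taking a generic section $s$ of $\cO_X(nH-Z_a)$ for $n\gg 0$ gives $b=s\in k(X)^\times$ with $\div_X(b)=Z_a+E-F_2$, where $E$ and $F_2\in|nH|$ are in general position (in particular disjoint from $F\cap C$ and from the zero divisor of $g$), and near $F\cap C$ one has $b=(f^e+\piD u)\cdot(\mathrm{unit})$, whence $b|_F=\piD\cdot(\mathrm{unit})$. Applying Lemma~\ref{classgroup.lem1} to $\{g,b\}$, and discarding the contributions of $E$ and $F_2$ (where $\tilde a\in I_{2D}$ locally, so these lie in $\FXCC{2D}$) and of the zero divisor of $g$ (disjoint from $C$, hence zero), expresses $\sum_{x\in F\cap C}\sym{g|_{Z_a,x}}{Z_a,x}$ as an integer multiple of $\sym{b|_F}{F}$ modulo $\FXCC{2D}$. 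A second application, to a pair $\{g,b'\}$ with $b'\in k(X)^\times$ chosen so that $\div_X(b')$ is supported on $C$ together with divisors in general position away from $F\cap C$, and so that $b'|_F$ agrees with $b|_F$ near $F\cap C$ up to $1+I_{2D}\cO_F$ — the global analogue of the passage from $b=f^e+\piD u$ to $b'=u\piD(1-\alpha fu\piD)$ in Claim~\ref{heuristic.claim2} — together with the rewriting $f^e\equiv-\piD u$ on $Z_a$ and the elementary identities $\tfrac{1+X}{1+Y}=1+Z$ with $Z\in I_{2D}\cO$ of Claims~\ref{heuristic.claim1}–\ref{heuristic.claim2} and \eqref{specialcurve-eq2a}, shows that this $\sym{b|_F}{F}$‑term lies in $\sym{1+\tfrac{\piD^2\pi}{f^\epsilon}\cO_{Z_a,F\cap C}}{Z_a}+\sym{1+\tfrac{\piD^2}{f}\cO_{Z_a,F\cap C}}{Z_a}+\FXCC{2D}$. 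Combining with the congruence for $g|_{Z_a,x}$ gives the asserted containment.

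The main obstacle, as throughout this part of the paper, is the simultaneous global bookkeeping: one must choose the degrees of all auxiliary linear systems large enough that every required cohomology vanishes (those in the hypotheses, together with the Serre‑vanishing and existence statements for the auxiliary sections) and that all Bertini‑transversality and general‑position conditions hold at once, so that the divisors fed into Lemma~\ref{classgroup.lem1} share no component and every ``far'' curve meets $C$ away from $F\cap C$; and — the delicate point — one must verify that the lifting error together with the residual $F$‑contribution decomposes into exactly the two fractional ideals $\tfrac{\piD^2\pi}{f^\epsilon}\cO_{Z_a}$ and $\tfrac{\piD^2}{f}\cO_{Z_a}$ modulo $\FXCC{2D}$, and no coarser filtration step, which is where the precise twists $-2D-C+\epsilon F$ on $X$ and $-2D+F$ on $C$ and the inequality $\epsilon\le e-1$ are used.
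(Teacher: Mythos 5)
Your skeleton is the right one and is essentially the paper's: lift the local datum to a global section $\tilde a\in H^0(X,\cO_X(-2D+\ep F))$ using the two vanishing hypotheses, set $Z'=\div_X(1+\tilde a)+e'F$ with $e'=-\ord_F(\tilde a)\le\ep$, choose $b$ with $\div_X(b)=Z_a+G-F_\infty$ and $G\cap F\cap C=F_\infty\cap F\cap C=\varnothing$, and expand $\partial\{1+\tilde a,b\}=0$. But one step is fatally wrong: you discard the contribution of the zero divisor of $g=1+\tilde a$ on the grounds that it is disjoint from $C$. It is not. Since $\tilde a$ must have a pole of order $e'\ge 1$ along $F$ in order to restrict to $\frac{\piD^2}{f^{\ep}}\gamma$ on $Z_a$ with $\gamma$ a unit, locally at $F\cap C$ one has $1+\tilde a=(f^{e'}+\piD^2c)/f^{e'}$ with $c\in\cO_{X,F\cap C}$, so the zero divisor $Z'$ passes through every point of $F\cap C$. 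The resulting term $\sym{b}{Z'}$ is precisely the hard part of the argument: the paper's proof spends its entire second half (Claims \ref{keylem.claim2} and \ref{keylem.claim3} and a second application of Lemma \ref{classgroup.lem1}) showing that the combination $-e'\sym{b}{F}+\sym{b}{Z'}$ lies in $\FXCC{2D}$, and this forces one to construct $b'\in H^0(X,\cO_X(-D+NF_\infty))$ agreeing with $b$ on all of $F\cup Z'$, not merely on $F$ as in your second application. Treating $\sym{b|_F}{F}$ in isolation cannot work: near $F\cap C$ one has $b|_F=\piD\cdot(\mathrm{unit})$, which is nowhere near a principal unit, and there is no mechanism for converting a symbol supported on $F$ into symbols supported on $Z_a$; in the local model you cite (Claim \ref{heuristic.claim2}) it is exactly the pair consisting of the $F$-component and the components of $\div(f^{e-1}+\alpha u^2\piD^2)$ --- that is, the $F$- and $Z'$-contributions --- that the second $K_2$-symbol cancels jointly.

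A secondary inaccuracy: the lifting precision you claim (error only in $\frac{\piD^2\pi}{f^{\ep}}\cO_{Z_a,F\cap C}$) is not available from the stated hypotheses. Controlling $\tilde a|_{Z_a}$ modulo $\cO_{Z_a}(-2D-C+\ep F)$ alone means prescribing it on the length-$e$ scheme $Z_a\cap C$, whose obstruction lies in $H^1(C,\cO_C(-2D+(\ep-e)F))$, a strictly more negative twist than the assumed $H^1(C,\cO_C(-2D+F))=0$ because $\ep\le e-1$. What the hypotheses actually give (the paper's Claim \ref{keylem.claim1}) is surjectivity onto the quotient by $\cO_{Z_a}(-2D+F)+\cO_{Z_a}(-2D-C+\ep F)$, so both correction terms $\frac{\piD^2}{f}\beta$ and $\frac{\piD^2\pi}{f^{\ep}}\gamma$ already arise at the lifting stage. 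This by itself is harmless for the final containment, but it shows the $\frac{\piD^2}{f}$ term does not come from the $\sym{b}{F}$ analysis, and it removes the only place where your argument could legitimately produce that term once the missing $Z'$-contribution is accounted for.
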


Recall $\pi, \piD, f\in \cO_{X,F\cap C}$ are such that locally at $F\cap C$
\[
C=\div_X(\pi),\; D=\div_X(\piD),\; F=\div_X(f)
\]
so that $(\pi,f)$ is a system of regular parameters in $\cO_{X,F\cap C}$. 

\begin{rem}\label{keylemma.rem}
By \eqref{specialcurve-eq1},
$\displaystyle{\frac{\piD^2}{f^{e-1}} \equiv u^{-2}f^{e+1}\in \cO_{Z_a,F\cap C}}$ so
$\displaystyle{\frac{\piD^2}{f^\ep}\in f^{e+1}\cO_{Z_a,F\cap C}}$.
\end{rem}
\def\ta{a'}

\begin{proof} 
We fix $a\in \cP_{D,e}(F)$ and write $Z=Z_a$. 
Take $1+\frac{\piD^2}{f^\ep}\alpha$ with $\alpha\in \cO_{Z,F\cap C}$.
By \eqref{keylem.eq0}, letting $A=(\ep-1)F|_{C} \subset C$, 
Lemma \ref{preliminarylem-1} implies that the natural map
\[
H^0(X,\cO_{X}(-2D+\ep F)) \to H^0(A,\cO_{X}(-2D+\ep F)|_{A})
\]
is surjective. The target group is isomorphic to 
\[
\cO_{X}(-2D+\ep F)\otimes \cO_{X,F\cap C}/(f^{\ep-1},\pi)
\simeq \cO_{X}(-2D+\ep F)\otimes \cO_{Z,F\cap C}/(f^{\ep-1},\pi),
\]
where the isomorphism holds since 
$\cO_{X,F\cap C}(-Z)\subset (f^e,\pi) \subset (f^{\ep-1},\pi)$ by \eqref{specialcurve-eq1} and the assumption $\ep\leq e-1$. Hence there exist
\[\ta\in H^0(X,\cO_{X}(-2D+\ep F))\]
and $\beta,\gamma\in \cO_{Z,F\cap C}$ such that
\[
\ta|_{Z} =\frac{\piD^2}{f^\ep}\alpha + \frac{\piD^2}{f}\beta + \frac{\piD^2\pi}{f^\ep}\gamma
\;\in \cO_{Z,F\cap C}.
\]
This implies
\begin{equation}\label{keylem.eq3}
(1+\ta)|_{Z} =(1+\frac{\piD^2}{f^\ep}\alpha)(1 +  \frac{\piD^2}{f}\beta')
(1 + \frac{\piD^2\pi}{f^\ep}\gamma') \;\in \cO_{Z,F\cap C}^\times,
\end{equation}
where $\beta'=\beta v^{-1}$ with
\[
v=1+\frac{\piD^2}{f^\ep}\alpha = 1-\piD{f^{e-\ep}}u^{-1}\alpha\;\in \cO_{Z,F\cap C}^\times
\quad (\text{cf. Remark \ref{keylemma.rem}}) 
\]
and $\gamma'=\gamma v^{-1} (1 +  \frac{\piD^2}{f}\beta')^{-1}\in \cO_{Z,F\cap C}$. Put
\[
Z'=\div_X(1+\ta) + \ep F.
\]
By Lemma \ref{classgroup.lem2}(1), $Z'\in \ZXCC$ with $Z'\cap C= F\cap C$. Locally at $F\cap C$, 
\begin{equation}\label{keylem.eq4}
\ta=c\frac{\piD^2}{f^\ep}\qaq Z'=\div_X(f^{\ep}+\piD^2 \cdot c) \qwith c\in \cO_{X,F\cap C}.
\end{equation}

By \eqref{specialcurve-eq1}, letting $b=f^e+\piD\cdot u\;\in k(X)^\times$, we have  
\begin{equation}\label{keylem.eq1}
\div_X(b)= Z + G\qfor G\in \Div(X)\; \text{ such that } |G|\cap F\cap C=\varnothing.
\end{equation}
Noting $Z'\cap C= F\cap C$, we can apply Lemma \ref{classgroup.lem2}(3) to $1+\ta$ and $b$ to get
\[
\sym {1+\ta} {Z}- \ep\sym {b} {F} + \sym {b} {Z'}\in \FXCC {2D}.
\]
Hence \eqref{keylem.eq3} implies 
\[
\sym{1+\frac{\piD^2}{f^\ep}\alpha}{Z} + 
\sym{1 +  \frac{\piD^2}{f}\beta'}{Z} +
\sym{1 + \frac{\piD^2\pi}{f^\ep}\gamma'}{Z}
- \ep\sym {b} {F} + \sym {b} {Z'} \in \FXCC{2D}.
\]
Thus the proof of Lemma \ref{keylemma} is reduced to showing 
\begin{equation}\label{keylem.eq5}
-\ep\sym {b} {F} + \sym {b} {Z'} \in \FXCC{2D}.
\end{equation}

\begin{claim}\label{keylem.claim2}
There exists $b'\in \piD\cdot \cO_{X,F\cap C}^\times$ such that 
\begin{equation}\label{keylem.claim2.eq}
b|_{F\cup Z'}=b'|_{F\cup Z'}\in \cO_{F\cup Z',F\cap C}.
\end{equation}
\end{claim}
\begin{proof}
We have 
\begin{equation*}\label{keylem.eq5.1}
b= f^e + \piD u =u\piD(1-\piD f^{e-\ep}c u^{-1}) + f^{e-\ep}(f^{\ep}+ \piD^2 c)\;\in \;\cO_{X,F\cap C}.
\end{equation*}
The second term is divisible by a local equation of $F\cup Z'$ around $F\cap C$ due to \eqref{keylem.eq4}
and the assumption $\ep\leq e-1$. Thus it suffices to take $b'= u\piD(1-\piD f^{e-\ep}c u^{-1})$.
\end{proof}

By Lemma \ref{classgroup.lem2}(4) applied to $1+\ta$ and $b'$, we have
\[
-\ep\sym {b'} {F} + \sym {b'} {Z'} \in \FXCC{2D}.\]
which implies \eqref{keylem.eq5} since
$\sym {b} {F} =\sym {b'} {F}$ and $\sym {b} {Z'}=\sym {b'} {Z'}$ thanks to \eqref{keylem.claim2.eq}
and the fact $Z'\cap C=F\cap C$.
This completes the proof of Lemma \ref{keylemma}.
\end{proof}




\medbreak
Let the assumption be as in Lemma \ref{keylem.cor}. By Lemma \ref{keylemma} and Remark \ref{keylemma.rem},
\[
\begin{aligned}
\sym {1+{f^{e+1}}\cO_{Z_a,F\cap C}} {Z_a} 
&\subset \sym {1+\frac{\piD^2}{f^{e-1}}\cO_{Z_a,F\cap C}} {Z_a}\\
&\subset \sym {1 + \frac{\piD^2\pi}{f^{e-1}}\cO_{Z_a,F\cap C}}{Z_a} 
+ \sym {1 + \frac{\piD^2}{f}\cO_{Z_a,F\cap C}}{Z_a} +\FXCC {2D}\\
&\subset \sym {1 + \frac{\piD^2}{f}\cO_{Z_a,F\cap C}}{Z_a} +\FXCC {D+C}
\end{aligned}
\]
where the last inclusion holds since 
$\piD^2\pi/f^{e-1}\equiv -u^{-1}\piD\pi \in \cO_{Z_a,F\cap C}$ by \eqref{specialcurve-eq1}.
Hence Lemma \ref{keylem.cor} follows from the following.

\begin{lem}\label{keylemma-2}
Fix a regular closed point $x\in C$ and let $\lam\in I$ be such that $x\in \Clam$.
Let $F\in \ZXCC$ be such that $x\in F$ and $F\Cap C$ at $x$.
Take a local parameter $f$ (resp. $\pi$) of $F$ (resp. $C$) at $x$ so that 
$(\pi,f)$ is a system of regular parameters at $x$. 
Let $Z\in \ZXCC$ be such that locally at $x$,
\begin{equation}\label{specialcurve-eq2}
Z=\div_X(c\cdot f^e+\pi^m) \qwith c\in \cO_{X,x},
\end{equation}
where $m>0$ is an integer. Consider the object $(\tX,\tC)$ of $\cB_X$, where 
$g:\tX\to X$ is the blowup at $x$ and $\tC=g^*C$ (cf. Definition \ref{def.cB}).
Let $Z'\subset \tX$ be the proper transform of $Z$.
\begin{itemize}
\item[(1)]
If $e\geq m$, we have
\[
\frac{\piD^2}{f} \in \cO_{Z',Z'\cap \tC}(-g^*(2D-\Clam)).
\]
In particular we have
\[
\sym {1 + \frac{\piD^2}{f}\cO_{Z,x}}{Z,x} \in \tFXCC {2D-\Clam}.
\]
\item[(2)]
Assuming $(p,2\mlam)=1$ and $e\geq m(p^n-1)$ for an integer $n>0$, we have
\[
\sym {1 + \frac{\piD^2}{f}\cO_{Z,x}}{Z,x} \in \hFXCC {2D} + p^n \hWU.
\]
\end{itemize}
\end{lem}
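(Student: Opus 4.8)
I would treat the two parts separately, since they are of quite different nature: (1) is a local geometric computation on the blow-up of $x$, whereas (2) layers on top of it a $p$-adic refinement whose substance is really contained in the later lemmas.

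\emph{Plan for (1).} First note that $g\colon\tX=Bl_x(X)\to X$ lies in $\cB_X$ (Definition~\ref{def.cB}), because $x$ is a regular point of $C$. I would then work in the two affine charts of $g$: with $\pi$ a parameter of $C$ and $f$ a parameter of $F$ at $x$, use coordinates $(s,v)$ with $\pi=sv,\ f=v$ on the chart where the exceptional divisor $E$ crosses the strict transform $C^{(1)}$, and $(u,w)$ with $\pi=u,\ f=uw$ on the other. Since $e\ge m$, the multiplicity of $Z$ at $x$ is $m$, so $g^*Z=Z'+mE$; and, again because $e\ge m$, in the $(u,w)$-chart $Z'$ misses the strict transform $F^{(1)}$ of $F$. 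As the only pole of the rational function $\piD^2/f$ on $\tX$ is along $F^{(1)}$, it follows that $\piD^2/f$ is regular on $Z'$ near $Z'\cap\tC$. Writing $\piD=\pi^{\mlam}\cdot(\text{unit})$ at $x$, a one-line computation gives $\piD^2/f=(sv)^{2\mlam-1}\cdot s\cdot(\text{unit})$ (resp.\ $u^{2\mlam-1}\cdot(\text{unit})$), i.e.\ at every point of $Z'\cap\tC$ the function $\piD^2/f$ is a local equation of $g^*(2D-C)$ times an element of $\cO_{Z',Z'\cap\tC}$; this is the first assertion. For the ``in particular'', I would recall from the proof of Lemma~\ref{filtration.lem} that $Z'\to Z$ is finite and $(Z')^N=Z^N$, so the symbol $\sym{\cdot}{Z}$ is model-independent; thus for $a\in\cO_{Z,x}$ one has $1+\tfrac{\piD^2}{f}a\in 1+I_{g^*(2D-C)}\cO^h_{Z',Z'\cap\tC}$, whence $\sym{1+\tfrac{\piD^2}{f}a}{Z,x}=\sym{1+\tfrac{\piD^2}{f}a}{Z',Z'\cap\tC}\in\FtXCC{g^*(2D-C)}\subset\tFXCC{2D-C}$ by Definitions~\ref{filtration.def0}(1) and~\ref{filtration.def}.

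\emph{Plan for (2).} The same computation shows $\piD^2/f$ is short of the ideal of $g^*(2D)$ on $Z'$ by exactly one factor of $f$, and this defect survives every further sequence of point blow-ups because the strict transform of $F$ keeps reappearing; hence one cannot land in $\hFXCC{2D}$ by geometry alone, and the term $p^n\hWU$ must be bought by a Frobenius argument. Here the plan is to feed Part~(1) — equivalently Lemma~\ref{keylemma}, which already rewrites the relevant symbols in the shape $1+\tfrac{\piD^2}{f}\cO$ — into the mod-$p^n$ strengthening of the increasing-order mechanism, to be carried out in \S\ref{keylemproofIV} (Lemma~\ref{keylem3D-3}). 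The hypothesis $(p,2\mlam)=1$ is what makes the leading part of $1+\tfrac{\piD^2}{f}a$ controllable by a single Artin--Schreier--Witt component (cf.\ Lemma~\ref{CFT.lem0}(3), since $\piD^2=\pi^{2\mlam}\cdot(\text{unit})$ with $2\mlam$ prime to $p$), and $e\ge m(p^n-1)$ provides exactly the room to iterate the increasing-order step $n$ times, each iteration scaling the controlling contact order by $p$, so that the residual symbol becomes $p^n$ times an element of $\hWU$; concretely one reaches, modulo $1+I_{2D}\cO^h_{Z^N,C\cap Z}$, a representation $1+\tfrac{\piD^2}{f}a=\eta^{p^n}$ with $\eta\in 1+\fm\cO^h_{Z^N,C\cap Z}$.

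Thus Part~(1) is essentially bookkeeping once the charts are set up, and I expect the genuine difficulty to be Part~(2): reconciling the $\pi$-adic ramification filtration with the $p$-adic/Frobenius structure of $\WU$ — which is precisely why the hypotheses $(p,2\mlam)=1$ and the large lower bound on $e$ appear, and why its proof is deferred to the chain of lemmas ending in \S\ref{keylemproofIV}.
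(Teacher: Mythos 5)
Your part (1) is correct and is essentially the paper's argument: the paper also blows up at $x$, checks in the chart at $F'\cap E$ that $Z'$ misses $F'\cap E$ because $e\ge m$, and then reads off $\piD^2/f\in\cO_{Z',Z'\cap E}(-2g^*D+E)\subset\cO_{Z',Z'\cap E}(-g^*(2D-C))$ exactly as in your two-chart computation; the passage from this to $\tFXCC{2D-C}$ via the identification $k(Z')_\infty^\times=k(Z)_\infty^\times$ is also what the paper uses. For part (2) your reduction is likewise the paper's: one applies Lemma~\ref{keylem3D-3} with $D$ replaced by $2D$ (so $a=m$, $b=e$, and the hypotheses $(p,2\mlam)=1$, $e\ge m(p^n-1)$ are exactly what that lemma needs), and the element $1+\tfrac{\piD^2}{f}\cO_{Z,x}$ is precisely $1+\cO_{Z,x}(-2D+F)$.

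One caution about your gloss on the mechanism of Lemma~\ref{keylem3D-3}: the claim that one reaches, modulo $1+I_{2D}\cO^h_{Z^N,C\cap Z}$, a representation $1+\tfrac{\piD^2}{f}a=\eta^{p^n}$ directly on $Z^N$ is not what happens and would fail in general (the defect of $\tfrac{\piD^2}{f}$ against $I_{2D}$ on a branch of $Z^N$ need not be divisible by $p^n$). The paper's actual route is different: it performs $m$ successive blow-ups at the point where the strict transform of $C$ meets the newest exceptional divisor, with $1\le m\le p^n-1$ chosen so that $p^n\mid m\mlam-1$ (this is where $(p,\mlam)=1$, here $(p,2\mlam)=1$, enters), so that the relevant symbol lands in $1+I_{\phi_m^*(2D)-E_m}$ with the multiplicity of $E_m$ divisible by $p^n$; only then does it invoke the moving statement Lemma~\ref{keylem2-cor}, which transports the symbol to curves transversal to the boundary, where perfectness of the residue field yields the $p^n$-th power decomposition on the graded piece. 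Since you explicitly defer the proof to \S\ref{keylemproofIV} rather than relying on the $\eta^{p^n}$ claim, this does not invalidate your proposal, but the heuristic should not be taken as a substitute for that argument.
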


\bigskip

In this section we prove Lemma \ref{keylemma-2}(1). The proof of Lemma \ref{keylemma-2}(2) will be given in 
\S\ref{keylemproofIV}. Note that Lemma \ref{keylem.cor}(1) follows from Lemma \ref{keylemma-2}(1). 

Let $\Clam'$ be the proper transform of $\Clam$ and $E=g^{-1}(x)$ be the exceptional divisor 
so that $g^*\Clam=\Clam'+E$. Let $F'$ be the proper transform of $F$ in $\tX$. 
We claim $Z'\cap F'\cap E=\varnothing$. Indeed $f/\pi$ (resp. $\pi$) is a local parameter of $F'$ (resp. $E$) at $F'\cap E$.
By \eqref{specialcurve-eq2} and the assumption $e\geq m$, $Z'$ is defined locally at $F'\cap E$ by
$c(f/\pi)^{e}\pi^{e-m} + 1$, which implies the claim.
Noting that $f$ is a local parameter of $E$ at any $y\in E-(F'\cap E)$, we have
\begin{equation*}\label{keylemma-2.eq1}
\frac{\piD^2}{f}\in \cO_{Z',Z'\cap E}(-2g^*D+E)\subset \cO_{Z',Z'\cap E}(-g^*(2D-\Clam))
\end{equation*}
noting $-2g^*D+E =-2g^*D+g^*\Clam-\Clam' =-g^*(2D-\Clam) -\Clam'$.
Lemma \ref{keylemma-2}(1) follows from this.


\section{Proof of Key Lemma III}\label{keylemproofIII}

Let $(X,C)$ be in $\cC$ (see Definition \ref{def.cC}) and $\{\Clam\}_{\lam\in I}$ be the set of irreducible
components of $C$. Fix a Cartier divisor 
\begin{equation*}
\displaystyle{D=\underset{\lambda\in I}{\sum} \mlam \Clam}
\qwith \mlam\geq 1.
\end{equation*}
The following lemma is a preliminary for the proof of 
Lemma \ref{movinglem} whose proof will be completed in \S\ref{keylemproofIV}.
In this section we prove only Lemma \ref{keylem2}(1),
which is necessary for the proof of Lemma \ref{keylem3D-3}.
Lemma \ref{keylemma-2}(2) and hence Lemma \ref{keylem.cor}(2) will be deduced from Lemma \ref{keylem3D-3}.
Lemma \ref{keylem2}(2) will be then deduced from Lemma \ref{keylem.cor}(2). 

\begin{lem}\label{keylem2}
Take a reduced $Z\in \ZXCC$ and $x\in Z\cap C$ and a dense open subset $V\subset X$ containing all generic points of $C$.
\begin{itemize}
\item[(1)]
We have (cf. Definition \ref{filtration.def0}(4))
\[
\sym {1+\cO_{Z,x}(-D)}{Z,x} \subset \FXCCapV D \;+\;  \tFXCC {2D-C}  + \FXCC {D+C},
\]
\item[(2)]
Assume $p\not=2$. For any integer $n>0$, we have
\[
\sym {1+\cO_{Z,x}(-D)}{Z,x} \subset \FXCCapV D \;+\;  \hFXCC {D+C} + p^n\hWU.
\]
\end{itemize}
\end{lem}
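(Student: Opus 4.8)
The plan is to prove part (1) here and, as indicated above, to obtain part (2) in \S\ref{keylemproofIV} once Lemma \ref{keylem.cor}(2) is available; the two arguments are formally the same, only the source of the error term changing (Lemma \ref{keylem.cor}(1) for~(1), Lemma \ref{keylem.cor}(2) for~(2)). The method is a Bertini-type moving argument in the spirit of the proof of Lemma \ref{keylem.cor}(1) in \S\ref{keylemproofII}: using the Milnor $K$-theory relation of Lemma \ref{classgroup.lem1} we produce a vanishing relation in $\WU$ that rewrites the symbol $\sym{1+\alpha}{Z,x}$, $\alpha\in\cO_{Z,x}(-D)$, as a sum of symbols along curves transversal to $C$ plus symbols along special curves of Definition \ref{specialcurve}.

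First I would reduce to the case that $Z$ is integral and regular at $x$. Treating the branches of $Z$ through $x$ one at a time and passing to a blowup $g\colon(\tX,\tC)\to(X,C)$ in $\cB_X$ separating them, we may work on $\tX$: by \eqref{eq.tFXCC} the group $\tFXCC{2D-C}$ is unchanged, $\hFXCC{2D}$ and $\hWU$ are controlled on the blowup (Lemma \ref{filtration.lem}, and $\hWU$ depends only on $U$), and $\FXCCapV{D}$ can only grow. Transversality of the strict transform of $Z$ to $\tC$ is precisely what is \emph{not} assumed here; arranging it is the point of the argument. Fixing a regular parameter $\pi$ of $C$ and a system $(\pi,f)$ of regular parameters at $x$, for $d\gg0$ I would use Bertini to choose $F\in\cL(d)$ through $x$ with $F\Cap C$, transversal to $Z$ and to $C$ at $x$, meeting $C$ only in regular points of $C\cap V$ and away from $Z\cap C\setminus\{x\}$, and with the Serre vanishings needed below; together with a general $F_\infty\in\cL(d)$ I would pick $b\in k(X)^\times$ with $\div_X(b)=Z+G-F_\infty$ for some $G\in\ZXCC$ transversal to $C$ and disjoint from $F\cap C$ and from $x$.

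The core step is then: using the vanishings, lift $\alpha$ to a global section $a$ of a suitable twist $\cO_X(-D+eF)$ — as in the proof of Lemma \ref{keylemma} — so that $(1+a)|_Z$ equals $1+\alpha$ times a product of units of special type (lying in $1+f^{e+1}\cO_{Z,x}$ or in $1+\cO_{Z,x}(-D-C)$), while $1+a$ restricts into $1+\cO(-D-C)$ on $G$, on $F_\infty$, and at the points of $Z\cap C\setminus\{x\}$. Applying Lemma \ref{classgroup.lem1} to $\{1+a,b\}$ and sorting the resulting relation by component expresses $\sym{1+\alpha}{Z,x}$ as a sum of: symbols along $F$, $G$, $F_\infty$ and the transversal part of an auxiliary divisor, each restricting $1+a$ into $1+\cO(-D)$ and hence contributing to $\FXCCapV{D}$; the symbol along the special curve $Z'=\div_X(1+a)+\epsilon F$ (for a suitable $\epsilon\le e$), which by \eqref{specialcurve-eq1} has the local form $\div_X(f^{\epsilon}+\piD^2c)$ near $F\cap C$, so that — after first invoking Lemma \ref{keylemma} to clear the denominator $f^{\epsilon}$, exactly as in \S\ref{keylemproofII} — Lemma \ref{keylem.cor}(1) places it in $\tFXCC{2D-C}$; and the special-type error factors on $Z$, which Lemma \ref{keylem.cor}(1) likewise absorbs into $\tFXCC{2D-C}$. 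This proves (1). For (2), run the same construction with $e\ge\mlam(p^n-1)$ and the hypothesis $(p,2\mlam)=1$, replacing every use of Lemma \ref{keylem.cor}(1) by Lemma \ref{keylem.cor}(2), whose output is $\hFXCC{2D}+p^n\hWU$.

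I expect the main obstacle to be the simultaneous bookkeeping in the core step: one must choose $d$, $F$, $F_\infty$, $G$ and the lift $a$ so that all transversality conditions and all required cohomology groups vanish (Serre vanishing for $d\gg0$), and — the delicate point — so that each restriction $(1+a)|_{(\cdot)}$ falls in exactly the congruence subgroup that identifies its symbol either as a $\FXCCapV{D}$-term or as a special-curve term governed by Lemma \ref{keylem.cor}. Checking these restrictions together with the orders of vanishing of $a$ along $F$ and along $C$ is where the real work lies; one must also handle the passage to the blowup with care, since among $\FXCC{}$, $\tFXCC{}$ and $\hFXCC{}$ only $\tFXCC{}$ (and the $U$-intrinsic $\hWU$) behaves well under blowup, as recorded in Lemma \ref{filtration.lem} and \eqref{eq.tFXCC}.
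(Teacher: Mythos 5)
Your skeleton --- produce a relation $\partial\{1+a,b\}=0$ with $Z$ occurring in $\div_X(b)$, lift $\alpha$ to a global section $a$ by Serre vanishing, and absorb the leftover special-curve term by Lemma \ref{keylem.cor} --- is the right one, and your reading that part (2) is obtained by rerunning (1) with Lemma \ref{keylem.cor}(2) matches the paper. But two steps fail as stated. The main one is the choice of $b$. The paper does not take $\div_X(b)=Z+G-F_\infty$; it runs the pencil $\{F_t\}_{t\in L}$ through $F_0=Z+G$ and $F_\infty$ and sets $b=1-\rho^{q-1}$ with $q=p^N$, so that $\div_X(b)$ still contains $Z$ with multiplicity one, but in addition $b|_{F_\infty}=1$ and $b-1$ vanishes to order $q-1$ along $F_\infty$. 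The special curve is then $Z_a=\div_X(1+a)+(q-2)F_\infty$ with $Z_a\cap C\subset F_\infty\cap C$, \emph{away} from $x$, and the leftover term $\sym{1-\rho^{q-1}}{Z_a}$ is literally of the form $\sym{1+f^{e+1}\cO_{Z_a,F_\infty\cap C}}{Z_a}$ with $e=q-2$ large, so Lemma \ref{keylem.cor} applies verbatim. In your version the special curve $Z'$ sits over $F\cap C\ni x$ and the function restricted to it is $b$, which vanishes along $Z$ near $x$; thus $b|_{Z'}$ is not in $1+f^{\epsilon+1}\cO_{Z'}$ nor in $1+\frac{\piD^2}{f^{\epsilon}}\cO_{Z'}$, so neither Lemma \ref{keylemma} nor Lemma \ref{keylem.cor} controls $\sym{b|_{Z'}}{Z'}$, and the companion term $\sym{b|_{F}}{F}$ is likewise uncontrolled. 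Removing such terms is exactly the delicate ``second symbol $\{1+a,b'\}$'' step in the proofs of Lemmas \ref{lem.3term2}, \ref{keylem3D-2} and \ref{keylemma}, which you omit and which is not available for an arbitrary reduced $Z$ through $x$. Relatedly, your ``special-type error factors on $Z$'' cannot be absorbed by Lemma \ref{keylem.cor}: that lemma applies only to curves of the local shape $\div_X(f^e+\piD u)$, and $Z$ is arbitrary. The paper instead arranges $(\clubsuit 1)$, i.e.\ $a|_Z\equiv\alpha$ modulo $\cO^h_{Z,x}(-2D)$, so the error on $Z$ lands in $\FXCC{2D}$ with no further input.

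The opening reduction to $Z$ integral and regular at $x$ by blowing up is also a gap, and an unnecessary one: in the paper's argument $Z$ enters only as a multiplicity-one component of $\div_X(b)$, so no regularity or transversality of $Z$ at $x$ is ever used. Moreover ``$\FXCCapV{D}$ can only grow'' is the containment in the wrong direction: to descend the statement from $\tX$ to $X$ you need $\FXtCCap{g^*D}\subset \FXCCapV{D}+(\text{error})$, i.e.\ that a curve transversal to $\tC$ at a point of the exceptional divisor can be traded for a curve transversal to $C$ at $x$. That is precisely Claim \ref{keylem4-1.claim} of \S\ref{keylemproofV}, proved there via Lemmas \ref{keylem3D-2} and \ref{keylem4-c}, and the surrounding results of that section already take Lemma \ref{keylem2}(1) as input, so invoking them here would risk circularity.
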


\begin{rem}\label{keylem2.rem}
\begin{itemize}
\item[(1)]
Letting (cf. \eqref{eqDkeylemmas0})
\begin{equation}\label{eqDkeylemmas}
C_0= \underset{\lam\in J}{\sum} \Clam \quad\text{with  }\;
J=\{\lam \in I\;|\; \mlam\geq 2\},
\end{equation}
Lemma \ref{keylem2}(1) implies
\[
\FXCC D \subset \FXCCapV D \;+\;  \tFXCC {D+C_0},
\]
noting $\FXCC {D+C}\subset \FXCC {D+C_0}\subset \tFXCC {D+C_0}$.
\item[(2)]
Assuming $p\not=2$, Lemma \ref{keylem2}(2) implies
\[
\FXCC D  \subset \FXCCapV D \;+\;  \hFXCC {D+C} + p^n\hWU.
\]
\end{itemize}
\end{rem}
\medbreak\noindent
\begin{proof}[Proof of Lemma \ref{keylem2}]
Take an integer $d>0$ large enough that the linear system $|d H-Z|$ on $X$
($H\subset X$ is a hyperplane section) is sufficiently very ample.
By Bertini's theorem there are $F_0,F_\infty \;\in \cL(d)=|dH|$ satisfying the following:
\begin{itemize}
\item[$(\sharp 1)$]
$H^1(C,\cO_C(-2D+F))=0$ for any $F\in \cL(d)$ (cf. Remark \ref{keylem.cor.rem}),
\item[$(\sharp 2)$]
$F_0=  Z + G$ for $G\in \ZXCC$ such that $G\Cap C$, $G\cap C\subset V$, $G\cap C \cap Z=\varnothing$,
\item[$(\sharp 3)$]
$F_\infty\Cap C$ and $F_\infty\cap C\subset V$, and 
$F_\infty \cap C\cap F_0 =\varnothing$.
\end{itemize}
Let $L\in Gr(1,\cL(d))$ be the line passing through $F_0$ and $F_\infty$ and consider the pencil 
$\{F_t\}_{t\in L}$. By $(\sharp 3)$ the following conditions hold:
\begin{itemize}
\item[$(\sharp 4)$]
$F_t \cap F_{t'}\cap C =\varnothing$ for $t\not= t'\in L$, 
\item[$(\sharp 5)$]
there exists a finite subset $\Sigma\subset L$ such that $F_t\Cap C$ and $F_t\cap C\subset V$ for $t\in L-\Sigma$.
\end{itemize}
Choose an identification $L\simeq \bP^1=\Proj(k[T_0,T_1])$ such that
$F_0=F_t$ with $t=(1:0)\in \bP^1$ and $F_\infty=F_t$ with $t=(0:1)\in \bP^1$. Put
\[W: =Z+ F_\infty\in \ZXCC.\]
Take $q=p^N$ with $N$ sufficiently large and a finite subset 
$S\subset \bP^1(\bF_q)\backslash (\{0,1,\infty\}\cup\Sigma)$ with $s=\deg_L(S)$ large enough that the following conditions hold for $F\in \cL(d)$:
\begin{itemize}
\item[$(\spadesuit 1)$]
$H^1(X,\cO_X(-2D-C+(q-2)F))=0$.
\item[$(\spadesuit 2)$]
$H^1(X,\cO_X(-D-W +(s-1) F))=H^1(W,\cO_W(-2D + (s-1) F))=0$.
\end{itemize}
Put 
\[
\Theta=- \underset{t\in \Lambda}{\sum} F_t+(q-2) F_\infty, \quad
\Lambda=\bP^1(\bF_q)-(S\cup\{0,\infty\}).
\]
We have $\cO_X(\Theta)\simeq \cO_X((s-1)F_\infty))$ so that $(\spadesuit 2)$ implies
\[H^1(X,\cO_X(-D + \Theta-W))=H^1(W,\cO_W(-D + \Theta)\otimes\cO_W(-D))=0.\]
Thus Lemma \ref{preliminarylem-1} implies that the natural map
\[r_W: H^0(X,\cO_X(-D - \underset{t\in \Lambda}{\sum} F_t +(q-2) F_\infty))
\to \frac{\cO_{W,W\cap C}(-D+\Theta)}{\cO_{W,W\cap C}(-2D + \Theta)}\]
is surjective. $(\sharp 2)$, $(\sharp 3)$ and $(\sharp 4)$ imply
\[Z\cap C\cap |\Theta|=\varnothing \qaq
\cO_{F_\infty,F_\infty\cap C}(\Theta) 
=\cO_{F_\infty,F_\infty\cap C}((q-2) F_\infty) .\]
Hence the target of $r_W$ is equal to
\[\frac{\cO_{Z,Z\cap C}(-D)}{\cO_{Z,Z\cap C}(-2D)}\;\oplus\;
\frac{\cO_{F_\infty,F_\infty\cap C}(-D+(q-2) F_\infty)}
{\cO_{F_\infty,F_\infty\cap C}(-2D+(q-2) F_\infty)}.\]
Since $F_\infty\Cap C$ we have an isomorphism
\begin{equation*}\label{keylem3.eq3}
\nu: \frac{\cO_{F_\infty,F_\infty\cap C}(-D+(q-2) F_\infty))}
{\cO_{F_\infty,F_\infty\cap C}(-D-C+(q-2) F_\infty))} 
\simeq \underset{x\in F_\infty\cap C}{\bigoplus} k(x).
\end{equation*}
Consider the composite map
\[
\mu: H^0(X,\cO_X(-D  +\Theta)) \to 
\cO_{F_\infty,F_\infty\cap C}(-D+(q-2) F_\infty))
\rmapo{\nu} \underset{x\in F_\infty\cap C}{\bigoplus} k(x).
\]
By the surjectivity of $r_W$, for given $\alpha\in \cO_{Z,x}(-D)$, one can find
\[
a\in H^0(X,\cO_X(-D - \underset{t\in \Lambda}{\sum} F_t +(q-2) F_\infty))
\] 
satisfying the following conditions:
\begin{itemize}
\item[$(\clubsuit 1)$]
$a|_{Z}= \alpha \mod \cO_{Z,x}(-2D)$ and $a|_{Z}\in \cO_{Z,y}(-2D)$ for all $y\in (Z\cap C)-x$,
\item[$(\clubsuit 2)$]
$\mu(a) = (1,\dots,1)$.
\end{itemize}
$(\clubsuit 2)$ implies $a\in \cP_{D,q-2}(F_\infty)$ and we put (cf. Definition \ref{specialcurve})
\[
Z_a=\div_X(1+a) + (q-2) F_\infty.
\]
Take rational functions 
\[
\rho=\frac{T_0}{T_1-T_0} \qaq b= 1 - \rho^{q-1}\text{  on } L=\bP^1.
\]
We have
\[\div_X(b)= \underset{t\in \bA^1(\bF_q)-\{0,1\}}{\sum} F_t + (1-q)F_1 + F_0 \qwith F_0=Z+G.\]
By $(\sharp 2)$, $(\sharp 3)$ and $(\sharp 4)$, no component of $\div_X(b)$ passes through $F_\infty\cap C$.
Noting 
\[b|_{F_\infty}=1\qaq
(1+a)|_{F_t}=1 \qfor t\in \Lam=\bP^1(\bF_q)-(S\cup \{0,\infty\}),\] 
Lemma \ref{classgroup.lem2}(1) implies
\begin{equation}\label{proof-keylem2.eq1}
W(X,C)\ni 0= \sym {1+a} {Z} + \sym {1+a} {G} + 
\underset{t\in S}{\sum} \sym {1+a} {F_t} + \sym {1-\rho^{q-1}} {Z_a}.
\end{equation}
$(\clubsuit 1)$ implies 
\[
\sym {1+a} {Z,x} -\sym {1+\alpha}{Z,x} \in \FXCC {2D},
\]
\[
\sym {1+a} {Z,y} \in \FXCC {2D}\qfor y\in (Z\cap C)-x.
\]
By $(\sharp 2)$ and $(\sharp 4)$, $F_\infty\cap G \cap C=\varnothing$ and $F_\infty\cap F_t\cap C=\varnothing$
for $t\in S$ so that
\[
a\in \cO_{X,G\cap C}(-D) \qaq
a\in \cO_{X,F_t\cap  C}(-D) \qfor t\in S.
\]
Finally, using $(\sharp 1)$ and $(\clubsuit 1)$, Lemma \ref{keylem.cor}(1) with $e=q-2$ implies 
\[\sym {1-\rho^{q-1}} {Z_a}\in \tFXCC {2D-C} + \FXCC {D+C} .\]
Recalling $G\Cap C\subset V$ and $F_t\Cap C\subset V$ for $t\in S$, this proves Lemma \ref{keylem2}(1)
since the third term of \eqref{proof-keylem2.eq1} lies in $\FXCCapV D$.
Lemma \ref{keylem2}(2) would follow from Lemma \ref{keylem.cor}(2) (cf. Remark \ref{keylem.cor.rem})
whose proof will be complete in \S\ref{keylemproofIV}.
\end{proof}





\section{Proof of Key Lemma IV}\label{keylemproofIV}
\bigskip

In this section we finish the proof of Lemma \ref{keylemma-2}(2).
Note that this completes the proof of Lemmas \ref{keylem.cor} and \ref{keylem2} and hence Lemma \ref{keylemII}.
Lemma \ref{keylemma-2}(2) follows from the following lemma
where we take $2D$ in Lemma \ref{keylemma-2} for $D$ in Lemma \ref{keylem3D-3}.
Let the notation be as in the beginning of \S\ref{keylemproofIII}. Recall 
\begin{equation*}
\displaystyle{D=\underset{\lambda\in I}{\sum} \mlam \Clam}
\qwith \mlam\geq 1.
\end{equation*}

\begin{lem}\label{keylem3D-3}
Fix a regular closed point $x\in C$ and an integer $n>0$. 
Let $F\in \ZXCC$ be such that $x\in F$ and $F\Cap C$ at $x$ and $\lam \in I$ be such that $x\in \Clam$.
Take a local parameter $f$ (resp. $\pi$) of $F$ (resp. $C$) at $x$ so that 
$(\pi,f)$ is a system of regular parameters at $x$. 
Let $Z\in \ZXCC$ be such that locally at $x$,
\begin{equation}\label{keylem3D-3.eq0}
Z=\div_X(u\cdot f^b+ \pi^a) \qwith u\in \cO_{X,x},\;a,b\in \bZ_{>0}.
\end{equation}
Assume 
\begin{equation}\label{keylem3D-3.eq0b}
\text{$(p,\mlam)=1$ and $b\geq a(p^n-1)$.}
\end{equation}
Then we have (cf. Definition \ref{filtration.def0})
\[
 \sym {1+\cO_{Z,x}(-D+F)} {Z,x} \in \hFXCC {D} + p^n\hWU. 
\]
\end{lem}


For the proof of Lemma \ref{keylem3D-3}, we need a preliminary lemma.

\begin{lem}\label{keylem2-cor}
For a fixed integer $n>0$, put
\[
C_P= \underset{\lam\in J_P}{\sum} \Clam \qwith  \;
J_P=\{\lam \in I\;|\; \frac{\mlam}{p^n}\in \bZ\}.
\]
For $Z\in \ZXCC$ and $x\in Z\cap C$, we have
\[
\sym {1+\cO_{Z,x}(-D)}{Z,x} \subset  \tFXCC {D+C_P} + p^n \FXCC C.
\]
\end{lem}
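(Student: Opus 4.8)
The plan is to deduce Lemma \ref{keylem2-cor} from Lemma \ref{keylem2}(1) by one application of the Frobenius in characteristic $p$, the point being that over a field of characteristic $p$ the $p^n$-th power map has no binomial error terms. First I would reduce to a single transversal symbol. By Lemma \ref{keylem2}(1) with $V=X$, the element $\sym{1+\cO_{Z,x}(-D)}{Z,x}$ lies in $\FXCCap D+\tFXCC{2D-C}$. Since $p^n\mid\mlam$ forces $\mlam\geq p^n\geq 2$, one checks componentwise that $2D-C\geq D+C_P$, so $\tFXCC{2D-C}\subseteq\tFXCC{D+C_P}$, and it is enough to treat a generator $\sym{1+\cO_{G,y}(-D)}{G,y}$ of $\FXCCap D$, where $G\in\ZXCC$ meets $C$ transversally at a regular point $y\in\Clam$ of $C$. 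Write $m=\mlam$ and let $t$ be a local equation of $C$ at $y$, so that $t$ restricts to a uniformizer of $\cO_{G,y}$, $I_D\cO_{G,y}=\fm_y^m$, and $I_{D+C_P}\cO_{G,y}$ equals $\fm_y^{m+1}$ if $\lambda\in J_P$ and $\fm_y^m$ otherwise.

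If $\lambda\notin J_P$ there is nothing to do: $\cO_{G,y}(-D)=I_{D+C_P}\cO_{G,y}$, so the generator lies in $\FXCC{D+C_P}$ directly, once the local datum is spread out over $G\cap C$ by weak approximation (which by Remark \ref{filtration.rem} only introduces a correction term in $\FXCC{D+C_P+C}\subseteq\FXCC{D+C_P}$). If $\lambda\in J_P$, write $m=p^n\ell$ with $\ell\geq 1$. Given $a=t^mu\in\cO_{G,y}(-D)$, use that the residue field $k(y)$ is perfect to choose $v\in\cO_{G,y}$ with $v^{p^n}\equiv u\bmod\fm_y$; then $1+a=(1+t^mv^{p^n})(1+t^{m+1}w)$ for some $w\in\cO_{G,y}$. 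The factor $1+t^{m+1}w$ lies in $1+\fm_y^{m+1}\cO_{G,y}=1+I_{D+C_P}\cO_{G,y}$ and contributes, as above, to $\FXCC{D+C_P}$. For the other factor, $(1+t^\ell v)^{p^n}=1+(t^\ell v)^{p^n}=1+t^mv^{p^n}$ in characteristic $p$, whence
\[
\sym{1+t^mv^{p^n}}{G,y}=p^n\,\sym{1+t^\ell v}{G,y};
\]
since $t^\ell v\in\fm_y=I_C\cO_{G,y}$, weak approximation realizes $\sym{1+t^\ell v}{G,y}$ inside $\FXCC C$ (modulo a deeper term in $\FXCC{D+C_P}$), so this contribution lands in $p^n\FXCC C+\FXCC{D+C_P}$. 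Summing over the generators finishes the proof.

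There is no serious obstacle here. The only thing that needs care is the recurrent passage from a symbol supported at the single point $y$ to an honest generator of a filtration subgroup $\FXCC{\bullet}$; this is done uniformly by weak approximation in the semilocal ring $\cO^h_{G,C\cap G}$, the auxiliary corrections always landing in a strictly deeper step of the filtration and hence in $\FXCC{D+C_P}$. Unlike its sister statements in \S\ref{keylemmas}, this lemma does not appear to require Lemma \ref{keylem3D-2}: the essential inputs are just the perfectness of $k(y)$ and the characteristic-$p$ identity $(1+s)^{p^n}=1+s^{p^n}$, which is exactly why the power of $p$ that appears is precisely $p^n$ and why $C_P$ is defined via divisibility of $\mlam$ by $p^n$.
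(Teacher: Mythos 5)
Your proof is correct and takes essentially the same route as the paper's: reduce to symbols on curves transversal to $C$ via Lemma \ref{keylem2}(1) (the paper invokes it in the form of Remark \ref{keylem2.rem}(1), landing the error term in $\tFXCC{D+C_0}$ rather than your $\tFXCC{2D-C}$, but both are contained in $\tFXCC{D+C_P}$), and then use the factorization $1+\cO_{F,x}(-D)\subset\bigl(1+\cO_{F,x}(-D-C_P)\bigr)\cdot\bigl(1+\cO_{F,x}(-C)\bigr)^{p^n}$, which rests exactly on the perfectness of the residue field and the characteristic-$p$ identity $(1+s)^{p^n}=1+s^{p^n}$. The weak-approximation bookkeeping you make explicit is left implicit in the paper.
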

\begin{proof}
By Remark \ref{keylem2.rem}(1) we have (noting $C_P\subset C_0$)
\[
\sym {1+\cO_{Z,x}(-D)}{Z,x} \subset  \tFXCC {D+C_P} +  \FXCCap D .
\]
Hence the lemma follows from the following fact:
for integral $F\in \ZXCC$ such that $F\Cap C$ at $y\in F\cap C$, we have
\[
1+\cO_{F,y}(-D)\subset \big(1+\cO_{F,y}(-D-C_P)\big) \cdot \big(1+\cO_{F,y}(-C)\big)^{p^n}.
\]
Indeed, letting $\lam\in I$ be such that $y\in \Clam$, we have an isomorphism
\[
\frac{1+\cO_{F,y}(-D)}{1+\cO_{F,y}(-D-C_P)} \simeq 
 \frac{1+\cO_{F,y}(-\mlam\Clam)}{1+\cO_{F,y}(-(\mlam+1)\Clam)}\\ 
\]
if $\lam\in J_P$, and otherwise the group on the left hand side vanishes.
The desired fact is checked by using $p^n|\mlam$ if $\lam\in J_P$ and the perfectness of $\k(y)$
(use the equality $(1+a)^{p^n}=1+a^{p^n}$).
\end{proof}

\noindent
{\it Proof of Lemma \ref{keylem3D-3}}\;\;
Taking a local parameter $\piD$ of $D$ at $x$, we want to show
\begin{equation}\label{keylem3D-3.eq1}
 \sym {1+\alpha\frac{\piD}{f}} {Z,x} \in \hFXCC {D}+ p^n\hWU\qfor \alpha\in \cO_{X,x}. 
\end{equation}
For integers $m\geq 0$, we inductively define 
\[
(X_m,C'_m,E_m,x_m)
\]
as follows. For $m=0$, $X_0=X$, $C'_0=C$, $E_0=\varnothing$, $x_0=x$.
For $m=1$, let $g_1:X_1=Bl_x(X) \to X$ be the blowup at $x$, $C'_1\subset X_1$ be 
the proper transform of $C$, $E_1=g_1^{-1}(x)$ be the exceptional divisor, and $x_1=C'_1\cap E_1$.
Assuming $(X_{m-1},C'_{m-1},E_{m-1},x_{m-1})$ defined, let $g_m:X_m=Bl_{x_{m-1}}(X_{m-1}) \to X_{m-1}$,
$C'_m\subset X_m$ be the proper transform of $C'_{m-1}$, $E_m=g_m^{-1}(x_{m-1})$, and $x_m=C'_m\cap E_m$. Let 
\[
\phi_m=g_m\circ \cdots\circ g_1 : X_m\to X,
\]
be the composite map, $C_m=\phi_m^{-1}(C)_{\rm red}$, and $E_{i,m}\subset X_m$ be 
the proper transform of $E_{i}\subset X_i$ for $1\leq i\leq m-1$. 
We also define $E_{0,1}$ as the proper transform of $F$.
Note that $(X_m,C_m)$ is an object of $\chBX$ but not in $\cB_X$ (cf. Definition \ref{def.cB}).
We easily see that the following facts hold for $m\geq 1$:
\begin{itemize}
\item[$(*1)$]
$(\pi/f^m,f)$ is a system of regular parameters of $X_m$ at $x_m$.
\item[$(*2)$]
$f$ is a local parameter of $E_m\subset X_m$ at any point $y\in E_m\backslash E_{m-1,m}$.
\item[$(*3)$]
Let $Z_m\subset X_m$ be the proper transform of $Z$ in \eqref{keylem3D-3.eq0}.
If $b\geq am$, $Z_m$ is defined locally around $E_m\backslash E_{m-1,m}$ by
$(\pi/f^m)^a+ u f^{b-am}$. We have
\[
b\geq am \;\Longleftrightarrow\; Z_m\cap \phi_m^{-1}(x) \subset E_m\backslash E_{m-1,m},
\]
\[
b\geq am+1 \;\Longleftrightarrow\; Z_m\cap \phi_m^{-1}(x) = x_m.
\]
\item[$(*4)$]
$\phi_m^*C=C'_m + mE_m +\underset{1\leq i\leq m-1}{\sum}i E_{i,m}$,
\end{itemize}
By the assumption $(p,\mlam)=1$, there is a (unique) integer $m$ such that 
$1\leq m\leq p^n-1$ and $p^n|m  \mlam-1$. Take $y\in Z_m\cap \phi_m^{-1}(x)$. 
By \eqref{keylem3D-3.eq0b} we have $b\geq a(p^n-1)\geq am$ 
and $(*2)$, $(*3)$ and $(*4)$ imply
\[
\frac{\piD}{f}\in \cO_{Z_m,y}(-\phi_m^*D+E_m)\qaq
\phi_m^*D-E_m\geq 0
\]
and hence
\[
\sym {1+\alpha\frac{\piD}{f}} {Z_m,y}\in F^{(\phi_m^*D-E_m)}W(X_m,C_m)
\qfor \alpha\in \cO_{X,x}.
\]
By $(*4)$ the multiplicity of $E_m$ in $\phi_m^*D-E_m$ is $m\mlam-1$.
Since $p^n|m\mlam-1$, Lemma \ref{keylem2-cor} with 
Remark \ref{filtration.rem} implies 
\[
\sym {1+\alpha\frac{\piD}{f}} {Z_m,y}\in \tFB^{(\phi_m^*D)}W(X_m,C_m)\;+\; p^nF^{(C_m)}W(X_m,C_m).
\]
Noting $F^{(C_m)}W(X_m,C_m)\subset \hWU$, this implies \eqref{keylem3D-3.eq1} and completes the proof of Lemma \ref{keylem3D-3}.


\section{Proof of Key Lemma V}\label{keylemproofV}
\bigskip

Let the notation be as in \S\ref{keylemmas}.
In this section we prove Lemma \ref{movinglem}. 

\begin{lem}\label{movinglem2-lem1}
Let $g:\tX\to X$ be the blowup at a closed point of $C$ and $\tC=g^{-1}(C)_{\rm red}$. 
Then, for any integer $N>0$, we have 
\begin{equation}\label{movinglem2-lem1.eq0}
\FXtCCap {g^*D}\subset \FXCC D \;+\; \FtXCC{g^*(D+N\cdot  C)}.
\end{equation}
\end{lem}
\begin{proof}
Put $\tD=g^*D\in \ZXCCt$. It suffices to show 
\[
\sym{1+\cO_{F,y}(-\tD)}{F,y} \subset \FXCC D +\FtXCC {g^*(D+N\cdot C)}
\]
for any reduced $F\in \ZXCCt$ such that $F\Cap \tC$ and for any $y\in F\cap \tC$. 

\begin{claim}\label{movinglem2-lem1.claim1} Let $x=g(y)$. We may assume $\k(x)=\k(y)$.
\end{claim}
Indeed, take a finite Galois extension $k'$ of $k$ and consider the diagram
\[
\xymatrix{
\tX'  \ar[d]_{g'} \ar[r]^{\tilde{\phi}} &  \tX \ar[d]^{g} \\
X' \ar[r]_{\phi} &  X
}
\]
where $X'=X\otimes_k k'$ and $\tX'=\tX\otimes_k k'$.
Let $C'=\phi^*C$ and $\tC'=\tilde{\phi}^*\tC$. Note that these are reduced since $\phi$ and $\tilde{\phi}$ 
are \'etale. Take a point $y'\in \tX'$ lying over $y$ and let $x'=g'(y')$.
Taking $k'$ large enough, we may assume $\k(x')=\k(y')$. 
Set $F'=\tilde{\phi}^*F \in \ZXCCtd$. Then $F'$ is reduced and $F'\Cap \tC'$. Thus we may assume 
\begin{equation}\label{movinglem2-lem1.eq1}
\sym{1+\cO_{F',y'}(-{g'}^*D')}{F',y'} \subset \FXdCC {D'}  + F^{{g'}^*(D'+ N \cdot C')}W(\tX',\tC'),
\end{equation}
where $D'=\phi^*D$. Note
\begin{equation}\label{movinglem2-lem1.eq2}
{g'}^*(D'+ N \cdot C')= {g'}^*\phi^*(D+ N \cdot C)={\tilde{\phi}^*g^*(D+ N \cdot C)}.
\end{equation}
We have a commutative diagram
\[
\xymatrix{
1+\cO_{F',y'}(-\tilde{\phi}^*\tD) \ar[r]^{N_{F'/F}} \ar[d]^{\sym{\;,\;}{F',y'}} &
1+\cO_{F,y}(-\tD)  \ar[d]^{\sym{\;,\;}{F,y}} \\
F^{(\tilde{\phi}^*\tD)}W(\tX',\tC')   \ar[r]^{N_{\tilde{\phi}}} 
&  F^{(\tD)}W(\tX,\tC)   \\
F^{(\phi^*D)}W(X',C')  \ar[r]^{N_\phi}  \ar[u]_{\hookrightarrow}&  
\FXCC {D}\ar[u]_{\hookrightarrow} 
}
\]
where $N_\phi$ (resp. $N_{\tilde{\phi}}$) is induced by the norm map \eqref{eq.Wnorm} for $\phi$ 
(resp. $\tilde{\phi}$), and $N_{F'/F}$ is the norm map induced by $F'\to F$.
Since $F'\to F$ is \'etale, $N_{F'/F}$ is surjective after replacing $\cO_{F',y'}$ and $\cO_{F,y}$ by their
henselizations. Noting Remark \ref{filtration.rem}, \eqref{movinglem2-lem1.eq0} follows from 
\eqref{movinglem2-lem1.eq1} and \eqref{movinglem2-lem1.eq2} and 
\[N_{\tilde{\phi}}\big(F^{\tilde{\phi}^*g^*(D+ N \cdot C)}W(\tX',\tC') \big) \subset\FtXCC{g^*(D+N\cdot  C)}.\]
This completes the proof of Claim \ref{movinglem2-lem1.claim1}.
\medbreak

Now we prove Lemma \ref{movinglem2-lem1} assuming $\k(x)=\k(y)$. 
By Lemma \ref{keylem4-c} below, for any integer $m>0$, there exists $G\in \ZXCC$ such that $G$ is regular at $x$ and $(G',F)_y\geq m+1$ where $G'$ is the proper transform of $G$ in $\tX$. Lemma \ref{keylem3D-2} implies
\[
 \sym{1+\cO_{F,y}(-\tD)}{F,y}\subset \sym{1+\cO_{G',y}(-\tD)}{G',y}+ \FtXCC {\tD+m\tC}.
\]
We have $m\tC\geq N\cdot g^*C$ for $m$ sufficiently large so that
\[
 \sym{1+\cO_{F,y}(-\tD)}{F,y}\subset \sym{1+\cO_{G',y}(-\tD)}{G',y}+ 
\FtXCC {g^*(D+N\cdot C)}.
\]
Since $G$ is regular at $x=g(y)$, $G'\to G$ is an isomorphism at $y$. 
Hence we have
\[\sym{1+\cO_{G',y}(-g^*D)}{G',y}=\sym{1+\cO_{G,x}(-D)}{G,x}\text{  in } W(U).\]
This completes the proof of Lemma \ref{movinglem2-lem1}.
\end{proof}

\begin{lem}\label{keylem4-c}
Let $X$ be a smooth surface over $k$ and $x\in X$ be a closed point and $g:X' \to X$ be the blowup at $x$
with $E=g^{-1}(x)$. Let $x'$ be a closed point of $E$ such that $\k(x)=\k(x')$.
Let $F\subset X'$ be an integral curve such that $x'\in F$ and $F\Cap E$ at $x'$.
Then, for any integer $m>0$, there exists a curve $G\subset X$ such that $G$ is regular at $x$ and $(G',F)_{x'}\geq m+1$ where $G'$ is the proper transform of $G$ in $X'$.
\end{lem}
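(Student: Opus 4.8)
The strategy is to pass to the complete local rings at $x$ and at $x'$, reduce the statement to an explicit computation with graphs on the blow-up, and then produce $G$ as an algebraic curve realizing a prescribed jet at $x$.

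First I would fix a regular system of parameters $(s,t)$ of $\cO_{X,x}$. Since $k$ is perfect, $\kappa(x)$ is separable over $k$, so $\widehat\cO_{X,x}\cong\kappa(x)[[s,t]]$ and likewise $\widehat\cO_{X',x'}\cong\kappa(x')[[u,v]]$. In the chart of $g$ containing $x'$ the blow-up is $s=u$, $t=uv$ with $E=\{u=0\}$, and $E\cong\P^1_{\kappa(x)}$; a closed point of $E$ over $x$ is $\kappa(x)$-rational exactly when it is a $\kappa(x)$-point of this $\P^1$, in which case, using $\kappa(x')=\kappa(x)$ and replacing $(s,t)$ by a suitable regular system of parameters, I may assume $x'$ is the origin of this chart, i.e. that $x'$ corresponds to the tangent direction $\{t=0\}$ at $x$. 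Then, since $|F|$ is regular at $x'$ and $(|F|,E)_{x'}=1$, Weierstrass preparation in $\kappa(x)[[u,v]]$ presents $F$ near $x'$ as a graph $\{v=\delta(u)\}$ with $\delta(u)=\sum_{i\ge 1}\delta_i u^i\in u\,\kappa(x)[[u]]$.

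Next I would construct $G$. Choose $c\in\kappa(x)$ with $c\ne\delta_m$ (possible, $\kappa(x)$ being nontrivial) and consider the class of $t-\gamma(s)$ in $\cO_{X,x}/\fm_x^{m+2}\cong\kappa(x)[s,t]/(s,t)^{m+2}$, where $\gamma(s)=\sum_{i=1}^{m-1}\delta_i s^{i+1}+c\,s^{m+1}$; this makes sense because the Artinian local ring $\cO_{X,x}/\fm_x^{m+2}$ contains a copy of $\kappa(x)$. Lift it to $h\in\cO_{X,x}$ along $\cO_{X,x}\twoheadrightarrow\cO_{X,x}/\fm_x^{m+2}$; since the linear part of $h$ is $t$, the germ $\{h=0\}$ at $x$ is regular and integral, and I let $G$ be the closure in $X$ of the component of this germ, so that $G$ is a curve on $X$ regular at $x$ (for generic choice of the lift $h$ one keeps $G$ integral and distinct from the components of $C$, giving $G\in\ZXCC$ when that is needed). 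Alternatively, for $X$ projective one obtains such an $h$ as a local equation of a suitable member of $|dH|$ for $d\gg0$ via Serre vanishing $H^1(X,\cO_X(dH)\otimes\mathcal I_x^{m+2})=0$ and Bertini. Finally I would check the conclusion: $G$ is regular at $x$ with tangent direction $\{t=0\}$, so $G'$ meets $E$ only at $x'$, and pulling $h$ back via $s=u$, $t=uv$ and dividing off $u$ exhibits $G'$ near $x'$ as a graph $\{v=g(u)\}$ with $g(u)=\sum_{i=1}^{m-1}\delta_i u^i+c\,u^m+O(u^{m+1})$. Hence $g(u)-\delta(u)=(c-\delta_m)u^m+O(u^{m+1})$ has $u$-order exactly $m$, and since intersection numbers are unchanged by completion,
\[
(G',F)_{x'}=\dim_{\kappa(x')}\kappa(x')[[u]]\big/\bigl(g(u)-\delta(u)\bigr)=m\ge m.
\]

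The one point that genuinely requires care is the possibility that $\kappa(x)$ is strictly larger than $k$: one cannot simply write a polynomial in $s,t$ over $\cO_{X,x}$ with the coefficients $\delta_i\in\kappa(x)$. This is exactly what perfectness of $k$ repairs, by furnishing coefficient fields in $\widehat\cO_{X,x}$ and in $\cO_{X,x}/\fm_x^{m+2}$, so that the jet $t-\gamma(s)$ is a bona fide element one can lift and the whole tangency computation may be carried out over $\kappa(x)=\kappa(x')$ in the completions. I expect this residue-field bookkeeping, rather than any substantial geometric input, to be the main obstacle; everything else is the routine graph computation indicated above.
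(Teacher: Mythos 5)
Your proposal is correct and takes essentially the same route as the paper's own (very terse) proof: the paper also passes to $\Spec(\widehat{\cO_{X,x}})$, views $F$ as the proper transform of a regular formal branch $T=g(F)$ at $x$, and reduces to producing an algebraic $G$ regular at $x$ with $(G,T)_x\geq m+1$, which is exactly what your jet-truncation $t-\gamma(s)$ accomplishes. Your write-up merely makes explicit the coefficient-field bookkeeping and the chart computation that the paper leaves implicit.
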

\begin{proof}
We can take a system $(s,t)$ of regular parameters of $\cO_{X,x}$ such that letting $E_0=\div_X(s)$ and 
$E_0'\subset X'$ be the proper transform of $E_0$, $F\cap E_0'\cap E=\varnothing$.
Then $E-(E_0'\cap E) = \Spec(\k(x)[t/s])$ and there is $c\in \k(x)$ such that
$x'$ is given by the ideal $(t/s-c)$. 
For $h\in \widehat{\cO_{X,x}}\cong \k(x)[[s,t]]$ write 
\[ h = \sum_{i=0}^\infty h_i\qwith h_i=0\;\text{ or }\; h_i\in \k(x)[s,t]\;\text{ homogeneous of degree $i$.}\]
Then we define $in(h)= h_{i_h}$ with $i_h=\max\{i\; |\;  h_i\not=0\}$.
By \cite[Ch.VIII\S7 Bilinear Lemma]{ZS} we have the following.

\begin{claim}\label{bilinerlemma}
Assume $in(h)=QR$ for non-constant homogeneous $Q,R\in\k(x)[s,t]$ which have no common prime factor. 
Then there exist $q,r\in \k(x)[[s,t]]$ such that $h=qr$, $Q=in(q)$ and $R=in(r)$.
\end{claim}

Let $f\in \cO_{X,x}$ be a local equation of $g(F)\subset X$. Putting $d=i_f$, we can write
\[ in(f) = \underset{1\leq j\leq d}{\sum}\; a_j t^j s^{d-j} \qwith a_j\in \k(x).\]
Then we have 
\[E\times_{X'} F = \Spec\; \k(x)[z]/(\phi(z))\qwith
\phi(z) = \underset{1\leq j\leq d}{\sum}\; a_j z^j.\] 
Since $F\Cap E$ at $x'$, this implies a decomposition
$in(f) = (t-cs) H$ where $H\in \k(x)[t,s]$ is not divisible by $(t-cs)$.
By Claim \ref{bilinerlemma} there exist $g,f'\in \k(x)[[s,t]]$ such that $f=g f'$, $in(g)=(t-cs)$ and $in(f')=H$. 
Then, taking $G=\divXx{g'}$ (cf. Remark \ref{coro.Recpoint.rem})
for $g'\in \cO_{X,x}$ such that $g'\equiv g \mod (s,t)^{m+2}$ in $\widehat{\cO_{X,x}}$,
one easily see that $G$ satisfies the desired property of Lemma \ref{keylem4-c}.
\end{proof}
\medskip

Put (cf. \eqref{eqDkeylemmas0})
\begin{equation}\label{keylem4D.eq1}
C_0= \underset{\lam\in J}{\sum} \Clam \quad\text{with  }\;
J=\{\lam \in I\;|\; \mlam\geq 2\}.
\end{equation}
Take a dense open subset $V\subset X$ containing all generic points of $C$.

\begin{lem}\label{movinglem2-lem2}
\begin{itemize}
\item[(1)]
Let $g:\tX\to X$ be a map in $\cB_X$ (cf. Definition \ref{def.cB}).
Note that $\tC=g^*(C)$ is reduced. Then
\begin{equation}\label{movinglem2-lem2.eq1}
\FtXCC {g^*D} \subset \FXCC D \;+\; \tFXCC{D+C_0}.
\end{equation}
\item[(2)]
For any integer $N>0$, we have (cf.\ Definition \ref{filtration.def0}(4))
\[
\tFXCC {D}\subset \FXCCapV D \;+\; \tFXCC{D+N\cdot C_0}.
\]
\end{itemize}
\end{lem}
\begin{proof}
To show \eqref{movinglem2-lem2.eq1}, we first assume $g$ is the blowup at a regular closed point $x$ of $C$.
By Remark \ref{keylem2.rem}(1) applied to $(\tX,\tC)$, we have
\[
\FtXCC {g^*D} \subset \FXtCCap {g^*D} \;+\; \tFtXCC{g^*D+\tC_0},
\]
where $\tC_0$ is defined for $g^*D$ as \eqref{keylem4D.eq1}.
Clearly $\tC_0=g^*C_0$ so that
\[
\tFtXCC{g^*D+\tC_0}=\tFtXCC{g^*(D+C_0)}=\tFXCC{D+C_0}, 
\]
where the second equality holds by \eqref{eq.tFXCC}. 
Hence \eqref{movinglem2-lem2.eq1} follows from Lemma \ref{movinglem2-lem1}.

Now we prove \eqref{movinglem2-lem2.eq1} in general case by induction on the number of 
blown-up points. Decompose $g$ as
\[
g:\tX \rmapo{\psi} X' \rmapo{\phi} X\qwith D'=\phi^*D,\; \tD=\psi^*D'=g^*D,
\]
where $\phi$ is in $\cB_X$  and $\psi$ is the blowup at a regular closed point of $C'=\phi^*C$. 
By the induction hypothesis applied for $\phi$, we have
\begin{equation}\label{movinglem2-lem2.eq2} 
\FXCCd {D'}\subset \FXCC {D} \;+\; \tFXCC{D+C_0},
\end{equation}
By applying to $\psi$ what we have shown, we get
\begin{equation}\label{movinglem2-lem2.eq3} 
\FtXCC {\tD}\subset \FXdCC {D'} \;+\; \tFXdCC{D'+C'_0},
\end{equation}
where $C'_0$ is defined for $D'$ as \eqref{keylem4D.eq1}. Noting that $\phi$ is in $\cB_X$, we see 
$\phi^*C_0 = C'_0$ and 
\[
\tFXdCC{D'+C'_0} = \tFXdCC{\phi^*(D+C_0)} = \tFXCC{D+C_0}.
\]
Thus \eqref{movinglem2-lem2.eq1}  follows from \eqref{movinglem2-lem2.eq2}  and \eqref{movinglem2-lem2.eq3} .
\medbreak

Next we show Lemma \ref{movinglem2-lem2}(2). By induction on $N$, we may assume $N=1$. 
By Remark \ref{keylem2.rem}(1) we have
\begin{equation*}\label{keylem4-1.eq1}
\FXCC D \subset \FXCCapV D \;+\; \tFXCC{D+C_0}.
\end{equation*}
Hence it suffices to show 
\[
\tFXCC {D}\subset \FXCC D \;+\; \tFXCC{D+ C_0}.
\]
This is a direct consequence of \eqref{movinglem2-lem2.eq1} and Definition \ref{filtration.def}.
\end{proof}

\begin{lem}\label{movinglem2-lem4}
\begin{itemize}
\item[(1)]
Let $g:\tX\to X$ be a map in $\chBX$ (cf. Definition \ref{def.cB}) and $\tC=g^{-1}(C)_{\rm red}$. Then, for any integer $N>0$,  
\begin{equation}\label{movinglem2-lem4.eq0}
\FtXCC {g^*D}\subset \FXCC D \;+\; \hFXCC{D+N\cdot  C_0}.
\end{equation}
\item[(2)]
For any integer $N>0$, 
we have (cf.\ Definition \ref{filtration.def0}(4))
\[
\hFXCC {D}\subset \FXCCapV D \;+\; \hFXCC{D+N\cdot C_0}.
\]
where $V$ is any open subset of $X$ such that $V\cap C$ is dense in $C$.
\end{itemize}
\end{lem}
\begin{proof}
(2) is shown by the same argument as the proof of Lemma \ref{movinglem2-lem2}(2)
using \eqref{movinglem2-lem4.eq0} instead of \eqref{movinglem2-lem2.eq1}.
We show \eqref{movinglem2-lem4.eq0}. Write $\tD=g^*D\in \ZXCCt$.
First we assume $g$ is the blowup at a closed point $x\in X$. Fix $N$ in the lemma.
By Lemma \ref{movinglem2-lem2}(2) applied to $(\tX,\tC)$ and $\tD$, for any integer $M>0$ we have
\begin{equation}\label{movinglem2-lem4.eq1}
\FtXCC {\tD} \subset \FXtCCap {\tD} \;+\; \tFtXCC {\tD+M\cdot\tC_0},
\end{equation}
where $\tC_0$ is defined for $\tD$ as \eqref{keylem4D.eq1}. 
We have $|g^{-1}(C_0)| \subset |\tC_0|$ so that $M\cdot\tC_0\geq N\cdot g^*C_0$ for 
$M$ large enough. Then
\begin{equation}\label{movinglem2-lem4.eq2}
\tFtXCC {\tD+M\cdot\tC_0} \subset \tFtXCC {g^*(D+N\cdot C_0)} \subset \hFXCC {D+N\cdot C_0}.
\end{equation}
Thus \eqref{movinglem2-lem4.eq0} follows from \eqref{movinglem2-lem4.eq1}, \eqref{movinglem2-lem4.eq2}
and Lemma \ref{movinglem2-lem1}.

Now we prove Lemma \ref{movinglem2-lem4} in general case by induction on the number of blown-up points.
The reduction argument is similar to that of the proof of Lemma \ref{movinglem2-lem2}(1). Decompose $g$ as
\[
g:\tX \rmapo{\psi} X' \rmapo{\phi} X\qwith D'=\phi^*D,\; \tD=\psi^*D'=g^*D,
\]
where $\phi$ is in $\chBX$ and $\psi$ is the blowup at a closed point of $\phi^{-1}(C)$. 
By the induction hypothesis applied for $\phi$, we have
\begin{equation}\label{keylem4-2.claim1.eq1}
\FXCCd {D'}\subset \FXCC {D} \;+\; \hFXCC{D+N\cdot C_0},
\end{equation}
where $C'=\phi^{-1}(C)_{red}$. By applying to $\psi$ what we have shown, we get
\begin{equation}\label{keylem4-2.claim1.eq2}
\FtXCC {\tD}\subset \FXCCd {D'} \;+\; \hFXCCd{D'+M\cdot C'_0}
\end{equation}
for any integer $M>0$, where $C'_0$ is defined for $D'$ as \eqref{keylem4D.eq1}. 
We have $|\phi^{-1}(C_0)| \subset |C'_0|$ so that $M\cdot C'_0\geq N\cdot \phi^*C_0$ for 
$M$ large enough. Hence 
\[
\hFXCCd{D'+M\cdot C'_0}\subset \hFXCCd{\phi^*(D+N\cdot C_0)}=\hFXCC{D+N\cdot C_0}.
\]
Thus \eqref{movinglem2-lem4.eq0} follows from \eqref{keylem4-2.claim1.eq1} and \eqref{keylem4-2.claim1.eq2}.
\end{proof}

\begin{rem}\label{movinglem2-lem5.rem}
If $D\geq 2C$, then $C_0=C$ so Lemma \ref{movinglem2-lem4}(2) implies
\[
\hFXCC D  \subset \FXCCapV D \;+\;  \hFXCC {D+N\cdot C}\quad\text{for any } N>0.
\]
\end{rem}

\begin{lem}\label{movinglem2-lem6}
Let the assumption be as in Lemma \ref{movinglem2-lem4}(1). Assume $p\not=2$. 
For any integers $N,n>0$ we have 
\begin{equation}\label{movinglem2-lem65.eq1}
\FtXCC {g^*D}\subset \FXCC D \;+\; \hFXCC{D+N\cdot  C} + p^n\hWU.
\end{equation}
\end{lem}
\begin{proof}
Put $\tD=g^*D$. By the same induction argument as the proof of Lemma \ref{movinglem2-lem4}(1), 
we are reduced to the case where $g$ is the blowup at a closed point $x\in C$.
By Remark \ref{keylem2.rem}(2) applied to $\tX,\tC,\tD$, we have
\[
\FtXCC {\tD} \subset \FXtCCap {\tD} \;+\; \hFXtCC {\tD+\tC} + p^n\hWU.
\]
Noting $\tD+\tC\geq 2\tC$, Remark \ref{movinglem2-lem5.rem} applied to $\tX,\tC,\tD+\tC$ implies
\[
\hFXtCC {\tD+\tC} \subset \FXtCCap {\tD} \;+\; \hFXtCC {\tD+M\cdot\tC}\quad\text{for any } M>0.
\]
Taking $M$ large enough so that $M\cdot\tC\geq N\cdot g^*C$, we have
\[
\hFXtCC {\tD+M\cdot\tC}\subset \hFXtCC {g^*(D+N\cdot C)}=\hFXCC {D+N\cdot C}.
\]
Thus we get
\[
\FtXCC {\tD} \subset \FXtCCap {\tD} \;+\; \hFXCC {D+N\cdot C} + p^n\hWU.
\]
Now \eqref{movinglem2-lem65.eq1} follows from Lemma \ref{movinglem2-lem1}.
\end{proof}

Finally we prove Lemma \ref{movinglem}. By Remark \ref{keylem2.rem}(2) we have 
\[
\FXCC D  \subset \FXCCapV D \;+\;  \hFXCC {D+C} + p^n\hWU.
\]
Noting $D+C\geq 2C$, Remark \ref{movinglem2-lem5.rem} implies
\[
\hFXCC {D+C}  \subset \FXCCapV D \;+\;  \hFXCC {D+N\cdot C}.
\]
Thus it suffices to show
\[
\hFXCC D  \subset \FXCC D \;+\;  \hFXCC {D+N\cdot C} + p^n\hWU,
\]
which follows from Lemma \ref{movinglem2-lem6} and Lemma \ref{filtration.lem}.

\section {Appendix}\label{appendix}

In this section we give a sketch of a proof of Lemma \ref{lemRes}.
For a ring $R$ over $\bF_p$ and for an integer $q\geq 0$, put 
\[S\widehat{C}K_q(R) = \projlim n \Ker\big( K^{sym}_q(R[T]/(T^n)) \to K_q(R)\big), \]
where $K^{sym}_q(R[T]/(T^n))$ is the subgroup of $K_q(R[T]/(T^n))$ generated by symbols.
For an integer $n\geq 0$ put 
\begin{equation}\label{filSCK}
Fil^n S\widehat{C}K_q(R)= \Ker\big( S\widehat{C}K_q(R) \to K^{sym}_q(R[T]/(T^{n+1}))\big). 
\end{equation}
Let $T\widehat{C}K_q(R)$ be the $p$-typical part of $S\widehat{C}K_q(R)$ as defined in \cite[Ch.II \S7]{B} (see also \cite[p.636]{Ka1}).
Letting $I_p$ be the set of positive integers not divisible by $p$, 
we have the projection to the $p$-typical part:
\begin{equation}\label{typicalpart}
\tau_R=\underset{n\in I_p}{\sum} -\frac{\mu(n)}{n}V_nF_n: S\widehat{C}K_{q+1}(R) \to T\widehat{C}K_{q+1}(R)
\end{equation}
(see \cite[Ch.II \S1 (2.1)]{B} for $F_n$ and $V_n$). 
Recall the Artin-Hasse exponential
\[E(T):=\underset{n\in I_p}{\prod}\; (1-T)^{-\mu(n)/n}\;\in \; T\widehat{C}K_1(R)\]
(Note $S\widehat{C}K_1(R)= 1+ T\cdot R[[T]]$ and $E(T)= \tau_R(1-T)$).
There is an isomorphism
\[ \psi: W(R) \isom T\widehat{C}K_1(R)\;;\; (a_0,a_1,a_2,\cdots) \mapsto \underset{n\geq 0}{\prod} E(a_nT^{p^n}).\]
It induces a pairing
\begin{equation}\label{TCKpairing}
W(R) \times K^{sym}_q(R) \to T\widehat{C}K_{q+1}(R)\;;\; (x,y_1,\dots,y_q) \mapsto (\psi(x),y_1,\dots,y_q).
\end{equation}

For a discrete valuation field $L$ with residue field $E$, let 
\begin{equation}\label{resLE}
Res_{L/E}:S\widehat{C}K_{q+1}(L) \rmapo{Res_{L/E}} S\widehat{C}K_{q}(E)
\end{equation}
be the residue map from \cite[\S2.5 Lemma 2.5]{Ka1}. 
For an $N$-dimensional local field $K$ as in \eqref{recK}, we define
\[Res_{K/\bF_p}: S\widehat{C}K_{N+1}(K) \to S\widehat{C}K_1(\bF_p)=(1+T\bF_p[[T]])^\times \]
as the composite $Tr_{k_0/\bF_p}\circ Res_{k_1/k_0}\circ Res_{k_2/k_1}\circ \cdots Res_{K/k_{N-1}}$, where
$Tr_{k_0/\bF_p}$ is the trace map $S\widehat{C}K_1(k_0)\to S\widehat{C}K_1(\bF_p)$. It induces natural maps (see $(a)$ below)
\[Res_{K/\bF_p}:T\widehat{C}K_{N+1}(K) \to T\widehat{C}K_1(\bF_p)=W(\bF_p), \]
\[Res^s_{K/\bF_p}:T\widehat{C}K_{N+1}(K) \rmapo{Res_{K/\bF_p}}  W(\bF_p)\to W_s(\bF_p)\quad(s\in \bZ_{\geq 1}). \]
By \cite[\S3.1 Th.2 and the argument on p.662]{Ka1}, the $p$-part of $\Psi_K$ is induced by \eqref{ASW.eq} and a pairing
\[ \pairII: W_s(K) \times \KM N K \to  W_s(\bF_p)\simeq  \bZ/p^s\bZ\]
which arises from the pairing (cf. \eqref{TCKpairing})
\[  W(K) \times \KM N K \to T\widehat{C}K_{N+1}(K) \rmapo{Res^s_{K/\bF_p}} W_s(\bF_p).\]
Thus the commutativity of \eqref{commutativitypairing} follows from that of the following diagram
\begin{equation*}
\begin{CD} 
\fil m W_s(K) & \times& \;\KM N K/V^m \KM N K & @>{\pairII}>> & \bZ/p^s\bZ \\
@VV{-F^sd}V  @AA{\rho^m_K}A && @AA{p^{s-1}}A \\
\fmK^{-m} \Omega_{\cO_K}^1\otimes_{\cO_K} E\; &\times& \;\fmK^{m-1}\Omega_{\cO_K}^{N-1}\otimes_{\cO_K} E 
& @>{\pairI}>> & \pz \;,\\
\end{CD}
\end{equation*}
where $F^sd$ is the map in \eqref{Fsd}. The latter commutativity follows from
\begin{equation}\label{eq.res}
\begin{aligned}
& Res^s_{K/\bF_p}\{E(c T^{p^{s-1-i}}), 1+ab_1\cdots b_{N-1}, b_1,\dots,b_{N-1}\}= -E(hT^{p^{s-1}})\\
&\qwith \; h= Res^{\Omega}_{K/\bF_p}\big(c^{p^i-1} dc\wedge adb_1\wedge \cdots\wedge db_{N-1} \big).\\
\end{aligned}
\end{equation}
for $a,b_1\dots,b_{N-1}\in \cO_K$, $c\in K$ and $i\in \bZ$ such that $v_K(a)\geq m-1$ and that
$p^i v_K(c)\geq -(m-1)$ with $0\leq i\leq s-1$ or $p^i v_K(c)\geq -m$ with $0\leq i\leq \min\{s,\ord_p(m)\}-1$.
The formula is checked by using the explicit computation of the residue map in \cite[\S2.2 and \S2.4]{Ka1}.
\medbreak

Here we give a sketch of the computation assuming:
\begin{enumerate}
\item[$(*)$] 
$K$ is the fraction field of the henselization of $F\{t,\pi\}$ at the prime ideal $(\pi)$, where
$F$ is a finite field and $F\{t,\pi\}$ is the henselization of $F[t,\pi]$ at $(t,\pi)$
(this is the only case we use in this paper).
\end{enumerate}
In this case we have $N=2$ and $k_1=E:=F\{t\}[1/t]$ and $k_0=F$. We identify $E$ with a subfield of $K$.
The formula \eqref{eq.res} is then reduced to the following formulas
\begin{equation}\label{eq2.res}
\begin{aligned}
& Res^s_{K/\F_p}\{E(c T^{p^{s-1-i}}), 1+a\pi^{m-1}t, t\} = -E(f(c,a) T^{p^{s-1}}) \\
& \qwith f(c,a)= Res^{\Omega}_{K/\F_p}\big(c^{p^i-1} dc\wedge a\pi^{m-1} dt\big)\in F,
\end{aligned}
\end{equation}
\begin{equation}\label{eq3.res}
\begin{aligned}
&Res^s_{K/\F_p}\{E(c T^{p^{s-1-i}}), 1+b\pi^m, \pi\}= -E(g(c,b) T^{p^{s-1}}) \\
& \qwith g(c,b)= Res^{\Omega}_{K/\F_p}\big(c^{p^i-1} dc\wedge b\pi^{m} d\pi\big),
\end{aligned}
\end{equation}
for $a,b\in E$ and $c=\gamma \pi^{-e}$ with $\gamma\in E$ and $e,i\in\bZ$ such that
$ep^i\leq m-1$ with $0\leq i\leq s-1$ or $e p^i \leq m$ with $0\leq i\leq \min\{s,\ord_p(m)\}-1$.
Noting
\[ c^{p^i-1} dc = \frac{\gamma^{p^i}}{\pi^{ep^i}}(\frac{d\gamma}{\gamma}-e \frac{d\pi}{\pi})\]
and that $ep^i\leq m$ and $p|e$ if $ep^i= m$, we compute
\begin{equation}\label{eq3.5.res}
 Res^{\Omega}_{K/\bF_p}\big(c^{p^i-1} dc\wedge a\pi^{m-1} dt\big)  = 
 \left.\left\{\begin{gathered}
 -e Res^{\Omega}_{E/\bF_p}(a\gamma^{p^i} dt ) \\ 
 0 \\
\end{gathered}\right.\quad
\begin{aligned}
&\text{if $m=e p^i+1$,}\\
&\text{otherwise,}
\end{aligned}\right.
\end{equation}
\begin{equation}\label{eq4.res}
 Res^{\Omega}_{K/\bF_p}\big(c^{p^i-1} dc\wedge b\pi^{m} d\pi\big)  = 
 \left.\left\{\begin{gathered}
 Res^{\Omega}_{E/\bF_p}(b\gamma^{p^i-1} d\gamma ) \\ 
 0 \\
\end{gathered}\right.\quad
\begin{aligned}
&\text{if $m=e p^i$,}\\
&\text{otherwise.}
\end{aligned}\right.
\end{equation}
In what follows we only prove \eqref{eq3.res} (\eqref{eq2.res} is proved similarly).
We will use the following facts (cf. \cite[\S2.2 and \S2.4]{Ka1}):
Let $L$ be a discrete valuation field with residue field $E$ and $Res_{L/E}$ be as in \eqref{resLE}.
\begin{itemize}
\item[$(a)$]
$Res_{L/E}$ preserves the filtration from \eqref{filSCK} and commutes with $F_n$ and $V_n$.
In particular $Res_{L/E}\circ\tau_L=\tau_E\circ Res_{L/E}$ (cf. \eqref{typicalpart}).
\item[$(b)$]
We have 
$S\widehat{C}K_{q+1}(\cO_L) \subset \Ker(Res_{L/E}).$
\item[$(c)$]
For $\alpha\in S\widehat{C}K_{q}(\cO_L)$ with its image $\overline{\alpha}\in S\widehat{C}K_{q}(E)$, we have
\[ Res_{L/E}(\{\alpha,\pi\}) = \overline{\alpha}.\]
\item[$(d)$]
Endow $S\widehat{C}K_{q}(L)$ with topology by the filtration \eqref{filSCK}.
For $s\in \Z_{>0}$ let $F^s T\widehat{C}K_{q}(L)$ be
the closed subgroup of $S\widehat{C}K_{q}(L)$ topologically generated by
\[ \{E(aT^{p^n}),r_1,\dots,r_{q-1}\}\;\text{ and }\;
\{E(aT^{p^n}),r_1,\dots,r_{q-2},T\} \qfor n\geq s,\]
where $a\in L$, $r_1,\dots,r_{q-1}\in L^\times$ (for the second element, see $(e)$ below). Put 
\[ T\Phi_sK_{q}(L)=F^s T\widehat{C}K_{q}(L)/F^{s+1} T\widehat{C}K_{q}(L).\]
Then there is a well-defined map 
\[ \phi_L^{q,s}: \Omega^{q-1}_L \to T\Phi_s K_{q}(L) \;;\;
a \dlog {r_1}\wedge \cdots \dlog {r_{q-1}} \mapsto \{E(aT^{p^s}),r_1,\dots,r_{q-1}\}.\]
We have $Res_{L/E}\big(F^s T\widehat{C}K_{q}(L)\big) \subset F^s T\widehat{C}K_{q}(E)$ and 
the diagram
\[\xymatrix{
\Omega^{q-1}_L \ar[r]^{\hskip -20pt \phi_L^{q,s}}\ar[d]_{Res^{\Omega}_{L/E}} 
& T\Phi_s K_{q}(L) \ar[d]^{Res_{L/E}}  \\
\Omega^{q-2}_E \ar[r]^{\hskip -20pt\phi_E^{q-1,s}} &  T\Phi_s K_{q-1}(E)\;. \\
}\]
is commutative (see \cite[\S2 Prop.3]{Ka1}).
\item[$(e)$]
For a ring $R$ over $\bF_p$ let $S(R)=\underset{q\geq 0}{\bigoplus} S_q(R)$ be the sub-graded ring of
\[ \underset{q\geq 0}{\bigoplus}\; K_q^{sym}(R[[T]][T^{-1}])\]
generated by the image of $\underset{q\geq 0}{\bigoplus}\; K_q^{sym}(R[[T]])$ and $T\in K_1(R[[T]][T^{-1}])$.
For $n\in \bZ_{\geq 1}$ let $S^{(n)}(R)=\underset{q\geq 0}{\bigoplus} S^{(n)}_q(R)$ be the graded ideal of $S(R)$ 
generated by $1+T^{n}R[[T]]\in S_1(R)$. Then there exists a natural map (see \cite[\S2.2 Lemma 4]{Ka1})
\[ \projlim n S^{(1)}_q(R)/S^{(n)}_q(R) \to S\widehat{C}K_{q}(R), \]
which is an isomorphism if $R$ is regular having a $p$-basis over $\bF_p$ (see \cite[\S2.2 Cor.1]{Ka1}).
In particular the symbol $\{x,T\}\in S\widehat{C}K_{q}(R)$ with $x\in S\widehat{C}K_{q-1}(R)$ makes a sense. 
We have $\{\tau_R(x),T\} = \tau_R\{x,T\}$ by \cite[Ch.II \S6 Prop.(1.1)(ii)]{B}.
\item[$(f)$]
For an $N$-dimensional local field $K$, \cite[(3) on p.679]{Ka1} and $(a)$ imply
\[\tau_{\F_p} Res_{K/\bF_p}\big(\{S\widehat{C}K_{N}(K),T\} \big)=0.\]
\end{itemize}

\begin{lem}\label{lem1.residue}
Let $K$ be as in $(*)$.
Take integers $d,j,e,m\geq 0$ such that $j|d$ and $je\leq m$.
Fix $\gamma,b\in E$ and put $\xi=1-\gamma\pi^{-e} T^{d/j}$ and 
define $\theta_k\in 1+T\cO_K[[T]]$ for $0\leq k\leq j$ inductively by
\[ \theta_0=1+b\pi^m,\; \theta_{k}= 1+(-1)^k(\theta_0\theta_1\cdots\theta_{k-1})^{-1} b\pi^{m-ke}\gamma^k T^{kd/j}.\]
Then we have 
\[ \{\xi, 1 + b\pi^m,\pi\} \equiv (-1)^j\{(-1)^{1+e} \gamma^{-1}\xi,\theta_j,\pi\}
\;\mod \Ker(\tau_{\F_p}Res_{K/\bF_p})\subset S\widehat{C}K_3(K) .\]
\end{lem}
\begin{proof}
For $A,B\in K[[T]]$ with $1+B\not=0$ we have a formula
\begin{equation}\label{eq1.lem1.residue}
  \{1+TA,1+B \} = -\{-B(1+TA), 1 + (1+B)^{-1} TAB\}\;\text{in } K_2(K[[T]][T^{-1}]).
\end{equation}
From this and $(e)$ we deduce
\[ \{\xi, 1 + b\pi^m,\pi\} = -\{-b\pi^m\xi,\theta_1,\pi\}= -\{(-1)^{m+1} b\xi,\theta_1,\pi\}\;\text{ in } S\widehat{C}K_3(K).\]
If $m-e>0$, $(b)$ and $(c)$ imply $\{(-1)^{m+1} b,\theta_1,\pi\}\in \Ker(Res_{K/\bF_p})$ so that 
\[ \{\xi, 1 + b\pi^m,\pi\} \equiv -\{\xi,\theta_1,\pi\}\;\mod \Ker(Res_{K/\bF_p}).\]
Repeat the same argument by noting $m-ke>0$ for $k<j$ and the facts 
\begin{equation*}\label{eq2.lem1.residue}
\begin{aligned}
& \{a,\theta_{k},\pi\}\;\in \;
\Ker(Res_{K/\bF_p})\;\;\text{for $ k <j$ and $a\in \cO_K[[T]]^\times$},\\
&\{T,\theta_{k},\pi\}\;\in \;\Ker(\tau_{\F_p}Res_{K/\bF_p})
\;\;\text{for $k \leq j$},\\
\end{aligned}
\end{equation*}
which follows from $(b)$, $(c)$ and $(f)$, one obtains 
\begin{equation*}
\begin{aligned}
\{\xi, 1 + b\pi^m,\pi\} & \equiv (-1)^{j-1}\{\xi,\theta_{j-1},\pi\}\;\mod 
\Ker(\tau_{\F_p}Res_{K/\bF_p})\\
& = (-1)^{j}\{(1-\theta_{j-1})\xi,\theta_{j},\pi\}\\
& \equiv (-1)^{j}\{(-1)^j b\gamma^{j-1}\pi^{m-(j-1)e}\xi,\theta_{j},\pi\}\;\mod 
\Ker(\tau_{\F_p}Res_{K/\bF_p})\\
& = (-1)^{j}\{(-1)^j (1-\theta_{j})^{-1}b\gamma^{j-1}\pi^{m-(j-1)e}\xi,\theta_{j},\pi\}\\
&\equiv (-1)^{j}\{-\pi^e\gamma^{-1}\xi,\theta_j,\pi\} \;\mod 
\Ker(\tau_{\F_p}Res_{K/\bF_p})\\
& = (-1)^{j}\{(-1)^{1+e}\gamma^{-1}\xi,\theta_j,\pi\}.
\end{aligned}
\end{equation*}
This completes the proof of Lemma \ref{lem1.residue}.
\end{proof}

Now we prove \eqref{eq3.res}. We take $d=p^{s-1}$ and $j=p^i$ in Lemma \ref{lem1.residue}. 
By the lemma and $(a)$, putting $\ep=(-1)^{1+e}$,
we are reduced to showing
\begin{equation}\label{eq7.res}
\tau_{\F_p}Res^s_{K /\bF_p}\{\ep \gamma^{-1}\xi,\theta_j,\pi\}= E(h T^{p^{s-1}}),
\end{equation}
\begin{equation}\label{eq.h}
 h =  \left.\left\{\begin{gathered}
 Res^{\Omega}_{E/\bF_p}(b\gamma^{p^i} \dlog{\gamma} ) \\ 
 0 \\
\end{gathered}\right.\quad
\begin{aligned}
&\text{if $m=e p^i$,}\\
&\text{if $m > e p^i$}
\end{aligned}\right.
\end{equation}
(note $(-1)^j=-1$ by the assumption $p\not=2$).
we have
\[\{\xi,\theta_j,\pi\}\in \{S\widehat{C}K_2(K),T\} + Fil^{p^{s-1}}S\widehat{C}K_3(K)
\;\;\text{ (cf. \eqref{filSCK})}.\]
Indeed, writing $A=-\gamma\pi^{-e}$ and $B=(-1)^j(\theta_0\theta_1\cdots\theta_{j-1})^{-1} b\pi^{m-ke}\gamma^j$, 
\[ \{\xi,\theta_j\}= \{1+AT^{d/j},1+BT^d\}=-\{-BT^d(1+AT^{d/j}), 1+ (1+BT^d)^{-1}ABT^{d+d/j}\}\]
by \eqref{eq1.lem1.residue}. In view of $(f)$ we get
\[\tau_{\bF_p} Res_{K/\bF_p}\{\xi,\theta_j,\pi\}\;\in \;
\tau_{\bF_p}\big(Fil^{p^{s-1}}S\widehat{C}K_1(\bF_p)\big) = \Ker\big(W(\bF_p) \to W_s(\bF_p)\big),\]
where the equality follows from \cite[Ch.I \S1 Remark 4.3 (ii)]{B}.
Recalling 
\[\theta_j= 1-(\theta_0\theta_1\cdots\theta_{j-1})^{-1} 
b\pi^{m-je}\gamma^{p^i} T^{p^{s-1}}, \]
$(a)$ and $(c)$ imply (recall $j=p^i$)
\begin{equation*}\label{eq5.res}
 Res_{K /E}\{\ep\gamma^{-1},\theta_j,\pi\} = 
\left.\left\{\begin{gathered}
 \{1- b\gamma^{p^i} T^{p^{s-1}},\ep\gamma\}, \\ 
 0.\\
\end{gathered}\right.\quad
\begin{aligned}
&\text{if $m=e p^i$,}\\
&\text{otherwise.}
\end{aligned}\right.
\end{equation*}
Hence \eqref{eq7.res} follows from  
\begin{equation*}\label{eq6.res}
\tau_{\F_p}Res^s_{E/\bF_p}\{1- b\gamma^{p^i} T^{p^{s-1}},\ep\gamma\}=
Res^s_{E/\bF_p}\{E(b\gamma^{p^i} T^{p^{s-1}}),\ep\gamma\}= E(h T^{p^{s-1}}),
\end{equation*}
where $h$ is from \eqref{eq.h}. The first (resp. second) equality follows from $(a)$ 
(resp. $(d)$). This completes the proof.
\bigskip

\bigskip


\begin{thebibliography}{CGGHL}

 \bibitem[B]{B}
Bloch, S., 
 {\it Algebraic $K$-theory and crystalline cohomology}.
 Inst.\ Hautes Etudes Sci.\ Publ.\ Math.\ {\bf 63} (1977), 187--268.


 \bibitem[BK]{BK}
 Bloch, S., Kato, K.,
 {\it $p$-adic \'etale cohomology}.
 Inst.\ Hautes Etudes Sci.\ Publ.\ Math.\ {\bf 63} (1986), 107--152.



\bibitem[dJ]{dJ}
J. de Jong, 
{\it Smoothness, semi-stability and alterations}, 
Inst.\ Hautes Etudes Sci.\ Publ.\ Math.\  {\bf 83} (1996), 51--93.







\bibitem[D]{D2}
P. Deligne, 
{\it   Finitude de l'extension de $\mathbb{Q}$ engendr\'ee par des 
traces de Frobenius, en caract\'eristique  finie},   
Volume dedicated to the memory of I.~M.~Gelfand,  Moscow Math. J. {\bf 12} (2012) no. 3.



\bibitem[Dr]{Dr} V. Drinfeld, 
{\it On a conjecture of Deligne},   
Volume dedicated to the memory of I.~M.~Gelfand,
Moscow Math. J. {\bf 12} (2012) no. 3.



\bibitem[EK]{EK}
H. Esnault, M. Kerz,
{\it A finiteness theorem for Galois representations of function
fields over finite fields (after Deligne)},
Acta Mathematica Vietnamica {\bf 37}, Number 4 (2012), p. 351--362.



\bibitem[ESV]{ESV}
Esnault, H., Srinivas, V., Viehweg, E. 
{\it The universal regular quotient of the Chow group of points on projective varieties}, 
Invent. math. {\bf 135} (1999), 595--664.







\bibitem[KS]{KS}
K. Kato and S. Saito,
{\it Global class field theory of arithmetic schemes},
Contemporary Math.\ {\bf 55} (1986), 255--331.









\bibitem[Ka1]{Ka1}
K. Kato, 
{\it A generalization of local class field theory by using $K$-groups, II},
J. Fac. Sci.Univ. Tokyo, Sect.IA Math. {\bf 27} (1980), 603--683.

   
\bibitem[Ka2]{Ka}
K. Kato, 
{\it Swan conductors for characters of degree one in the imperfect residue field case},
Comtemp. Math., {\bf 83} 101--131 (1989).



\bibitem[Ka3]{Ka3}
K. Kato, 
{\it Class field theory, $D$-module, and ramification on higher dimensional schemes, Part I},
Amer. J. Math. {\bf 116} (1994), 757--784.

   

\bibitem[KS1]{KS}
K. Kato and S. Saito, 
{\it Two dimensional class field theory}, 
in: Galois groups and Their Representations, 
	Advanced Studies in Pure Math. {\bf 2} (1983), 103--152.

\bibitem[KS2]{KS2}
	K. Kato and S. Saito, 
    {\t Global class field theory of arithmetic schemes}, 
	Contemporary Math. {\bf 55} (1986), 255--331.

\bibitem[KSY]{KSY}
 B. Kahn, S. Saito, T. Yamazaki (with appendices by Kay Rulling) 
{\it Reciprocity sheaves, I},
preprint,  ArXiv:1402.4201

\bibitem[KeS]{KeSLef}
M. Kerz, S. Saito, 
{\it Lefschetz theorem for abelian fundamental group with modulus}, 
Algebra and Number Theory {\bf 8} (2014), 689--702.



\bibitem[KeSc]{KeSc}
M. Kerz and A. Schmidt,    
{\it Covering data and higher dimensional global class field theory},
J. of Number Theory {\bf 129} (2009), 2569--2599.



  


\bibitem[LW]{LevWei}
M. Levine, C. Weibel,
{\it Zero cycles and complete intersections on singular varieties},
J. Reine Angew. Math. {\bf 359} (1985), 106--120. 





\bibitem[Ma]{Ma}
S. Matsuda, 
{\it On the Swan conductors in positive characteristic},
Amer. J. Math., {\bf 119} 705--739 (1997).





\bibitem[Ra]{Ras}
W. Raskind, 
{\it Abelian class field theory of arithmetic schemes}, 
$K$-theory and algebraic geometry (Santa Barbara, CA, 1992), 85--187, 
Proc. Sympos. Pure Math., 58, Part 1, Amer. Math. Soc., Providence, RI, 1995. 





\bibitem[Ru]{Rus}
H. Russell, 
{\it Albanese varieties with modulus over a perfect field},
preprint, ArXiv 0902.2533.



\bibitem[Sa1]{Sa}
S. Saito, 
{\it Class field theory for two dimensional local rings},
in: Galois Representations and Arithmetic Geometry, 
Advanced Studies in Pure Math. {\bf 12} (1987), 343-373.

\bibitem[Sa2]{Sa2}
S. Saito, 
{\it Arithmetic on  two dimensional local rings},
Invent. Math. {\bf 85} (1986), 379-414.

\bibitem[Sa3]{Sa3}
S. Saito, 
{\it Class field theory for curves over local fields},
Journal of Number Theory. {\bf 21} (1985), 44-80.


\bibitem[TSa]{TSa}
T. Saito, 
{\it Wild Ramification and the Cotangent Bundle},
preprint, ArXiv:1301.4632


\bibitem[Sc]{Sc}
A. Schmidt, 
{\it Some applications of Wiesend's higher dimensional class field theory},
Math. Zeit. {\bf 256} (2007), 731--736 

\bibitem[SS]{SS}
A. Schmidt, M. Spiess, 
{\it Singular homology and class field theory of varieties over finite fields}, 
J. Reine Angew. Math. {\bf 527} (2000), 13--36. 

		
\bibitem[SaSa]{SaSa}
S. Saito, K. Sato,
{\it A finiteness theorem for zero-cycles over p-adic fields},  
Annals of Math. {\bf 172} (2010), 593--639.

\bibitem[Se1]{Se}
J.-P. Serre,
Corps Locaux,
Hermann, Paris 1968.



\bibitem[Se2]{Se2} 
J.-P. Serre, 
{\em Zeta and $L$ functions},  
Arithmetical Algebraic Geometry (Proc. Conf. Purdue Univ., 1963)  pp. 82--92 Harper \& Row, New York 1965.

\bibitem[SV]{SV}
A. Suslin, V. Voevodsky,
{\it Singular homology of abstract algebraic varieties},
Invent. Math. {\bf 123} (1996), 61--94.

\bibitem[Wi]{Wi}
G. Wiesend, 
{\it Class field theory for arithmetic schemes},
Math. Z. {\bf 256} (2007), no. 4, 717--729. 

  




\bibitem[SGA4]{SGA4}
M. Artin, A. Grothendieck, J.-L. Verdier, 
{\it  Th\'eorie des topos et cohomologie \'etale des sch\'emas}, 
Lecture Notes in Mathematics {\bf 269}, {\bf 270}, {\bf 305}, Springer-Verlag.



\bibitem[SGA7II]{SGA7}
P. Deligne and N. Katz,
{\it Groupes de Monodromie en G\'eom\'etrie Alg\'ebrique},
Lecture Notes in Math. {\bf 340}, Springer-Verlag.

\bibitem[ZS]{ZS}
O. Zariski and P. Samuel,
{\it Commutative Algebra II}
GTM {\bf 29}, Springer-Verlag.

\end{thebibliography}
\end{document}